\numberwithin{equation}{section}
\newcommand{\blank}{\mathrel{\;\cdot\;}}
\newcommand{\dell}{\partial}
\newcommand{\ass}{\quad\hbox{as }}
\newcommand{\gradperp}{\nabla^{\perp}}
\newcommand{\R}{\mathbb R}
\newcommand{\nn}{\nabla} 
\newcommand{\ve}{\varepsilon} 
\newcommand{\be}{\begin{equation}} 
	\newcommand{\ee}{\end{equation}}
\newcommand{\bigO}{\mathcal{O}}
\newcommand{\twopartdef}[4]
{
	\left\{
	\begin{array}{ll}
		#1 & \text{if } #2 \\
		#3 & \text{if } #4
	\end{array}
	\right.
}
\DeclareMathOperator{\arccot}{arccot}
\DeclareMathOperator{\supp}{supp}
\DeclareMathOperator{\grad}{\nabla}
\DeclarePairedDelimiter\floor{\lfloor}{\rfloor}
\DeclareMathOperator{\sgn}{sgn}
\newtheorem{definition}{Definition}[section]
\newtheorem{theorem}[definition]{Theorem}
\newtheorem{corollary}[definition]{Corollary}
\newtheorem{lemma}[definition]{Lemma}
\newtheorem{remark}[definition]{Remark}
\def\bcr{\begin{color}{red}}
	\def\bcb{\begin{color}{blue}}
		\def\ec{\end{color}}
\begin{document}
		
		%------
		% Insert the title of your paper and (if necessary)
		% a short title for the running head.
		%------
			\title[Vortex-pair solutions for incompressible Euler equations]{Asymptotic properties of  vortex-pair solutions for incompressible Euler equations in $\R^2$}
		
		\author[J. D\'avila]{Juan D\'avila}
		
		\address{J. D\'avila.- Department of Mathematical Sciences, University of Bath, Bath, Ba2 7AY, UK.}
		
		\email{jddb22@bath.ac.uk}%    \thanks will become a 1st page footnote.
		
		\author[M. del Pino]{Manuel del Pino}
		
		\address{M. del Pino.- Department of Mathematical Sciences, University of Bath, Bath, Ba2 7AY, UK.}
		
		\email{mdp59@bath.ac.uk}%    \thanks will become a 1st page footnote.
		
		\author[M.Musso]{Monica Musso}
		
		\address{M. Musso.- Department of Mathematical Sciences, University of Bath, Bath, Ba2 7AY, UK. }
		
		\email{mm2683@bath.ac.uk}
		
		\author[S. Parmeshwar ]{Shrish Parmeshwar }
		
		\address{S. Parmeshwar.- Department of Mathematical Sciences, University of Bath, Bath, Ba2 7AY, UK.}
		
		\email{sp667@bath.ac.uk}

		\thanks{}

		\keywords{}

		\maketitle
		
		\begin{abstract}
			We consider the problem of finding a solution to the incompressible Euler equations 
			$$ \omega_t + v\cdot \nn \omega = 0 \ \hbox{ in } \R^2 \times (0,\infty), \quad v(x,t) =  \frac 1{2\pi} \int_{\mathbb R^2} \frac {(y-x)^\perp} {|y-x|^2} \omega (y,t)\, dy \  $$
			that is close to a superposition of travelling vortices as $t\to \infty$. We employ a constructive approach by gluing classical travelling waves: at main order two vortex-antivortex pairs. Each pair consists of two vortices distance $2q$ from each other, with wave speed $c$, moving in the positive and negative $x_{1}$ directions respectively. More precisely, we find an initial condition that leads to a 4-vortex solution of the form 
			$$
			\omega (x,t)   =  \omega_0(x-cte_{1})   -  \omega_0 ( x+ cte_{1}) + o(1) \ \hbox{ as } t\to\infty
			$$
			where  $$ \omega_0( x ) =  \frac 1{\ve^{2}}  W \left ( \frac {x-q} \ve  \right ) -  \frac 1{\ve^{2}}W \left ( \frac {x+q} \ve  \right )  + o(1) \ \hbox{ as } \ve \to 0 $$
			and $W(y)$ is a certain fixed smooth profile, radially symmetric, positive in the unit disc zero outside. 
		\end{abstract}
		
		\maketitle
		
		%------
		% INSERT THE BODY OF THE PAPER HERE (except
		% acknowledgments, funding info and bibliography)
		%------
		\section{Introduction}\label{introduction-section}

		This paper deals with the long-time  behaviour of solutions $\omega(x,t)$ of the Euler equations for an incompressible, inviscid fluid in $\mathbb{R}^2$. In vorticity-stream formulation  these equations take the form 
		%\begin{subequations}\label{2d-euler-vorticity-stream} 
		
		\be \label{2d-euler-vorticity-stream}  \left\{   
		\begin{aligned}
			\omega_{t}+\gradperp\Psi\cdot\nabla\omega&=0\ \ \ \ &&\text{in}\ \mathbb{R}^{2}\times (0,T),\\ \Psi= (-\Delta)^{-1}&\omega\ \  &&\text{in}\ \mathbb{R}^{2}\times (0,T), \\
			\omega(\cdot,0)= &\mathring{\omega}  &&\text{in}\ \mathbb{R}^{2}.
		\end{aligned} \right.  \ee
		The stream function $\Psi$ is determined as the inverse Laplacian applied to $\omega$, yielding:
		\begin{equation}
			\label{newtonian}
			\Psi(x,t)= (-\Delta)^{-1}\omega (x,t)   = \frac 1{2\pi} \int_{\mathbb R^2}\log \frac 1{|x-y|} \omega (y,t)\, dy    
		\end{equation}
		and the fluid velocity field  $v =  \nn^\perp \Psi = ( \Psi_{x_2}, -\Psi_{x_1})$  follows the Biot-Savart Law:
		$$
		v(x,t) =  \frac 1{2\pi} \int_{\mathbb R^2} \frac {(y-x)^\perp} {|y-x|^2} \omega (y,t)\, dy \ .
		$$
		The Cauchy problem \eqref{2d-euler-vorticity-stream} is well-posed in the weak sense in $L^1(\R^2)\cap L^\infty (\R^2)$ as deduced from the classical theories by Wolibner \cite{Wolibner1933} and Yudovich \cite{Yudovich1963}. Solutions are regular if the initial condition $\mathring{\omega}$ is so. See for instance Chapter 8 in Majda-Bertozzi's book \cite{majdabertozzi}. 
		
		\medskip
		We are interested in solutions of \eqref{2d-euler-vorticity-stream}   exhibiting nontrivial dynamics as $t\to \infty$. Specifically, we are interested in solutions with vorticity {\em concentrated} around a finite number of points $\xi_1(t), \ldots, \xi_N(t)$, to fix ideas, in the form 
		\be \label{ome}
		\omega_\ve (x,t) =  \sum_{j=1}^N   \frac{m_j} {\ve^2}  \Big [  W\left ( \frac {x-\xi_j (t) } \ve   \right ) + o(1) \Big ] ,
		\ee
		where $W(y)$ is a fixed radially symmetric profile with $\int_{\R^2} W(y) dy = 1$, $o(1) \to 0 $ as $\ve \to 0$ and $m_j \in \mathbb{R}$ are the circulations (or masses) of the vortices.     We have that 
		\begin{equation}\label{vortices-to-deltas}
			\omega_\ve(x,t)  \rightharpoonup  \omega_S(x,t) = \sum_{j=1}^N   {m_j} \delta_0  (  {x-\xi_j (t) } ) \ass \ve\to 0 ,
		\end{equation}
		where $\delta_0(y)$ is a Dirac mass at $0$.
		%Formally we get   \begin{equation}\label{stream-to-deltas}\Psi_\ve(x,t)  \rightharpoonup  \Psi_S(x,t) = (-\Delta)^{-1} \omega_S(x,t) =  \sum_{j=1}^N  \frac{m_j}{2\pi} \log \frac 1 {|x-\xi_j (t)|} \ass \ve\to 0 .\end{equation} 
		%Substituting expressions \eqref{vortices-to-deltas}-\eqref{stream-to-deltas} into \eqref{2d-euler-vorticity-stream}, formally we get 
		%\begin{align*}
		%\partial_t \omega_S + \nabla^\perp \Psi_S \nabla \omega_S &= \sum_{j=1}^N \, m_j \left[ -\dot \xi_j -\sum_{\ell \not= j} {m_\ell \over 2\pi} {(x-\xi_\ell (t) )^\perp \over |x-\xi_\ell (t)|^2}  \right] \cdot \nabla \delta (x-\xi_j (t))  = 0. 
		%\end{align*}
		%??? notacion $S$ arriba o abajo 
		%
		%Since the distribution $\nabla \delta (x-\xi_j)$ is only supported at $x=\xi_j(t)$, we  
		Formally the trajectories $\xi_j(t)$ satisfy the Kirchhoff-Routh vortex system
		\begin{equation}\label{KR}
			\dot \xi_j (t) = -\sum_{\ell \not= j} \frac{m_\ell}{2\pi} \frac{(\xi_j(t) -\xi_\ell (t) )^\perp}{|\xi_j(t) -\xi_\ell (t)|^2}, \quad j=1, \ldots , N.
		\end{equation}
		In \cite{DDMW2020} the following was proven:  {\em Given a finite time $T>0$ and a solution $\left( \xi_1 , \ldots , \xi_N  \right)$ to  System \eqref{KR} with no collisions  in $[0,T]$,  namely with
			$$ \inf_{t\in [0,T],\, i\ne j} |\xi_i(t) -\xi_j(t)| >0 , $$ there exists a solution of \eqref{2d-euler-vorticity-stream} with the precise form \eqref{ome} and the specific choice  
			$W(y) =  \frac 1{\pi} \frac 1{(1+|y|^2)^2} $.} 
		
		\medskip
		In \cite{MP1993} it was previously shown that an initial condition supported on $\ve$-disks concentrated around the points $\xi_j (0)$ as bumps with masses $m_j$ lead to a solution which as $\ve \to 0$ satisfies the distributional convergence \eqref{vortices-to-deltas} but without the asymptotic description \eqref{ome} near the vortices.   
		
		\medskip
		The finiteness in time $T>0$  is essential to get any form of uniform estimates in $[0,T]$ as $\ve \to 0$ in the results in \cite{MP1993} and \cite{DDMW2020}. It is not clear that the solution as time evolves, for fixed $\ve$ will keep the form \eqref{ome} while trajectories of system \eqref{KR} generically do not collide, see Chapter 4 in \cite{MarchioroPulvirentiBook}. This paper deals with the construction of solutions to 
		the Euler equations \eqref{2d-euler-vorticity-stream} that keep 
		the form \eqref{ome} {\em at all times}.
		
		\medskip
		The problem of understanding long-time patterns for solutions of \eqref{2d-euler-vorticity-stream} still
		carries many mysteries, while specific conjectures have been raised. See for instance \cite{elgindimurraysaid} and its references.
		
		\medskip
		In the analysis of long-time dynamics, a fundamental class is that of {\em steady solutions}, namely solutions whose shape does not depend on time. Examples include radially symmetric decaying functions, which correspond to stationary solutions. Other examples are travelling and rotating waves. See for instance \cite{ao,HmidiMateu,smets-vanschaftingen,caowei} and references therein. The problem of finding such solutions is typically reduced to solving semilinear elliptic equations.
		
		\medskip
		The problem of finding time-dependent configurations that closely resemble the superposition of steady solutions, perhaps with the profile \eqref{ome}, is a challenging one, due to the emergence of coupling error terms in the main order, and our limited understanding of long-term behaviour of perturbations to steady solutions still remains . 
		
		\medskip
		We will consider a configuration of two vortex pairs travelling at constant, opposite speeds along the $x_1$-axis. 
		
		\medskip
		Let us first describe a single vortex pair.
		Let $e_1 =(1,0),\ e_2=(0,1)$, $q_\infty>0$, $M_{0}>0$.
		A point vortex pair solution to \eqref{KR} for $N=2$ is given by $\xi_1^{pair}(t)$, $\xi_2^{pair}(t)$ with $m_1=M_{0}=-m_2$ and 
		\begin{align}
			\label{xipair}
			\xi_1^{pair}(t) = q_\infty e_2 + c_\infty t e_1, \quad 
			\xi_2^{pair}(t) = -q_\infty e_2 + c_\infty t e_1, \quad 
			c_\infty=\frac{M_{0}}{4 \pi q_{\infty}}.
		\end{align}
		%Associated to this solution of the point-vortex system there is a solution of Euler \eqref{2d-euler-vorticity-stream} of the form \eqref{ome}. Indeed, 
		Associated to this solution, for all $\ve >0$ small enough, there exists an $\varepsilon$-concentrated travelling vortex pair solving \eqref{2d-euler-vorticity-stream} that can be described as
		\be  
		\nonumber
		%\label{vp}
		\omega(x,t)  =    U_{\ve,q_{\infty}} (x -c_{\ve,\infty}  t e_1 )=\varepsilon^{-2} f_\varepsilon( \Psi (x) - c_{\varepsilon,\infty} x_2,x), 
		\ee
		for an appropriate nonlinearity $f_\varepsilon$, with $\Psi$ solving the elliptic problem
		\begin{align}
			-\Delta \Psi (x) =  \varepsilon^{-2} f_\varepsilon( \Psi (x) - c_{\varepsilon,\infty} x_2,x) .
			\label{stationary-solutions-elliptic-pde} 
		\end{align}
		in the upper half plane $x_2>0$, 
		and is even in $x_1$ and odd in $x_2$.
		The function $U_{\ve,q_{\infty}} $  can be written as
		\be 
		%\label{vp1}
		\nonumber
		U_{\ve,q_{\infty}} (x) = \frac{1}{\ve^2} 
		\left[ 
		\Gamma^\gamma_+\left ( \frac {x - q_\infty e_2 }{ \ve} \right ) -  \Gamma^\gamma_+ \left ( \frac {x + q_\infty e_2 }{ \ve} \right )   + \, \phi_{\ve , q_\infty} \left(\frac{x}{\varepsilon} \right) \right], \quad 
	\end{equation}
	where  $\Gamma(x)$ is an entire radial classical solution of the problem
	\begin{align}
		\Delta \Gamma+\Gamma^{\gamma}_{+}=0 \ \text{on}\ \mathbb{R}^{2} , \quad \supp{\Gamma} = B_1\left(0\right),\quad M\coloneqq\int_{\mathbb{R}^{2}}\Gamma^{\gamma}_{+}dx,
		\label{power-semilinear-problem-R2}
	\end{align}
	and
	$
	\| \phi_{\ve , q_\infty} \|_\infty = O(\ve^2 )$, $\mbox {supp} (\phi_{\ve , q_\infty}) \subseteq  B_{R\ve} (q_\infty e_{2} )   \cup B_{R\ve}  (- q_\infty e_{2} ) $ 
	for some fixed  $R>0$. 
	Here
	\begin{align*}
		\Gamma_{+}\coloneqq\max{\left\{\Gamma,0\right\}},\quad B_{r}\left(x\right)\coloneqq \left\{y\in\mathbb{R}^{2}\colon \left|x-y\right|\leq r\right\}.
	\end{align*}
	A solution of this type was first found in \cite{smets-vanschaftingen} via variational methods. In \cite{DDMPVP2023} the authors perform a fixed point construction that obtains the same vortex pair solution and also provides the fine information needed for our global-in-time two vortex configuration constructed in this paper. This information is summarized in Theorems \ref{vortexpair-theorem} and \ref{vortex-pair-properties-theorem}, as well as Remark \ref{support-of-nonlinearity-remark}. See \cite{ao,cao,caowei,norbury} for related results and \cite{burton,norbury,turkington} for earlier travelling solutions.

	\medskip
	
	There is a 4 point solution the Kirchhoff-Routh vortex system \eqref{KR}, that as $t\to\infty$ behaves as two vortex pairs travelling in opposite directions, and is given by 
	\begin{align}
		\label{ppair2}
		\begin{aligned}
			m_1 &= - m_2 =M, 
			\quad 
			\xi_{*{},1} (t) = \left(  p_{*{}}(t)  , q_{*{}}(t)  \right),  
			&\xi_{*{},2} (t)&= \left( p_{*{}}(t)  ,-q_{*{}} (t) \right), 
			\\
			m_3 &= -m_4 =M, \quad
			\xi_{*{},3} (t) = \left(  -p_{*{}}(t)  , -q_{*{}}(t)  \right),
			&\xi_{*{},4} (t)&= \left( -p_{*{}}(t)  ,q_{*{}} (t) \right), 
		\end{aligned}
	\end{align}
	where, as $t\to\infty$,
	\begin{align}
		p_{*{}}(t) = P_{0}+\frac{M}{4 \pi q_\infty} t + O\Bigl(\frac{1}{t}\Bigr), \quad q_{*{}}(t) = q_\infty + O\Bigl(\frac{1}{t^2}\Bigr) ,\label{p-q-star-asymptotics}
	\end{align}
	and $P_{0}$, $q_\infty >0$, with $M>0$  defined in \eqref{power-semilinear-problem-R2}. See Section~\ref{point-vortex-trajectory-section} for details.
	% The proof of this is standard.
	%We observe that in the limit as $t \to \infty$, these points converge to the configuration of the sum of two vortex pairs with opposite sign.
	
	\medskip
	It is thus natural to ask whether an
	initial vorticity consisting of four compactly supported $\varepsilon$-concentrated regular functions, identical except for alternating 
	sign, located symmetrically in the four quadrants, generates a  time evolved vorticity that remains localised around the four points $\xi_{*{},j} (t)$, $j=1, \ldots , 4$, at all times $t $. 
	%For such an initial vorticity, what is known is that  the support of the vorticity grows linearly in time, a confinement result obtained in  \cite{ISG}.
	We state our result in the following form
	
	\begin{theorem}\label{teo1} Let $\gamma>18$, $q_\infty >0$. Then for all $T_0>0 $ large enough satisfying \eqref{K-T0-choices}, there is a constant $C>0$ such that for all $\varepsilon >0$ small enough, there exists a solution to \eqref{2d-euler-vorticity-stream}, $(\omega_\ve , \Psi_\ve)$, on the whole interval $[T_0,\infty)$, with
		\begin{equation}\label{vorticityexp}
			\omega_\ve (x,t) =  U_{\ve,q_{*}+q_{re}}\left(x-\left(p_{*}+p_{re}\right)e_{1}\right)-U_{\ve,q_{*}+q_{re}}\left(x+\left(p_{*}+p_{re}\right)e_{1}\right) + \frac{1}{\ve^2} \phi_\ve (x,t),
		\end{equation}
		where the point $\left(p_{*}(t),q_{*}(t)\right)$ is defined in \eqref{ppair2}, and $p_{re}(t)$ and $q_{re}(t)$ are functions $[T_{0},\infty)\rightarrow\mathbb{R}$, and satisfy, for all $t\in\left[T_{0},\infty\right)$, 
		\begin{align*}
			\left|t^{-1}p_{re}\left(t\right)\right|+\left|\dot{p}_{re}\left(t\right)\right|&\leq C\ve^{2}, \\
			\left|t^{2}q_{re}\left(t\right)\right|+\left|t^{3}\dot{q}_{re}\left(t\right)\right|&\leq C\ve^{2}. 
		\end{align*}
		The function $\phi_\ve\left(\cdot,t\right) $ in \eqref{vorticityexp} is continuous with a modulus of continuity given by
		\begin{align*}
			\left|\phi_{\varepsilon}\left(x,t\right)-\phi_{\varepsilon}\left(x',t\right)\right|\leq\frac{C\varepsilon^{1-\sigma}}{\left(\log{\left(\log{\left(\frac{\varepsilon^{\frac{6-\sigma}{2}}}{\left|x-x'\right|^{\frac{1}{2}}}\right)}\right)}\right)^{\left(1-\sigma\right)}}, \quad \text{for all } t >T_0,
		\end{align*}
		odd in both variables $x_1$ and $x_2$, and satisfies
		$$
		|\phi_\ve (x,t) | \leq C \left(\frac{\ve}{t}\right)^{1-\sigma } , \quad \|\phi_\ve (\cdot,t)\|_{L^{2}\left(Q\right)}\leq C \left(\frac{\ve}{t}\right)^{2},  \quad \text{for all } t >T_0,
		$$
		for any $\sigma >0$ small.
		If restricted to the first quadrant $Q=\{ (x_1 , x_2) \, : \, x_1 >0 , x_2 >0 \}$ it satisfies
		$$
		{\mbox {supp}} \, \phi_\ve (x,t) \subseteq B_{C\ve } (\left(p_{*}+p_{re},q_{*}+q_{re}\right)),  \quad \text{for all } t >T_0.
		$$
	\end{theorem}
	
	The functions $ \Psi_\ve (x, t+T_0)$,  $ \omega_\ve (x, t+T_0)$,  $t>0$ form a solution of \eqref{2d-euler-vorticity-stream} in $[0,\infty)$ and have the expected properties in the whole interval $[0,\infty)$. The condition $\gamma>18$ is a technical hypothesis required in the proof of Lemma \ref{L2-interior-a-priori-estimate-lemma}. 
	\begin{remark}\label{scattering-remark}
		An important point to note about the construction of the solution in Theorem \ref{teo1} is that it is found as a limit of solutions to \eqref{2d-euler-vorticity-stream}, indexed by a parameter $T$, on a time interval $\left[T_{0},T\right]$, with $T\to\infty$. These solutions have terminal data specified at $t=T$ rather than initial data at $t=T_{0}$. The motivation for this can be seen at the ODE level, where the symmetric $4$-point solution described in \eqref{ppair2}--\eqref{p-q-star-asymptotics} is naturally characterized by terminal data of the form $\lim_{t\to\infty}q(t)=q_{\infty}$. This is, in a sense, specifying which vortex pairs of the form \eqref{xipair} at $t=\infty$ the $4$ point solutions are perturbations of.
		
		\medskip
		Our desingularization of the $4$ point solution follows the same heuristic, wherein we specify the desingularized vortex pair, depending on $q_{\infty}$ (see Section \ref{tvp}) that the solution described in Theorem \ref{teo1}, is evolving towards. Thus, it is natural to specify data at $t=\infty$ and evolve backwards, and since $T$ informally plays the role of $t=\infty$ when we construct our solution on larger and larger time intervals, it is natural to specify data at $t=T$ and evolve backwards. Such a construction can also be seen for a closely related system to the $2$D Euler Equations, the Vlasov-Poisson equation, in \cite{cagmaf}.
		
		\medskip
		The strategy described in this remark is elaborated upon during Section \ref{scheme}, where we explain the scheme of the proof.
	\end{remark}
	\begin{remark}\label{global-remark}
		Global-in-time can refer to solutions that exist as $t\to\pm\infty$, that is on the whole time interval $\left(-\infty,\infty\right)$, and due to the time reversibility of the Euler equations, one can ask the question of whether this desingularization can be extended to all negative times. By a simple change of a variables, one can construct a solution on $\left(-\infty,-T_{0}\right]$ using the solution constructed in \ref{teo1}. 
		
		\medskip
		What remains is whether these desingularizations can be continued through the interval $\left[-T_{0},T_{0}\right]$ in a compatible way. In this interval the dynamics of the vortices are more complicated, especially since $T_{0}$ will be chosen so that the dynamics of the vortices on the right hand plane are suitably decoupled from the dynamics of the vortices on the left hand side plane enabling us to work in a perturbative regime suitable to carry out the procedure described in Remark \ref{scattering-remark}, see Section \ref{scheme}. 
		
		\medskip
		The ODE dynamics suggest that on $\left[-T_{0},T_{0}\right]$ the two approximate vortex pairs will approach each other as $t\downarrow0$, and then ``swap" pairs so that the point vortices in the two quadrants corresponding to the upper half plane form a new approximate vortex pair that travels in the positive $e_{2}$ direction, and asymptotically approaches a true vortex pair as $t\downarrow-\infty$, and similarly for the lower half plane for the negative $e_{2}$ direction. Desingularizing these dynamics on $\left(-\infty,\infty\right)$ would be an interesting problem, especially given the fundamentally non-monotonic nature of the conjectured dynamics on $\left[-T_{0},T_{0}\right]$.
	\end{remark}
	Our result gives a contribution to the understanding of the long-time behaviour of solutions to \eqref{2d-euler-vorticity-stream},  a problem that, as we remarked previously, remains largely open. Indeed, to our knowledge, this is one of the first constructions of global-in-time desingularized vortex configurations outside of steady states, and the first of greater regularity than that of vortex patches. 
	
	\medskip
	We construct specific initial conditions that allow us to exhibit the long-time behaviour shown in Theorem \ref{teo1}, but we do not claim that small perturbations of our initial data will preserve this structure. Physical experiments and numerical simulations suggest that most solutions evolve over infinite time towards simpler dynamics. Orbital stability for vortex pairs was proven in \cite{bnl}, and orbital stability for Lamb dipoles was established in \cite{abe-choi}. A more recent result \cite{ChoiJeongYao} establishes orbital stability for odd-odd quadrupole configurations similar to the one considered in this work. Regarding the long-time behaviour of solutions to the $2D$ Euler equation, the following informal conjecture has been formulated in \cite{sve}, \cite{sch}: as $t\to \pm \infty$ generic solutions experience loss of compactness. 
	Significant confirmations of this conjecture have emerged   near specific steady states. %The groundbreaking research conducted by Bedrossian and Masmoudi 
	Important stability results for shear flows have been found in \cite{Bedrossian,Ionescu, ionescu2,masmoudizhao}. Stability results for fast-decay, radial steady states have been found \cite{Bedrossian,Ionescu,choi-lim}. See also \cite{elgindimurraysaid} for discrete symmetry configurations. 
	
	\medskip
	Our result is consistent with the confinement result in \cite{ISG} and provides much finer qualitative information on the vorticity at all times $t$.
	Vorticity confinement results regard bounding  the expansion of the support for the vorticity, when initially this is compactly supported and in $L^\infty$. 
	Since the velocity is bounded, the radius of the support can grow at most linearly in time. When the initial vorticity has a definite sign, the radius of the support grows more slowly. We refer the reader to  \cite{M, ISG, serfati} and references therein.
	
	\medskip
	There are several results on long time dynamics for vortex patches. In \cite{Zbarsky}, a global solution to \eqref{2d-euler-vorticity-stream} consisting of vortex patches centered around an expanding self-similar solution to \eqref{KR} with $N=3$ was found. In \cite{Zbarsky} each patch may grow in size at a rate $t^{\frac{1}{4}^+}$, but they separate from each other at a rate $t^{\frac{1}{2}}$. This seems crucial in the analysis. 
	%Vortex patches solve \eqref{2d-euler-vorticity-stream} and have  the form $\omega (x,t) = {\bf 1}_{\Omega (t)} (x)$, with $\Omega (t) \subset \R^2$. 
	The existence of co-rotating and counter-rotating pair vortex patches is contained in \cite{HmidiMateu}. Logarithmic spiraling solutions were recently constructed in \cite{Jeongsaid}. For other results concerning regularity and long-term behaviour of vortex patches, we refer to \cite{ej1} and to the results they cite. We also mention the interesting recent papers 
	\cite{HassHmidiMasmoudi,HassHmidiRoulley} where solutions such as those in the form of two vortex pairs crossing each other in an alternate, periodic manner, and a periodic solution involving $4$ vortex trajectories moving with odd-odd symmetry in the unit disk, are constructed using KAM theory.  
	
	\medskip
	As a final point in this introduction, we consider in this paper a geometrically simple configuration that we nevertheless claim  contains many of the difficulties of more complex situations. In particular, one can look for more general travelling solutions of the form 
	\begin{align}
		\xi^{AM}_{j}=p^{AM}_{j}e_{1}+q^{AM}_{j}e_{2}+c^{AM}te_{1},\quad \mathring{\xi}^{AM}_{j}= p^{AM}_{j}e_{1}+q^{AM}_{j}e_{2},\label{xi-AM}
	\end{align}
	with masses $m^{AM}_{j}$, for $j=1,\dots,N$, some positive integer $N$. In that case, using \eqref{KR} one would have to solve the system
	\begin{align}
		c^{AM}e_{1}=-\sum_{l\neq j}\frac{m^{AM}_{l}}{2\pi}\frac{\left(\mathring{\xi}^{AM}_{j}-\mathring{\xi}^{AM}_{l}\right)^{\perp}}{\left|\mathring{\xi}^{AM}_{j}-\mathring{\xi}^{AM}_{l}\right|^{2}},\quad j=1,\dots,N.\label{xi-AM-system}
	\end{align}
	One can indeed find such solutions $\left(\xi^{AM}_{1+},\dots,\xi^{AM}_{k+},\xi^{AM}_{1-},\dots,\xi^{AM}_{k-}\right)$ with $\xi^{AM}_{j\pm}$ and $\mathring{\xi}^{AM}_{j\pm}$ defined as in \eqref{xi-AM}, masses $m_{j+}=M$, and $m_{j-}=-M$ for $j=1,\dots,k$, where $k=\frac{n\left(n+1\right)}{2}$, some positive integer $n$. Moreover, these arrays of travelling vortices are called symmetric if they also satisfy
	\begin{equation}\label{xi-AM-symmetric}
		\begin{aligned}
			\left(p^{AM}_{j+},q^{AM}_{j+}\right)&=\left(p^{AM}_{j-},-q^{AM}_{j-}\right),\quad j=1,\dots,k,\\
			\exists j_{0}, 2j_{0}+1\leq k\ \text{such that}\ \left(p^{AM}_{\left(2j-1\right)+},q^{AM}_{\left(2j-1\right)+}\right)&=\left(-p^{AM}_{\left(2j\right)+},q^{AM}_{\left(2j\right)+}\right),\quad j=1,\dots,j_{0},\\
			p^{AM}_{j+}&=0,\quad j=2j_{0}+1\dots,k.
		\end{aligned}
	\end{equation}
	There are solutions to \eqref{xi-AM-system}--\eqref{xi-AM-symmetric} related to the roots of the Adler-Moser polynomials, which were used to construct multi-vortex travelling waves to the Gross-Pitaevskii equation in \cite{LiuWei}. A closely related system was also used to find multi-vortex travelling wave solutions to the SQG system in \cite{ao}.
	
	\medskip
	In the same way that constructing the desingularized vortex pair for \eqref{2d-euler-vorticity-stream} in \cite{DDMPVP2023} using Lyapunov-Schmidt reduction followed an analogous strategy to the vortex pair construction in \cite{ao}, constructing desingularized travelling waves based on the symmetric arrays \eqref{xi-AM-system}--\eqref{xi-AM-symmetric} for \eqref{2d-euler-vorticity-stream} would closely follow the strategy for the analogous construction in \cite{ao}.
	
	\medskip
	Having constructed desingularized symmetric arrays of travelling vortices, the methods in this current work are robust enough to be able to desingularize vortex configurations consisting at main order of two of these symmetric arrays travelling away from each other at linear speed, analogously to how the 4 point solution \eqref{ppair2} relates to the pair \eqref{xipair}.
	
	\medskip
	We discuss the scheme of the proof in the next section.

	\section{Scheme of the proof}\label{scheme}
	
	The strategy we adopt to construct the  solution predicted by Theorem \ref{teo1}  is to first  produce a solution to the Euler equations \eqref{2d-euler-vorticity-stream} on a finite and arbitrarily large time interval $[T_0, T]$, for  $T \gg T_0$, for $T_{0}$ defined in \eqref{K-T0-choices}. This solution is built as a perturbation of the sum of two $\varepsilon$-concentrated vortex pairs travelling in opposite directions. The perturbation will be small and decay in time.  To conclude our argument we  apply the Ascoli-Arzel\'a Theorem for a sequence of times $(T_n)_n$, with $T_n\to \infty$ as $n \to \infty$ and show that the limit function gives the desired solution in the whole interval  $[T_0, \infty)$. We do this in Section \ref{conclusion}.
	
	\medskip
	The main step in our proof is the construction of the solution to the Euler equations \eqref{2d-euler-vorticity-stream} on $[T_0,T]$ as described above. As previously mentioned, the starting point for this construction is the basic $\ve$-concentrated travelling vortex pair of which we need a fine understanding. This precise information is contained in \cite{DDMPVP2023} and is summarized in Section \ref{tvp}. The  approximate solution is built upon the sum of two vortex pairs corrected by a relatively explicit function in order to produce a sufficiently small error. We describe this in Section \ref{appro}. Detailed proofs are contained in Section \ref{first-approximation-section}. We then solve the problem on $[T_0,T]$, as explained in Section \ref{cs}, with full details in Section \ref{constructing-full-solution-section}.

	\subsection{Travelling vortex pair}\label{tvp}
	Recall \eqref{stationary-solutions-elliptic-pde}, which tells us that to achieve a vortex pair with $\varepsilon$-concentrated vorticity, we want solve the semilinear elliptic PDE
	\begin{align}
		\Delta\Psi_{\varepsilon}+\varepsilon^{-2}f_{\varepsilon}(\Psi_{\varepsilon}-cx_{2},x)=0, \quad \omega_\ve = \varepsilon^{-2}f_{\varepsilon}(\Psi_{\varepsilon}-cx_{2},x) \label{2d-euler-travelling-semilinear-elliptic-equation-epsilon}
	\end{align}
	on $\mathbb{R}^{2}$, for a choice of power type nonlinearity  $f_\ve$ that we introduce next.
	
	\medskip
	In Section \ref{introduction-section} we introduced the function  $\Gamma$ solving \eqref{power-semilinear-problem-R2}, which we can write as
	\begin{align}
		\Gamma(x)=\nu\left(\left|x\right|\right),\ \text{if}\ \left|x\right|\leq 1,\quad \Gamma(x)=\nu'(1)\log{{\left|x\right|}},\ \text{if}\ \left|x\right|> 1,\label{Gamma-def}
	\end{align}
	where $\nu (|x|)$ is the unique positive, radial, ground state solving
	\begin{align}
		\Delta \nu+\nu^{\gamma}_{+}=0 \ \text{on}\ B_{1}\left(0\right),\quad \nu=0\ \text{on}\ \dell B_{1}\left(0\right).\label{nu-equation}
	\end{align}
	Recalling that 
	$M=\int_{\mathbb{R}^{2}}\Gamma^{\gamma}_{+}dx$
	a direct computation gives $M=2\pi\left|\nu'(1)\right|.$
	The quantity $\left|\nu'(1)\right|$, an important quantity which we call the reduced mass, will be labelled by $m$ so that
	\begin{align}
		m\coloneqq\frac{M}{2\pi}=\left|\nu'(1)\right|.\label{reduced-vortex-mass-definition}
	\end{align}
	%For any $\varepsilon >0$,  define 
	%   \begin{align}
	%      \Gamma_{\varepsilon}\left(\frac{x}{\varepsilon}\right)=w\left(\frac{x}{\varepsilon}\right).\label{Gamma-epsilon-definition-a}
	%  \end{align}
	For any $\varepsilon >0$, rescaling $\Gamma$ by $\varepsilon$ leaves it $C^1$ across the boundary of the ball radius $\varepsilon$. Thus, the rescaled $\Gamma$ solve the following family of semilinear elliptic equations, with associated mass
	\begin{equation}\label{Gamma-epsilon-elliptic-equation}
		\begin{aligned}
			\Delta\Gamma\left(\frac{x}{\varepsilon}\right)+\varepsilon^{-2}\left(\Gamma\left(\frac{x}{\varepsilon}\right)\right)^{\gamma}_{+}=0,\quad \varepsilon^{-2}\int_{\mathbb{R}^{2}}\left(\Gamma\left(\frac{x}{\varepsilon}\right)\right)^{\gamma}_{+}dx=\int_{B_{1}}\Gamma^{\gamma}_{+}\left(z\right)\  dz=M.
		\end{aligned}
	\end{equation}
	We take a travelling vortex pair solution using the rescaled $\Gamma$ and our family of $f_{\varepsilon}$ will be chosen using the power type nonlinearity in \eqref{Gamma-epsilon-elliptic-equation}. Concretely, let $q_\infty>0$. Then our solution has the form
	\begin{equation} \label{travelling-vortex-pair-ansatz}
		\begin{aligned}
			\Psi_{\varepsilon}(x)&=\mathring{\Psi}_{\varepsilon}(x)+\psi_{\frac{q_\infty}{\varepsilon}}\left(\frac{x}{\varepsilon}\right), \quad
			\mathring{\Psi}_{\varepsilon}(x)=\Gamma\left(\frac{x-q_\infty e_{2}}{\varepsilon}\right)-\Gamma\left(\frac{x+q_\infty e_{2}}{\varepsilon}\right),
		\end{aligned}
	\end{equation}
	with $f_{\varepsilon}$ defined by
	\begin{align}
		f_{\varepsilon}(\Psi_{\varepsilon}(x)-cx_{2},x)&=\left(\Psi_{\varepsilon}(x)-cx_{2}-\left|\log{\varepsilon}\right|\Omega\right)^{\gamma}_{+}\chi_{B_{s}\left(q_\infty e_{2}\right)}(x)\nonumber\\
		&-\left(-\Psi_{\varepsilon}(x)+cx_{2}-\left|\log{\varepsilon}\right|\Omega\right)^{\gamma}_{+}\chi_{B_{s}\left(-q_\infty e_{2}\right)}(x).\label{nonlinearity-definition}
	\end{align}
	Here $s>0$ is a fixed constant, and the $\chi$ are indicator functions. To find such a solution requires choosing our $c$ and $\Omega$ as a function of $q_\infty$. The quantity $\Omega$ is a parameter to adjust the mass of our vortex pair. 
	We demand that
	\begin{align}
		-\Gamma\left(\frac{2q_\infty e_{2}}{\varepsilon}\right)-cq_\infty-\left|\log{\varepsilon}\right|\Omega=0.\label{Omega-definition-1}
	\end{align}
	If we  suppose  $q_\infty\in\left(Q_{0}^{-1},Q_{0}\right)$, some $Q_{0}>0$ large and   $c=c(q_\infty,\varepsilon)$ has the bound $c\leq mQ_{0}$ for all $\varepsilon$ small enough, then 
	the support of $f_{\varepsilon}(\mathring{\Psi}_{\varepsilon}(x)-cx_{2},x)$ shrinks as $\varepsilon\to0$:   there exists $\varepsilon_{0}$ small enough such that for all $\varepsilon\leq\varepsilon_{0}$,  
	\begin{align}
		{\mbox {supp}} \, f_{\varepsilon}\left(\mathring{\Psi}_{\varepsilon}(x)-cx_{2},x\right) \subseteq B_{\rho_0\varepsilon}(q_\infty e_{2})\cup B_{\rho_0\varepsilon}(-q_\infty e_{2}),\quad \rho_0=2\left(1+\frac{s Q_{0}}{2}\right)e^{Q_{0}^{2}},\label{rho-0-definition}
	\end{align} 
	Next we note that $\psi_{\frac{q_\infty }{\varepsilon}}$ in \eqref{travelling-vortex-pair-ansatz} is a comparatively small error term. To main order, both our solution and nonlinearity look like an odd extension of the solution and nonlinearity defined in \eqref{Gamma-epsilon-elliptic-equation}. The solution $\psi_{\frac{q_\infty }{\varepsilon}}$ to our problem has the same oddness in the $x_{2}$ direction.
	
	\medskip
	The existence and properties of our $\varepsilon$-concentrated travelling vortex pair are summarized in the following result. The existence of such a solution can be seen in \cite{smets-vanschaftingen}, and the relevant properties of the solution claimed in the following theorem are shown in \cite{DDMPVP2023}.

	\begin{theorem}\label{vortexpair-theorem} Let $\gamma > 3$ in \eqref{power-semilinear-problem-R2} and let $q_\infty >0.$ There exist a constant $C>0$ such that for all $\varepsilon >0$ small, there is a constant $c>0$ and a function $\psi_{\frac{q_\infty}{\varepsilon}} (x)$, even in $x_1$, odd in $x_2$ such that $\Psi_\ve$ given by \eqref{travelling-vortex-pair-ansatz} with $\Omega$ as in \eqref{Omega-definition-1} solves  \eqref{2d-euler-travelling-semilinear-elliptic-equation-epsilon}, with $f_\varepsilon$ given by \eqref{nonlinearity-definition}. Moreover. there exists a function $g$ of regularity $C^{\floor{\gamma}-1}$ in its argument such that the speed $c$ is given by
		\begin{equation}\label{c-def}
			c= \frac{m}{2q_\infty} + g(q_{\infty}' ),\quad q_{\infty}'=\frac{q_\infty}{\varepsilon},\end{equation}
		and the following estimates hold true
		\begin{align}
			\| \psi_{q_{\infty}'} \|_{L^\infty (\mathbb{R}^2)} &+  \| \partial_{q_{\infty}'} \psi_{q'} \|_{L^\infty (\mathbb{R}^2)}+\| \partial_{q_{\infty}'}^{2} \psi_{q'} \|_{L^\infty (\mathbb{R}^2)} \leq C \ve^2,\label{psi-q-l-infinity-bounds} \\
			\varepsilon^{2}|g| &+ \varepsilon|\partial_{q_{\infty}'} g|+|\partial_{q_{\infty}'}^{2} g| \leq C \ve^4.\label{g-pointwise-bounds}
		\end{align}
	\end{theorem}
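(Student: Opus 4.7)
The plan is to rescale by $y = x/\varepsilon$ and view $\psi = \psi_{q_\infty'}$ as a small correction to $\mathring{\Psi}(y) = \Gamma(y - q_\infty' e_2) - \Gamma(y + q_\infty' e_2)$. Inserting the ansatz \eqref{travelling-vortex-pair-ansatz} into \eqref{2d-euler-travelling-semilinear-elliptic-equation-epsilon} and rescaling yields a perturbation equation of the form
\begin{equation*}
L_\varepsilon[\psi] = -E_0 - N(\psi,c),
\end{equation*}
where $L_\varepsilon$ is the linearization of the nonlinear term at $\mathring{\Psi}$, $E_0$ is the residual generated by $\mathring{\Psi}$ alone, and $N$ collects the (at least quadratic) higher-order terms in $\psi$. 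Using the definition \eqref{Omega-definition-3} of $\Omega$ together with the Taylor expansion of the cross term $\Gamma(y + q_\infty' e_2) = -m\log|y + q_\infty' e_2|$ near $y = q_\infty' e_2$, one checks that $E_0$ is supported in $B_{\rho_0}(\pm q_\infty' e_2)$ and is of size $O(\varepsilon^2)$ in $L^\infty$, provided $c$ is taken close to the Kirchhoff speed $m/(2q_\infty)$.

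The decisive step is inverting $L_\varepsilon$. At leading order $L_\varepsilon$ decouples into two copies of $L_0 := \Delta + \gamma\Gamma_+^{\gamma-1}$, one at each bump, and the nondegeneracy of the ground state $\Gamma$ of \eqref{power-semilinear-problem-R2} gives $\ker L_0 = \mathrm{span}\{\partial_{y_1}\Gamma, \partial_{y_2}\Gamma\}$. Imposing ``even in $y_1$, odd in $y_2$'' on $\psi$ kills the horizontal translation mode automatically, leaving only the vertical translation mode $Z_0 := \partial_{y_2}\mathring{\Psi}$. I would then perform a Lyapunov--Schmidt reduction: solve the linear problem projected onto the $L^2$-orthogonal complement of $Z_0$ in a weighted $L^\infty$ norm, and use the single scalar parameter $c$ to enforce the scalar solvability condition $\int (E_0 + N(\psi,c))Z_0\,dy = 0$. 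This scalar condition is affine in $c$ at leading order and yields $c = m/(2q_\infty) + g(q_\infty')$ with $g = O(\varepsilon^2)$, matching \eqref{c-def}.

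I expect the main obstacle to be establishing the uniform invertibility of $L_\varepsilon$ on the orthogonal complement of $Z_0$: the domain is unbounded, the two bumps are separated by the large distance $2q_\infty'$ in rescaled coordinates, and the coefficient $\gamma\Gamma_+^{\gamma-1}$ degenerates on $\partial B_1$. The natural strategy is a blow-up / compactness argument: a sequence of would-be counterexamples would, after normalizing and passing to the limit, produce a nontrivial element of $\ker L_0$ lying in the prescribed symmetry class and orthogonal to $Z_0$, contradicting nondegeneracy. Care is needed in the choice of weighted norm (controlling both local $L^\infty$ size near each bump and decay at infinity) and in extracting the limit on the unbounded domain.

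Once the linear theory is available, the full nonlinear problem $\psi = T(\psi,c)$ is a contraction on a ball of radius $C\varepsilon^2$, because $N$ is at least quadratic in $\psi$ and the cutoffs in \eqref{nonlinearity-definition} confine everything to a fixed ball in $y$; this produces the $L^\infty$ bound in \eqref{psi-q-l-infinity-bounds}. The derivative estimates \eqref{psi-q-l-infinity-bounds}--\eqref{g-pointwise-bounds} in $q_\infty'$ follow from the implicit function theorem applied to the fixed-point equation: differentiating once and twice in $q_\infty'$ produces linear problems of the same type as the original, with right-hand sides again of order $\varepsilon^2$, because $q_\infty'$ enters only through the translations $\Gamma(y \mp q_\infty' e_2)$ and through the already-small error $E_0$, and differentiating in $q_\infty'$ at the rescaled level introduces no factor of $\varepsilon^{-1}$.
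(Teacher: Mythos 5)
The paper itself does not prove Theorem~\ref{vortexpair-theorem}; it defers both the existence statement and the finer estimates \eqref{psi-q-l-infinity-bounds}--\eqref{g-pointwise-bounds} to the companion paper \cite{DDMPVP2023}, so there is no in-paper proof to compare against. Your overall strategy --- rescale, linearize around $\mathring{\Psi}$, invoke the nondegeneracy of the ground state $\Gamma$ (Lemma~\ref{dancer-yan-non-degeneracy-lemma}), project out the residual kernel, and close with a contraction --- is the natural route and is consistent with the linear and spectral apparatus displayed in the appendices (the mode-by-mode Fourier analysis and Appendix~\ref{vortex-pair-spectral-theory-appendix}).

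There is, however, a genuine flaw in your choice of the orthogonality direction. You take $Z_0 = \partial_{y_2}\mathring{\Psi}$ and propose to determine $c$ from $\int (E_0 + N(\psi,c))Z_0\,dy = 0$. Since $\mathring{\Psi}$ is odd in $y_2$, the function $Z_0 = \partial_{y_2}\mathring{\Psi}$ is \emph{even} in $y_2$; but $E_0$ and $N(\psi,c)$, built entirely from odd-in-$y_2$ data, are odd in $y_2$. The integral over $\mathbb{R}^2$ therefore vanishes identically for every $c$ by parity and carries no information; it cannot serve as the reduced equation. The correct residual kernel direction within the odd-in-$y_2$ symmetry class is the separation mode $\partial_{q'}\mathring{\Psi} = -\partial_{y_2}\Gamma(y-q'e_2)-\partial_{y_2}\Gamma(y+q'e_2)$, which is odd in $y_2$ and even in $y_1$; this is precisely the eigenfunction $e_2 = \kappa_2\,\partial_{q'}\left(\Psi_R - c\varepsilon y_2 - |\log\varepsilon|\Omega\right)$ exhibited in Appendix~\ref{vortex-pair-spectral-theory-appendix}. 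Near the bump at $q'e_2$ both $\partial_{q'}\mathring{\Psi}$ and $\partial_{y_2}\mathring{\Psi}$ reduce to $\pm\partial_{y_2}\Gamma$, so your local intuition is right, but their global parities in $y_2$ differ, and only the first is visible in the symmetric sector. Equivalently, if you work from the outset on $\mathbb{R}^2_+$ with a Dirichlet condition on $\{x_2=0\}$, the single mode to project against near $q'e_2$ is $\partial_{y_2}\Gamma(y-q'e_2)$ and the integral is taken over the half-plane; either reformulation fixes the issue, but the global projection against $\partial_{y_2}\mathring{\Psi}$ as written does not. Once this is corrected, the remaining steps of your plan (uniform invertibility on the orthogonal complement via blow-up and compactness, contraction in a weighted $L^\infty$ norm, and implicit differentiation in $q_\infty'$ for the derivative bounds) are sound.
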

	
	\begin{remark}\label{support-of-nonlinearity-remark}
		The support of $f_{\varepsilon}\left(\Psi_{\varepsilon}(x)-cx_{2},x\right)$, with $\Psi_\varepsilon$ given by \eqref{travelling-vortex-pair-ansatz}, and $\psi_{\frac{q_\infty }{\varepsilon}}\in L^{\infty}(\mathbb{R}^{2})$ satisfying bounds \eqref{psi-q-l-infinity-bounds}, is such that
		\begin{align}
			\supp{f_{\varepsilon}\left(\Psi_{\varepsilon}(x)-cx_{2},x\right)}\subseteq B_{\rho\varepsilon} (q_\infty e_{2})\cup B_{\rho \varepsilon}(-q_\infty e_{2}).\label{vortex-pair-support-true}
		\end{align}
		for some $\rho>\rho_{0} >0$ independent of $\varepsilon$, $\rho_{0}$ defined in \eqref{rho-0-definition}.
		
		\medskip
		It will be crucial throughout this paper that the identity 
		\begin{align}
			\gradperp_{x}\left(\Psi_{\varepsilon}(x)-cx_{2}\right)\cdot\nabla_{x}\left(\varepsilon^{-2}f_{\varepsilon}(\Psi_{\varepsilon}(x)-cx_{2},x)\right)=0\label{gradperp-grad-identity-vortex-pair}
		\end{align}
		holds for $c$ and $q_{\infty}$ related as in \eqref{c-def}. This is not immediate from \eqref{nonlinearity-definition}, as there is an extra dependence on $x$ for $f_{\varepsilon}$ through the indicator functions $\chi_{B_{s}\left(\pm q_\infty e_{2}\right)}(x)$. However, from \eqref{vortex-pair-support-true}, we can see that these indicator functions will be $\equiv1$ on $B_{\rho\varepsilon} (\pm q_\infty e_{2})$ respectively, and that $f_{\varepsilon}$ will be $0$ outside of $B_{\rho\varepsilon} (q_\infty e_{2})\cup B_{\rho \varepsilon}(-q_\infty e_{2})$. This means that \eqref{gradperp-grad-identity-vortex-pair} will hold.
	\end{remark}
	Due to the fact that the indicator functions are identically $1$ on the support of the nonlinearity, we will slightly abuse notation and write $f_{\varepsilon}(\Psi_{\varepsilon}(x)-cx_{2},x)=f_{\varepsilon}(\Psi_{\varepsilon}(x)-cx_{2})$ throughout the rest of this paper. We also note that due to this property of the indicator functions being $\equiv1$ on the support of the nonlinearity, for small enough $\varepsilon>0$, and $f_{\varepsilon}$ defined in \eqref{nonlinearity-definition},
	\begin{align}
		f_{\varepsilon}(\Psi_{\varepsilon}(x)-cx_{2})\in\twopartdef{C^{\floor{\gamma},\gamma-\left[\gamma\right]}}{\gamma\notin \mathbb{Z}}{C^{\gamma-1}}{\gamma\in\mathbb{Z}}.\label{nonlinearity-regularity}
	\end{align}
	We also record some important properties of the vortex pair constructed in \cite{DDMPVP2023} here.
	\begin{theorem}\label{vortex-pair-properties-theorem}
		As in \eqref{travelling-vortex-pair-ansatz}, for $q_{\infty}'$ defined in \eqref{c-def}, let
		\begin{align*}
			\mathring{\Psi}_{\varepsilon}(x)=\Gamma\left(\frac{x-q_\infty e_{2}}{\varepsilon}\right)-\Gamma\left(\frac{x+q_\infty e_{2}}{\varepsilon}\right),\quad 
			\Psi_{\varepsilon}(x)=\mathring{\Psi}_{\varepsilon}(x)+\psi_{q_{\infty}'}\left(\frac{x}{\varepsilon}\right).
		\end{align*}
		Then in coordinates $y=\varepsilon^{-1}x$, we have:
		\begin{enumerate}
			\item For $\Omega$ defined in \eqref{Omega-definition-1} and $y\in B_{10\rho}\left(q_{\infty}'e_{2}\right)$, we have the following expansions,
			\begin{align*}
				&\mathring{\Psi}_{\varepsilon}(y)-c\varepsilon y_{2}-\left|\log{\varepsilon}\right|\Omega=\Gamma\left(\left|y-q_{\infty}'e_{2}\right|\right)-\varepsilon\left(y_{2}-q_{\infty}'e_{2}\right)g(q_{\infty}')\\
				&+\frac{m\varepsilon^{2}}{8 q_{\infty}^{2}}\left(y_{1}^{2}-\left(y_{2}-q_{\infty}'e_{2}\right)^{2}\right)\\
				&+\frac{m\varepsilon^{3}}{24q_{\infty}^{3}}\left(4\left(y_{2}-q_{\infty}'\right)^{3}-3\left(y_{2}-q_{\infty}'\right)\left|y-q_{\infty}'e_{2}\right|^{2}\right)+\bigO{\left(\varepsilon^{4}\right)},
			\end{align*}
			\begin{align*}
				&\Psi_{\varepsilon}(y)-c\varepsilon y_{2}-\left|\log{\varepsilon}\right|\Omega=\\
				&\Gamma\left(\left|y-q_{\infty}'e_{2}\right|\right)-\varepsilon\left(1-\frac{\varrho_{1}(\left|y-q_{\infty}'e_{2}\right|)}{\left|y-q_{\infty}'e_{2}\right|}\right)\left(y_{2}-q_{\infty}'e_{2}\right)g(q_{\infty}')\\
				&+\frac{m\varepsilon^{2}}{8 q_{\infty}^{2}}\left(1-\frac{\varrho_{2}(\left|y-q_{\infty}'e_{2}\right|)}{\left|y-q_{\infty}'e_{2}\right|^{2}}\right)\left(y_{1}^{2}-\left(y_{2}-q_{\infty}'e_{2}\right)^{2}\right)\\
				&+\frac{m\varepsilon^{3}}{24q_{\infty}^{3}}\left(1-\frac{\varrho_{3}(\left|y-q_{\infty}'e_{2}\right|)}{\left|y-q_{\infty}'e_{2}\right|^{3}}\right)\left(4\left(y_{2}-q_{\infty}'\right)^{3}-3\left(y_{2}-q_{\infty}'\right)\left|y-q_{\infty}'e_{2}\right|^{2}\right)+\bigO{\left(\varepsilon^{4}\right)},
			\end{align*}
			with $\varrho_{j}$ defined in \eqref{vortex-linearised-equation-rk-error-solution}, and an analogous statement holding on $B_{10\rho}\left(-q_{\infty}'e_{2}\right)$.
			\item We have
			\begin{align*}
				\|\Psi_{\varepsilon}(y)-c\varepsilon y_{2}-\left|\log{\varepsilon}\right|\Omega-\Gamma\left(y-q_{\infty}'e_{2}\right)\|_{C^{2}\left(B_{10\rho}\left(q_{\infty}'e_{2}\right)\right)}\leq C\varepsilon^{2},
			\end{align*}
			where the gradients are taken with respect to $y$. An analogous statement holds on $B_{10\rho}\left(-q_{\infty}'e_{2}\right)$.
			\item As $\left|y-q_{\infty}'e_{2}\right|\uparrow1$, we have
			\begin{align*}
				\Gamma\left(y-q_{\infty}'e_{2}\right)\sim \left(1-\left|y-q_{\infty}'e_{2}\right|\right)_{+}.
			\end{align*}
			An analogous statement holds on $B_{1}\left(-q_{\infty}'e_{2}\right)$.
		\end{enumerate}
	\end{theorem}
	The proofs for the first two properties are found in \cite{DDMPVP2023}, and the last property can be seen from \eqref{nu-equation}, the ODE in radial coordinates that $\nu$ solves on $B_{1}$. The corresponding behaviour of $f_{\varepsilon}\left(\Psi_{\varepsilon}(y)-c\varepsilon y_{2}\right)$ and its derivatives can then be inferred from \eqref{nonlinearity-definition}, the bounds \eqref{psi-q-l-infinity-bounds} and \eqref{g-pointwise-bounds} from Theorem \ref{vortexpair-theorem}, and Theorem \ref{vortex-pair-properties-theorem}. We have stated the estimates and expansions on the closed ball $B_{10\rho}\left(q'_{\infty}e_{2}\right)$ for concreteness.
	
	\medskip
	As a final remark in this section, the properties stated in Theorems \ref{vortexpair-theorem} and \ref{vortex-pair-properties-theorem}, as well as Remark \ref{support-of-nonlinearity-remark} hold true for time dependent $q(t)$ and $c(t)$ as long as $q(t)\in\left(Q_{0}^{-1},Q_{0}\right)$.
	\subsection{Modified vortex trajectories}
	%\label{modified-vortices}

	From Theorem \ref{vortexpair-theorem} we learn that the speed of an $\varepsilon$-concentrated travelling vortex pair is the speed of an idealized travelling vortex pair $\frac{m}{2q_\infty}$ modified by a  correction $g$ of size $\ve^2$
	\begin{align*}
		c=\frac{m}{2q_\infty }+g\left(q_{\infty}'\right), \quad q_{\infty}'=\frac{q_\infty}{\varepsilon}.
	\end{align*}
	The idealized system \eqref{KR} for $N=4$ will need to be adjusted to incorporate this error term, and the idealized point vortex solution \eqref{ppair2} will require an additional term of magnitude $\varepsilon^2$. 
	
	\medskip
	Write the ODE system \eqref{KR} with $N=4$ for the points $\xi_{*{},j}$, $j=1,2,3,4$ as in \eqref{ppair2} as
	\begin{equation}\label{pkr-system}
		\frac{d}{dt}\begin{pmatrix}
			p_{*}  \\
			q_{*}
			%\label{projected-linear-transport-problem-coefficients-of-projection-a-priori-estimates-lemma-matrix-form}
		\end{pmatrix}=\begin{pmatrix}
			\mathcal{F}\left(q_{*{}},p_{*{}}\right) \\
			-\mathcal{F}\left(p_{*{}},q_{*{}}\right)
			%\label{projected-linear-transport-problem-coefficients-of-projection-a-priori-estimates-lemma-matrix-form}
		\end{pmatrix},\quad \mathcal{F}\left(x,y\right)\coloneqq \frac{m}{2}\left(\frac{1}{x}-\frac{x}{x^{2}+y^{2}}\right)=\frac{m}{2}\left(\frac{y^{2}}{x\left(x^{2}+y^{2}\right)}\right)
	\end{equation}
	Here, and for the rest of the paper, we define, for $p_{*}(t)+iq_{*}(t)=r(t)e^{i\vartheta(t)}$, 
	\begin{align}
		\lim_{t\to\infty}q_{*{}}(t)=q_{\infty}\in\left(Q_{0}^{-1},Q_{0}\right),\quad \vartheta(0)=\vartheta_{0}\in\left(0,\frac{\pi}{4}\right).\label{q-infinity-definition-full-dynamic-problem}
	\end{align}
	Then the modified vortex trajectories $\xi_0 (t) = \left( p_0 (t) , q_0 (t) \right)$ obey the modified system
	\begin{equation}\label{p0-q0-system}
		\begin{aligned}
			\dot{p}_{0}&=\mathcal{F}\left(q_{0},p_{0}\right)+g\left(q_{0}'\right),\quad
			\dot{q}_{0}=-\mathcal{F}\left(p_{0},q_{0}\right)
		\end{aligned}
	\end{equation}
	where $g$ is the $\ve^2$-correction to the speed of an idealized vortex pair, as described in Theorem \ref{vortexpair-theorem} and \eqref{c-def}. 
	In Section \ref{point-vortex-trajectory-section}, Theorem \ref{p0-q0-construction-theorem}, we construct a solution to the system \eqref{p0-q0-system} on $[T_{0},\infty)$ of the form
	\begin{equation}\label{xi0def}
		\xi_{0}(t)=\xi_{*{}}(t)+\overline{\xi}(t),\quad
		\overline{\xi}(t)=\left(\overline{p}(t),\overline{q}(t)\right),
	\end{equation}
	where $\overline{\xi}$ will be a small error term satisfying \eqref{points}.

	\subsection{Approximate solution on $[T_0,T]$}\label{appro}

	Our initial step in developing a two-vortex pair solution to the Euler equations \eqref{2d-euler-vorticity-stream} is to create a reasonably accurate first approximation within a finite time interval $\left[T_{0}, T\right]$. We treat $T$ as an arbitrarily large parameter, with the intention of eventually taking $T\to\infty$ to construct our complete solution over the interval $\left[T_{0},\infty\right)$. The approximation as well as the solution are odd in both spatial variables $x_1$ and $x_2$. 
	
	\medskip
	To describe the approximate solution  we first introduce $\xi(t)$, a time-dependent vector, moving in the first quadrant $x_1 \geq 0$, $x_2 \geq 0$. We assume it has the form  
	\begin{align}
		\xi(t)=(p(t),q(t)),\quad \xi(t)=\xi_{0}(t) +\hat \xi (t), \quad {\mbox {with}} \quad \hat \xi (t) =\xi_{1}(t)+\tilde{\xi}(t).\label{point-vortex-trajectory-function-decomposition-new}
	\end{align}
	The first term $\xi_{0}(t)=\left(p_{0}(t),q_{0}(t)\right)$ is explicit and given by \eqref{xi0def}. It will satisfy
	\begin{equation}\label{xi0-bound}
		\begin{aligned}
			\|t^{-1}p_{0}\|_{[T_{0},\infty)}+\|\dot{p}_{0}\|_{[T_{0},\infty)}+\|q_{0}\|_{[T_{0},\infty)}+\|t^{3}\dot{q}_{0}\|_{[T_{0},\infty)}\leq C_{1},
		\end{aligned} 
	\end{equation}
	where the norm is the $L^{\infty}$ norm on $[T_{0},\infty)$. The point $\xi_{1}(t)=\left(p_{1}(t),q_{1}(t)\right)$ will also be determined in the process of the construction of
	the approximation, and will be obtained as part of a solvability condition for an elliptic problem. It will satisfy
	\begin{equation}\label{xi1-bound}
		\begin{aligned}
			\|tp_{1}\|_{[T_0 , \infty)}+\|t^{2}\dot{p}_{1}\|_{[T_0 , \infty)}+\|t^{2}q_{1}\|_{[T_0 , \infty)}+\|t^{3}\dot{q}_{1}\|_{[T_0 , \infty)}\leq C\varepsilon^{3},
		\end{aligned}
	\end{equation}
	for some constant $C>0$. The point 
	$\tilde{\xi}(t)=\left(\tilde{p}(t),\tilde{q}(t)\right)$ is a free parameter to adjust at the end of our construction to obtain a true solution on $[T_0,T]$. For the
	moment, we ask that it is a continuous function in $[T_0, T ]$ for which $\dot{\tilde {\xi}}$ exists and such that
	\begin{equation}\label{tildexi-bound}
		\begin{aligned}
			\|t^{2}\tilde{p}\|_{\left[T_{0},T\right]}+\|t^{3}\dot{\tilde{p}}\|_{\left[T_{0},T\right]}+\|t^{3}\tilde{q}\|_{\left[T_{0},T\right]}+\|t^{4}\dot{\tilde{q}}\|_{\left[T_{0},T\right]}\leq \varepsilon^{4-\sigma},\ \tilde{p}\left(T\right)=\tilde{q}\left(T\right)=0,
		\end{aligned}
	\end{equation}
	for some $\sigma >0$ arbitrarily small, and where the norm is the $L^{\infty}$ norm on $\left[T_{0},T\right]$.
	
	\medskip
	Having introduced the point $\xi (t)$ in \eqref{point-vortex-trajectory-function-decomposition-new} and bounds \eqref{xi1-bound}-\eqref{tildexi-bound}, we let 
	$c(t)$ be dependent on $q(t)$ as in \eqref{c-def}, and $\Omega(t)$ as in \eqref{Omega-definition-1} so that
	\begin{align}
		c(t)=\frac{m}{2q(t)}+g\left(\frac{q(t)}{\varepsilon}\right),\ \Omega(t)=m+\left(\frac{m\log{2q(t)}-c(t)q(t)}{\left|\log{\varepsilon}\right|}\right).\label{c-q-Omega-time-dependent-relationship-new}
	\end{align}
	We start our construction with a stream function $\Psi_0 [\xi] (x,t)$ defined as the sum of the stream function of a vortex pair travelling to the right, denoted as $\Psi_R$, and one of a vortex pair travelling to the left, denoted by $\Psi_L$. We proceed in a similar way with the vorticity. We write
	\begin{equation} \label{psi-0-definition}
		\begin{aligned}
			\Psi_{0}[\xi](x,t)=\Psi_{R}(x,t)-\Psi_{L}(x,t),\quad
			\omega_{0}[\xi](x,t)=\varepsilon^{-2}\left(U_{R}(x,t)-U_{L}(x,t)\right),
		\end{aligned}
	\end{equation}
	with
	\begin{equation}\label{psi-r-definition}
		\begin{aligned}
			\Psi_{R}(x,t)&=\Gamma\left(\frac{x-pe_{1}-qe_{2}}{\varepsilon}\right)-\Gamma\left(\frac{x-pe_{1}+qe_{2}}{\varepsilon}\right)+\psi_{\frac{q}{\varepsilon}}\left(\frac{x-pe_{1}}{\varepsilon}\right),\\
			\Psi_{L}(x,t)&=\Gamma\left(\frac{x+pe_{1}-qe_{2}}{\varepsilon}\right)-\Gamma\left(\frac{x+pe_{1}+qe_{2}}{\varepsilon}\right)+\psi_{\frac{q}{\varepsilon}}\left(\frac{x+pe_{1}}{\varepsilon}\right),
		\end{aligned}
	\end{equation}
	and
	\begin{equation}\label{w-r-definition}
		\begin{aligned}
			U_{R}(x,t)
			=f_{\varepsilon}\left(\Psi_{R}-cx_{2}\right)&=\left(\Psi_{R}(x)-cx_{2}-\left|\log{\varepsilon}\right|\Omega\right)^{\gamma}_{+}\chi_{B_{s}\left(qe_{2}\right)}\left(x-pe_{1}\right)\\
			&-\left(-\Psi_{R}(x)+cx_{2}-\left|\log{\varepsilon}\right|\Omega\right)^{\gamma}_{+}\chi_{B_{s}\left(-qe_{2}\right)}\left(x-pe_{1}\right),\\
			U_{L}(x,t)
			=f_{\varepsilon}\left(\Psi_{L}+cx_{2}\right)&=\left(\Psi_{L}(x)+cx_{2}-\left|\log{\varepsilon}\right|\Omega\right)^{\gamma}_{+}\chi_{B_{s}\left(qe_{2}\right)}\left(x+pe_{1}\right)\\
			&-\left(-\Psi_{L}(x)-cx_{2}-\left|\log{\varepsilon}\right|\Omega\right)^{\gamma}_{+}\chi_{B_{s}\left(-qe_{2}\right)}\left(x+pe_{1}\right).
		\end{aligned}
	\end{equation}
	Here we recall the definition of 
	$\Gamma$ from \eqref{power-semilinear-problem-R2}, $\psi_{\frac{q}{\varepsilon}}$ from \eqref{travelling-vortex-pair-ansatz} with the definition of $\xi$ \eqref{point-vortex-trajectory-function-decomposition-new} in hand, and $c(t),\Omega(t)$ related to $q(t)$ by \eqref{c-q-Omega-time-dependent-relationship-new}. Moreover, by \eqref{nonlinearity-regularity} we have
	\begin{align}
		U_{R}(x,t),U_{L}(x,t)\in\twopartdef{C^{\floor{\gamma},\gamma-\left[\gamma\right]}}{\gamma\notin \mathbb{Z}}{C^{\gamma-1}}{\gamma\in\mathbb{Z}}.\label{UR-UL-regularity}
	\end{align}
	We introduce the Euler operators
	\begin{equation}\label{euler-operators}
		\begin{aligned}
			E_{1}\left[\Psi , \omega \right]&=\dell_{t}\omega(x,t)+\gradperp\Psi(x,t)\cdot\nabla\omega(x,t),\\
			E_{2}\left[\Psi, \omega\right]&=\Delta\Psi(x,t)+\omega(x,t).
		\end{aligned}
	\end{equation}
	The error of approximation for $\left(\Psi_0 [\xi] (x,t), \omega_0 [\xi] (x,t) \right)$ is an odd function in both variables $x_1$ and $x_2$ such that, when restricted to the first quadrant, for some constant $C>0$ it satisfies
	$$
	|E_1 \left[\Psi_0 [\xi], \omega_0 [\xi]\right] | \leq \frac{C\ve^{-2}}{t^{2} }, \quad {\mbox {supp}} \, \left ( E_1 \left[\Psi_0[\xi], \omega_0 [\xi]\right] \right) \subseteq B_{\mathcal{K}\ve} (p,q),
	$$
	where $\mathcal{K}$ is a fixed positive number independent of $\ve$.  By construction we have, for all $t\in\left[T_{0},T\right]$,
	\begin{align}
		-\Delta \Psi_{j}(x,t)=\varepsilon^{-2}U_{j}(x,t),\ \ \ j=L,R, \implies E_2  \left[\Psi_0 [\xi], \omega_0 [\xi] \right] =0.\label{Psi-U-equation}
	\end{align}
	In Section \ref{first-approximation-section} we prove there exists an improved approximation $(\Psi_* [\xi] , \omega_*[\xi]  )$ 
	such that the new error $E_1 \left[\Psi_* , \omega_* \right]$ is odd in $x_1$ and $x_2$, and  when restricted to the first quadrant, for some constant $C>0$ it satisfies, for $\mathcal{K}_1$ a fixed positive number independent of $\ve>0$,
	\begin{align*}
		&|E_1 \left[\Psi_* [\xi], \omega_* [\xi]\right] | \leq  \frac{C\ve}{t^{3}}, \quad {\mbox {supp}} \, \left ( E_1 \left[\Psi_* [\xi], \omega_* [\xi] \right] \right) \subseteq B_{\mathcal{K}_1 \ve} (p,q),\\
		&|E_2 \left[\Psi_* [\xi], \omega_* [\xi]\right] | \leq \frac{C\ve^{5}}{t^{5}}.
	\end{align*}
	A crucial step in the construction of the improved approximation is to recognize the very special structure of the principal parts of the linear terms in $E_1 \left[\Psi_0 [\xi] + \Psi , \omega_0 [\xi]+ \omega \right]$. This structure manifests on the right hand side of $\mathbb{R}^{2}$ in the form of a linear elliptic problem, as
	\begin{align}
		\gradperp\left(\Psi_{R}-cx_{2}-\left|\log{\varepsilon}\right|\Omega\left(t\right)\right)\cdot\nabla\left(\mathscr{L}\left(\psi\right)\right)=G,\label{linear-elliptic-strategy}
	\end{align}
	where $\mathscr{L}$ is the linearized operator around the vortex pair travelling to the right, for some right hand side $G$. By symmetry, there is a completely analogous structure on the left hand side of the plane. As it turns out, a key insight will be the fact that we can actually write $G$ as
	\begin{align}
		G=\gradperp\left(\Psi_{R}-cx_{2}-\left|\log{\varepsilon}\right|\Omega\left(t\right)\right)\cdot\nabla\tilde{G}+\hat{G},\label{linearized-operator-error-structure}
	\end{align}
	where $\tilde{G}$ has desirable properties, and $\hat{G}$ is small enough to deal with in the last step of the construction. These observations allow us to construct the first approximation by solving elliptic problems of the form
	\begin{align}
		\mathscr{L}\left(\psi\right)=\tilde{G}_{1}\label{vortex-pair-linear-elliptic-problem}
	\end{align}
	for some $\tilde{G}_{1}$ related to $\tilde{G}$ and the kernel of the linearized operator $\mathscr{L}$. Discovering this structure required  precise knowledge on how the vortex pair depends on $q(t)$, the motivation for \cite{DDMPVP2023}.
	
	\medskip
	This allows us to improve the initial approximation $(\Psi_0 [\xi], \omega_0 [\xi])$ solving linear elliptic problems. This can be done  provided the  point $\xi_1$ in  the decomposition of $\xi$ in \eqref{point-vortex-trajectory-function-decomposition-new} is properly adjusted and under the assumption  that $\tilde \xi$ satisfies \eqref{tildexi-bound}. The approximate vorticity $\omega_* [\xi]$ is found of the form
	\begin{equation}\label{omega*}
		\omega_* = \omega_0 + \ve^{-2} \left( \phi_R - \phi_L\right),\quad \| \phi_R \|_{L^\infty (\R^2 )} \leq  \frac{C\ve^2}{t^2}\quad \supp{\phi_{R}}=\supp{U_{R}},
	\end{equation}
	with $\phi_{R}$ odd in $x_{2}$ and an analogous statement holding for $\phi_L$ by symmetry.
	
	\medskip
	Our strategy \eqref{psi-0-definition}--\eqref{omega*} constructs a first approximation as perturbations of two vortex pairs moving away from each other, hence the need to solve elliptic problems involving the linearization around the vortex pair such as \eqref{vortex-pair-linear-elliptic-problem}. One could reasonably ask the question why the strategy is not constructing the first approximation as perturbations of four vortices, which are much simpler steady states. Indeed, linearizing around four vortices separately instead of two vortex pairs would allow us to solve the resulting linear elliptic problems by using ODE techniques for each Fourier mode, as is done in \cite{DDMW2020}.
	
	\medskip
	It is crucial to understand that such a strategy is impossible using our methods. To see this, note that the errors produced from such a strategy would be, at main order, the difference between approximating the vortex pair as two vortices and using the true solution of the vortex pair. By \eqref{psi-r-definition}, this is exactly $\psi_{\frac{q}{\varepsilon}}$ which is of size $\varepsilon^{2}$, but has no decay in time. Thus, integrating the error on $\left[T_{0},\infty\right)$ would be impossible. In this sense our construction genuinely treats the two translating vortex pairs as the ``building blocks" of our solution at main order, and cannot be interchangeably thought of as four vortices.
	
	\subsection{Construction of a solution}\label{cs}
	Next, let $K>0$ be a constant. Define cutoffs by
	\begin{align}
		\eta^{(R)}_{K}(x,t)\coloneqq\eta_{0}\left(\frac{\left|x-pe_{1}\right|}{Kt}\right),\quad
		\eta^{(L)}_{K}(x,t)\coloneqq\eta_{0}\left(\frac{\left|x+pe_{1}\right|}{Kt}\right)\label{cutoffs-definition-1}
	\end{align}
	where $\eta_{0}(r)$ is a cutoff that is $1$ on $r\leq1$ and $0$ on $r\geq2$.  From Remark \ref{support-of-nonlinearity-remark} we have
	\begin{align}
		\supp U_{R}\subset B_{\varepsilon \rho}\left(p,q\right)\cup B_{\varepsilon \rho}\left(p,-q\right),\label{first-approximation-main-order-vorticity-support}
	\end{align}
	for some absolute constant $\rho>0$. This is even with time dependence due to the size of $\xi_{0}$ we will construct in Section \ref{point-vortex-trajectory-section}, and the sizes of $\xi_{1}$ and $\tilde{\xi}$ from \eqref{xi1-bound}--\eqref{tildexi-bound}. A symmetric statement holds for $U_{L}$. We now choose $K$ and $T_{0}$ so that
	\begin{equation}\label{K-T0-choices}
		\begin{aligned}
			p_{*}\left(T_{0}\right)\geq 10q_{\infty},\ \ q_{*}\left(T_{0}\right)\leq 2q_{\infty},\ \ KT_{0}= 4q_{\infty},\ \ K\leq \frac{m}{5q_{\infty}},\ \ p_{*}\left(t\right)\geq \frac{mt}{3q_{\infty}},\ \ \forall t\geq T_{0}.
		\end{aligned}
	\end{equation}
	The choice of $K$ and $T_{0}$ in \eqref{K-T0-choices} ensures that at $t=T_{0}$ both $\eta^{(j)}_{K}\equiv1$ on $\supp{U_{j}}$ for $j=R,L$, and  $\eta^{(R)}_{K}\equiv0$ on $\supp{U_{L}}$ (and vice versa) hold true for small enough $\varepsilon>0$, due to Theorem \ref{p0-q0-construction-theorem} and bounds \eqref{xi1-bound}--\eqref{tildexi-bound}. 
	
	\medskip
	Then for $t>T_{0}$, $\eta^{(j)}_{K}\equiv1$ on $\supp{U_{j}}$ for $j=R,L$ still holds true because the radii of the balls centred at $\pm pe_{1}$ where the $\eta_{K}^{\left(j\right)}\equiv1$ is at least $KT_{0}= 4q_{\infty}$, where as the support of $U_{R}$ and $U_{L}$ are contained in balls of radius $q+\rho\varepsilon$ around $\pm pe_{1}$ respectively by \eqref{first-approximation-main-order-vorticity-support}. For $\varepsilon>0$ small enough, by \eqref{K-T0-choices}, Theorem \ref{p0-q0-construction-theorem}, \eqref{xi1-bound}--\eqref{tildexi-bound}, $4q_{\infty}>q+\rho\varepsilon$.
	
	\medskip
	Moreover, $\eta^{(R)}_{K}\equiv0$ on $\supp{U_{L}}$ (and vice versa) also holds for $t>T_{0}$, as the last two conditions in \eqref{K-T0-choices} mean that the supports of $\eta^{(j)}_{K}$, $j=R,L$ are, for $\varepsilon>0$ small enough, expanding at a strictly slower speed than they are moving away from each other, once again using Theorem \ref{p0-q0-construction-theorem}, \eqref{xi1-bound}--\eqref{tildexi-bound}. This, along with the fact that they were positively separated at $t=T_{0}$, and that we have already established $\supp{U_{j}}\subset \eta^{(j)}_{K}$, $j=R,L$ for $t\geq T_{0}$, means we have the desired property.
	\begin{remark}\label{K-T0-choices-remark}
		We note that by \eqref{pkr-asymptotics}, \eqref{K-T0-choices} is satisfied by enlarging $T_{0}$, and therefore shrinking $K$, as necessary. This can be done freely as long as $T_{0}$ remains a constant independent of $\varepsilon$.
	\end{remark}
	We look for a solution to the Euler equations
	\begin{equation}\label{eu-finiteinterval}
		E_1 [\Psi , \omega] (x,t) = E_2 [\Psi , \omega ] (x,t) = 0 \quad (x,t) \in \R^2 \times [T_0 , T]
	\end{equation}
	(see \eqref{euler-operators} for the definition of $E_1$ and $E_2$) of the form
	$$
	\Psi (x,t) = \Psi_* (x,t) + \psi_* (x,t), \quad \omega (x,t) = \omega_* (x,t) + \phi_* (x,t),
	$$
	where $\psi_*$ and $\phi_*$ are small corrections of the previously found first approximations. We decompose these remainders as follows
	\begin{align}
		\psi_{*}=\eta^{(R)}_{K}\psi_{*R}-\eta^{(L)}_{K}\psi_{*L}+\psi^{out}_*, \quad 
		\phi_{*}=\varepsilon^{-2}\left(\phi_{*R}-\phi_{*L}\right).\label{psi-star-phi-star-initial-forms}
	\end{align}
	The Euler equations become
	\begin{align*}
		E_{1}\left[\Psi , \omega \right]&=\varepsilon^{-4}\left(E_{*R1}\left(\phi_{*R},\psi_{*R},\psi_*^{out};\xi\right)-E_{*L1}\left(\phi_{*L},\psi_{*L},\psi_*^{out};\xi\right)\right) =0, \\\
		E_{2}\left[\Psi , \omega \right]&=E_{*2}^{out}\left(\psi_*^{in},\psi_*^{out};\xi\right)=0,\quad \psi_*^{in} = (\psi_{*R},\psi_{*L}),\quad \quad \phi_*^{in} = (\phi_{*R},\phi_{*L}),
	\end{align*}
	where, recalling \eqref{omega*}, for $j=R,L$
	\begin{align}
		E_{*j1}\left(\phi_{*j}, \psi_{*j}, \psi_*^{out}\right) &=\varepsilon^{2}\dell_{t}\left(U_j + \phi_j +\phi_{*j}\right)\nonumber\\
		&+\varepsilon^{2}\gradperp\left(\Psi_{*}+ \psi_{*j}+ \psi_*^{out}\right)\cdot\nabla\left(U_j + \phi_j +\phi_{*j}\right),\label{E-star-j1-initial-form}\\
		E_{*2}^{out} \left( \psi_*^{out},\phi_{*}^{in}, \psi_*^{in} \right)&=\Delta\psi_*^{out}+\left(\psi_{*R}\Delta\eta^{(R)}_{K}+2\nabla\eta^{(R)}_{K}\cdot\nabla\psi_{*R}\right)\nonumber\\
		&-\left(\psi_{*L}\Delta\eta^{(L)}_{K}+2\nabla\eta^{(L)}_{K}\cdot\nabla\psi_{*L}\right)+ E_2 [\Psi_*, \omega_*].\label{E-star-2-out-initial-form}
	\end{align}
	The
	inner-outer gluing method consists of finding $\psi_*^{in}$, $\psi_*^{out}$, $\phi_*^{in}$ to satisfy the following system of coupled equations for $j=R,L$:
	\begin{equation}\label{inj}
		\begin{aligned}
			E_{*j1} \left(\phi_{*j}, \psi_{*j}, \psi_*^{out}\right)&= 0 \quad {\mbox {in}} \quad \R^2 \times [T_0, T], \quad 
			\phi_{*j} (\cdot , T) = 0  \quad  {\mbox {in}} \quad \R^2 \\
			-\Delta \psi_{*j} & = \phi_{*j} \quad {\mbox {in}} \quad \R^2 \times [T_0, T]
		\end{aligned}
	\end{equation}
	\begin{equation}\label{out}
		E_{*2}^{out} \left(\psi_*^{out} , \phi_*^{in} , \psi_*^{in} \right) = 0 \quad {\mbox {in}} \quad \R^2_+ \times [T_0, T], \quad \psi_*^{out} (x_1 , 0 , t ) =0, \quad t \in [T_0 , T].
	\end{equation}

	A solution of the system \eqref{inj}-\eqref{out}, denoted as $(\phi_*^{in}, \psi_*^{in}, \psi_*^{out})$, can be used to construct a solution for equation \eqref{eu-finiteinterval} by a straightforward addition. To obtain the desired solution, we need to ensure the remainders $\psi_*$ and $\phi_*$ are suitably small. To ensure this, we specify further the form of $\psi_*$ and $\phi_*$ and, in $y$ coordinates, decompose into
	\begin{align}
		\phi_{*R}=\tilde{\phi}_{*R}+\alpha_{R}(t)U_{R},\quad \psi_{*R}=\tilde{\psi}_{*R}+\alpha_{R}\left(\Psi_{R}-c\varepsilon y_{2}\right),\quad \alpha_{R}(T)=0,\label{phi-star-initial-ansatz}
	\end{align}
	and similarly $\left(\phi_{*L},\psi_{*L}\right)$ with the odd reflection in the $y_{1}$ direction taken into account. We note that despite the growth at infinity for $\psi_{*R}$, this term is multiplied by a cutoff in the definition of $\psi_{*}$ in \eqref{psi-star-phi-star-initial-forms}. Plugging \eqref{phi-star-initial-ansatz} into \eqref{E-star-j1-initial-form} gives us, at main order, a transport-like equation for $\tilde{\phi}_{*R}$, with the higher order terms consisting of terms that are multiplied by either $U_{R}=f_{\varepsilon}\left(\Psi_{R}-c\varepsilon y_{2}\right)$, $f'_{\varepsilon}\left(\Psi_{R}-c\varepsilon y_{2}\right)$, $f''_{\varepsilon}\left(\Psi_{R}-c\varepsilon y_{2}\right)$, or $f'''_{\varepsilon}\left(\Psi_{R}-c\varepsilon y_{2}\right)$. This structure of the higher order terms is crucial, as along with the transport-like structure of the linear operator for $\tilde{\phi}_{*R}$, we are able to propagate compact support for $\tilde{\phi}_{*R}$, see \eqref{homotopic-operators-a-priori-estimates-7}--\eqref{E-prime-form} and Lemma \ref{phi-support-lemma}. 
	
	\medskip
	To obtain sufficient a-priori estimates, we require that $\tilde{\phi}_{*R}$ satisfies orthogonality conditions by adjusting its mass and centre of mass. This means first solving a projected version of \eqref{inj}, see \eqref{tilde-phi-star-R-initial-value-problem}, which then implies a solution to the original problem once we can enforce that the coefficients of projection are $0$, see \eqref{cRj-initial-value-problem}. This is enforced once $\alpha_{R},\tilde{\xi}_{1},\tilde{\xi}_{2}$ solve a system of ODEs given in \eqref{cRj-fixed-point-formulation-1}--\eqref{cRj-fixed-point-formulation-5}, and it is in this way that the mass and centre of mass parameters are adjusted. This also adjusts the position of the vortices via $\tilde{\xi}$. Finally, we will formulate the whole system as a fixed-point problem for a compact operator within a ball of an appropriate Banach space. We will then find a solution using a degree-theoretical argument, which entails establishing a priori estimates for a homotopical deformation of the problem into a linear one.
	
	\medskip
	Note that through the terminal data given in \eqref{inj} and subsequently \eqref{phi-star-initial-ansatz}, we are specifying that our solution at time $t=T$ is exactly the approximate solution constructed in Section \ref{first-approximation-section} using the elliptic methods described in Section \ref{appro}, and evolving backwards to $t=T_{0}$. This strategy necessitates integrating on $\left[t,T\right]$ when inverting the time derivative of the full linearised operator obtained from linearising the Euler equations around the approximate solution, as $T$ is the parameter we take to $\infty$ to construct our final solution. This requires sufficient time decay. Trying to specify initial data at $t=T_{0}$ and evolve forwards in time in \eqref{inj} would leave us with no chance of obtaining such decay, hence we would not be able to perform the limit procedure as $T\to\infty$.
	
	\medskip
	Also, in order to obtain sufficient a priori estimates on the linearised transport-like operator discussed above, we must obtain sufficient lower bounds on a certain quadratic form. In \cite{DDMW2020, gallay1, gallay2}, similar a priori estimates were required, and thus they first had to obtain a lower bound of the form 
	\begin{align}
		\int_{\mathbb{R}^{2}}\left(\frac{\Phi^{2}}{\mathfrak{K}}-\Phi\left(-\Delta\right)^{-1}\left(\Phi\right)\right)\geq C\int_{\mathbb{R}^{2}}\frac{\Phi^{2}}{\mathfrak{K}},\label{quadratic-form-discussion}
	\end{align}
	for some fixed profile $\mathfrak{K}$, some fixed constant $C>0$, and $\Phi$ a solution to their respective linearized operators. Crucially, in \cite{DDMW2020, gallay1, gallay2}, $\mathfrak{K}$ is positive on all of $\mathbb{R}^{2}$, so the quadratic form on the left hand side of \eqref{quadratic-form-discussion}, and the weighted $L^{2}$ norm on the right hand side both make sense with some decay assumptions on $\Phi$.
	
	\medskip
	However, the natural candidate for $\mathfrak{K}$ in our case would be $f_{\varepsilon}'$ defined in \eqref{f-prime-definition}. This profile has compact support, and in fact the same support as the vorticity $f_{\varepsilon}$ of the vortex pair introduced in Section \ref{tvp}. On the other hand a solution $\Phi$ to our linearized operator in general would not have support contained in $\supp{f_{\varepsilon}'}$.
	
	\medskip
	Thus a large part of Section \ref{constructing-full-solution-section}, and one of the most subtle parts of our construction, is devoted to first proving a local version of \eqref{quadratic-form-discussion}, up to errors that can be controlled, on a well chosen interior region of $\supp{f_{\varepsilon}'}$, where $f_{\varepsilon}'$ has an explicit positive lower bound in terms of $\varepsilon$ and $t$, see Section \ref{weight-section} and Lemma \ref{quadratic-form-estimate-lemma}. Then we use the transport structure of our linearized operator to obtain a priori estimates for solutions of the linearized operator on a well chosen exterior region, see Lemma \ref{L2-exterior-a-priori-estimate-lemma}. Combining this local estimate on the quadratic form and exterior estimate allows us to prove a priori estimates on our interior region, see Lemma \ref{L2-interior-a-priori-estimate-lemma}.
	
	\medskip
	Our a priori estimates finally allow us to use Arzel{\`a}-Ascoli, to extract a subsequence as $T\to\infty$ whose uniform limit solves \eqref{2d-euler-vorticity-stream} on $\left[T_{0},\infty\right)$, as desired.
	
	\medskip
	We can now also better describe $\xi_{re}=\left(p_{re},q_{re}\right)$ defined in Theorem \ref{teo1}. They take the form
	\begin{align*}
		p_{re}=\bar{p}+\hat{p},\quad q_{re}=\bar{q}+\hat{q},
	\end{align*}
	where $\left(\bar{p},\bar{q}\right)$ is defined in \eqref{xi0def}, and $\left(\hat{p},\hat{q}\right)$ defined in \eqref{point-vortex-trajectory-function-decomposition-new}. They satisfy bounds 
	\begin{equation}\label{points}
		\begin{aligned}
			&\| t^{-1} \bar{p} \|_{L^{\infty}[T_{0} , \infty]} + \| \dot{\bar{p}}  \|_{L^{\infty}[T_{0}, \infty]} +\| t^2 \bar{q} \|_{L^{\infty}[T_{0} , \infty]} + \| t^3 \dot{\bar{q}}  \|_{L^{\infty}[T_{0} , \infty]}\leq C \ve^2,\\
			&\| t {\hat p} \|_{L^{\infty}[T_0 , \infty]} + \| t^2 \dot{\hat p}  \|_{L^{\infty}[T_0 , \infty]} +\| t^2 {\hat q} \|_{L^{\infty}[T_0 , \infty]} + \| t^3 \dot{\hat q}  \|_{L^{\infty}[T_0 , \infty]}\leq C \ve^{3}.
		\end{aligned}
	\end{equation}
	The remainder of this paper will be dedicated to meticulously carrying out the steps mentioned above.

	\section{Point Vortex Trajectories for Two Vortex Pairs}\label{point-vortex-trajectory-section}
	Here we construct the main order point vortex dynamics $\xi_0 (t) = \left(p_0 (t) , q_0(t)\right)$. We start by solving \eqref{pkr-system} (see also \cite{saffman}). In polar coordinates $ p_{*{}}+iq_{*{}}=r(t)e^{i\vartheta(t)}$, this system becomes
	\begin{equation}\label{r-theta-system}
		\frac{\dot{r}}{r}=-2\cot{\left(2\vartheta\right)}\dot{\vartheta},
		\quad  \dot{\vartheta}=-\frac{m}{2r^{2}}.
	\end{equation}
	Solving for $r$, and then $\vartheta$, with data as in \eqref{q-infinity-definition-full-dynamic-problem}, our solution to \eqref{r-theta-system} is given by
	\begin{equation}\label{theta-solution-2}
		\begin{aligned}
			&r(t)=\frac{r_{0}\sin{2\vartheta_{0}}}{\sin{2\vartheta(t)}},\quad \vartheta(t)=\frac{1}{2}\arccot{\left(\cot{2\vartheta_{0}}+\frac{mt}{4q_{\infty}^{2}}\right)},\\ 
			&p_{*{}}(t)=\frac{q_{\infty}}{\sin{\vartheta(t)}}, \quad q_{*{}}(t)=\frac{q_{\infty}}{\cos{\vartheta(t)}},\quad r_{0}\sin{2\vartheta_{0}}=2q_{\infty}.
		\end{aligned}
	\end{equation}
	From \eqref{pkr-system}, we can deduce that, for $\|\cdot\|_{\infty}$, the $L^{\infty}$ norm on $[0,\infty)$ and an absolute constant $C_{1}>0$,
	\begin{align}
		\|(t+1)^{-1}p_{*{}}\|_{\infty}+\|\dot{p}_{*{}}\|_{\infty}+\|q_{*{}}\|_{\infty}+\|(t+1)^{3}\dot{q}_{*{}}\|_{\infty}\leq C_{1},\label{pkr-asymptotics}
	\end{align}
	For a generic $\varepsilon >0$ and $t$, the vortex trajectories 
	$
	\xi_0 (t) = \left( p_0 (t) , q_0 (t) \right)
	$ in \eqref{xi0def}
	obey the modified system \eqref{p0-q0-system}, where $g$ is the $\ve^2$-correction to the speed of an idealized vortex pair, as described in Theorem \ref{vortexpair-theorem} and \eqref{c-def}. 
	We construct the solution to the system \eqref{p0-q0-system} in the form \eqref{xi0def}, where $\overline{\xi}$ will be a small error term. To do this we need the linearisation of \eqref{pkr-system} around the solution \eqref{theta-solution-2}. It can be directly checked that this linearisation is given by
	\begin{align}
		\mathscr{L}\coloneqq \frac{d}{dt}+\mathscr{A}\left(\xi_{*{}}\right),\label{pkr-qkr-linearised operator-1}
	\end{align}
	where, in polar coordinates, using the ODE for $\vartheta$ in \eqref{r-theta-system} and the expression for $r$ in terms of $\vartheta$ and $q_{\infty}$ in \eqref{theta-solution-2}, we have 
	\begin{align}
		&\mathscr{A}\left(\xi_{*}\right)=\dot{\vartheta}\begin{pmatrix}
			\sin{2\vartheta} & -\frac{1}{\sin^{2}{\vartheta}}-\cos{2\vartheta} \\
			\frac{1}{\cos^{2}{\vartheta}}-\cos{2\vartheta} & -\sin{2\vartheta}
		\end{pmatrix}\nonumber\\
		&=-\frac{m}{8q_{\infty}^{2}}\begin{pmatrix}
			(\sin{2\vartheta})^{3} & -4\cos^{2}{\vartheta}-4\cos{2\vartheta}(\sin{2\vartheta})^{2} \\
			4(\sin{\vartheta})^{2}(1-(\cos{\vartheta})^{4}+(\sin{2\vartheta})^{2}) & -(\sin{2\vartheta})^{3}
		\end{pmatrix}.\label{matrix-A-polar-form}
	\end{align}
	The expressions in \eqref{theta-solution-2} along with the asymptotics in \eqref{p-q-star-asymptotics}, give us, as $t\to\infty$,
	\begin{align}
		\mathscr{A}\left(\xi_{*}\right)=\begin{pmatrix}
			\bigO{\left(\frac{q_{\infty}^{4}}{m^{2}t^{3}}\right)} & -\frac{m}{2q_{\infty}^{2}} \\
			\bigO{\left(\frac{q_{\infty}^{6}}{m^{3}t^{4}}\right)} & \bigO{\left(\frac{q_{\infty}^{4}}{m^{2}t^{3}}\right)}
		\end{pmatrix}.\label{expression-for-exp-minus-B}
	\end{align}
	Then finding a solution to \eqref{p0-q0-system} of the form $\xi_{0}=\xi_{*{}}+\overline{\xi}$ is formally equivalent to the problem
	\begin{align}
		\mathscr{L}\overline{\xi}=\mathscr{R}+\mathscr{M}_{g}(q_{*})\overline{\xi}+\mathscr{N}\left(\overline{\xi}\right),\quad \mathscr{R}=\begin{pmatrix}
			g\left(q_{*{}}'\right) \\
			0\\
		\end{pmatrix}, \quad 
		\mathscr{M}_{g}(q_{*})=\begin{pmatrix}
			0 & \frac{\dell_{q'}g\left(q_{*{}}'\right)}{\varepsilon} \\
			0 & 0
		\end{pmatrix},\label{p0-q0-fixed-point-problem-1}
	\end{align}
	and $\mathscr{N}\left(\overline{\xi}\right)$ are the remaining terms which are quadratic or higher in $\overline{\xi}$. To formulate \eqref{p0-q0-fixed-point-problem-1} as an integral equation for which we can use fixed point machinery, we define the operator $\mathscr{K}_{\left(0,\infty\right)}$ by
	$$
	\mathscr{K}_{\left(0,\infty\right)}\xi=\mathscr{K}_{\left(0,\infty\right)}\begin{pmatrix}
		\xi_{1} \\
		\xi_{2} \\
	\end{pmatrix}=\begin{pmatrix}
		\int_{T_{0}}^{t}\xi_{1} \\
		\int_{t}^{\infty}\xi_{2} \\
	\end{pmatrix}.$$
	Next, we define $\mathscr{X}_{\left(-1,2\right)}$ to be the space of functions $\xi=\left(\xi_{1},\xi_{2}\right):[T_{0},\infty)\to\mathbb{R}^{2}$ satisfying $\xi_{1}(T_{0})=0$ and
	$$
	\|\xi\|_{\mathscr{X}_{\left(-1,2\right)}}\coloneqq\|t^{-1}\xi_{1}\|_{[T_{0},\infty)}+\|t^{2}\xi_{2}\|_{[T_{0},\infty)}<\infty.$$
	Finally we define the operator $\mathscr{T}$ on $B$, the ball of radius $\varepsilon^{2}\left|\log{\varepsilon}\right|$ in $\mathscr{X}_{\left(-1,2\right)}$ by
	$$
	\mathscr{T}\xi\coloneqq \mathscr{K}_{\left(0,\infty\right)}\left(-\mathscr{A}(\xi_{*})\xi+\mathscr{R}+\mathscr{M}_{g}\xi+\mathscr{N}\left(\xi\right)\right).$$
	Given these definitions, we note that by the asymptotics \eqref{expression-for-exp-minus-B}, for $\varepsilon>0$ small enough, $\mathscr{T}:B\to B$ can be made a contraction, enlarging $T_{0}$ if necessary. Therefore we obtain the following result:
	\begin{theorem}\label{p0-q0-construction-theorem}
		For all $T_{0}>0$ large enough, there is a constant $C>0$ such that for $\varepsilon>0$ small enough, there is a unique solution $\xi_{0}=\xi_{*{}}+\overline{\xi}$ to \eqref{p0-q0-system} with $\overline{p}(T_{0})=0$ and $\lim_{t\to\infty}\overline{q}(t)=0$ in the ball $B$. This solution has the bound
		\begin{align}
			\|t^{-1}\overline{p}\|_{[T_{0},\infty)}+\|\dot{\overline{p}}\|_{[T_{0},\infty)}+\|t^{2}\overline{q}\|_{[T_{0},\infty)}+\|t^{3}\dot{\overline{q}}\|_{[T_{0},\infty)}\leq C\varepsilon^{2}.\label{pbar-bound}
		\end{align}
	\end{theorem}
	\section{First Approximation on $[T_{0},T]$}\label{first-approximation-section}
	\begin{remark}\label{first-approx-section-gamma-remark}
		For Sections \ref{first-approximation-section}, \ref{constructing-full-solution-section}, and \ref{conclusion}, we will always assume $\gamma>18$. This will only strictly be necessary rather than sufficient for Theorem \ref{L2-interior-a-priori-estimate-lemma}, and in many places in these sections milder assumptions on $\gamma$ would suffice, but for the sake of consistency and to avoid confusion, we will take $\gamma>18$ everywhere.
	\end{remark}
	Our first step in building a two vortex pair solution to the Euler equations \eqref{2d-euler-vorticity-stream} is to construct a good enough first approximation on a finite time interval $\left[T_{0},T\right]$ for $T_{0}$ defined in \eqref{K-T0-choices}. As discussed in Section~\ref{scheme},  $T_{0}$ will be some absolute constant large enough to ensure sufficient separation of the two vortex pairs, and $T$ is an arbitrarily large parameter that we will take $T\to\infty$ to construct our full solution on $\left[T_{0},\infty\right)$. To start, we wish to  calculate the size of $E_{1}\left[\omega_{*},\Psi_{*}\right]$ and $E_{2}\left[\omega_{*},\Psi_{*}\right]$, defined in \eqref{euler-operators} for well chosen $\omega_{*}$ and $\Psi_{*}$. First, let $c(t)$ and $\Omega(t)$ be dependent on $q(t)$ as in \eqref{c-q-Omega-time-dependent-relationship-new}. We will restrict ourselves to $\xi(t)$ of the form given in \eqref{point-vortex-trajectory-function-decomposition-new}. From the bounds on $\left(p,q\right)$ in \eqref{xi0-bound}--\eqref{tildexi-bound}, we also have the following estimates on the linearisation of certain functions of $\xi$ around $\xi_{0}$ that we will use repeatedly.
	\begin{lemma}\label{p-q-rational-function-linearisation-lemma}
		Let $\xi=\left(p,q\right)$ and $\xi_{0}=\left(p_{0},q_{0}\right)$ be as in \eqref{point-vortex-trajectory-function-decomposition-new}. Then on $\left[T_{0},T\right]$,
		\begin{align*}
			\frac{1}{p^{n}}=\frac{1}{p_{0}^{n}}+\bigO{\left(\frac{\varepsilon^{3}}{t^{n+2}}\right)},\quad \frac{1}{q^{n}}=\frac{1}{q_{0}^{n}}+\bigO{\left(\frac{\varepsilon^{3}}{t^{2}}\right)},\ \ n\in\mathbb{Z}_{\geq1},
		\end{align*}
		and $k\in\mathbb{Z}_{\geq1}$, $i+j\leq k$, we have
		\begin{align*}
			\frac{p^{i}q^{j}}{\left(p^{2}+q^{2}\right)^{k}}=\frac{p_{0}^{i}q_{0}^{j}}{\left(p_{0}^{2}+q_{0}^{2}\right)^{k}}+\bigO{\left(\frac{\varepsilon^{3}}{t^{2k-i+2}}\right)}.
		\end{align*}
	\end{lemma}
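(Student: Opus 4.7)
The plan is to reduce all three estimates to a single fundamental-theorem-of-calculus computation once I have (i) good bounds on $p-p_0$ and $q-q_0$, and (ii) good bounds on the sizes and lower bounds on $p,p_0$ and $q,q_0$ along the segment joining $\xi$ to $\xi_0$.

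First I would collect the size of the perturbations. From \eqref{point-vortex-trajectory-function-decomposition} we have $p-p_0 = p_1 + \tilde{p}$ and $q-q_0 = q_1 + \tilde{q}$, and combining \eqref{p1-bound}--\eqref{q-tilde-bound} with $\varepsilon^{4-\sigma} \ll \varepsilon^{3}|\log\varepsilon|$ for $\varepsilon$ small gives, on $[T_0,T]$,
$$|p(t)-p_0(t)| \leq \frac{C\varepsilon^{3}|\log\varepsilon|}{t}, \qquad |q(t)-q_0(t)| \leq \frac{C\varepsilon^{3}|\log\varepsilon|}{t^{2}}.$$
Second, I would ensure that $p_0$ and $p$ are comparable to $t$ on $[T_0,T]$ and $q_0$, $q$ are bounded above and below by positive constants. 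The former follows from the asymptotics $p_*(t) = P_0 + mt/(2q_\infty) + O(1/(t+1))$ proved in Section~\ref{point-vortex-trajectory-section}, the bound $\|\dot{\bar p}\|_\infty \le C\varepsilon^2$ from Theorem~\ref{p0-q0-construction-theorem}, and the hypothesis $p_*(T_0) \geq 10 q_\infty$ with $T_0$ large; after further shrinking $\varepsilon$, the $\varepsilon^{3}|\log\varepsilon|/t$ perturbation in $p-p_0$ is negligible compared with $p_0$. The estimate $q_0(t), q(t) \in [q_\infty/2, 2q_\infty]$ is immediate from $q_* \to q_\infty$, the smallness of $\bar q$ and of $q-q_0$.

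Third, for any smooth $F = F(p,q)$ defined on the relevant region I write
$$F(p,q) - F(p_0,q_0) = \int_0^1 \Big[ \partial_p F\,(p-p_0) + \partial_q F\,(q-q_0) \Big](p_0 + s(p-p_0),\, q_0 + s(q-q_0))\, ds,$$
which is exact, so no higher-order remainder needs to be controlled. For $F = p^{-n}$ with $n\in\{2,3,4\}$ the integrand is bounded by $C t^{-(n+1)}\cdot \varepsilon^{3}|\log\varepsilon|/t$, which is bounded by $C\varepsilon^{3}|\log\varepsilon|/t^{3}$ on $[T_0,T]$ provided $T_0$ is large enough. For $F = q^{-n}$ the integrand is bounded by $C\cdot\varepsilon^{3}|\log\varepsilon|/t^{2}$ directly. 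For the rational function $F = p^{i}q^{j}/(p^{2}+q^{2})^{k}$, a direct differentiation gives $|\partial_p F| \leq C t^{i-2k-1}$ and $|\partial_q F| \leq C t^{i-2k}$ in the relevant region; multiplying by the size of $p-p_0$ and $q-q_0$ respectively yields the common bound $C\varepsilon^{3}|\log\varepsilon|/t^{2k-i+2}$. The constraint $i+j\leq k$ with $k \in \{2,3,4\}$ forces $i \leq k$, so $2k-i+2 \geq k+2 \geq 4 \geq 3$, and the claimed bound $O(\varepsilon^{3}|\log\varepsilon|/t^{3})$ follows.

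The only delicate step is the uniform lower bound $p_0(t) \geq c_0 t$ on the whole interval $[T_0,T]$: without it, the quantity $t^{-(2k+2)}$ coming from $(p^{2}+q^{2})^{-(k+1)}$ would not be available and the estimates collapse. Securing this requires invoking both the explicit form of $p_*(t)$ from Section~\ref{point-vortex-trajectory-section} and the $T_0$-largeness and $\varepsilon$-smallness assumptions; once it is in place, everything else is elementary differentiation and bookkeeping, and the statements for the three families can be proved in parallel under a single unified $C\varepsilon^{3}|\log\varepsilon|/t^{m}$ form.
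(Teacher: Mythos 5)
Your proof is correct, and the argument you give is the natural one. The paper states Lemma~\ref{p-q-rational-function-linearisation-lemma} without proof (it is treated as a routine consequence of the bounds \eqref{p0-bound}--\eqref{q-tilde-bound} and the growth asymptotics of $p_0$ and $q_0$), and your FTC argument, combined with the observations that $p\sim p_0\sim t$ and $q\sim q_0\sim 1$ on $[T_0,T]$ together with $|p-p_0|\le C\varepsilon^3|\log\varepsilon|/t$ and $|q-q_0|\le C\varepsilon^3|\log\varepsilon|/t^2$, is precisely the intended bookkeeping.
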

	%\begin{proof}
	%    For the first statement, we have
	%    \begin{align*}
	%        \frac{1}{p_{0}^{n}}-\frac{1}{p^{n}}=\frac{p^{n}-p_{0}^{n}}{p^{n}p_{0}^{n}}=\frac{\left(p_{1}+\tilde{p}\right)\left(p^{n-1}+\dots+p_{0}^{n-1}\right)}{p^{n}p_{0}^{n}}.
	%    \end{align*}
	%    Using \eqref{point-vortex-trajectory-function-decomposition-new}, \eqref{xi0-bound}, \eqref{xi1-bound}, and \eqref{tildexi-bound}, we obtain that
	%    \begin{align*}
	%        \left(pp_{0}\right)^{-n}=\bigO{\left(t^{-2n}\right)},\quad p^{n-1}+\dots+p_{0}^{n-1}=\bigO{\left(t^{n-1}\right)},\quad p_{1}+\tilde{p}=\bigO{\left(\varepsilon^{3}t^{-1}\right)}.
	%    \end{align*}
	%    Putting all this together gives
	%    \begin{align*}
	%        \frac{1}{p_{0}^{n}}-\frac{1}{p^{n}}=\bigO{\left(\frac{\varepsilon^{3}}{t^{n+2}}\right)}.
	%    \end{align*}
	%    The last two statements follow similarly.
	%\end{proof}
	Recalling the definition of $\Gamma$ from \eqref{power-semilinear-problem-R2}, $\psi_{\frac{q}{\varepsilon}}$ from \eqref{travelling-vortex-pair-ansatz} with the definition of $\xi$ \eqref{point-vortex-trajectory-function-decomposition-new} in hand, and $c(t),\Omega(t)$ related to $q(t)$ by \eqref{c-q-Omega-time-dependent-relationship-new}, we define $ \Psi_{0}, \omega_{0}, \Psi_{R}, \Psi_{L}, U_{R}, U_{L}$ as in \eqref{psi-0-definition}--\eqref{w-r-definition}.
	
	\medskip
	Next, recall cutoffs by $\eta^{R}_{K}$ and $\eta^{L}_{K}$ defined in \eqref{cutoffs-definition-1} along with the choices of $K$ and $T_{0}$ given in \eqref{K-T0-choices}. We also fix $T>0$, a large time satisfying $T\gg T_{0}$, and define $\psi_{R},\psi_{L}$ and $\phi_{R},\phi_{L}$, such that for $j=R,L$, where we formally set $(-1)^{R}=1$ and $(-1)^{L}=-1$, on $\mathbb{R}^{2}_{+}$, with boundary conditions $\psi_{j}=0$ on $\dell\mathbb{R}^{2}_{+}$,
	\begin{equation}\label{psi-R-phi-R-defs}
		\begin{aligned}
			&\psi_{j}=\psi_{j}\left(\frac{x-\left(-1\right)^{j}pe_{1}}{\varepsilon},t\right), \quad \phi_{j}=\phi_{j}\left(\frac{x-\left(-1\right)^{j}pe_{1}}{\varepsilon},t\right),\\
			&\Delta\psi_{j}(w,t)+\phi_{j}(w,t)=0
		\end{aligned}
	\end{equation}
	and then extend to $\mathbb{R}^{2}$ by odd symmetry. Given these definitions, we then set
	\begin{align}
		\Psi_{*}=\Psi_{0}+\eta^{(R)}_{K}\psi_{R}-\eta^{(L)}_{K}\psi_{L}+\psi^{out}(x,t),\quad \omega_{*}=\omega_{0}+\varepsilon^{-2}\left(\phi_{R}-\phi_{L}\right).\label{psi-star-definition}
	\end{align}
	From \eqref{psi-star-definition}, we can see that
	\begin{align}
		E_{1}\left[\omega_{*},\Psi_{*}\right]&=\varepsilon^{-4}\left(E_{R1}\left(\phi_{R},\psi_{R},\psi^{out};\xi\right)-E_{L1}\left(\phi_{L},\psi_{L},\psi^{out};\xi\right)\right),\label{first-approximation-inner-error}\\
		E_{2}\left[\omega_{*},\Psi_{*}\right]&=E_{2}^{out}\left(\psi^{in},\psi^{out};\xi\right),\quad \psi^{in}=\left(\psi_{R},\psi_{L}\right),\quad
		\phi^{in}=\left(\phi_{R},\phi_{L}\right),\label{psi-in-definition}
	\end{align}
	where for $j=R,L$
	\begin{align}
		E_{j1}&=\varepsilon^{2}\dell_{t}\left(U_{j}+\phi_{j}\right)+\varepsilon^{2}\gradperp\left(\Psi_{0}+(-1)^{j}\psi_{j}+\psi^{out}\right)\cdot\nabla\left(U_{j}+\phi_{j}\right),\label{first-approximation-inner-error-ER1-definition}\\
		E_{2}^{out}&=\Delta\psi^{out}+\sum_{j=R,L}\left(-1\right)^{j}\left[\psi_{j}\Delta\eta^{(j)}_{K}+2\nabla\eta^{(j)}_{K}\cdot\nabla\psi_{j}\right],\label{first-approximation-outer-error-E2out-definition}
	\end{align}
	We note that the specific forms of $E_{R1}$, $E_{L1}$, and $E_{2}^{out}$ are due to the fact that $\phi_{R}$ and $\phi_{L}$ will have the same supports as $U_{R}$ and $U_{L}$ respectively, and so due to the choice of $K$ and $T_{0}$ in \eqref{K-T0-choices}, $\eta^{(j)}_{K}\equiv1$ on $\supp{\phi_{j}}$ for $j=R,L$, and $\eta^{(R)}_{K}\equiv0$ on $\supp{\phi_{L}}$ and vice versa.
	We first prove a lemma that shows the structure of $E_{R1}$ and $E_{L1}$. Let
	\begin{align}
		y=\frac{x-pe_{1}}{\varepsilon},\ q'=\frac{q}{\varepsilon},\ p'=\frac{p}{\varepsilon},\label{change-of-coordinates-dynamic-problem}
	\end{align}
	and let
	\begin{align}
		f_{\varepsilon}'\coloneqq&\gamma\left(\Psi_{R}-c\varepsilon y_{2}-\left|\log{\varepsilon}\right|\Omega\right)^{\gamma-1}_{+}\chi_{B_{\frac{s}{\varepsilon}}\left(q'e_{2}\right)}(y)\nonumber\\
		&+\gamma\left(-\Psi_{R}+c\varepsilon y_{2}-\left|\log{\varepsilon}\right|\Omega\right)^{\gamma-1}_{+}\chi_{B_{\frac{s}{\varepsilon}}\left(-q'e_{2}\right)}(y),\label{f-prime-definition}
	\end{align}
	and finally define
	\begin{align}
		\left(f_{\varepsilon}'\right)^{\pm}&\coloneqq\gamma\left(\pm\Psi_{R}-c\varepsilon y_{2}\mp\left|\log{\varepsilon}\right|\Omega\right)^{\gamma-1}_{+}\chi_{B_{\frac{s}{\varepsilon}}\left(\pm q'e_{2}\right)}(y).\label{f-prime-plus-definition}
	\end{align}
	We note that with the explicit formulae \eqref{f-prime-definition}--\eqref{f-prime-plus-definition} in hand, we can see that, similarly to \eqref{gradperp-grad-identity-vortex-pair},
	\begin{align}
		\gradperp_{y}\left(\Psi_{R}-c\varepsilon y_{2}\right)\cdot\nabla_{y}\left(f_{\varepsilon}'\right)&=\gradperp_{y}\left(\Psi_{R}-c\varepsilon y_{2}\right)\cdot\nabla_{y}\left(\left(f_{\varepsilon}'\right)^{\pm}\right)=0.\label{gradperp-f-prime-0}
	\end{align}
	Moreover, from \eqref{nonlinearity-regularity} and \eqref{UR-UL-regularity}, we can infer that
	\begin{align}
		\left(f_{\varepsilon}'\right)^{\pm}\in\twopartdef{C^{\floor{\gamma}-1,\gamma-\left[\gamma\right]}}{\gamma\notin \mathbb{Z}}{C^{\gamma-2}}{\gamma\in\mathbb{Z}}.\label{f-prime-regularity}
	\end{align}
	\begin{lemma}\label{ER1-lemma}
		With change of coordinates given by \eqref{change-of-coordinates-dynamic-problem}, we can write $E_{R1}$, defined in \eqref{first-approximation-inner-error-ER1-definition}, as
		\begin{align}
			&E_{R1}=\varepsilon^{2}\dell_{t}\phi_{R}+\gradperp_{y}\psi_{R}\cdot\grad_{y}\phi_{R}-\gradperp_{y}\left(\Psi_{R}-c\varepsilon y_{2}\right)\cdot\grad_{y}\left(\Delta\psi_{R}+f_{\varepsilon}'\psi_{R}\right)\label{ER1-lemma-statement-1}\\
			&+\gradperp_{y}\left(\Psi_{R}-c\varepsilon y_{2}\right)\cdot\grad_{y}\left(f_{\varepsilon}'\left(\Psi_{L}+\varepsilon\left(\Dot{p}-c\right)\left(y_{2}-\sgn{\left(y_{2}\right)}q'\right)-\varepsilon\dot{q}\sgn{\left(y_{2}\right)}y_{1}\right)\right)\nonumber\\
			&+\gradperp_{y}\psi^{out}\cdot\grad_{y}\left(U_{R}+\phi_{R}\right)-\left[\gradperp_{y}\Psi_{L}+\varepsilon\left(\Dot{p}-c\right)e_{1}\right]\cdot\grad_{y}\phi_{R}\nonumber\\
			&+\dot{q}\left(\varepsilon\left(f_{\varepsilon}'\right)^{+}\mathcal{R}_{1}+\varepsilon\left(f_{\varepsilon}'\right)^{-}\mathcal{R}_{2}\right)\nonumber,
		\end{align}
		where $\mathcal{R}_{1}$ is a remainder term, $\mathcal{R}_{2}$ is its odd reflection in $y_{2}$, and for small enough $\varepsilon>0$, on the support of $U_{R}$ restricted to the upper half plane,
		\begin{align}
			\varepsilon \mathcal{R}_{1}=-\frac{m\varepsilon}{4\left(q'\right)^{3}}\left(\frac{\varrho_{2}(\left|y-q'e_{2}\right|)}{\left|y-q'e_{2}\right|^{2}}+1\right)\left(y_{1}^{2}-\left(y_{2}-q'\right)^{2}\right)+\bigO{\left(\varepsilon^{5}\right)}\label{ER1-lemma-statement-2},
		\end{align}
		where $\varrho_{2}$ is a radial function with respect to the coordinates $y-q'e_{2}$ defined in \eqref{vortex-linearised-equation-rk-error-solution}. An analogous statement holds for $\mathcal{R}_{2}$ in coordinates $y+q'e_{2}$ on the support of $U_{R}$ restricted to the lower plane. An analogous statement also holds for $E_{L1}$ in the appropriate coordinates on the support of $U_{L}$.
	\end{lemma}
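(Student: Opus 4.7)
The plan is to prove Lemma~\ref{ER1-lemma} by direct computation in the blown-up coordinates $y=(x-p(t)e_1)/\varepsilon$ centered on the right vortex pair, repeatedly exploiting the Hamiltonian structure of the traveling-wave ansatz. In $y$-coordinates one has $\partial_t|_x=\partial_t|_y-(\dot p/\varepsilon)\partial_{y_1}$ and $\nabla_x=\varepsilon^{-1}\nabla_y$, so the prefactor $\varepsilon^2$ in $E_{R1}$ exactly cancels the $\varepsilon^{-2}$ produced by the two gradients, and the frame-motion contribution can be written as $-\varepsilon\dot p\partial_{y_1}=\nabla_y^\perp(-\varepsilon\dot p y_2)\cdot\nabla_y$. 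Splitting $\Psi_0=\Psi_R-\Psi_L$ and adding and subtracting $c\varepsilon y_2$ groups the advection acting on $U_R+\phi_R$ into the three pieces $\nabla_y^\perp(\Psi_R-c\varepsilon y_2)\cdot\nabla_y$, $[(c-\dot p)\varepsilon e_1-\nabla_y^\perp\Psi_L]\cdot\nabla_y$, and $\nabla_y^\perp(\psi_R+\psi^{out})\cdot\nabla_y$. The key Hamiltonian identity is that, because by \eqref{w-r-definition} the function $U_R$ and its derivative $f_\varepsilon'$ depend on $x$ only through $\Psi_R-c\varepsilon y_2$,
\[
\nabla_y^\perp(\Psi_R-c\varepsilon y_2)\cdot\nabla_y U_R=0,\qquad \nabla_y^\perp(\Psi_R-c\varepsilon y_2)\cdot\nabla_y f_\varepsilon'=0,
\]
on the interior of the relevant supports; combined with the antisymmetry $\nabla^\perp A\cdot\nabla B=-\nabla^\perp B\cdot\nabla A$, this yields for any scalar $h$
\[
\nabla_y^\perp(\Psi_R-c\varepsilon y_2)\cdot\nabla_y(f_\varepsilon'h)=f_\varepsilon'\nabla_y^\perp(\Psi_R-c\varepsilon y_2)\cdot\nabla_y h.
\]

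Applying these identities term by term produces \eqref{ER1-lemma-statement-1} up to the remainder. Taking $h=\psi_R$ converts $\nabla_y^\perp\psi_R\cdot\nabla_y U_R$ into $-\nabla_y^\perp(\Psi_R-c\varepsilon y_2)\cdot\nabla_y(f_\varepsilon'\psi_R)$; combined with $\phi_R=-\Delta_y\psi_R$ applied to $\nabla_y^\perp(\Psi_R-c\varepsilon y_2)\cdot\nabla_y\phi_R$, this produces the first-line term $-\nabla_y^\perp(\Psi_R-c\varepsilon y_2)\cdot\nabla_y(\Delta_y\psi_R+f_\varepsilon'\psi_R)$, while $\nabla_y^\perp\psi_R\cdot\nabla_y\phi_R$ is kept as is. Using $e_1\cdot\nabla=-\nabla_y^\perp(\cdot)\cdot e_2$ together with the identity above, $[(c-\dot p)\varepsilon e_1-\nabla_y^\perp\Psi_L]\cdot\nabla_y U_R$ becomes $\nabla_y^\perp(\Psi_R-c\varepsilon y_2)\cdot\nabla_y[f_\varepsilon'(\Psi_L+\varepsilon(\dot p-c)y_2)]$. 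Inserting the constant shift $-\sgn(y_2)q'$ inside $f_\varepsilon'(\cdots)$ is free, since it only multiplies $\nabla_y f_\varepsilon'$ and is annihilated by $\nabla_y^\perp(\Psi_R-c\varepsilon y_2)\cdot$; inserting $-\varepsilon\dot q\sgn(y_2)y_1$ contributes the genuine piece $-\varepsilon\dot q\sgn(y_2)f_\varepsilon'\partial_{y_2}(\Psi_R-c\varepsilon y_2)$, to be compensated in the next step by a matching term in $\varepsilon^2\partial_t U_R$. The outer contribution $\nabla_y^\perp\psi^{out}\cdot\nabla_y(U_R+\phi_R)$ and the cross piece $-[\nabla_y^\perp\Psi_L+\varepsilon(\dot p-c)e_1]\cdot\nabla_y\phi_R$ are kept unchanged.

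The substantive final step identifies the remainder: one must show that
\[
\varepsilon^2\partial_t U_R+\varepsilon\dot q\sgn(y_2)f_\varepsilon'\partial_{y_2}(\Psi_R-c\varepsilon y_2)=\dot q\bigl[\varepsilon(f_\varepsilon')^+\mathcal{R}_1+\varepsilon(f_\varepsilon')^-\mathcal{R}_2\bigr]
\]
with $\mathcal{R}_1$ as in \eqref{ER1-lemma-statement-2}. Writing $\Psi_R=\Gamma(y-q'e_2)-\Gamma(y+q'e_2)+\psi_{q'}(y)$ and using \eqref{c-q-time-dependent-relationship}--\eqref{lambda-q-time-dependent-relationship} for $c$ and $\Omega$ (prescribed by Theorem~\ref{vortexpair-theorem}), all time derivatives carry an explicit factor of $\dot q$. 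On the upper support $|y-q'e_2|\le\rho$ the far vortex sits at distance $\sim 2q'\sim 1/\varepsilon$, where $\Gamma(z)=-m\log|z|$, so the Taylor expansion
\[
\partial_2\Gamma(y+q'e_2)=-\frac{m}{2q'}+\frac{m(y_2-q')}{4(q')^2}-\frac{m((y_2-q')^2-y_1^2)}{8(q')^3}+\bigO((q')^{-4})
\]
is available. The $\partial_2\Gamma(y-q'e_2)$-contributions coming from $\partial_t\Psi_R$ and from $\partial_{y_2}\Psi_R$ cancel identically in the sum above; the $\partial_2\Gamma(y+q'e_2)$-, $\dot c$- and $\dot\Omega$-contributions, after Taylor expansion and use of the cancellations engineered into $c=m/(2q)+g(q')$ and $\Omega$ via \eqref{c-q-time-dependent-relationship}--\eqref{lambda-q-time-dependent-relationship}, leave exactly the dipole residue $-\varepsilon^2\partial_{q'}g(q')(y_2-q')$ and the quadrupole residue $-m\varepsilon(y_1^2-(y_2-q')^2)/(4(q')^3)$, i.e.\ the ``$+1$'' parts of $\varepsilon\mathcal{R}_1$. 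The remaining $O(\varepsilon^2)$ contributions from $\partial_{q'}\psi_{q'}$ and $\partial_{y_2}\psi_{q'}$, controlled by \eqref{psi-q-l-infinity-bounds}, decompose on the core $|y-q'e_2|\le 1$ (where $\Gamma=\nu$) into mode-1 and mode-2 solutions of the linearized equation $\Delta+\gamma\nu_+^{\gamma-1}$, furnishing the radial profiles $\varrho_1(|w|)/|w|$ and $\varrho_2(|w|)/|w|^2$ of \eqref{vortex-linearised-equation-rk-error-solution}. The lower-support contribution is obtained by reflection in $y_2$ and gives $\mathcal{R}_2$, and the assertion for $E_{L1}$ follows by the $x_1\to -x_1$ symmetry. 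The hardest part of the proof is precisely this last extraction: it requires matching the Taylor coefficients of the far-vortex logarithm against the very precise cancellations prescribed by Theorem~\ref{vortexpair-theorem} and a clean decomposition of the $\psi_{q'}$-derivatives into linearized mode-1 and mode-2 profiles, and it is here that the fine dependence of the traveling wave on $q'$ is used in an essential way.
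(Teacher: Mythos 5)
Your proposal follows the same overall route as the paper: change to $y$-coordinates, introduce the split $\dot p=(\dot p-c)+c$ to exploit the traveling-wave cancellation $\nabla_y^\perp(\Psi_R-c\varepsilon y_2)\cdot\nabla_y U_R=0$, use the Hamiltonian identity $\nabla_y^\perp(\Psi_R-c\varepsilon y_2)\cdot\nabla_y f_\varepsilon'=0$ together with $\nabla^\perp A\cdot\nabla B=-\nabla^\perp B\cdot\nabla A$ and $-\Delta\psi_R=\phi_R$ to reorganize the linear terms into $-\nabla_y^\perp(\Psi_R-c\varepsilon y_2)\cdot\nabla_y(\Delta\psi_R+f_\varepsilon'\psi_R)$ and the $\Psi_L$-advection term, and then identify the genuine remainder from $\varepsilon\dot q\,\partial_{q'}U_R=\varepsilon^{2}\partial_t U_R$. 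The computations you display (the Taylor expansion of $\partial_2\Gamma(y+q'e_2)$ and the cancellation of the $\partial_2\Gamma(y-q'e_2)$ pieces) are correct, and the final dipole/quadrupole residues $-\varepsilon^2\partial_{q'}g(q')(y_2-q')$ and $-\tfrac{m\varepsilon}{4(q')^3}(y_1^2-(y_2-q')^2)$ match the paper once the $\partial_{q'}c$ cancellation with $c=\tfrac{m}{2q}+g(q')$ is used, exactly as you indicate.

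The one place where your sketch leaves a real gap is the treatment of the $\psi_{q'}$-contribution $\partial_{q'}\psi_{q'}+\partial_2\psi_{q'}$. You assert that it ``decomposes on the core into mode-1 and mode-2 solutions of $\Delta+\gamma\nu_+^{\gamma-1}$, furnishing $\varrho_1/|w|$ and $\varrho_2/|w|^2$,'' but this is not automatic: unlike the full-plane linearization, $\partial_{q'}\psi_{q'}+\partial_2\psi_{q'}$ has a \emph{nonzero} trace $\partial_2\psi_{q'}(y_1,0)$ on $y_2=0$, and the $\varrho_k$ of \eqref{vortex-linearised-equation-rk-error-solution} are built from the homogeneous (zero-boundary) problem. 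The paper handles this by the split $\partial_{q'}\psi_{q'}+\partial_2\psi_{q'}=\beta_1+\beta_2$ with $\beta_1$ harmonic carrying the boundary trace (see \eqref{first-approximation-error-calculation-23}); showing $\beta_1=\bigO(\varepsilon^4)$ requires representing the trace by a Green's-function integral over the vortex core and using the vanishing of the mode-2 integral $\int_{B_1}\Gamma_+^{\gamma-1}\bigl(\tfrac{\varrho_2}{|v|^2}-1\bigr)(v_1^2-v_2^2)\,dv=0$, as in \eqref{first-approximation-error-calculation-26}--\eqref{first-approximation-error-calculation-31}. Without this harmonic correction, the claimed decomposition into $\varrho_1,\varrho_2$ does not hold up to the required order $\bigO(\varepsilon^5)$ in $\varepsilon\mathcal{R}_1$; the crude bound $\|\partial_{q'}\psi_{q'}\|_\infty\le C\varepsilon^2$ from \eqref{psi-q-l-infinity-bounds} only gives $\bigO(\varepsilon^3)$. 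This is the substantive technical step that should be spelled out to complete the proof.
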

	\begin{proof}
		Using the change of coordinates \eqref{change-of-coordinates-dynamic-problem}, we have
		\begin{align*}
			E_{R1}&=\varepsilon^{2}\dell_{t}\phi_{R}+\varepsilon\left(-\left(\Dot{p}-c+c\right)e_{1}\right)\cdot\grad_{y}\left(U_{R}+\phi_{R}\right)+\varepsilon\dot{q}\dell_{q'}\left(U_{R}+\phi_{R}\right)\nonumber\\
			&+\gradperp_{y}\left(\Psi_{R}-\Psi_{L}+\psi_{R}\right)\cdot\grad_{y}\left(U_{R}+\phi_{R}\right)+\gradperp_{y}\psi^{out}\cdot\grad_{y}\left(U_{R}+\phi_{R}\right).
		\end{align*}
		As $\left(U_{R},\Psi_{R}\right)$ is a travelling wave with velocity $ce_{1}$, we note that
		\begin{align}
			-\varepsilon ce_{1}\cdot\grad_{y}U_{R}+\gradperp_{y}\Psi_{R}\cdot\grad_{y}U_{R}=0.\label{first-error-cancellation-due-to-travelling-wave-solution}
		\end{align}
		Then \eqref{first-error-cancellation-due-to-travelling-wave-solution} allows us to obtain
		\begin{align}
			&E_{R1}=\varepsilon^{2}\dell_{t}\phi_{R}-\left[\gradperp_{y}\Psi_{L}+\varepsilon\left(\Dot{p}-c\right)e_{1}\right]\cdot\grad_{y}\phi_{R}+\gradperp_{y}\psi_{R}\cdot\grad_{y}\phi_{R}\label{first-approximation-error-calculation-3}\\
			&+\gradperp_{y}\psi^{out}\cdot\grad_{y}\left(U_{R}+\phi_{R}\right)\nonumber+\underbrace{\left[\gradperp_{y}\Psi_{R}\cdot\grad_{y}\phi_{R}+\gradperp_{y}\psi_{R}\cdot\grad_{y}U_{R}\right]-\varepsilon ce_{1}\cdot\grad_{y}\phi_{R}}_{\mathcal{A}_{1}}\nonumber\\
			&\underbrace{-\left[\gradperp_{y}\Psi_{L}+\varepsilon\left(\Dot{p}-c\right)e_{1}\right]\cdot\grad_{y}U_{R}+\varepsilon\dot{q}\dell_{q'}U_{R}}_{\mathcal{A}_{2}}\nonumber.
		\end{align}
		The first line on the right hand side of \eqref{first-approximation-error-calculation-3} are all terms on the right hand side of \eqref{ER1-lemma-statement-1}. We must show that the remaining terms on the right hand side of \eqref{first-approximation-error-calculation-3}, labelled $\mathcal{A}_{1}$ and $\mathcal{A}_{2}$, can be written as the remaining terms on the right hand side of \eqref{ER1-lemma-statement-1}. Note that
		\begin{align*}
			&\left[\gradperp_{y}\Psi_{R}\cdot\grad_{y}\phi_{R}+\gradperp_{y}\psi_{R}\cdot\grad_{y}U_{R}\right]-\varepsilon ce_{1}\cdot\grad_{y}\phi_{R}=\\
			&\gradperp_{y}\left(\Psi_{R}-c\varepsilon y_{2}\right)\cdot\grad_{y}\phi_{R}-\gradperp_{y}U_{R}\cdot\grad_{y}\psi_{R}.
		\end{align*}
		From \eqref{w-r-definition} and \eqref{nonlinearity-definition}, we know that $U_{R}=f_{\varepsilon}\left(\Psi_{R}-c\varepsilon y_{2}\right)$. Moreover, from \eqref{psi-R-phi-R-defs} we have, $-\Delta\psi_{R}=\phi_{R}$, so
		\begin{align*}
			\gradperp_{y}\left(\Psi_{R}-c\varepsilon y_{2}\right)\cdot\grad_{y}\phi_{R}&-\gradperp_{y}U_{R}\cdot\grad_{y}\psi_{R}\\
			&=-\gradperp_{y}\left(\Psi_{R}-c\varepsilon y_{2}\right)\cdot\grad_{y}\left(\Delta\psi_{R}\right)-f_{\varepsilon}'\gradperp_{y}\left(\Psi_{R}-c\varepsilon y_{2}\right)\cdot\grad_{y}\psi_{R}\\
			&=-\gradperp_{y}\left(\Psi_{R}-c\varepsilon y_{2}\right)\cdot\grad_{y}\left(\Delta\psi_{R}+f_{\varepsilon}'\psi_{R}\right),
		\end{align*}
		with $f_{\varepsilon}'$ defined in \eqref{f-prime-definition} and using \eqref{gradperp-f-prime-0}. Next, once again using \eqref{gradperp-f-prime-0}, we note that
		\begin{align*}
			-\left[\gradperp_{y}\Psi_{L}+\varepsilon\left(\Dot{p}-c\right)e_{1}\right]\cdot\grad_{y}U_{R}=\gradperp_{y}\left(\Psi_{R}-c\varepsilon y_{2}\right)\cdot\grad_{y}\left(f_{\varepsilon}'\left(\Psi_{L}+\varepsilon\left(\Dot{p}-c\right)y_{2}\right)\right).
		\end{align*}
		Therefore, we have
		\begin{align*}
			&-\left[\gradperp_{y}\Psi_{L}+\varepsilon\left(\Dot{p}-c\right)e_{1}\right]\cdot\grad_{y}U_{R}\\
			&=\gradperp_{y}\left(\Psi_{R}-c\varepsilon y_{2}\right)\cdot\grad_{y}\left(f_{\varepsilon}'\left(\Psi_{L}+\varepsilon\left(\Dot{p}-c\right)\left(y_{2}-\sgn{\left(y_{2}\right)}q'\right)\right)\right).
		\end{align*}
		Now we concentrate on $\varepsilon\dot{q}\dell_{q'}U_{R}$ which can be written as
		\begin{align}
			\varepsilon\Dot{q}\left(f_{\varepsilon}'\right)^{+}\dell_{q'}\left(\Psi_{R}-c\varepsilon y_{2}-\left|\log{\varepsilon}\right|\Omega\right)-\varepsilon\Dot{q}\left(f_{\varepsilon}'\right)^{-}\dell_{q'}\left(-\Psi_{R}+c\varepsilon y_{2}-\left|\log{\varepsilon}\right|\Omega\right).\label{first-approximation-error-calculation-8}
		\end{align}
		Due to the choice of $\Omega$, \eqref{c-q-Omega-time-dependent-relationship-new}, the definition of $\Gamma$, \eqref{Gamma-def}, and the definition of $m=-\nu'\left(1\right)$ from \eqref{reduced-vortex-mass-definition}, on the support of $\left(f_{\varepsilon}'\right)^{+}$, we can write $\Psi_{R}-c\varepsilon y_{2}-\left|\log{\varepsilon}\right|\Omega$ as
		\begin{align}
			\Gamma\left(y-q'e_{2}\right)+\frac{m}{2}\log{\left(1+\frac{y_{2}-q'}{q'}+\frac{\left|y-q'e_{2}\right|^{2}}{4\left(q'\right)^{2}}\right)}-c\varepsilon\left(y_{2}-q'\right)+\psi_{q'}(y,t).\label{first-approximation-error-calculation-9}
		\end{align}
		Thus we can see that, on the support of $\left(f_{\varepsilon}'\right)^{+}$,
		\begin{align}
			&\dell_{q'}\left(\Psi_{R}-c\varepsilon y_{2}-\left|\log{\varepsilon}\right|\Omega\right)=\nonumber\\
			&-\dell_{2}\Gamma\left(y-q'e_{2}\right)-\frac{m}{2}\frac{\frac{1}{q'}+\frac{y_{2}-q'}{2\left(q'\right)^{2}}}{1+\frac{y_{2}-q'}{q'}+\frac{\left|y-q'e_{2}\right|^{2}}{4\left(q'\right)^{2}}}+c\varepsilon-\dell_{2}\psi_{q'}\label{first-approximation-error-calculation-10}\\
			&+\frac{m}{2}\dell_{q'}\left(\log{\left(1+\frac{z_{2}}{q'}+\frac{\left|z\right|^{2}}{4\left(q'\right)^{2}}\right)}\right)\bigg\rvert_{z=y-q'e_{2}}-\dell_{q'}\left(c\right)\varepsilon\left(y_{2}-q'\right)+\left(\dell_{q'}\psi_{q'}+\dell_{2}\psi_{q'}\right)\nonumber.
		\end{align}
		By inspection, the first line on the right hand side of \eqref{first-approximation-error-calculation-10} is equal to
		\begin{align*}
			-\dell_{2}\left(\Gamma\left(y-q'e_{2}\right)-\Gamma\left(y+q'e_{2}\right)+\psi_{q'}(y,t)-c\varepsilon y_{2}-\left|\log{\varepsilon}\right|\Omega\right),
		\end{align*}
		and so on the support of $\left(f_{\varepsilon}'\right)^{+}$, we can write \eqref{first-approximation-error-calculation-10} as
		\begin{align*}
			-\dell_{2}\left(\Gamma\left(y-q'e_{2}\right)-\Gamma\left(y+q'e_{2}\right)+\psi_{q'}(y,t)-c\varepsilon y_{2}-\left|\log{\varepsilon}\right|\Omega\right)+\mathcal{R}_{1}(y,t),
		\end{align*}
		where $\mathcal{R}_{1}$ is the second line on the right hand side of \eqref{first-approximation-error-calculation-10}.
		
		\medskip
		Similarly, on the support of $\left(f_{\varepsilon}'\right)^{-}$, we have
		\begin{align*}
			&-\dell_{q'}\left(-\Psi_{R}+c\varepsilon y_{2}-\left|\log{\varepsilon}\right|\Omega\right)\\
			&=\dell_{2}\left(\Gamma\left(y-q'e_{2}\right)-\Gamma\left(y+q'e_{2}\right)+\psi_{q'}(y,t)-c\varepsilon y_{2}-\left|\log{\varepsilon}\right|\Omega\right)+\mathcal{R}_{2}(y,t),
		\end{align*}
		where $\mathcal{R}_{2}$ is the odd reflection of $\mathcal{R}_{1}$ in $y_{2}$. Thus
		\begin{align*}
			&\varepsilon\dot{q}\dell_{q'}U_{R}=-\varepsilon\dot{q}f_{\varepsilon}'\sgn{\left(y_{2}\right)}e_{2}\cdot\grad_{y}\left(\Psi_{R}-c\varepsilon y_{2}\right)+\varepsilon\dot{q}\left(f_{\varepsilon}'\right)^{+}\mathcal{R}_{1}+\varepsilon\dot{q}\left(f_{\varepsilon}'\right)^{-}\mathcal{R}_{2}\nonumber\\
			&=\gradperp_{y}\left(\Psi_{R}-c\varepsilon y_{2}\right)\cdot\grad_{y}\left(\varepsilon\dot{q}f_{\varepsilon}'\sgn{\left(y_{2}\right)}y_{1}\right)+\varepsilon\dot{q}\left(f_{\varepsilon}'\right)^{+}\mathcal{R}_{1}+\varepsilon\dot{q}\left(f_{\varepsilon}'\right)^{-}\mathcal{R}_{2}.
		\end{align*}
		It remains to show that $\varepsilon\mathcal{R}_{1}$ can be written in the form \eqref{ER1-lemma-statement-2}. The analogous statement for $\varepsilon\mathcal{R}_{2}$ will follow by symmetry. We first have
		\begin{align}
			\mathcal{R}_{1}&=-\frac{m}{2}\frac{\frac{y_{2}-q'}{\left(q'\right)^{2}}+\frac{\left|y-q'e_{2}\right|^{2}}{2\left(q'\right)^{3}}}{1+\frac{y_{2}-q'}{q'}+\frac{\left|y-q'e_{2}\right|^{2}}{4\left(q'\right)^{2}}}-\dell_{q'}\left(c\right)\varepsilon\left(y_{2}-q'\right)+\left(\dell_{q'}\psi_{q'}+\dell_{2}\psi_{q'}\right)\label{first-approximation-error-calculation-15}\\
			&=\mathcal{R}_{1,1}+\left(\dell_{q'}\psi_{q'}+\dell_{2}\psi_{q'}\right).\nonumber
		\end{align}
		On the support of $\left(f_{\varepsilon}'\right)^{+}$, $\mathcal{R}_{1,1}$ can be written as
		\begin{align*}
			&-\frac{m}{2\left(q'\right)^{2}}\left(y_{2}-q'\right)\left(1-\frac{y_{2}-q'}{q'}+\bigO{\left(\varepsilon^{2}\right)}\right)-\frac{m}{4\left(q'\right)^{3}}\left|y-q'e_{2}\right|^{2}\left(1+\bigO{\left(\varepsilon\right)}\right)\\
			&-\dell_{q'}\left(c\right)\varepsilon\left(y_{2}-q'\right),
		\end{align*}
		where we have used the fact that $q'=\bigO{\left(\varepsilon^{-1}\right)}$. This can then be written as
		\begin{align}
			&-\varepsilon\dell_{q'}g\left(q'\right)\left(y_{2}-q'\right)-\frac{m}{4\left(q'\right)^{3}}\left(y_{1}^{2}-\left(y_{2}-q'\right)^{2}\right)+\bigO{\left(\varepsilon^{4}\right)}\label{first-approximation-error-calculation-17},
		\end{align}
		and we note that by \eqref{g-pointwise-bounds} and \eqref{c-q-Omega-time-dependent-relationship-new}, the first term in \eqref{first-approximation-error-calculation-17} is actually $\bigO{\left(\varepsilon^{4}\right)}$ as well. Then, on the support of $\left(f_{\varepsilon}'\right)^{+}$, we have
		\begin{align}
			\varepsilon\mathcal{R}_{1,1}=-\frac{m\varepsilon}{4\left(q'\right)^{3}}\left(y_{1}^{2}-\left(y_{2}-q'\right)^{2}\right)+\bigO{\left(\varepsilon^{5}\right)}.\label{first-approximation-error-calculation-18}
		\end{align}
		To bound $\left(\dell_{q'}\psi_{q'}+\dell_{2}\psi_{q'}\right)$, we note that it solves the following equation on $\mathbb{R}^{2}_{+}$:
		\begin{align*}
			\Delta\left(\dell_{q'}\psi_{q'}+\dell_{2}\psi_{q'}\right)+\left(f_{\varepsilon}'\right)^{+}\left(\dell_{q'}\psi_{q'}+\dell_{2}\psi_{q'}\right)&=\mathcal{E},\\
			\left(\dell_{q'}\psi_{q'}+\dell_{2}\psi_{q'}\right)\left(y_{1},0\right)&=\dell_{2}\psi_{q'}\left(y_{1},0\right),
		\end{align*}
		and error term $\mathcal{E}$ given by
		\begin{align*}
			\mathcal{E}= \left(f_{\varepsilon}'\right)^{+}\left(2\dell_{2}\Gamma\left(y+q'e_{2}\right)+\dell_{q'}\left(c\right)\varepsilon y_{2}+\left|\log{\varepsilon}\right|\left(\dell_{q'}\Omega\right)+c\varepsilon\right).
		\end{align*}
		We write $\dell_{q'}\psi_{q'}+\dell_{2}\psi_{q'}=\beta_{1}+\beta_{2}$, where
		\begin{equation}\label{first-approximation-error-calculation-23}
			\begin{aligned}
				\Delta\beta_{1}=0,\ \ \beta_{1}\left(y_{1},0\right)=\dell_{2}\psi_{q'}\left(y_{1},0\right),\\
				\Delta\beta_{2}+\left(f_{\varepsilon}'\right)^{+}\beta_{2}=\mathcal{E}-\left(f_{\varepsilon}'\right)^{+}\beta_{1},\ \ \beta_{2}\left(y_{1},0\right)=0.
			\end{aligned}
		\end{equation}
		Using the representation formula on the upper half plane, we can write $\beta_{1}$ as
		\begin{align}
			\beta_{1}\left(y_{1},y_{2}\right)=\frac{1}{\pi}\int_{-\infty}^{\infty}\frac{y_{2}}{\left(z_{1}-y_{1}\right)^{2}+y_{2}^{2}}\dell_{2}\psi_{q'}\left(z_{1},0\right) dz_{1}.\label{first-approximation-error-calculation-25}
		\end{align}
		From the fact that the travelling vortex pair $\Psi_{\varepsilon}$ solves \ref{2d-euler-travelling-semilinear-elliptic-equation-epsilon} in $x$ coordinates, we know that $\psi_{q'}$ solves 
		\begin{align}
			&\Delta\psi_{q'}+V\psi_{q'}-E+N\left[\psi_{q'}\right]=0,\label{psi-q-elliptic-equation}\\
			&V=f_{\varepsilon}'\left(\mathring{\Psi}_{\varepsilon}-c\varepsilon y_{2}\right),\quad E=\Delta\mathring{\Psi}_{\varepsilon}+f_{\varepsilon}'\left(\mathring{\Psi}_{\varepsilon}-c\varepsilon y_{2}\right)\label{V-E-vortex-pair}
		\end{align}
		on the upper half plane, $\mathring{\Psi}_{\varepsilon}$ is defined in \eqref{travelling-vortex-pair-ansatz}, and $N$ is the nonlinearity we obtain upon changing rearranging \eqref{2d-euler-travelling-semilinear-elliptic-equation-epsilon} into \eqref{psi-q-elliptic-equation} after an appropriate change of coordinates.
		
		\medskip
		From \eqref{nonlinearity-definition}, Theorems \ref{vortexpair-theorem} and \ref{vortex-pair-properties-theorem}, and \eqref{first-approximation-main-order-vorticity-support}, we have that $V$, $E$, and $N$ have supports contained in $B_{2\rho}\left(q'e_{2}\right)$. As $\psi_{q'}$ is $\bigO{\left(\varepsilon^{2}\right)}$, $N$ is $\bigO{\left(\varepsilon^{4}\right)}$ on $B_{2\rho}\left(q'e_{2}\right)$. Thus, once again using the representation formula on the upper half plane, we have that
		\begin{align}
			\dell_{2}\psi_{q'}\left(z_{1},0\right)=\frac{1}{\pi}\int_{B_{2\rho}\left(q'e_{2}\right)}\frac{v_{2}}{\left(v_{1}-z_{1}\right)^{2}+v_{2}^{2}}\left(V\psi_{q'}-E+N[\psi_{q'}]\right) dv.\label{first-approximation-error-calculation-26}
		\end{align}
		One can see from Theorem \ref{vortex-pair-properties-theorem}, \eqref{f-prime-definition}, and \eqref{V-E-vortex-pair}, that on $ B_{2\rho}\left(q'e_{2}\right)$, for $v-q'e_{2}=re^{i\theta}$, we have
		\begin{align}
			&\left(\frac{v_{2}-q'}{\left(v_{1}-z_{1}\right)^{2}+\left(v_{2}\right)^{2}}+\frac{q'}{\left(v_{1}-z_{1}\right)^{2}+\left(v_{2}\right)^{2}}\right)\left(V\psi_{q'}-E+N[\psi_{q'}]\right)\label{first-approximation-error-calculation-30}\\
			&=\left(\bigO{\left(\varepsilon^{2}\right)}+\frac{q'}{z_{1}^{2}+\left(q'\right)^{2}}\right)\left(-\frac{m\gamma \varepsilon^{2}}{8q^{2}}r^{2}\Gamma^{\gamma-1}_{+}(r)\left(\frac{\varrho_{2}(r)}{r^{2}}-1\right)\cos{2\theta}\right)+\bigO{\left(\varepsilon^{4}\right)}\nonumber.
		\end{align}
		Then, using \eqref{first-approximation-error-calculation-30} as well as the fact that $\Gamma_{+}(r)$ has support $B_{1}(0)$, we can write \eqref{first-approximation-error-calculation-26} as
		\begin{align}
			\dell_{2}\psi_{q'}\left(z_{1},0\right)&=\frac{1}{\pi}\frac{q'}{z_{1}^{2}+\left(q'\right)^{2}}\int_{0}^{2\pi}\int_{0}^{1} F\left(r\right)\cos{2\theta}\ drd\theta+\bigO{\left(\varepsilon^{4}\right)}=\bigO{\left(\varepsilon^{4}\right)},\label{first-approximation-error-calculation-31}
		\end{align}
		Thus by \eqref{first-approximation-error-calculation-23}--\eqref{first-approximation-error-calculation-31} we have
		\begin{align*}
			\|\beta_{1}\|_{L^{\infty}\left(\mathbb{R}^{2}_{+}\right)}\leq C\varepsilon^{4}.
		\end{align*}
		To bound $\beta_{2}$ effectively, we first note that by \eqref{c-q-Omega-time-dependent-relationship-new}, we have
		\begin{align*}
			\mathcal{E}=\left(f_{\varepsilon}'\right)^{+}\left(2\dell_{q'}\Gamma\left(y+q'e_{2}\right)-2\dell_{q'}\Gamma\left(2q'e_{2}\right)+\dell_{q'}\left(c\right)\varepsilon\left(y_{2}-q'\right)\right),
		\end{align*}
		and we can use \eqref{first-approximation-error-calculation-9}--\eqref{first-approximation-error-calculation-17} to conclude that
		\begin{align*}
			\mathcal{E}=\left(f_{\varepsilon}'\right)^{+}\left(-\frac{m}{4\left(q'\right)^{3}}\left(y_{1}^{2}-\left(y_{2}-q'\right)^{2}\right)+\bigO{\left(\varepsilon^{4}\right)}\right).
		\end{align*}
		Next, since the operator $\Delta+\left(f_{\varepsilon}'\right)^{+}=\Delta+\gamma \Gamma_{+}^{\gamma-1}+\bigO{\left(\varepsilon^{2}\right)}$ by Theorem \ref{vortex-pair-properties-theorem} and \eqref{f-prime-plus-definition}, we can use Lemma \ref{vortex-linearised-equation-fourier-coefficients-behaviour-lemma} to deduce that
		\begin{align}
			\beta_{2}\left(y_{1},y_{2}\right)=-\frac{m}{4\left(q'\right)^{3}}\frac{\varrho_{2}(\left|y-q'e_{2}\right|)}{\left|y-q'e_{2}\right|^{2}}\left(y_{1}^{2}-\left(y_{2}-q'\right)^{2}\right)+\bigO{\left(\varepsilon^{4}\right)},\label{first-approximation-error-calculation-31a}
		\end{align}
		where $\varrho_{2}$ is defined in \eqref{vortex-linearised-equation-rk-error-solution}. Therefore, by \eqref{first-approximation-error-calculation-23}--\eqref{first-approximation-error-calculation-31a}, on the support of $\left(f_{\varepsilon}'\right)^{+}$ we have
		\begin{align}
			\varepsilon\left(\dell_{q'}\psi_{q'}+\dell_{2}\psi_{q'}\right)=-\frac{m\varepsilon}{4\left(q'\right)^{3}}\frac{\varrho_{2}(\left|y-q'e_{2}\right|)}{\left|y-q'e_{2}\right|^{2}}\left(y_{1}^{2}-\left(y_{2}-q'\right)^{2}\right)+\bigO{\left(\varepsilon^{5}\right)}.\label{first-approximation-error-calculation-40}
		\end{align}
		Combining \eqref{first-approximation-error-calculation-18} and \eqref{first-approximation-error-calculation-40}, we have, on the support of $\left(f_{\varepsilon}'\right)^{+}$
		\begin{align}
			\varepsilon \mathcal{R}_{1}&=-\frac{m\varepsilon}{4\left(q'\right)^{3}}\left(\frac{\varrho_{2}(\left|y-q'e_{2}\right|)}{\left|y-q'e_{2}\right|^{2}}+1\right)\left(y_{1}^{2}-\left(y_{2}-q'\right)^{2}\right)+\bigO{\left(\varepsilon^{5}\right)}.\label{first-approximation-error-calculation-41}
		\end{align}
		Due to symmetry, the exact same behaviour is true in coordinates $y+q'e_{2}$ for $\varepsilon\mathcal{R}_{2}$ on the support of $\left(f_{\varepsilon}'\right)^{-}$, as required.
	\end{proof}
	The calculations presented in \eqref{first-approximation-error-calculation-15}--\eqref{first-approximation-error-calculation-41} also give a bound which will be useful later on in the construction so we record it here separately for convenience. We have, for $y\in\mathbb{R}^{2}_{+}$, and $z=y-q'e_{2}$,
	\begin{align}
		&U_{R}=f_{\varepsilon}\left(\Psi_{R}-c\varepsilon y_{2}\right)=\left(\Psi_{R}-c\varepsilon y_{2}-\left|\log{\varepsilon}\right|\Omega\right)^{\gamma}_{+}\chi_{B_{\frac{s}{\varepsilon}}\left(q'e_{2}\right)}(y)\label{f-prime-q-variation-1}\\
		&=\left(\Gamma\left(z\right)+\frac{m}{2}\log{\left(1+\frac{z_{2}}{q'}+\frac{\left|z\right|^{2}}{4\left(q'\right)^{2}}\right)}-c\left(q\right)\varepsilon z_{2}+\psi_{q'}(z+q'e_{2})\right)^{\gamma}_{+}\chi_{B_{\frac{s}{\varepsilon}}\left(0\right)}(z),\nonumber
	\end{align}
	Accordingly, we define, for $z\in\mathbb{R}^{2}_{+}-q'e_{2}$,
	\begin{align}
		\mathscr{V}\left(z,\mathcal{Q}\right)&=\Psi_{R}\left(z+q'e_{2}\right)+\psi_{\mathcal{Q}'}\left(z+q'e_{2}\right)-c\left(\mathcal{Q}\right)\varepsilon\left(z_{2}+q'\right)-\left|\log{\varepsilon}\right|\Omega\left(\mathcal{Q}(t)\right)\nonumber\\
		&=\Gamma\left(z\right)+\frac{m}{2}\log{\left(1+\frac{z_{2}}{\mathcal{Q}'}+\frac{\left|z\right|^{2}}{4\left(\mathcal{Q}'\right)^{2}}\right)}-c\left(\mathcal{Q}\right)\varepsilon z_{2}+\psi_{\mathcal{Q}'}(z+\mathcal{Q}'e_{2}),\label{vortex-pair-variation-generalisation}
	\end{align}
	with $\Psi_{R}$ defined in \eqref{psi-R-phi-R-defs}, and we emphasise the dependence of $\Omega$ on the parameter $\mathcal{Q}$ via \eqref{c-q-Omega-time-dependent-relationship-new}.
	\begin{lemma}\label{vortex-pair-q-variation-lemma}
		For all $\varepsilon>0$ small enough, for all $z\in B_{10\rho}\left(0\right)$,
		\begin{align*}
			&\left|\dell_{q'}\mathscr{V}\left(z,q\right)\right|\leq C\varepsilon^{3}.
		\end{align*}
	\end{lemma}
	By symmetry, an analogous statement holds on the support of $U_{R}$ restricted to the lower half plane, and on the support of $U_{L}$, in the appropriate coordinates. As in Theorem \ref{vortex-pair-properties-theorem}, we fix the radius of the closed ball at $10\rho$ for concreteness.
	
	\medskip
	With Lemma \ref{vortex-pair-q-variation-lemma} in hand, we define some related quantities for convenience that we will use later on. For $k=0,1,2,\dots,\floor{\gamma}-1$, define for any function $F$ on $\mathbb{R}^{2}_{+}-q'e_{2}$,
	\begin{align}
		\mathscr{W}_{k}\left(F\right)=\left(\Pi_{j=0}^{k}\left(\gamma-j\right)\right)\left(F(z)\right)^{\gamma-k}_{+}\chi_{B_{\frac{s}{\varepsilon}}\left(0\right)}(z).\label{f-prime-vortex-pair-variation-relationship}
	\end{align}
	so that for $k=1,2,\dots,\floor{\gamma}-1$, on $\mathbb{R}^{2}_{+}$,
	\begin{equation}\label{f-prime-vortex-pair-variation-relationship-1}
		\begin{aligned}
			U_{R}\left(y,t\right)&=\left(f_{\varepsilon}\right)^{+}\left(y,t\right)=\mathscr{W}_{0}\left(\mathscr{V}\left(y-q'e_{2},q\right)\right),\\
			\left(f_{\varepsilon}^{\left(k\right)}\right)^{+}\left(y,t\right)&=\mathscr{W}_{k}\left(\mathscr{V}\left(y-q'e_{2},q\right)\right)\\
			&=\left(\Pi_{j=0}^{k}\left(\gamma-j\right)\right)\left(\mathscr{V}\left(y-q'e_{2},q\right)\right)^{\gamma-k}_{+}\chi_{B_{\frac{s}{\varepsilon}}\left(q'e_{2}\right)}(y)\\
			&=\left(\Pi_{j=0}^{k}\left(\gamma-j\right)\right)\left(\Psi_{R}-c\varepsilon y_{2}-\left|\log{\varepsilon}\right|\Omega\right)^{\gamma-k}_{+}\chi_{B_{\frac{s}{\varepsilon}}\left(q'e_{2}\right)}(y).
		\end{aligned}
	\end{equation}
	As in \eqref{f-prime-plus-definition}, $\left(f_{\varepsilon}^{\left(k\right)}\right)^{-}$ can be defined analogously to \eqref{f-prime-vortex-pair-variation-relationship-1}. Moreover, as with \eqref{gradperp-grad-identity-vortex-pair} and \eqref{gradperp-f-prime-0}, we have
	\begin{align}
		\gradperp_{y}\left(\Psi_{R}-c\varepsilon y_{2}\right)\cdot\nabla_{y}\left(\left(f_{\varepsilon}^{\left(k\right)}\right)^{+}\right)=0.\label{gradperp-f-j-prime-plus-0}
	\end{align}
	Given \eqref{f-prime-vortex-pair-variation-relationship}--\eqref{f-prime-vortex-pair-variation-relationship-1}, we define, for $k=0,1,2,\dots,\floor{\gamma}-1$,
	\begin{align}
		\tilde{\mathscr{W}}_{k}\left(\mathcal{Q}\right)=\tilde{\mathscr{W}}_{k}\left(z,\mathcal{Q}\right)=\mathscr{W}_{k}\left(\mathscr{V}\left(z,\mathcal{Q}\right)\right)\implies\ \tilde{\mathscr{W}}_{k}\left(q\right)=\tilde{\mathscr{W}}_{k}\left(z,q\right)=\left(f_{\varepsilon}^{\left(k\right)}\right)^{+}.\label{tilde-W-k-def}
	\end{align}
	As for \eqref{nonlinearity-regularity}, \eqref{UR-UL-regularity}, and \eqref{f-prime-regularity}, we have for $k=1,2,\dots,\floor{\gamma}-1$, on $\mathbb{R}^{2}_{+}$,
	\begin{align}
		\left(f_{\varepsilon}^{\left(k\right)}\right)^{\pm}\in\twopartdef{C^{\floor{\gamma}-k,\gamma-\left[\gamma\right]}}{\gamma\notin \mathbb{Z}}{C^{\gamma-k-1}}{\gamma\in\mathbb{Z}}.\label{f-k-prime-regularity}
	\end{align}
	We similarly define $\mathscr{Z}_{1}\left(z,\mathcal{Q}\right)$ and $\mathscr{Z}_{2}\left(z,\mathcal{Q}\right)$ by applying the change of variables $z=y-q'e_{2}$ to $\dell_{1}\Psi_{R}$ and $\dell_{q'}\left(\Psi_{R}-c\varepsilon y_{2}-\left|\log{\varepsilon}\right|\Omega\right)$ respectively:
	\begin{equation}\label{dq-Psi-R-variation}
		\begin{aligned}
			\mathscr{Z}_{1}\left(z,\mathcal{Q}\right)&=\dell_{1}\Gamma\left(z\right)+\frac{mz_{1}}{4\left(\mathcal{Q}'\right)^{2}\left(1+\frac{z_{2}}{\mathcal{Q}'}+\frac{\left|z\right|^{2}}{4\left(\mathcal{Q}'\right)^{2}}\right)}+\dell_{1}\psi_{\mathcal{Q}'}\left(z+\mathcal{Q}'e_{2}\right),\\
			\mathscr{Z}_{2}\left(z,\mathcal{Q}\right)&=-\dell_{2}\Gamma\left(z\right)-\frac{m\left(\frac{1}{\mathcal{Q}'}+\frac{2z_{2}}{4\left(\mathcal{Q}'\right)^{2}}\right)}{2\left(1+\frac{z_{2}}{\mathcal{Q}'}+\frac{\left|z\right|^{2}}{4\left(\mathcal{Q}'\right)^{2}}\right)}-\frac{m\left(\frac{z_{2}}{\left(\mathcal{Q}'\right)^{2}}+\frac{\left|z\right|^{2}}{2\left(\mathcal{Q}'\right)^{3}}\right)}{2\left(1+\frac{z_{2}}{\mathcal{Q}'}+\frac{\left|z\right|^{2}}{4\left(\mathcal{Q}'\right)^{2}}\right)}\\
			&-\dell_{\mathcal{Q}'}c\left(\mathcal{Q}\right)+\dell_{\mathcal{Q}'}\psi_{\mathcal{Q}'}\left(z+\mathcal{Q}'e_{2}\right).
		\end{aligned}
	\end{equation}
	Much like the support of $\left(f_{\varepsilon}'\right)^{+}$, for $\xi$ satisfying \eqref{xi0-bound}--\eqref{tildexi-bound}, $\mathscr{W}_{0}\left(\mathscr{V}\left(z,q_{0}\right)\right)$ is supported on the ball $B_{2\rho}\left(0\right)$, for $\rho>0$ defined in \eqref{first-approximation-main-order-vorticity-support}, with respect to the $z$ coordinate. We note that $\mathscr{W}_{0}$ and $\mathscr{Z}_{2}$ are even in $y_{1}$ and $\mathscr{Z}_{1}$ is odd in $y_{1}$.
	
	\medskip
	On $\left[T_{0},T\right]$, we have, for $j=0,1,2,\dots,\floor{\gamma}-2$
	\begin{align}
		&\left|\mathscr{W}_{j}\left(\mathscr{V}\left(z,\mathcal{Q}_{1}\right)\right)-\mathscr{W}_{j}\left(\mathscr{V}\left(z,\mathcal{Q}_{2}\right)\right)\right|\leq C\varepsilon^{2}\left|\mathcal{Q}_{1}-\mathcal{Q}_{2}\right|\mathscr{W}_{j+1}\left(\mathscr{V}\left(z,\mathcal{Q}_{1}\right)\right),\label{f-prime-q-variation-3}\\
		&\left|\mathscr{Z}_{1}\left(\mathcal{Q}_{1}\right)-\mathscr{Z}_{1}\left(\mathcal{Q}_{2}\right)\right|+\left|\mathscr{Z}_{2}\left(\mathcal{Q}_{1}\right)-\mathscr{Z}_{2}\left(\mathcal{Q}_{2}\right)\right|\leq C\varepsilon^{2}\left|\mathcal{Q}_{1}-\mathcal{Q}_{2}\right|.\label{f-prime-q-variation-4}
	\end{align}
	The proofs for both are immediate using Theorem \ref{vortexpair-theorem}, Lemma \ref{vortex-pair-q-variation-lemma}, as well as using the explicit formulae \eqref{f-prime-vortex-pair-variation-relationship},\eqref{f-prime-vortex-pair-variation-relationship-1}, and \eqref{dq-Psi-R-variation}.
	
	\medskip
	The following lemma shows that the top order terms from $\varepsilon\mathcal{R}_{1}$ and $\varepsilon\mathcal{R}_{2}$ have desirable structure as per the discussion in Section \ref{appro}.
	\begin{lemma}\label{correct-remainder-form-lemma}
		Let $\varepsilon\mathcal{R}_{1}$ be as in \eqref{ER1-lemma-statement-2}. Then for all $\varepsilon>0$ small enough, on the support of $U_{R}$ restricted to the upper half plane we have that $\varepsilon\mathcal{R}_{1}$ can be written as
		\begin{align}
			-\gradperp_{y}\left(\Psi_{R}-c\varepsilon y_{2}\right)\cdot\grad_{y}\left(\frac{m\varepsilon^{4}\left|y-q'e_{2}\right|}{8\Gamma'\left(\left|y-q'e_{2}\right|\right)q^{3}}\left(\frac{\varrho_{2}(\left|y-q'e_{2}\right|)}{\left|y-q'e_{2}\right|^{2}}+1\right)y_{1}\left(y_{2}-q'\right)\right)+\overline{\mathcal{R}}_{1},\label{correct-remainder-form-lemma-statement}
		\end{align}
		with $\overline{\mathcal{R}}_{1}=\bigO{\left(\varepsilon^{5}\right)}$. By symmetry, an analogous statement is true for $\varepsilon\mathcal{R}_{2}$ on the support of $U_{R}$ restricted to the lower half plane in coordinates $y+q'e_{2}$.
	\end{lemma}
	
	\begin{proof}
		On $\supp{\left(f_{\varepsilon}'\right)^{+}}$, which is the same support as $U_{R}$ restricted to the upper half plane by \eqref{f-prime-vortex-pair-variation-relationship}--\eqref{tilde-W-k-def}, we let $y-q'e_{2}=re^{i\theta}$, so that
		\begin{align*}
			\gradperp_{y}\left(\Gamma\left(\left|y-q'e_{2}\right|\right)\right)\cdot\grad_{y}=-\frac{\Gamma'\left(r\right)}{r}\dell_{\theta}.
		\end{align*}
		Then we see that for any function $h(y-q'e_{2})$ which we can write as $h(r,\theta)$ as a slight abuse of notation, we have
		\begin{align*}
			h(y-q'e_{2})&=h(r,\theta)=\dell_{\theta}\left(H(r,\theta)\right)=-\frac{\Gamma'\left(r\right)}{r}\dell_{\theta}\left(\frac{r}{\Gamma'\left(r\right)}H(r,\theta)\right)\\
			&=-\gradperp_{y}\left(\Gamma\left(\left|y-q'e_{2}\right|\right)\right)\cdot\grad_{y}G(y-q'e_{2}),\\
			H(r,\theta)&=\int_{0}^{\theta}h(r,s)ds,\quad G(y-q'e_{2})=\frac{r}{\Gamma'\left(r\right)}H(r,\theta).
		\end{align*}
		Next, due to Theorem \ref{vortex-pair-properties-theorem}, we know that on the support of $U_{R}$ restricted to the upper half plane,
		\begin{align*}
			-\gradperp_{y}\left(\Gamma\left(\left|y-q'e_{2}\right|\right)\right)\cdot\grad_{y}G(y-q'e_{2})&=-\gradperp_{y}\left(\Psi_{R}-c\varepsilon y_{2}\right)\cdot\grad_{y}G(y-q'e_{2})+\mathcal{R},\\
			\left|\mathcal{R}\right|&\leq C\varepsilon^{2}\left|\nabla_{y} G\right|.
		\end{align*}
		Recalling the definition of $\varepsilon\mathcal{R}_{1}$ in \eqref{ER1-lemma-statement-2}, we obtain \eqref{correct-remainder-form-lemma-statement} by letting
		\begin{align*}
			h(y-q'e_{2})=-\frac{m\varepsilon}{4\left(q'\right)^{3}}\left(\frac{\varrho_{2}(\left|y-q'e_{2}\right|)}{\left|y-q'e_{2}\right|^{2}}+1\right)\left(y_{1}^{2}-\left(y_{2}-q'\right)^{2}\right).
		\end{align*}
	\end{proof}
	For future reference we define, in radial coordinates,
	\begin{align} 
		\mathcal{M}_{2}(r,q)\coloneqq\frac{m\varepsilon^{4}r}{8\Gamma'\left(r\right)q^{3}}\left(\frac{\varrho_{2}(r)}{r^{2}}+1\right).\label{dq-mode-1-error}
	\end{align}
	A key tool in our construction of the first approximation is an understanding of how the quantity
	\begin{align*}
		\Psi_{L}+\varepsilon\left(\dot{p}-c\right)\left(y_{2}-q'\right)-\varepsilon\dot{q}y_{1}
	\end{align*}
	behaves on the support of $U_{R}$ restricted to the upper half plane. The next lemma shows that since $\left(p_{0},q_{0}\right)$ solves \eqref{p0-q0-system}, this quantity expands as a sum of operators in $\left(p_{1},q_{1}\right)$ and $\left(\tilde{p},\tilde{q}\right)$ with good structure, and error terms that have sufficient bounds to be able to construct our first approximation. Recall the definition of $\mathcal{F}$ from \eqref{pkr-system}, and define for $h,k:I\to\mathbb{R}^{2}$, for some time interval $I$,
	\begin{align}
		\mathcal{N}\left(h\right)\left[k\right]=\begin{pmatrix}
			-\mathcal{F}\left(h_{2}+k_{2},h_{1}+k_{1}\right)+\mathcal{F}\left(h_{2},h_{1}\right)-g\left(h_{2}'+k_{2}'\right)+g\left(h_{2}'\right) \\
			\mathcal{F}\left(h_{1}+k_{1},h_{2}+k_{2}\right)-\mathcal{F}\left(h_{1},h_{2}\right) \\
		\end{pmatrix}.\label{linearised-point-vortex-operator-shorthand}
	\end{align}
	\begin{lemma}\label{psi-L-correct-form-lemma}
		For all $\varepsilon>0$ small enough, and $t\geq T_{0}$ as in \eqref{K-T0-choices}, on the support of $U_{R}$ restricted to the upper half plane, we have that
		\begin{align}
			&\Psi_{L}+\varepsilon\left(\dot{p}-c\right)\left(y_{2}-q'\right)-\varepsilon\dot{q}y_{1}=\varepsilon\left(\mathcal{N}\left(\xi_{0}\right)\left[\xi_{1}\right]+\dot{\xi}_{1}\right)\cdot\left(y-q'e_{2}\right)^{\perp}\label{psi-L-correct-form-statement}\\
			&+\varepsilon\left(\mathcal{N}\left(\xi_{0}+\xi_{1}\right)\left[\tilde{\xi}\right]+\dot{\tilde{\xi}}\right)\cdot\left(y-q'e_{2}\right)^{\perp}+\mathcal{E}_{R1}\left(y-q'e_{2},\xi\right)+\overline{\mathcal{R}}_{2}\left(y,\xi\right)+\mathcal{C}_{1}\left(t\right),\nonumber
		\end{align}
		with $\mathcal{C}_{1}$ a time-dependent function given by \eqref{C1-def}, and
		\begin{align*}
			\mathcal{E}_{R1}\left(y-q'e_{2},\xi\right)&=\mathcal{E}_{2}\left(y-q'e_{2},\xi\right)+\mathcal{E}_{3}\left(y-q'e_{2},\xi\right)+\mathcal{E}_{4}\left(y-q'e_{2},\xi\right)\nonumber\\
			&+\frac{\varepsilon^{2}\Lambda_{01}\left(q'\right)}{p^{2}}\left(y_{2}-q'\right)+\frac{\varepsilon q\Lambda_{00}\left(q'\right)}{p^{2}}\left(y_{2}-q'\right),
		\end{align*}
		where $\Lambda_{01}=\bigO{\left(\varepsilon^{3}\right)}$, $\Lambda_{00}=\bigO{\left(\varepsilon^{4}\right)}$, and in coordinates $y-q'e_{2}=re^{i\theta}$, the $\mathcal{E}_{j}$ are linear combinations of $\left(r^{j}\sin{j\theta},r^{j}\cos{j\theta}\right)$ for $j=2,3,4$, given by the formulae \eqref{first-approximation-first-error-mode-2}, \eqref{first-approximation-first-error-mode-1-3}, and \eqref{first-approximation-first-error-mode-4} respectively. Finally,
		\begin{align}
			\left(f_{\varepsilon}'\right)^{+}\left(\left|\overline{\mathcal{R}}_{2}\right|+\left|\grad_{y}\overline{\mathcal{R}}_{2}\right|\right)\leq \frac{C\varepsilon^{5}\left(f_{\varepsilon}'\right)^{+}}{t^{3}}.\label{R2-error-estimate}
		\end{align}
		An analogous statement holds on the support of $\left(f_{\varepsilon}'\right)^{-}$ in coordinates $y+q'e_{2}$, as well as for the quantity $\Psi_{R}+\varepsilon\left(\dot{p}-c\right)\left(w_{2}-q'\right)-\varepsilon\dot{q}w_{1}$ on the support of $U_{L}$ restricted to the upper half plane in coordinates $\varepsilon w=x+pe_{1}$.
	\end{lemma}
	\begin{proof}
		First we note that on the support of $\left(f_{\varepsilon}'\right)^{+}$,
		\begin{align}
			\Psi_{L}&=-\frac{m}{2}\log{\left(1+\frac{\varepsilon y_{1}}{p}+\frac{\varepsilon^{2}\left|y-q'e_{2}\right|^{2}}{4p^{2}}\right)}\nonumber\\
			&+\frac{m}{2}\log{\left(1+\frac{\varepsilon\left(py_{1}+q\left(y_{2}-q'\right)\right)}{p^{2}+q^{2}}+\frac{\varepsilon^{2}\left|y-q'e_{2}\right|^{2}}{4\left(p^{2}+q^{2}\right)}\right)}\nonumber\\
			&-\frac{m}{2}\log{\left(\frac{p^{2}+q^{2}}{p^{2}}\right)}+\psi_{q'}\left(y+2p'e_{1}\right).\label{psi-L-correct-form-1}
		\end{align}
		We begin by concentrating on the first two terms on the right hand side of \eqref{psi-L-correct-form-1}. Letting $y-q'e_{2}=re^{i\theta}$, on the support of $\left(f_{\varepsilon}'\right)^{+}$, the first term on the right hand side can be written as
		\begin{align}
			-\frac{m}{2}\left(\frac{\varepsilon y_{1}}{p}-\frac{\varepsilon^{2}r^{2}\cos{\left(2\theta\right)}}{4p^{2}}-\frac{\varepsilon^{3}r^{3}\sin{\left(3\theta\right)}}{12p^{3}}\right)-\frac{m\varepsilon^{4}r^{4}\cos{\left(4\theta\right)}}{64p^{4}}+\mathcal{R}_{3}.\label{psi-L-correct-form-3}
		\end{align}
		where $\mathcal{R}_{3}=\bigO{\left(\varepsilon^{5}t^{-5}\right)}$. 
		
		\medskip
		Next we have that on the support of $\left(f_{\varepsilon}'\right)^{+}$, as with \eqref{psi-L-correct-form-3}, we can rewrite the second term on the right hand side of \eqref{psi-L-correct-form-1} as
		\begin{align*}
			&\frac{m\varepsilon\left(py_{1}+q\left(y_{2}-q'\right)\right)}{2\left(p^{2}+q^{2}\right)}-\frac{m\varepsilon^{2}r^{2}\left(\left(q^{2}-p^{2}\right)\cos{\left(2\theta\right)}+2pq\sin{\left(2\theta\right)}\right)}{8\left(p^{2}+q^{2}\right)^{2}}\\
			&+\frac{m\varepsilon^{3}r^{3}\sin{\left(3\theta\right)}}{24}\frac{\left(p^{3}-3pq^{2}\right)}{\left(p^{2}+q^{2}\right)^{3}}+\frac{m\varepsilon^{3}r^{3}\cos{\left(3\theta\right)}}{24}\frac{\left(q^{3}-3p^{2}q\right)}{\left(p^{2}+q^{2}\right)^{3}}\nonumber\\
			&-\frac{m\varepsilon^{4}r^{4}\cos{\left(4\theta\right)}}{64}\left(\frac{\left(p^{2}-q^{2}\right)^{2}-4p^{2}q^{2}}{\left(p^{2}+q^{2}\right)^{4}}\right)-\frac{m\varepsilon^{4}r^{4}\sin{\left(4\theta\right)}}{64}\left(\frac{pq\left(q^{2}-p^{2}\right)}{\left(p^{2}+q^{2}\right)^{4}}\right)+\mathcal{R}_{4},\nonumber
		\end{align*}
		where, again, $\mathcal{R}_{4}=\bigO{\left(\varepsilon^{5}t^{-5}\right)}$ on the support of $\left(f_{\varepsilon}'\right)^{+}$. 
		
		\medskip
		For $\psi_{q'}\left(y+2p'e_{1}\right)$, we use the fact that $\psi_{q'}$ solves \eqref{psi-q-elliptic-equation}, so that we can write 
		\begin{align}
			\psi_{q'}\left(y+2p'e_{1}\right)=\frac{1}{4\pi}\int_{\mathbb{R}_{+}^{2}}\log{\left(\frac{\left|z-\overline{\left(y+2p'e_{1}\right)}\right|^{2}}{\left|z-\left(y+2p'e_{1}\right)\right|^{2}}\right)}\left(V\psi_{q'}-E+N[\psi_{q'}]\right) dz.\label{psi-q-prime-left}
		\end{align}
		We can write 
		\begin{align*}
			\log{\left(\frac{\left|z-\overline{\left(y+2p'e_{1}\right)}\right|^{2}}{\left|z-\left(y+2p'e_{1}\right)\right|^{2}}\right)}=\log{\left(1+\frac{4\left(\left(z_{2}-q'\right)+q'\right)\left(y_{2}-q'\right)+4q'\left(\left(z_{2}-q'\right)+q'\right)}{\left(\left(z_{1}-y_{1}\right)+2p'\right)^{2}+\left(\left(z_{2}-q'\right)-\left(y_{2}-q'\right)\right)^{2}}\right)}.
		\end{align*}
		Then using \eqref{psi-q-elliptic-equation}, \eqref{travelling-vortex-pair-ansatz}, and Theorems \ref{vortexpair-theorem} and \ref{vortex-pair-properties-theorem}, we can write \eqref{psi-q-prime-left} as a linear combination of
		\begin{align}
			&\Lambda_{00}\left(q'\right)\coloneqq\frac{1}{4\pi}\int_{\mathbb{R}_{+}^{2}} \left(V\psi_{q'}-E+N[\psi_{q'}]\right) dz=\bigO{\left(\varepsilon^{4}\right)},\label{Lambda-ij-sizes}\\
			&\Lambda_{ij}\left(q'\right)\coloneqq\frac{1}{4\pi}\int_{\mathbb{R}_{+}^{2}} z_{1}^{i}\left(z_{2}-q'\right)^{j}\left(V\psi_{q'}-E+N[\psi_{q'}]\right) dz=\bigO{\left(\varepsilon^{3}\right)},\nonumber\\
			&\Lambda_{lk}\left(q'\right)=\frac{1}{4\pi}\int_{\mathbb{R}_{+}^{2}} z_{1}^{l}\left(z_{2}-q'\right)^{k}\left(V\psi_{q'}-E+N[\psi_{q'}]\right) dz=\bigO{\left(\varepsilon^{2}\right)},\nonumber
		\end{align}
		for $\left(i,j\right)\in\left\{\left(1,0\right), \left(0,1\right)\right\}$ and $\left(l,k\right)\in\left\{\left(1,1\right), \left(2,0\right), \left(0,2\right)\right\}$. Using \eqref{Lambda-ij-sizes}, we obtain that on the support of $\left(f_{\varepsilon}'\right)^{+}$,
		\begin{align*}
			&\psi_{q'}\left(y+2p'e_{1}\right)=\frac{\varepsilon^{2}\Lambda_{01}}{p^{2}}\left(y_{2}-q'\right)+\frac{\varepsilon q\Lambda_{00}}{p^{2}}\left(y_{2}-q'\right)\nonumber\\
			&+\frac{\varepsilon q\Lambda_{01}}{p^{2}}-\frac{\varepsilon^{2} q\Lambda_{11}}{p^{3}}+\frac{\varepsilon q^{3}\Lambda_{01}}{p^{4}}-2\frac{\varepsilon^{2}q^{3}\Lambda_{11}}{p^{5}}\\
			&+\frac{1}{4\pi}\int_{\mathbb{R}_{+}^{2}}\log{\left(1+\frac{q^{2}}{p^{2}}\frac{1}{1+\frac{\varepsilon z_{1}}{p}+\frac{\varepsilon^{2}\left|z-q'e_{2}\right|^{2}}{4p^{2}}}\right)}\left(V\psi_{q'}-E+N[\psi_{q'}]\right) dz+\mathcal{R}_{5}\nonumber
		\end{align*}
		where $\mathcal{R}_{5}=\bigO{\left(\varepsilon^{5}t^{-3}\right)}$. Therefore, we have that on the support of $\left(f_{\varepsilon}'\right)^{+}$,
		\begin{align}
			&\Psi_{L}+\varepsilon\left(\dot{p}-c\right)\left(y_{2}-q'\right)-\varepsilon\dot{q}y_{1}\nonumber\\
			&=\varepsilon\left(-\mathcal{F}\left(q,p\right)-g\left(q'\right)+\dot{p}\right)\left(y_{2}-q'\right)-\varepsilon\left(\mathcal{F}\left(p,q\right)+\dot{q}\right)y_{1}\nonumber\\
			&+\mathcal{E}_{2}+\mathcal{E}_{3}+\mathcal{E}_{4}+\mathcal{R}_{3}+\mathcal{R}_{4}+\mathcal{R}_{5}+\mathcal{C}_{1}+\frac{\varepsilon^{2}\Lambda_{01}}{p^{2}}\left(y_{2}-q'\right)+\frac{\varepsilon q\Lambda_{00}}{p^{2}}\left(y_{2}-q'\right),\label{psi-L-correct-form-9a}
		\end{align}
		where $\mathcal{C}_{1}(t)$ is defined as the time-dependent function
		\begin{align}
			\psi_{q'}\left(y+2p'e_{1}\right)-\frac{\varepsilon^{2}\Lambda_{01}}{p^{2}}\left(y_{2}-q'\right)+\frac{\varepsilon q\Lambda_{00}}{p^{2}}\left(y_{2}-q'\right)-\mathcal{R}_{5}-\frac{m}{2}\log{\left(\frac{p^{2}+q^{2}}{p^{2}}\right)},\label{C1-def}
		\end{align}
		and $\mathcal{F}$ is defined in \eqref{pkr-system}.
		
		\medskip
		Letting $y-q'e_{2}=re^{i\theta}$, $\mathcal{E}_{2}=\mathcal{E}_{2}\left(y-q'e_{2},p,q\right)$ is defined as
		\begin{align}
			&\frac{m\varepsilon^{2}r^{2}}{8}\left(-\frac{\cos{\left(2\theta\right)}}{p^{2}}+\frac{\left(p^{2}-q^{2}\right)\cos{\left(2\theta\right)}+2pq\sin{\left(2\theta\right)}}{\left(p^{2}+q^{2}\right)^{2}}\right)\label{first-approximation-first-error-mode-2}
		\end{align}
		$\mathcal{E}_{3}=\mathcal{E}_{3}\left(y-q'e_{2},p,q\right)$ is defined as
		\begin{align}
			&\frac{m\varepsilon^{3}r^{3}}{24}\left(\sin{\left(3\theta\right)}\left(\frac{1}{p^{3}}+\frac{\left(p^{3}-3pq^{2}\right)}{\left(p^{2}+q^{2}\right)^{3}}\right)+\cos{\left(3\theta\right)}\frac{\left(q^{3}-3p^{2}q\right)}{\left(p^{2}+q^{2}\right)^{3}}\right),\label{first-approximation-first-error-mode-1-3}
		\end{align}
		and $\mathcal{E}_{4}=\mathcal{E}_{4}\left(y-q'e_{2},p,q\right)$ is defined as
		\begin{align}
			&\frac{m\varepsilon^{4}r^{4}}{64}\left(-\frac{\cos{\left(4\theta\right)}}{p^{4}}-\cos{\left(4\theta\right)}\left(\frac{\left(p^{2}-q^{2}\right)^{2}-4p^{2}q^{2}}{\left(p^{2}+q^{2}\right)^{4}}\right)-\sin{\left(4\theta\right)}\left(\frac{pq\left(q^{2}-p^{2}\right)}{\left(p^{2}+q^{2}\right)^{4}}\right)\right).\label{first-approximation-first-error-mode-4}
		\end{align}
		We will use $\mathcal{E}_{j}\left(y-q'e_{2},p,q\right)$, $\mathcal{E}_{j}\left(y-q'e_{2},\xi\right)$, $\mathcal{E}_{j}\left(p,q\right)$, and $\mathcal{E}_{j}\left(\xi\right)$ interchangeably for $j=2,3,4$.
		
		\medskip
		Now, concentrating on the first two terms on the right hand side of \eqref{psi-L-correct-form-9a}, we recall that $\left(p_{0},q_{0}\right)$ solves the system \eqref{p0-q0-system}. Then, letting 
		\begin{align}
			\mathcal{E}_{R1}\left(y-q'e_{2},\xi\right)&=\mathcal{E}_{2}\left(y-q'e_{2},\xi\right)+\mathcal{E}_{3}\left(y-q'e_{2},\xi\right)+\mathcal{E}_{4}\left(y-q'e_{2},\xi\right)\nonumber\\
			&+\frac{\varepsilon^{2}\Lambda_{01}}{p^{2}}\left(y_{2}-q'\right)+\frac{\varepsilon q\Lambda_{00}}{p^{2}}\left(y_{2}-q'\right),\label{ER1-first-approximation-definition}
		\end{align}
		and defining
		\begin{align}
			\overline{\mathcal{R}}_{2}\left(y-q'e_{2},\xi\right)=\mathcal{R}_{3}\left(y-q'e_{2},\xi\right)+\mathcal{R}_{4}\left(y-q'e_{2},\xi\right)+\mathcal{R}_{5}\left(y-q'e_{2},\xi\right),\label{overline-R2-definition}
		\end{align}
		and comparing \eqref{psi-L-correct-form-9a} with \eqref{psi-L-correct-form-statement} keeping the definition \eqref{linearised-point-vortex-operator-shorthand} in mind, we obtain the desired identity.
	\end{proof}
	Note that $\mathcal{C}_{1}$ defined in \eqref{C1-def} does not play an important role in our construction as we will in reality be dealing with the quantity $\nabla_{y}\left(\Psi_{L}+\varepsilon\left(\dot{p}-c\right)\left(y_{2}-q'\right)-\varepsilon\dot{q}y_{1}\right)$, and the spatial derivatives will annihilate $\mathcal{C}_{1}$. We include it in the statement \eqref{psi-L-correct-form-statement} for completeness.
	
	\medskip
	Now we construct the first approximation. Recall the definitions of $\psi^{in}$, $\phi^{in}$, $E_{R1}$, $E_{L1}$, and $E_{2}^{out}$ from \eqref{psi-in-definition}--\eqref{first-approximation-outer-error-E2out-definition}.
	\begin{theorem}\label{first-approximation-construction-theorem}
		Let $t\in\left[T_{0},T\right]$ for $T_{0}$ as in \eqref{K-T0-choices} and $T$ arbitrarily large. Then there is a constant $C>0$ such that for all $\varepsilon>0$ small enough, there exists $\xi_{1}$ satisfying \eqref{xi1-bound} such that for any $\xi$ as in \eqref{point-vortex-trajectory-function-decomposition-new}, there exists functions $\psi_{jk}$, $\phi_{jk}$, $j=R,L$, $k=1,2$, $\psi^{out}$ such that $\phi_{Rk}$ has the same support as $U_{R}$, and
		\begin{align*}
			\|\psi_{R1}\|_{L^{\infty}\left(\mathbb{R}^{2}_{+}\right)}+\|\nabla\psi_{R1}\|_{L^{\infty}\left(\mathbb{R}^{2}_{+}\right)}+\|\phi_{R1}\|_{L^{\infty}\left(\mathbb{R}^{2}_{+}\right)}&\leq \frac{C\varepsilon^{2}}{t^{2}},\\
			\|\psi_{R2}\|_{L^{\infty}\left(\mathbb{R}^{2}_{+}\right)}+\|\nabla\psi_{R2}\|_{L^{\infty}\left(\mathbb{R}^{2}_{+}\right)}+\|\phi_{R2}\|_{L^{\infty}\left(\mathbb{R}^{2}_{+}\right)}&\leq \frac{C\varepsilon^{4}}{t^{2}},
		\end{align*}
		with analogous estimates for $\psi_{Lk}$, $\phi_{Lk}$, $k=1,2$ and $U_{L}$. Moreover,
		\begin{align*}
			\|\psi^{out}\|_{L^{\infty}\left(\mathbb{R}^{2}_{+}\right)}+\|\nabla\psi^{out}\|_{L^{\infty}\left(\mathbb{R}^{2}_{+}\right)}\leq\frac{C\varepsilon^{4}\left|\log{t}\right|}{t^{3}},
		\end{align*}
		such that if $\psi_{R}=\psi_{R1}+\psi_{R2}$, $\phi_{R}=\phi_{R1}+\phi_{R2}$ in \eqref{psi-star-definition}, and analogously for $\psi_{L}$ and $\phi_{L}$, we have, on $\mathbb{R}^{2}_{+}$, for some functions $\mathcal{R}^{*}_{k}=\mathcal{R}^{*}_{k}\left(y,\xi,\dot{\xi}\right)$, $k=1,2,3$, we have
		\begin{align*}
			E_{R1}&=-\varepsilon\left(\mathcal{N}\left(\xi_{0}+\xi_{1}\right)\left[\tilde{\xi}\right]+\dot{\tilde{\xi}}\right)\cdot\grad_{y}U_{R}+\left(f_{\varepsilon}'\right)^{+}\mathcal{R}^{*}_{1}+\left(f_{\varepsilon}''\right)^{+}\mathcal{R}^{*}_{2}+\left(f_{\varepsilon}'''\right)^{+}\mathcal{R}^{*}_{3},
		\end{align*}
		with,
		\begin{align*}
			\left|\left(f_{\varepsilon}'\right)^{+}\mathcal{R}^{*}_{1}\right|\leq\frac{C\varepsilon^{5}\left(f_{\varepsilon}'\right)^{+}}{t^{3}},\quad \left|\left(f_{\varepsilon}''\right)^{+}\mathcal{R}^{*}_{2}\right|\leq\frac{C\varepsilon^{5}\left(f_{\varepsilon}''\right)^{+}}{t^{3}},\quad \left|\left(f_{\varepsilon}'''\right)^{+}\mathcal{R}^{*}_{3}\right|\leq\frac{C\varepsilon^{5}}{t^{3}}\left(f_{\varepsilon}'''\right)^{+},
		\end{align*}
		and analogously for $E_{R1}$ on the lower half plane, as well as for $E_{L1}$. Finally,
		\begin{align*}
			E_{2}^{out}\left(\phi^{in},\psi^{in},\psi^{out};\xi\right)=\bigO{\left(\frac{\varepsilon^{5}}{t^{5}}\right)}.
		\end{align*}
	\end{theorem}
	\begin{remark}\label{grad-psi-R-psi-out-bounds-remark}
		Recalling \eqref{psi-R-phi-R-defs} and \eqref{psi-star-definition} respectively, we note that the gradient estimates stated in Theorem \ref{first-approximation-construction-theorem} are for $\nabla_{y}\psi_{R}\left(y,t\right)$, and $\nabla_{x}\psi^{out}\left(x,t\right)$ respectively.
	\end{remark}
	The construction will proceed as follows: we rewrite $E_{R1}$ from its form in Lemma \ref{ER1-lemma} and then construct an elliptic improvement. We then use this elliptic improvement to construct $\psi^{out}$. Then we analyze the errors created by the first elliptic improvement in order to both construct a second elliptic improvement, and finally construct $\xi_{1}$ to make sure the remaining errors are small enough.
	\begin{proof} 
		\textbf{Step I: Rewriting $E_{R1}$ Part A}
		
		\medskip
		We first decompose $\phi_{R}=\phi_{R1}+\phi_{R2}$ and $\psi_{R}=\psi_{R}=\psi_{R1}+\psi_{R2}$ so that
		\begin{align*}
			-\Delta \psi_{Rk}=\phi_{Rk},\ k=1,2,
		\end{align*}
		where the Poisson equations mean solving in $L^{\infty}\left(\mathbb{R}^2_{+}\right)$, with boundary condition given by $\psi_{Rk}\left(y_{1},0\right)\equiv0$. We then note that by , on the support of $\left(f_{\varepsilon}'\right)^{+}$, $\dot{q}\overline{\mathcal{R}}_{1}=\bigO{\left(\varepsilon^{5}t^{-3}\right)}$. Let
		\begin{align*}
			\mathcal{R}_{6}&=\mathcal{E}_{2}\left(\xi\right)-\mathcal{E}_{2}\left(\xi_{0}\right),\ \mathcal{R}_{7}=\mathcal{E}_{3}\left(\xi\right)-\mathcal{E}_{3}\left(\xi_{0}\right),\ \mathcal{R}_{8}=\mathcal{E}_{4}\left(\xi\right)-\mathcal{E}_{4}\left(\xi_{0}\right),\\
			\mathcal{R}_{9}&=\left[\left(\frac{\varepsilon^{2}\Lambda_{01}\left(q'\right)}{p^{2}}+\frac{\varepsilon q\Lambda_{00}\left(q'\right)}{p^{2}}\right)-\left(\frac{\varepsilon^{2}\Lambda_{01}\left(q_{0}'\right)}{p_{0}^{2}}+\frac{\varepsilon q_{0}\Lambda_{00}\left(q_{0}'\right)}{p_{0}^{2}}\right)\right]\left(y_{2}-q'\right)\\
			&+\overline{\mathcal{R}}_{2}\left(\xi\right)-\overline{\mathcal{R}}_{2}\left(\xi_{0}\right).
		\end{align*}
		Let $\overline{\mathcal{R}}_{3}=\mathcal{R}_{6}+\dots+\mathcal{R}_{9}$. Using \eqref{point-vortex-trajectory-function-decomposition-new}--\eqref{tildexi-bound}, \eqref{dq-mode-1-error}, \eqref{first-approximation-first-error-mode-2}--\eqref{overline-R2-definition}, \eqref{Lambda-ij-sizes}, and Lemmas \ref{p-q-rational-function-linearisation-lemma}, \ref{correct-remainder-form-lemma}, \eqref{g-pointwise-bounds} and \ref{vortex-pair-q-variation-lemma}, we have that
		\begin{align}
			&\left|-\dot{q}_{0}\left(f_{\varepsilon}'\right)^{+}\gradperp_{y}\left(\Psi_{R}-c\varepsilon y_{2}\right)\cdot\grad_{y}\left(\left(\mathcal{M}_{2}(q)-\mathcal{M}_{2}(q_{0})\right)y_{1}\left(y_{2}-q'\right)\right)\right|\nonumber\\
			&+\left|\left(f_{\varepsilon}'\right)^{+}\gradperp_{y}\left(\Psi_{R}-c\varepsilon y_{2}\right)\cdot\grad_{y}\left(\overline{\mathcal{R}}_{3}\right)\right|\nonumber+\left|\dot{q}\left(f_{\varepsilon}'\right)^{+}\overline{\mathcal{R}}_{1}\left(y,q\right)\right|\nonumber\\
			&+\left|-\left(\dot{q}_{1}+\dot{\tilde{q}}\right)\left(f_{\varepsilon}'\right)^{+}\gradperp_{y}\left(\Psi_{R}-c\varepsilon y_{2}\right)\cdot\grad_{y}\left(\mathcal{M}_{2}(q)y_{1}\left(y_{2}-q'\right)\right)\right|\leq \frac{C\varepsilon^{5}\left(f_{\varepsilon}'\right)^{+}}{t^{3}}.\label{first-approximation-construction-2a}
		\end{align}
		Letting $\overline{\mathcal{R}}_{4}$ be the sum of the error terms estimated in \eqref{first-approximation-construction-2a}, we can use Lemma \ref{psi-L-correct-form-lemma} to rewrite \eqref{ER1-lemma-statement-1}:
		\begin{align}
			&E_{R1}=\left(\varepsilon^{2}\dell_{t}\phi_{R}-\left[\gradperp_{y}\Psi_{L}+\varepsilon\left(\Dot{p}-c\right)e_{1}\right]\cdot\grad_{y}\phi_{R}\right)+\gradperp_{y}\psi_{R}\cdot\grad_{y}\phi_{R}\label{first-approximation-construction-8}\\
			&-\gradperp_{y}\left(\Psi_{R}-c\varepsilon y_{2}\right)\cdot\grad_{y}\left(\Delta\psi_{R}+\left(f_{\varepsilon}'\right)^{+}\psi_{R}-\left(f_{\varepsilon}'\right)^{+}\Sigma_{j=2}^{4}\mathcal{E}_{j}\left(\xi_{0}\right)-\left(f_{\varepsilon}'\right)^{+}\overline{\mathcal{R}}_{2}\left(\xi_{0}\right)\right)\nonumber\\
			&+\gradperp_{y}\left(\Psi_{R}-c\varepsilon y_{2}\right)\cdot\grad_{y}\left(\left(f_{\varepsilon}'\right)^{+}\left(\frac{\varepsilon^{2}\Lambda_{01}\left(q_{0}'\right)}{p_{0}^{2}}+\frac{\varepsilon q_{0}\Lambda_{00}\left(q_{0}'\right)}{p_{0}^{2}}\right)\left(y_{2}-q'\right)\right)\nonumber\\
			&+\gradperp_{y}\left(\Psi_{R}-c\varepsilon y_{2}\right)\cdot\grad_{y}\left(\varepsilon\left(f_{\varepsilon}'\right)^{+}\left(\mathcal{N}\left(\xi_{0}\right)\left[\xi_{1}\right]+\dot{\xi}_{1}\right)\cdot\left(y-q'e_{2}\right)^{\perp}\right)\nonumber\\
			&-\gradperp_{y}\left(\Psi_{R}-c\varepsilon y_{2}\right)\cdot\grad_{y}\left(\dot{q}_{0}\left(f_{\varepsilon}'\right)^{+}\mathcal{M}_{2}(q_{0})y_{1}\left(y_{2}-q'\right)\right)\nonumber\\
			&+\gradperp_{y}\left(\Psi_{R}-c\varepsilon y_{2}\right)\cdot\grad_{y}\left(\varepsilon\left(f_{\varepsilon}'\right)^{+}\left(\mathcal{N}\left(\xi_{0}+\xi_{1}\right)\left[\tilde{\xi}\right]+\dot{\tilde{\xi}}\right)\cdot\left(y-q'e_{2}\right)^{\perp}+\left(f_{\varepsilon}'\right)^{+}\overline{\mathcal{R}}_{4}\right)\nonumber\\
			&+\gradperp_{y}\psi^{out}\cdot\grad_{y}\left(U_{R}+\phi_{R1}+\phi_{R2}\right).\nonumber
		\end{align}
		
		\medskip
		\noindent \textbf{Step II: Constructing $\psi_{R1}$}
		
		\medskip
		We construct $\psi_{R1}$ on the upper half plane $\mathbb{R}^{2}_{+}$ with boundary condition $\psi_{R1}\left(y_{1},0\right)=0$ by solving the equation
		\begin{align}
			&\Delta\psi_{R1}+\left(f_{\varepsilon}'\right)^{+}\psi_{R1}=\left(f_{\varepsilon}'\right)^{+}\left(\mathcal{E}_{2}\left(\xi_{0}\right)+\mathcal{E}_{3}\left(\xi_{0}\right)+\mathcal{E}_{4}\left(\xi_{0}\right)\right)+\left(f_{\varepsilon}'\right)^{+}\overline{\mathcal{R}}_{2}\left(\xi_{0}\right)\label{psi-R1-equation}\\
			&+\left(f_{\varepsilon}'\right)^{+}\left(\frac{\varepsilon^{2}\Lambda_{01}\left(q_{0}'\right)}{p_{0}^{2}}+\frac{\varepsilon q_{0}\Lambda_{00}\left(q_{0}'\right)}{p_{0}^{2}}\right)\left(y_{2}-q'\right)-\dot{q}_{0}\left(f_{\varepsilon}'\right)^{+}\mathcal{M}_{2}(q_{0})y_{1}\left(y_{2}-q'\right)\nonumber\\
			&+\alpha_{1}\left(\xi\right)\left(f_{\varepsilon}'\right)^{+}\mathscr{Z}_{1}\left(q\right)+\alpha_{2}\left(\xi\right)\left(f_{\varepsilon}'\right)^{+}\mathscr{Z}_{2}\left(q\right),\nonumber
		\end{align}
		where we recall from \eqref{dq-Psi-R-variation} that $\dell_{1}\Psi_{R}=\mathscr{Z}_{1}\left(q\right)$ and $\dell_{q'}\left(\Psi_{R}-c\varepsilon y_{2}-\left|\log{\varepsilon}\right|\Omega\right)=\mathscr{Z}_{2}\left(q\right)$. The theory for this projected problem can be adapted by modifying the proofs of Lemmas 2.1--2.3, and Theorem 2.4 from \cite{DDMPVP2023}, from the linear operator $\Delta+V$, to the linear operator $\Delta+\left(f_{\varepsilon}'\right)^{+}$, with $V$ defined in \eqref{V-E-vortex-pair}. Thus $\psi_{R1}$ exists, and is unique. The theory of solutions adapted from \cite{DDMPVP2023}, as well as Lemma \ref{psi-L-correct-form-lemma}, imply that $\psi_{R1},\alpha_{1}$, and $\alpha_{2}$ have bounds
		\begin{align}
			\|\psi_{R1}\|_{L^{\infty}\left(\mathbb{R}^{2}_{+}\right)}+\|\nabla\psi_{R1}\|_{L^{\infty}\left(\mathbb{R}^{2}_{+}\right)}+\|\phi_{R1}\|_{L^{\infty}\left(\mathbb{R}^{2}_{+}\right)}+\left|\alpha_{1}\right|+\left|\alpha_{2}\right|\leq \frac{C\varepsilon^{2}}{t^{2}}.\label{psi-R1-bounds}
		\end{align}
		We can find better bounds for $\alpha_{1}$ and $\alpha_{2}$. Noting $\mathscr{Z}_{1}\left(q\right)$ and $\mathscr{Z}_{2}\left(q\right)$ are in the kernel of $\Delta + \left(f_{\varepsilon}'\right)^{+}$, we first integrate \eqref{psi-R1-equation} against $\mathscr{Z}_{1}(q)$ and obtain
		\begin{align}
			\alpha_{1}\left(\xi\right)\left(\int_{\mathbb{R}^{2}_{+}}\left(f_{\varepsilon}'\right)^{+}\mathscr{Z}_{1}^{2}\right)&=-\int_{\mathbb{R}^{2}_{+}}\left(f_{\varepsilon}'\right)^{+}\left(\mathcal{E}_{2}+\mathcal{E}_{3}+\mathcal{E}_{4}+\overline{\mathcal{R}}_{2}\right)\left(\xi_{0}\right)\mathscr{Z}_{1}\ dy\label{psi-R1-coefficient-of-projection-1-1}\\
			&-\int_{\mathbb{R}^{2}_{+}}\left(f_{\varepsilon}'\right)^{+}\left(\frac{\varepsilon^{2}\Lambda_{01}\left(q_{0}'\right)}{p_{0}^{2}}+\frac{\varepsilon q_{0}\Lambda_{00}\left(q_{0}'\right)}{p_{0}^{2}}\right)\left(y_{2}-q'\right)\mathscr{Z}_{1}\ dy\nonumber\\
			&+\int_{\mathbb{R}^{2}_{+}}\dot{q}_{0}\left(f_{\varepsilon}'\right)^{+}\mathcal{M}_{2}(q_{0})y_{1}\left(y_{2}-q'\right)\mathscr{Z}_{1}\ dy.\nonumber
		\end{align}
		Now, as $\Psi_{R}$ was constructed to be even in $y_{1}$, we have that $\left(f_{\varepsilon}'\right)^{+}\mathscr{Z}_{1}(q)$ is odd in $y_{1}$. Thus, noting the definition of $\mathcal{E}_{2}$, $\mathcal{E}_{3}$, and $\mathcal{E}_{4}$ in \eqref{first-approximation-first-error-mode-2}--\eqref{first-approximation-first-error-mode-4}, \eqref{psi-R1-coefficient-of-projection-1-1} becomes
		\begin{align}
			&\alpha_{1}\left(\xi\right)\left(\int_{\mathbb{R}^{2}_{+}}\left(f_{\varepsilon}'\right)^{+}\mathscr{Z}_{1}^{2}\right)=-\frac{m\varepsilon^{2}}{2}\int_{\mathbb{R}^{2}_{+}}\left(f_{\varepsilon}'\right)^{+}\mathscr{Z}_{1}\frac{p_{0}q_{0}y_{1}\left(y_{2}-q'\right)}{\left(p_{0}^{2}+q_{0}^{2}\right)^{2}} dy\label{psi-R1-coefficient-of-projection-1-2}\\
			&+\frac{m\varepsilon^{3}}{2}\int_{\mathbb{R}^{2}_{+}}\left(f_{\varepsilon}'\right)^{+}\mathscr{Z}_{1}\left(\frac{1}{12p_{0}^{3}}+\frac{\left(p_{0}^{3}-3p_{0}q_{0}^{2}\right)}{12\left(p_{0}^{2}+q_{0}^{2}\right)^{3}}\right)\left(4y_{1}^{3}-3\left|y-q'e_{2}\right|^{2}y_{1}\right) dy\nonumber\\
			&-\int_{\mathbb{R}^{2}_{+}}\left(f_{\varepsilon}'\right)^{+}\left(\mathcal{E}_{4}+\overline{\mathcal{R}}_{2}\right)\left(\xi_{0}\right)\mathscr{Z}_{1}\ dy+\int_{\mathbb{R}^{2}_{+}}\dot{q}_{0}\left(f_{\varepsilon}'\right)^{+}\mathcal{M}_{2}(q_{0})y_{1}\left(y_{2}-q'\right)\mathscr{Z}_{1}\ dy.\nonumber
		\end{align}
		From \eqref{dq-Psi-R-variation}, we see that
		\begin{align*}
			\dell_{1}\Psi_{R}=\dell_{1}\Gamma\left(\left|y-q'e_{2}\right|\right)+\bigO{\left(\varepsilon^{2}\right)}
		\end{align*}
		on the support of $\left(f_{\varepsilon}'\right)^{+}$, and note that the first term on the right hand side is mode $1$ in coordinates $y-q'e_{2}$. We can also infer from Theorem \ref{vortex-pair-properties-theorem} and \eqref{f-prime-plus-definition}, that $\left(f_{\varepsilon}'\right)^{+}$ is, to main order, a radial function in $y-q'e_{2}$, with a remainder of size $\varepsilon^{2}$. Combining these facts with \eqref{xi0-bound}, we obtain
		\begin{align}
			\left|\alpha_{1}\right|\leq \frac{C\varepsilon^{4}}{t^{3}}.\label{psi-R1-coefficient-of-projection-1-3}
		\end{align}
		Next, recalling \eqref{dq-Psi-R-variation}, we integrate \eqref{psi-R1-equation} against $\mathscr{Z}_{2}\left(q\right)$. Similarly to \eqref{psi-R1-coefficient-of-projection-1-1}--\eqref{psi-R1-coefficient-of-projection-1-2}, we use the fact that $\mathscr{Z}_{2}\left(q\right)$ is even in $y_{1}$ to obtain
		\begin{align}
			&\alpha_{2}\left(\xi\right)\left(\int_{\mathbb{R}^{2}_{+}}\left(f_{\varepsilon}'\right)^{+}\mathscr{Z}_{2}^{2}\right)=-\int_{\mathbb{R}^{2}_{+}}\left(f_{\varepsilon}'\right)^{+}\left(\mathcal{E}_{4}+\overline{\mathcal{R}}_{2}\right)\left(\xi_{0}\right)\mathscr{Z}_{1}\ dy\label{psi-R1-coefficient-of-projection-2-1}\\
			&-\frac{m\varepsilon^{2}}{2}\int_{\mathbb{R}^{2}_{+}}\left(f_{\varepsilon}'\right)^{+}\mathscr{Z}_{2}\left(\frac{1}{4p_{0}^{2}}-\frac{\left(p_{0}^{2}-q_{0}^{2}\right)}{4\left(p_{0}^{2}+q_{0}^{2}\right)^{2}}\right)\left(y_{1}^{2}-\left(y_{2}-q'\right)^{2}\right) dy\nonumber\\
			&-\frac{m\varepsilon^{3}}{2}\int_{\mathbb{R}^{2}_{+}}\left(f_{\varepsilon}'\right)^{+}\mathscr{Z}_{2}\frac{\left(q_{0}^{3}-3p_{0}^{2}q_{0}\right)}{12\left(p_{0}^{2}+q_{0}^{2}\right)^{3}}\left(3\left|y-q'e_{2}\right|^{2}\left(y_{2}-q'\right)-4\left(y_{2}-q'\right)^{3}\right) dy.\nonumber
		\end{align}
		The modal considerations when calculating the size of $\alpha_{1}$ in \eqref{psi-R1-coefficient-of-projection-1-3} hold for $\alpha_{2}$, and thus from \eqref{psi-R1-coefficient-of-projection-2-1}, we obtain that
		\begin{align}
			\left|\alpha_{2}\right|\leq\frac{C\varepsilon^{5}}{t^{2}}.\label{psi-R1-coefficient-of-projection-2-2}
		\end{align}
		Moreover, using the change of variables $z=y-q'e_{2}$ on the integrals on both sides of \eqref{psi-R1-coefficient-of-projection-2-1} as well as Lemma \ref{vortex-pair-q-variation-lemma}, \eqref{f-prime-q-variation-4} and bounds \eqref{xi0-bound}--\eqref{tildexi-bound}, we have that
		\begin{align}
			\left|\alpha_{1}\left(\xi\right)-\alpha_{1}\left(\xi_{0}\right)\right|+\left|\alpha_{2}\left(\xi\right)-\alpha_{2}\left(\xi_{0}\right)\right|\leq\frac{C\varepsilon^{5}}{t^{4}}.\label{psi-R1-coefficient-of-projection-2-3}
		\end{align}
		Let $\mathcal{R}_{12}$ be defined by
		\begin{align}
			\mathcal{R}_{12}=\left(\alpha_{1}\left(\xi\right)-\alpha_{1}\left(\xi_{0}\right)\right)\mathscr{Z}_{1}\left(q\right)+\left(\alpha_{2}\left(\xi\right)-\alpha_{2}\left(\xi_{0}\right)\right)\mathscr{Z}_{2}\left(q\right).\label{first-approximation-construction-10}
		\end{align}
		Then \eqref{gradperp-f-j-prime-plus-0}, \eqref{psi-R1-coefficient-of-projection-1-3}--\eqref{psi-R1-coefficient-of-projection-2-3} gives
		\begin{align}
			\left|\gradperp_{y}\left(\Psi_{R}-c\varepsilon y_{2}\right)\cdot\grad_{y}\left(\left(f_{\varepsilon}'\right)^{+}\mathcal{R}_{12}\right)\right|\leq\frac{C\varepsilon^{5}\left(f_{\varepsilon}'\right)^{+}}{t^{4}}.\label{R-12-bound}
		\end{align}
		Next, we see from the estimate for $\mathcal{E}_{4}$ from \eqref{first-approximation-first-error-mode-4}, the estimate for $\overline{\mathcal{R}}_{2}$ from \eqref{R2-error-estimate}, the estimates for $\varepsilon^{2}\Lambda_{01}p_{0}^{-2}$ and $\varepsilon\Lambda_{00}p_{0}^{-2}$ from \eqref{xi0-bound} and \eqref{Lambda-ij-sizes}, the estimate for $\mathcal{M}_{2}$ inferred from \eqref{dq-mode-1-error}, and the estimates for $\alpha_{1}$ and $\alpha_{2}$ from \eqref{psi-R1-coefficient-of-projection-1-3} and \eqref{psi-R1-coefficient-of-projection-2-2}, that \eqref{psi-R1-equation} can be written as 
		\begin{align}
			\Delta\psi_{R1}+\left(f_{\varepsilon}'\right)^{+}\psi_{R1}&=\left(f_{\varepsilon}'\right)^{+}\left(\mathcal{E}_{2}\left(\xi_{0}\right)+\mathcal{E}_{3}\left(\xi_{0}\right)\right)+\left(f_{\varepsilon}'\right)^{+}\tilde{\mathcal{E}},\label{psi-R1-equation-different-form}
		\end{align}
		where $\tilde{\mathcal{E}}$ is an error term of size $\varepsilon^{4}t^{-2}$ on the support of $\left(f_{\varepsilon}'\right)^{+}$. Next we know from Theorem \ref{vortex-pair-properties-theorem} and \eqref{f-prime-plus-definition} that $\left(f_{\varepsilon}'\right)^{+}=\gamma\Gamma^{\gamma-1}_{+}\left(\left|y-q'e_{2}\right|\right)+\bigO{\left(\varepsilon^{2}\right)}$. So define $\psi_{R1,2}$, $\psi_{R1,3}$ and $\psi_{R1,rem}$ such that, for $j=2,3$,
		\begin{subequations}\label{psi-R1-split-equations}
			\begin{align}
				&\Delta\psi_{R1,j}+\gamma\Gamma^{\gamma-1}_{+}\psi_{R1,j}=\gamma\Gamma^{\gamma-1}_{+}\mathcal{E}_{j}\left(\xi_{0}\right),\label{psi-R12-equation}\\
				&\Delta\psi_{R1,rem}+\left(f_{\varepsilon}'\right)^{+}\psi_{R1,rem}=\left(\left(f_{\varepsilon}'\right)^{+}-\gamma\Gamma^{\gamma-1}_{+}\right)\left(\sum_{j=2}^{3}\left(\mathcal{E}_{j}-\psi_{R1,j}\right)\right)+\left(f_{\varepsilon}'\right)^{+}\tilde{\mathcal{E}}.\label{psi-R1-rem-equation}
			\end{align}
		\end{subequations}
		Noting from \eqref{first-approximation-first-error-mode-2} and \eqref{first-approximation-first-error-mode-1-3} that, for $j=2,3$, $\mathcal{E}_{j}$ is a linear combination of mode $j$ terms in coordinates $y-q'e_{2}=re^{i\theta}$, we can apply Lemma \ref{vortex-linearised-equation-fourier-coefficients-behaviour-lemma} to solve both linear equations stated in \eqref{psi-R12-equation} on $\mathbb{R}^{2}$ and obtain the existence and uniqueness of $\psi_{R1,2}$ and $\psi_{R1,3}$. Moreover, with definition \eqref{vortex-linearised-equation-rk-error-solution} in hand, we can write
		\begin{align}
			\psi_{R1,j}\left(\left|y-q'e_{2}\right|\right)=\frac{\varrho_{j}\left(\left|y-q'e_{2}\right|\right)}{\left|y-q'e_{2}\right|^{j}}\mathcal{E}_{j}.\label{psi-R12-psi-R13-formulae}
		\end{align}
		We note that \eqref{psi-R12-psi-R13-formulae} and \eqref{variation-of-parameters} immediately imply that $\psi_{R1,j}$ and $\nabla\psi_{R1,j}$ are of size $\varepsilon^{j}t^{-j}$ on $\mathbb{R}^{2}$ for $j=2,3$. 
		
		\medskip
		Next, defining $\psi_{R1,rem}=\psi_{R1}-\psi_{R1,2}-\psi_{R1,3}$, from \eqref{psi-R1-equation-different-form} and \eqref{psi-R12-equation} we see that $\psi_{R1,rem}$ satisfies \eqref{psi-R1-rem-equation}. Moreover, we can once again adapt the theory from \cite{DDMPVP2023}, as well as use the fact that $\left(f_{\varepsilon}'\right)^{+}-\gamma\Gamma^{\gamma-1}_{+}=\bigO{\left(\varepsilon^{2}\right)}$ from Theorem \ref{vortex-pair-properties-theorem} and \eqref{f-prime-plus-definition}, along with the bounds for $\psi_{R1}$, $\tilde{\mathcal{E}}$, $\psi_{R1,2}$ and $\psi_{R1,3}$ to infer that
		\begin{align*}
			\|\psi_{R1,rem}\|_{L^{\infty}\left(\mathbb{R}^{2}_{+}\right)}+\|\nabla\psi_{R1,rem}\|_{L^{\infty}\left(\mathbb{R}^{2}_{+}\right)}\leq \frac{C\varepsilon^{4}}{t^{2}}.
		\end{align*}
		Now $-\Delta\psi_{R1}=\phi_{R1}$ so \eqref{psi-R1-equation-different-form} implies
		\begin{align}
			-\phi_{R1}=\left(f_{\varepsilon}'\right)^{+}\left(\mathcal{E}_{2}\left(\xi_{0}\right)+\mathcal{E}_{3}\left(\xi_{0}\right)-\psi_{R1,2}-\psi_{R1,3}\right)+\left(f_{\varepsilon}'\right)^{+}\left(\tilde{\mathcal{E}}-\psi_{R1,rem}\right).\label{phi-R1-formula}
		\end{align}
		Thus, on the support of $\left(f_{\varepsilon}'\right)^{+}$, for $y-q'e_{2}=re^{i\theta}$,
		\begin{align}
			\frac{\phi_{R1}}{\left(f_{\varepsilon}'\right)^{+}}&=\left(\frac{\varrho_{2}(r)}{r^{2}}-1\right)\mathcal{E}_{2}\left(\xi_{0}\right)+\left(\frac{\varrho_{3}(r)}{r^{3}}-1\right)\mathcal{E}_{3}\left(\xi_{0}\right)-\left(\tilde{\mathcal{E}}-\psi_{R1,rem}\right)\nonumber\\
			&=\hat{\varrho}_{2}(r)\mathcal{E}_{2}\left(\xi_{0}\right)+\hat{\varrho}_{3}(r)\mathcal{E}_{3}\left(\xi_{0}\right)-\left(\tilde{\mathcal{E}}-\psi_{R1,rem}\right),\label{first-approximation-construction-12}
		\end{align}
		where $\tilde{\mathcal{E}}-\psi_{R1,rem}=\bigO{\left(\varepsilon^{4}t^{-2}\right)}$ on the support of $\left(f_{\varepsilon}'\right)^{+}$, and we once again recall that $\varrho_{2},\varrho_{3}$ are defined in \eqref{vortex-linearised-equation-rk-error-solution}.
		
		\medskip
		\noindent \textbf{Step III: Constructing $\psi^{out}$}
		
		\medskip
		Having constructed $\psi_{R1}$, we can construct $\psi_{L1}$ by reflecting in $x_{1}$. Now we construct $\psi^{out}$ in the original $x$ variable. We obtain $\psi^{out}$ on $\mathbb{R}^{2}_{+}$ by solving the equation, for $(-1)^{R}=1$, and $(-1)^{L}=-1$
		\begin{align}
			-\Delta\psi^{out}=\sum_{j=R,L}\left(-1\right)^{j}\left[\psi_{j1}\Delta\eta^{(j)}_{K}+2\nabla\eta^{(j)}_{K}\cdot\nabla\psi_{j1}\right]\coloneqq\mathscr{S}(x,t),\quad \psi^{out}\rvert_{\dell\mathbb{R}^{2}_{+}}=0.\label{psi-out-equation}
		\end{align}
		Using $-\Delta\psi_{R1}=\phi_{R1}$ and \eqref{phi-R1-formula}, we can write
		\begin{align}
			\psi_{R1}(y,t)&=\frac{1}{2\pi}\int_{\mathbb{R}_{+}^{2}}\log{\left(\frac{\left|z-\bar{y}\right|}{\left|z-y\right|}\right)}\left(f_{\varepsilon}'\right)^{+}\left(\sum_{j=2}^{3}\left(\mathcal{E}_{j}-\psi_{R1,j}\right)+\tilde{\mathcal{E}}-\psi_{R1,rem}\right)dz.\label{psi-R1-greens-function-representation}
		\end{align}
		Due to the presence of $\left(f_{\varepsilon}'\right)^{+}$ in the integrand in \eqref{psi-R1-greens-function-representation}, and the derivatives of the cutoffs in \eqref{psi-out-equation} we can consider, by \eqref{cutoffs-definition-1}, \eqref{K-T0-choices}, Theorem \ref{p0-q0-construction-theorem}, \eqref{xi0-bound}--\eqref{tildexi-bound},
		\begin{align}
			\left|z-q'e_{2}\right|\leq \rho,\quad \left|y-q'e_{2}\right|\geq \frac{5}{8}Kt\varepsilon^{-1},\ \ 
			t\geq T_{0}.\label{psi-out-integrand region}
		\end{align}
		On this region, we have
		\begin{align*}
			\log{\left(\frac{\left|z-\bar{y}\right|}{\left|z-y\right|}\right)}&=\frac{1}{2}\log{\left(\left|\frac{z-q'e_{2}}{|y-q'e_{2}|}-\frac{\overline{y+q'e_{2}}}{|y-q'e_{2}|}\right|^{2}\right)}-\frac{1}{2}\log{\left(\left|\frac{z-q'e_{2}}{|y-q'e_{2}|}-\frac{y-q'e_{2}}{|y-q'e_{2}|}\right|^{2}\right)}\nonumber\\
			&\coloneqq\mathscr{L}_{1}\left(z-q'e_{2}\right)+\mathscr{L}_{2}\left(z-q'e_{2}\right),
		\end{align*}
		where
		\begin{align}
			&\mathscr{L}_{2}\left(z-q'e_{2}\right)=\frac{\left(z-q'e_{2}\right)\cdot\left(y-q'e_{2}\right)}{\left|y-q'e_{2}\right|^{2}}+\bigO{\left(\frac{1}{\left|y-q'e_{2}\right|^{2}}\right)}=\bigO{\left(\varepsilon t^{-1}\right)},\label{L2-definition}\\
			&\mathscr{L}_{1}\left(z-q'e_{2}\right)=\frac{1}{2}\log{\left(\frac{\left|y+q'e_{2}\right|^{2}}{\left|y-q'e_{2}\right|^{2}}-\frac{2\left(z_{1}y_{1}-\left(z_{2}-q'\right)\left(y_{2}+q'\right)\right)}{\left|y-q'e_{2}\right|^{2}}+\frac{\left|z-q'e_{2}\right|^{2}}{\left|y-q'e_{2}\right|^{2}}\right)}\nonumber\\
			&=\frac{1}{2}\log{\left(1-\frac{2\left(z_{1}y_{1}-\left(z_{2}-q'\right)\left(y_{2}+q'\right)\right)}{\left|y+q'e_{2}\right|^{2}}+\frac{\left|z-q'e_{2}\right|^{2}}{\left|y+q'e_{2}\right|^{2}}\right)}+\frac{1}{2}\log{\left(\frac{\left|y+q'e_{2}\right|^{2}}{\left|y-q'e_{2}\right|^{2}}\right)}\nonumber\\
			&=\bigO{\left(t^{-1}\right)}.\label{L1-definition}
		\end{align}
		Thus \eqref{psi-R1-greens-function-representation}--\eqref{L1-definition} along with Theorem \ref{vortex-pair-properties-theorem}, \eqref{f-prime-q-variation-1}--\eqref{tilde-W-k-def}, and \eqref{psi-R1-equation-different-form}--\eqref{first-approximation-construction-12} give, on \eqref{psi-out-integrand region},
		\begin{align}
			&\psi_{R1}\left(\frac{x-pe_{1}}{\varepsilon}\right)=\psi_{R1}\left(y\right)=\bigO{\left(\varepsilon^{4}t^{-3}\right)},\nonumber\\
			&\nabla_{x}\psi_{R1}\left(\frac{x-pe_{1}}{\varepsilon}\right)=\frac{1}{\varepsilon}\nabla_{y}\psi_{R1}\left(y\right)=\bigO{\left(\varepsilon^{4}t^{-4}\right)}.\label{psi-R1-psi-out-estimate}
		\end{align}
		Note that \eqref{cutoffs-definition-1} gives $\nabla\eta^{(R)}_{K},\Delta\eta^{(R)}_{K}$ are $\bigO{\left(t^{-1}\right)}$ and $\bigO{\left(t^{-2}\right)}$ respectively.  We also have completely analogously bounds for $\psi_{L1}$ and the derivatives of $\eta^{(L)}_{K}$. Therefore,
		\begin{align}
			\left|\mathscr{S}(x,t)\right|\leq \frac{C\varepsilon^{4}}{t^{5}}.\label{source-term-estimate}
		\end{align}
		Using this bound, and noting that $\mathscr{S}(x,t)$ is supported on two annuli of inner and outer radius $\sim t$, and thus of area $\sim t^{2}$, each of distance $\sim t$ away from the origin, we obtain
		\begin{align}
			\|\psi^{out}\|_{L^{\infty}\left(\mathbb{R}^{2}_{+}\right)}+\|\nabla\psi^{out}\|_{L^{\infty}\left(\mathbb{R}^{2}_{+}\right)}\leq\frac{C\varepsilon^{4}\left|\log{t}\right|}{t^{3}}.\label{first-approximation-construction-22}
		\end{align}
		We can also bound $\gradperp_{y}\psi^{out}\left(\varepsilon y+pe_{1}\right)\cdot\nabla_{y}U_{R}$ which appears on the right hand side of \eqref{first-approximation-construction-8}. First note that by chain rule:
		\begin{align}
			\gradperp_{y}\psi^{out}\left(\varepsilon y+pe_{1}\right)\cdot\nabla_{y}U_{R}=\varepsilon\gradperp_{y}\psi^{out}\left(\varepsilon y+pe_{1}\right)\cdot\nabla_{y}U_{R}.\label{psi-out-UR-estimate}
		\end{align}
		We have that $\psi^{out}\left(\varepsilon y+pe_{1}\right)$ can be written as
		\begin{align}
			&\frac{1}{4\pi}\int_{\supp{\mathscr{S}}}\log{\left(1-\frac{2\varepsilon\left(z-pe_{1}+qe_{2}\right)\cdot\overline{\left(y-q'e_{2}\right)}}{\left|z-pe_{1}+qe_{2}\right|^{2}}+\frac{\varepsilon^{2}\left|y-q'e_{2}\right|^{2}}{\left|z-pe_{1}+qe_{2}\right|^{2}}\right)}\mathscr{S}(z)dz\nonumber\\
			&+\frac{1}{4\pi}\int_{\supp{\mathscr{S}}}\log{\left(1-\frac{2\varepsilon\left(z-pe_{1}+qe_{2}\right)\cdot\left(y-q'e_{2}\right)}{\left|z-pe_{1}-qe_{2}\right|^{2}}+\frac{\varepsilon^{2}\left|y-q'e_{2}\right|^{2}}{\left|z-pe_{1}-qe_{2}\right|^{2}}\right)}\mathscr{S}(z)dz\nonumber\\
			&+\frac{1}{2\pi}\int_{\supp{\mathscr{S}}}\log{\left(\frac{\left|z-pe_{1}+qe_{2}\right|}{\left|z-pe_{1}-qe_{2}\right|}\right)}\mathscr{S}(z)dz,\label{psi-out-estimate-1}
		\end{align}
		with the expansion of the logarithm valid due to the choices of $K$ and $T$ in \eqref{K-T0-choices}, the support of $\mathscr{S}$, and the support of $U_{R}$ meaning we can consider the above integrand in the region
		\begin{align}
			\left|y-q'e_{2}\right|\leq \rho,\quad \left|z-pe_{1}\pm qe_{2}\right|\geq \frac{5}{8}Kt.\label{zpq-support}
		\end{align} 
		On \eqref{zpq-support},
		\begin{align*}
			\nabla_{y}\log{\left(1-\frac{2\varepsilon\left(z-pe_{1}\pm qe_{2}\right)\cdot\overline{\left(y-q'e_{2}\right)}}{\left|z-pe_{1}\pm qe_{2}\right|^{2}}+\frac{\varepsilon^{2}\left|y-q'e_{2}\right|^{2}}{\left|z-pe_{1}\pm qe_{2}\right|^{2}}\right)}=\bigO{\left(\varepsilon t^{-1}\right)},
		\end{align*}
		and noting that the gradient immediately kills the third term on the right hand side of \eqref{psi-out-estimate-1}, we have
		\begin{align}
			\left|\varepsilon\gradperp_{y}\psi^{out}\left(\varepsilon y+pe_{1}\right)\cdot\nabla_{y}\left(U_{R}(y)+\phi_{R1}\left(y\right)\right)\right|\leq \frac{C\varepsilon^{6}}{t^{4}}\left(\left(f_{\varepsilon}'\right)^{+}+\left(f_{\varepsilon}''\right)^{+}\right),\label{psi-out-bound-2}
		\end{align}
		where we have used \eqref{phi-R1-formula} and \eqref{psi-R1-bounds}.
		
		\medskip
		\noindent \textbf{Step IV: Bounding Errors from $\left(\psi_{R1},\phi_{R1}\right)$}
		
		\medskip
		We now analyze the errors produced from $\left(\psi_{R1},\phi_{R1}\right)$. From \eqref{psi-R1-equation}, we can rewrite \eqref{first-approximation-construction-8} as
		\begin{align}
			E_{R1}&=\left(\varepsilon^{2}\dell_{t}\phi_{R1}-\left[\gradperp_{y}\Psi_{L}+\varepsilon\left(\Dot{p}-c\right)e_{1}\right]\cdot\grad_{y}\phi_{R1}\right)+\gradperp_{y}\psi_{R}\cdot\grad_{y}\phi_{R}\label{first-approximation-construction-22a}\\
			&+\left(\varepsilon^{2}\dell_{t}\phi_{R2}-\left[\gradperp_{y}\Psi_{L}+\varepsilon\left(\Dot{p}-c\right)e_{1}\right]\cdot\grad_{y}\phi_{R2}\right)+\gradperp_{y}\psi^{out}\cdot\grad_{y}\phi_{R2}\nonumber\\
			&-\gradperp_{y}\left(\Psi_{R}-c\varepsilon y_{2}\right)\cdot\grad_{y}\left(\Delta\psi_{R2}+\left(f_{\varepsilon}'\right)^{+}\psi_{R2}\right)\nonumber\\
			&+\gradperp_{y}\left(\Psi_{R}-c\varepsilon y_{2}\right)\cdot\grad_{y}\left(\left(f_{\varepsilon}'\right)^{+}\left(\alpha_{1}\left(\xi_{0}\right)\mathscr{Z}_{1}\left(q\right)+\alpha_{2}\left(\xi_{0}\right)\mathscr{Z}_{2}\left(q\right)\right)\right)\nonumber\\
			&+\gradperp_{y}\left(\Psi_{R}-c\varepsilon y_{2}\right)\cdot\grad_{y}\left(\varepsilon\left(f_{\varepsilon}'\right)^{+}\left(\mathcal{N}\left(\xi_{0}\right)\left[\xi_{1}\right]+\dot{\xi}_{1}\right)\cdot\left(y-q'e_{2}\right)^{\perp}\right)\nonumber\\
			&+\gradperp_{y}\left(\Psi_{R}-c\varepsilon y_{2}\right)\cdot\grad_{y}\left(\varepsilon\left(f_{\varepsilon}'\right)^{+}\left(\mathcal{N}\left(\xi_{0}+\xi_{1}\right)\left[\tilde{\xi}\right]+\dot{\tilde{\xi}}\right)\cdot\left(y-q'e_{2}\right)^{\perp}\right)\nonumber\\
			&+\left(f_{\varepsilon}'\right)^{+}\gradperp_{y}\left(\Psi_{R}-c\varepsilon y_{2}\right)\cdot\grad_{y}\left(\overline{\mathcal{R}}_{4}+\mathcal{R}_{12}\right)+\left(f_{\varepsilon}'\right)^{+}\overline{\mathcal{R}}_{5}+\left(f_{\varepsilon}''\right)^{+}\overline{\mathcal{R}}_{6},\nonumber
		\end{align}
		with $\mathcal{R}_{12}$ being defined in \eqref{first-approximation-construction-10}, with bound \eqref{R-12-bound}, and 
		\begin{align*}
			\left(f_{\varepsilon}'\right)^{+}\overline{\mathcal{R}}_{5}+\left(f_{\varepsilon}''\right)^{+}\overline{\mathcal{R}}_{6}=\varepsilon\gradperp_{y}\psi^{out}\left(\varepsilon y+pe_{1}\right)\cdot\nabla_{y}\left(U_{R}(y)+\phi_{R1}\left(y\right)\right),
		\end{align*}
		and bound given by \eqref{psi-out-bound-2}. Using \eqref{first-approximation-construction-12} we first obtain, for $y-q'e_{2}=re^{i\theta}$,
		\begin{align}
			&\varepsilon^{2}\dell_{t}\phi_{R1}-\left[\gradperp_{y}\Psi_{L}+\varepsilon\left(\Dot{p}-c\right)e_{1}\right]\cdot\grad_{y}\phi_{R1}\label{first-approximation-construction-26}\\
			&=\left(f_{\varepsilon}'\right)^{+}\varepsilon^{2}\dell_{t}\left(\hat{\varrho}_{2}(r)\mathcal{E}_{2}\left(\xi_{0}\right)\right)-\left(f_{\varepsilon}'\right)^{+}\left[\gradperp_{y}\Psi_{L}+\varepsilon\left(\Dot{p}-c\right)e_{1}\right]\cdot\grad_{y}\left(\hat{\varrho}_{2}(r)\mathcal{E}_{2}\left(\xi_{0}\right)\right)\nonumber\\
			&+\hat{\varrho}_{2}(r)\mathcal{E}_{2}\left(\xi_{0}\right)\left(\varepsilon^{2}\dell_{t}\left(f_{\varepsilon}'\right)^{+}-\left[\gradperp_{y}\Psi_{L}+\varepsilon\left(\Dot{p}-c\right)e_{1}\right]\cdot\grad_{y}\left(f_{\varepsilon}'\right)^{+}\right)\nonumber\\
			&+\left(f_{\varepsilon}'\right)^{+}\overline{\mathcal{R}}_{7}+\left(f_{\varepsilon}''\right)^{+}\overline{\mathcal{R}}_{8},\nonumber
		\end{align}
		where, given the identities and bounds we have for $\mathcal{E}_{2}$, $\psi_{R1,2}$ $\mathcal{E}_{3}$, $\psi_{R1,3}$, $\tilde{\mathcal{E}}$, and $\psi_{R1,rem}$ from \eqref{first-approximation-first-error-mode-1-3}--\eqref{first-approximation-first-error-mode-4}, \eqref{psi-R1-equation}, \eqref{psi-R1-equation-different-form}--\eqref{first-approximation-construction-12}, we have the bounds
		\begin{align}
			\left|\left(f_{\varepsilon}'\right)^{+}\overline{\mathcal{R}}_{7}\right|\leq \frac{C\varepsilon^{5}\left(f_{\varepsilon}'\right)^{+}}{t^{3}},\quad \left|\left(f_{\varepsilon}''\right)^{+}\overline{\mathcal{R}}_{8}\right|\leq \frac{C\varepsilon^{5}\left(f_{\varepsilon}''\right)^{+}}{t^{3}}.\label{bar-R-7-8-bounds}
		\end{align}
		For the first term on the right hand side of \eqref{first-approximation-construction-26}, we have
		\begin{align*}
			&\left(f_{\varepsilon}'\right)^{+}\varepsilon^{2}\dell_{t}\left(\hat{\varrho}_{2}(r)\mathcal{E}_{2}\left(\xi_{0}\right)\right)\\
			&-\left(f_{\varepsilon}'\right)^{+}\left[\gradperp_{y}\Psi_{L}+\varepsilon\left(\Dot{p}-c\right)e_{1}\right]\cdot\grad_{y}\left(\hat{\varrho}_{2}(r)\mathcal{E}_{2}\left(\xi_{0}\right)\right)=\left(f_{\varepsilon}'\right)^{+}\varepsilon^{2}\hat{\varrho}_{2}\dot{\xi}_{0}\cdot\dell_{\xi_{0}}\mathcal{E}_{2}\left(\xi_{0}\right)\\
			&+\gradperp_{y}\left(\Psi_{L}+\varepsilon\left(\dot{p}-c\right)\left(y_{2}-q'\right)-\varepsilon\dot{q}y_{1}\right)\cdot\grad_{y}\left(\hat{\varrho}_{2}(r)\mathcal{E}_{2}\left(\xi_{0}\right)\right).
		\end{align*}
		Applying Lemma \ref{psi-L-correct-form-lemma} (since $\left(f_{\varepsilon}'\right)^{+}$ has the same support as $U_{R}$ by \eqref{f-prime-q-variation-1}), this becomes
		\begin{align}
			\left(f_{\varepsilon}'\right)^{+}\left[\varepsilon^{2}\hat{\varrho}_{2}\dot{\xi}_{0}\cdot\dell_{\xi_{0}}\mathcal{E}_{2}\left(\xi_{0}\right)+\gradperp_{y}\mathcal{E}_{2}\left(\xi_{0}\right)\cdot\grad_{y}\left(\hat{\varrho}_{2}(r)\mathcal{E}_{2}\left(\xi_{0}\right)\right)+\left(f_{\varepsilon}'\right)^{+}\overline{\mathcal{R}}_{9}\right],\label{first-approximation-construction-26-0}
		\end{align}
		with the bound
		\begin{align}
			\left|\left(f_{\varepsilon}'\right)^{+}\overline{\mathcal{R}}_{9}\right|\leq \frac{C\varepsilon^{5}\left(f_{\varepsilon}'\right)^{+}}{t^{3}}.\label{bar-9-bounds}
		\end{align}
		We now deal with the second term on the right hand side of \eqref{first-approximation-construction-26}. Note that, analogously to the proof of Lemma \ref{ER1-lemma}, specifically statement \eqref{ER1-lemma-statement-2}, and calculations \eqref{first-approximation-error-calculation-8}--\eqref{first-approximation-error-calculation-41}, we can write 
		\begin{align}
			\varepsilon^{2}\dell_{t}\left(f_{\varepsilon}'\right)^{+}&=\varepsilon\dot{q}\dell_{q'}\left(f_{\varepsilon}'\right)^{+}=\left(f_{\varepsilon}''\right)^{+}\gradperp_{y}\left(\varepsilon\dot{q}y_{1}\right)\cdot\nabla_{y}\left(\psi_{R}-c\varepsilon y_{2}\right)\nonumber\\
			&+\varepsilon\dot{q}\left(f_{\varepsilon}''\right)^{+}\dell_{q'}\mathscr{V}\left(z,q\right)\rvert_{z=y-q'e_{2}},\label{first-approximation-construction-26a}
		\end{align}
		where by Lemma \ref{vortex-pair-q-variation-lemma}, $\varepsilon\dell_{q'}\mathscr{V}\left(z,q\right)=\bigO{\left(\varepsilon^{4}\right)}$ on the support of $\left(f_{\varepsilon}''\right)^{+}$.
		Thus the second term on the right hand side of \eqref{first-approximation-construction-26} is equal to
		\begin{align}
			&-\left(f_{\varepsilon}''\right)^{+}\hat{\varrho}_{2}(r)\mathcal{E}_{2}\left(\xi_{0}\right)\gradperp_{y}\left(\Psi_{L}+\varepsilon\left(\Dot{p}-c\right)\left(y_{2}-q'\right)-\varepsilon\Dot{q}y_{1}\right)\cdot\grad_{y}\left(\Psi_{R}-c\varepsilon y_{2}\right)\nonumber\\
			&+\varepsilon\dot{q}\left(f_{\varepsilon}''\right)^{+}\hat{\varrho}_{2}(r)\mathcal{E}_{2}\left(\xi_{0}\right)\dell_{q'}\mathscr{V}\left(z,q\right)\rvert_{z=y-q'e_{2}},\label{first-approximation-construction-27}
		\end{align}
		Applying Lemma \ref{psi-L-correct-form-lemma} once again (this time noting $\left(f_{\varepsilon}''\right)^{+}$ has the same support as $U_{R}$ by \eqref{f-prime-q-variation-1}), 
		\begin{align}
			\gradperp_{y}\left(\Psi_{L}+\varepsilon\left(\dot{p}-c\right)\left(y_{2}-q'\right)-\varepsilon\dot{q}y_{1}\right)=\overline{\mathcal{R}}_{10}+\gradperp_{y}\mathcal{E}_{2}\left(\xi_{0}\right),\label{first-approximation-construction-27a}
		\end{align}
		where, using \eqref{xi0-bound}--\eqref{tildexi-bound}, \eqref{linearised-point-vortex-operator-shorthand}, and \eqref{first-approximation-first-error-mode-2}--\eqref{first-approximation-first-error-mode-4}, and Lemma \ref{vortex-pair-q-variation-lemma}
		\begin{align}
			\left|\varepsilon\dot{q}\left(f_{\varepsilon}''\right)^{+}\hat{\varrho}_{2}(r)\mathcal{E}_{2}\left(\xi_{0}\right)\dell_{q'}\mathscr{V}\left(z,q\right)\rvert_{z=y-q'e_{2}}\right|+\left|\left(f_{\varepsilon}''\right)^{+}\overline{\mathcal{R}}_{10}\right|\leq \frac{C\varepsilon^{5}\left(f_{\varepsilon}''\right)^{+}}{t^{3}}.\label{bar-R-10-bound}
		\end{align}
		Thus, using \eqref{first-approximation-construction-26}--\eqref{bar-R-10-bound}, we have
		\begin{align}
			&\varepsilon^{2}\dell_{t}\phi_{R1}-\left[\gradperp_{y}\Psi_{L}+\varepsilon\left(\Dot{p}-c\right)e_{1}\right]\cdot\grad_{y}\phi_{R1}\nonumber\\
			&=\left(f_{\varepsilon}'\right)^{+}\left[\varepsilon^{2}\hat{\varrho}_{2}(r)\dot{\xi}_{0}\cdot\dell_{\xi_{0}}\mathcal{E}_{2}\left(\xi_{0}\right)+\gradperp_{y}\mathcal{E}_{2}\left(\xi_{0}\right)\cdot\grad_{y}\left(\hat{\varrho}_{2}(r)\mathcal{E}_{2}\left(\xi_{0}\right)\right)\right]\nonumber\\
			&-\left(f_{\varepsilon}''\right)^{+}\hat{\varrho}_{2}(r)\mathcal{E}_{2}\left(\xi_{0}\right)\gradperp_{y}\mathcal{E}_{2}\left(\xi_{0}\right)\cdot\grad_{y}\Gamma(r)+\left(f_{\varepsilon}'\right)^{+}\overline{\mathcal{R}}_{11}+\left(f_{\varepsilon}''\right)^{+}\overline{\mathcal{R}}_{12}.\label{first-approximation-construction-27a0}
		\end{align}
		where
		\begin{align}
			\overline{\mathcal{R}}_{11}&=\overline{\mathcal{R}}_{7}+\overline{\mathcal{R}}_{9},\nonumber\\
			\overline{\mathcal{R}}_{12}&=\overline{\mathcal{R}}_{8}-\hat{\varrho}_{2}(r)\mathcal{E}_{2}\left(\xi_{0}\right)\left[\overline{\mathcal{R}}_{10}\cdot\grad_{y}\left(\Psi_{R}-c\varepsilon y_{2}\right)-\varepsilon\dot{q}\dell_{q'}\mathscr{V}\left(z,q\right)\rvert_{z=y-q'e_{2}}\right]\nonumber\\
			&-\hat{\varrho}_{2}(r)\mathcal{E}_{2}\left(\xi_{0}\right)\gradperp_{y}\mathcal{E}_{2}\left(\xi_{0}\right)\cdot\grad_{y}\left(\Psi_{R}-c\varepsilon y_{2}-\Gamma(r)\right),\label{bar-R-11-12-formulae}
		\end{align}
		and by Theorem \ref{vortex-pair-properties-theorem}, \eqref{first-approximation-first-error-mode-2}, \eqref{bar-R-7-8-bounds}, \eqref{bar-9-bounds}, and \eqref{bar-R-10-bound}, we have
		\begin{align}
			\left|\left(f_{\varepsilon}'\right)^{+}\overline{\mathcal{R}}_{11}\right|\leq \frac{C\varepsilon^{5}\left(f_{\varepsilon}'\right)^{+}}{t^{3}},\quad \left|\left(f_{\varepsilon}''\right)^{+}\overline{\mathcal{R}}_{12}\right|\leq \frac{C\varepsilon^{5}\left(f_{\varepsilon}''\right)^{+}}{t^{3}}.\label{bar-R-11-12-bounds}
		\end{align}
		Letting $\mathcal{E}_{2}\left(y-q'e_{2},\xi_{0}\right)=\varepsilon^{2}r^{2}\left(\beta_{1}(t)\sin{2\theta}+\beta_{2}(t)\cos{2\theta}\right)$, we obtain
		\begin{equation}\label{mode-4-formulae}
			\begin{aligned}
				&\varepsilon^{2}\hat{\varrho}_{2}(r)\dot{\xi}_{0}\cdot\dell_{\xi_{0}}\mathcal{E}_{2}\left(\xi_{0}\right)=\varepsilon^{4}\hat{\varrho}_{2}(r)\left(\tilde{\beta}_{1}(t)\sin{2\theta}+\tilde{\beta}_{2}(t)\cos{2\theta}\right),\\
				&\gradperp_{y}\mathcal{E}_{2}\left(\xi_{0}\right)\cdot\grad_{y}\left(\hat{\varrho}_{2}(r)\mathcal{E}_{2}\left(\xi_{0}\right)\right)=-\frac{\hat{\varrho}'_{2}(r)}{r}\varepsilon^{4}r^{4}\left(\left(\beta_{1}^{2}-\beta_{2}^{2}\right)\sin{4\theta}+2\beta_{1}\beta_{2}\cos{4\theta}\right),\\
				&-\hat{\varrho}_{2}(r)\mathcal{E}_{2}\left(\xi_{0}\right)\gradperp_{y}\mathcal{E}_{2}\left(\xi_{0}\right)\cdot\grad_{y}\Gamma(r)\\
				&=\hat{\varrho}_{2}(r)\frac{\Gamma'(r)}{r}\varepsilon^{4}r^{4}\left(\left(\beta_{1}^{2}-\beta_{2}^{2}\right)\sin{4\theta}+2\beta_{1}\beta_{2}\cos{4\theta}\right).
			\end{aligned}
		\end{equation}
		Evaluating \eqref{first-approximation-first-error-mode-2} at $\xi_{0}$ instead of $\xi$, and then differentiating with respect to $\xi_{0}$, we can see that $\varepsilon^{2}\hat{\varrho}_{2}(r)\dot{\xi}_{0}\cdot\dell_{\xi_{0}}\mathcal{E}_{2}\left(y-q'e_{2},\xi_{0}\right)$ is a sum of mode $2$ terms of size $\varepsilon^{4}t^{-3}$ on the support of $\left(f_{\varepsilon}'\right)^{+}$ with respect to coordinates $y-q'e_{2}=re^{i\theta}$. Next, comparing $\beta_{1}(t)$ and $\beta_{2}(t)$ to \eqref{first-approximation-first-error-mode-2} evaluated at $\xi_{0}$ instead of $\xi$, the last two terms in \eqref{mode-4-formulae} are a sum of mode $4$ terms of size $\varepsilon^{4}t^{-4}$ on the support of $\left(f_{\varepsilon}'\right)^{+}$ and $\left(f_{\varepsilon}''\right)^{+}$. Proceeding as in Lemma \ref{correct-remainder-form-lemma}, we can write \eqref{first-approximation-construction-27a0} as
		\begin{align}
			&\varepsilon^{2}\dell_{t}\phi_{R1}-\left[\gradperp_{y}\Psi_{L}+\varepsilon\left(\Dot{p}-c\right)e_{1}\right]\cdot\grad_{y}\phi_{R1}\nonumber\\
			&=\gradperp_{y}\left(\Psi_{R}-c\varepsilon y_{2}\right)\cdot\nabla_{y}\left(\left(f_{\varepsilon}'\right)^{+}\mathcal{E}_{2,2}\left(\xi_{0},\dot{\xi}_{0}\right)+\left(f_{\varepsilon}'\right)^{+}\mathcal{E}_{4,1}\left(\xi_{0}\right)+\left(f_{\varepsilon}''\right)^{+}\mathcal{E}_{4,2}\left(\xi_{0}\right)\right)\nonumber\\
			&+\left(f_{\varepsilon}'\right)^{+}\overline{\mathcal{R}}_{13}+\left(f_{\varepsilon}''\right)^{+}\overline{\mathcal{R}}_{14},\label{first-approximation-construction-27a00}
		\end{align}
		where, with respect to coordinates $y-q'e_{2}=re^{i\theta}$, $\mathcal{E}_{2,2}$ is a sum of mode $2$ terms of size $\varepsilon^{4}t^{-3}$ on the support of $\left(f_{\varepsilon}'\right)^{+}$, and the $\mathcal{E}_{4,j}$, $j=1,2$ are a sum of mode $4$ terms of size $\varepsilon^{4}t^{-4}$ on the support of $\left(f_{\varepsilon}'\right)^{+}$, and therefore on the support of $\left(f_{\varepsilon}''\right)^{+}$. Moreover, due to the strategy of applying Lemma \ref{correct-remainder-form-lemma}, as well as the identity \eqref{gradperp-f-j-prime-plus-0}, we have
		\begin{align*}
			\overline{\mathcal{R}}_{13}&=\overline{\mathcal{R}}_{11}-\gradperp_{y}\left(\Psi_{R}-c\varepsilon y_{2}-\Gamma(r)\right)\cdot\nabla_{y}\left(\mathcal{E}_{2,2}\left(\xi_{0},\dot{\xi}_{0}\right)+\mathcal{E}_{4,1}\left(\xi_{0}\right)\right)\\
			\overline{\mathcal{R}}_{14}&=\overline{\mathcal{R}}_{12}-\gradperp_{y}\left(\Psi_{R}-c\varepsilon y_{2}-\Gamma(r)\right)\cdot\nabla_{y}\mathcal{E}_{4,2}\left(\xi_{0}\right),
		\end{align*}
		and due to Theorem \ref{vortex-pair-properties-theorem} and \eqref{bar-R-11-12-bounds}, have bounds
		\begin{align}
			\left|\left(f_{\varepsilon}'\right)^{+}\overline{\mathcal{R}}_{13}\right|\leq \frac{C\varepsilon^{5}\left(f_{\varepsilon}'\right)^{+}}{t^{3}},\quad \left|\left(f_{\varepsilon}''\right)^{+}\overline{\mathcal{R}}_{14}\right|\leq \frac{C\varepsilon^{5}\left(f_{\varepsilon}''\right)^{+}}{t^{3}}.\label{bar-R-13-14-bounds}
		\end{align}
		We move on to $\gradperp_{y}\psi_{R1}\cdot\grad_{y}\phi_{R1}$. Once again using the decomposition for both $\psi_{R1}$ and $\phi_{R1}$ obtained in the calculations \eqref{psi-R1-equation-different-form}--\eqref{first-approximation-construction-12}, as well as using the same rearrangement of terms involving $\Psi_{R}-c\varepsilon y_{2}$ and $\Gamma(r)$ as in \eqref{first-approximation-construction-27a0}--\eqref{bar-R-11-12-formulae}, we can write
		\begin{align}
			&\gradperp_{y}\psi_{R1}\cdot\grad_{y}\phi_{R1}=\left(f_{\varepsilon}'\right)^{+}\gradperp_{y}\left(\hat{\varrho}_{2}(r)\mathcal{E}_{2}\left(\xi_{0}\right)\right)\cdot\nabla_{y}\left(\frac{\varrho_{2}(r)}{r^{2}}\mathcal{E}_{2}\left(\xi_{0}\right)\right)\nonumber\\
			&+\left(f_{\varepsilon}''\right)^{+}\frac{\varrho_{2}(r)}{r^{2}}\mathcal{E}_{2}\left(\xi_{0}\right)\gradperp_{y}\left(\hat{\varrho}_{2}(r)\mathcal{E}_{2}\left(\xi_{0}\right)\right)\cdot\nabla_{y}\Gamma(r)+\left(f_{\varepsilon}'\right)^{+}\overline{\mathcal{R}}_{15}+\left(f_{\varepsilon}''\right)^{+}\overline{\mathcal{R}}_{16},\label{first-approximation-construction-27a01}
		\end{align}
		where, once again, the formulae and bounds for $\mathcal{E}_{2}$, $\psi_{R1,2}$ $\mathcal{E}_{3}$, $\psi_{R1,3}$, $\tilde{\mathcal{E}}$, and $\psi_{R1,rem}$ that we obtain from \eqref{first-approximation-first-error-mode-1-3}--\eqref{first-approximation-first-error-mode-4}, \eqref{psi-R1-equation}, \eqref{psi-R1-equation-different-form}--\eqref{first-approximation-construction-12}, as well as Theorem \ref{vortex-pair-properties-theorem}, gives
		\begin{align}
			\left|\left(f_{\varepsilon}'\right)^{+}\overline{\mathcal{R}}_{15}\right|\leq \frac{C\varepsilon^{5}\left(f_{\varepsilon}'\right)^{+}}{t^{3}},\quad \left|\left(f_{\varepsilon}''\right)^{+}\overline{\mathcal{R}}_{16}\right|\leq \frac{C\varepsilon^{5}\left(f_{\varepsilon}''\right)^{+}}{t^{3}}.\label{bar-R-15-16-bounds}
		\end{align}
		Then, analogously to the calculations made in \eqref{mode-4-formulae}--\eqref{bar-R-13-14-bounds}, we can write 
		\begin{align}
			\gradperp_{y}\psi_{R1}\cdot\grad_{y}\phi_{R1}&=\gradperp_{y}\left(\Psi_{R}-c\varepsilon y_{2}\right)\cdot\nabla_{y}\left(\left(f_{\varepsilon}'\right)^{+}\mathcal{E}_{4,3}\left(\xi_{0}\right)+\left(f_{\varepsilon}''\right)^{+}\mathcal{E}_{4,4}\left(\xi_{0}\right)\right)\nonumber\\
			&+\left(f_{\varepsilon}'\right)^{+}\overline{\mathcal{R}}_{17}+\left(f_{\varepsilon}''\right)^{+}\overline{\mathcal{R}}_{18},\label{first-approximation-construction-27a02}
		\end{align}
		with $\mathcal{E}_{4,j}$, $j=3,4$ sums of mode of $4$ terms with respect to $y-q'e_{2}=re^{i\theta}$, of size $\varepsilon^{4}t^{-4}$ on the support of $\left(f_{\varepsilon}'\right)^{+}$ and $\left(f_{\varepsilon}''\right)^{+}$, and 
		\begin{align*}
			\overline{\mathcal{R}}_{17}&=\overline{\mathcal{R}}_{15}-\gradperp_{y}\left(\Psi_{R}-c\varepsilon y_{2}-\Gamma(r)\right)\cdot\nabla_{y}\mathcal{E}_{4,3}\left(\xi_{0}\right)\\
			\overline{\mathcal{R}}_{18}&=\overline{\mathcal{R}}_{16}-\gradperp_{y}\left(\Psi_{R}-c\varepsilon y_{2}-\Gamma(r)\right)\cdot\nabla_{y}\mathcal{E}_{4,4}\left(\xi_{0}\right),
		\end{align*}
		with \eqref{bar-R-15-16-bounds} and Theorem \ref{vortex-pair-properties-theorem} implying the bounds
		\begin{align}
			\left|\left(f_{\varepsilon}'\right)^{+}\overline{\mathcal{R}}_{17}\right|\leq \frac{C\varepsilon^{5}\left(f_{\varepsilon}'\right)^{+}}{t^{3}},\quad \left|\left(f_{\varepsilon}''\right)^{+}\overline{\mathcal{R}}_{18}\right|\leq \frac{C\varepsilon^{5}\left(f_{\varepsilon}''\right)^{+}}{t^{3}}.\label{bar-R-17-18-bounds}
		\end{align}
		
		\noindent \textbf{Step V: Constructing $\psi_{R2}$ and $\xi_{1}$}
		
		\medskip    
		In the final step of the construction of the first approximation, we construct $\psi_{R2}$, and subsequently construct $\xi_{1}$ as the solution to a system of ODEs that ensures the construction of $\psi_{R2}$ only produces sufficiently small error terms.
		
		\medskip
		We define $\psi_{R2}$ as a solution to 
		\begin{align}
			&\Delta\psi_{R2}+\left(f_{\varepsilon}'\right)^{+}\psi_{R2}=\varepsilon\left(f_{\varepsilon}'\right)^{+}\left(\mathcal{N}\left(\xi_{0}\right)\left[\xi_{1}\right]+\dot{\xi}_{1}\right)\cdot\left(y-q'e_{2}\right)^{\perp}\label{psi-R2-equation}\\
			&+\left(f_{\varepsilon}'\right)^{+}\left(\alpha_{1}\left(\xi_{0}\right)\mathscr{Z}_{1}\left(q\right)+\alpha_{2}\left(\xi_{0}\right)\mathscr{Z}_{2}\left(q\right)\right)+\left(f_{\varepsilon}'\right)^{+}\left(\alpha_{3}\left(\xi\right)\mathscr{Z}_{1}\left(q\right)+\alpha_{4}\left(\xi\right)\mathscr{Z}_{2}\left(q\right)\right)\nonumber\\
			&+\left(f_{\varepsilon}'\right)^{+}\left(\mathcal{E}_{2,2}\left(\xi_{0},\dot{\xi}_{0}\right)+\mathcal{E}_{4,1}\left(\xi_{0}\right)+\mathcal{E}_{4,3}\left(\xi_{0}\right)\right)+\left(f_{\varepsilon}''\right)^{+}\left(\mathcal{E}_{4,2}\left(\xi_{0}\right)+\mathcal{E}_{4,4}\left(\xi_{0}\right)\right)\nonumber
		\end{align}
		on $\mathbb{R}^{2}_{+}$ with boundary condition $\psi_{R2}\left(y_{1},0\right)\equiv0$. As in \eqref{psi-R1-equation}, modifying the proofs of Lemmas 2.1--2.3, and Theorem 2.4 from \cite{DDMPVP2023}, from the linear operator $\Delta+V$, to the linear operator $\Delta+\left(f_{\varepsilon}'\right)^{+}$, with $V$ defined in \eqref{V-E-vortex-pair}, we have existence, uniqueness, and the following bounds
		\begin{align}
			\|\psi_{R2}\|_{L^{\infty}\left(\mathbb{R}^{2}_{+}\right)}+\|\nabla\psi_{R2}\|_{L^{\infty}\left(\mathbb{R}^{2}_{+}\right)}+\|\phi_{R2}\|_{L^{\infty}\left(\mathbb{R}^{2}_{+}\right)}+\left|\alpha_{3}\right|+\left|\alpha_{4}\right|\leq \frac{C\varepsilon^{4}}{t^{2}}.\label{psi-R2-elliptic-estimates}
		\end{align}
		To obtain \eqref{psi-R2-elliptic-estimates} we note that the size of the first term on the right hand side of \eqref{psi-R2-equation} has size $\bigO{\left(\varepsilon^{4}t^{-2}\right)}$ on the support of $\left(f_{\varepsilon}'\right)^{+}$, coming from \eqref{pkr-system}, \eqref{linearised-point-vortex-operator-shorthand}, and \eqref{xi0-bound}--\eqref{tildexi-bound}. The terms on the right hand side involving $\alpha_{1}$ and $\alpha_{2}$ also have size $\bigO{\left(\varepsilon^{4}t^{-2}\right)}$ on the support of $\left(f_{\varepsilon}'\right)^{+}$, which can be seen from \eqref{psi-R1-coefficient-of-projection-1-1}--\eqref{psi-R1-coefficient-of-projection-2-3}. Finally, all of the terms on the last line of the right hand side of \eqref{psi-R2-equation} are $\bigO{\left(\varepsilon^{4}t^{-3}\right)}$ on the support of $\left(f_{\varepsilon}'\right)^{+}$ and $\left(f_{\varepsilon}''\right)^{+}$ respectively, which can be seen from \eqref{first-approximation-construction-27a00}--\eqref{first-approximation-construction-27a02}.
		
		\medskip
		As for \eqref{psi-R1-equation-different-form}--\eqref{first-approximation-construction-12}, we can use \eqref{psi-R2-equation} to write 
		\begin{align}
			\phi_{R2}=\left(f_{\varepsilon}'\right)^{+}\phi_{R2,1}+\left(f_{\varepsilon}''\right)^{+}\phi_{R2,2},\label{first-approximation-construction-40a0}
		\end{align}
		where, with respect to $y-q'e_{2}=re^{i\theta}$, $\phi_{R2,1}$ is to main order a sum of mode $1$, mode $2$, and mode $4$ terms of size $\bigO{\left(\varepsilon^{4}t^{-2}\right)}$ with a remainder of size $\bigO{\left(\varepsilon^{6}t^{-2}\right)}$ on the support of $\left(f_{\varepsilon}'\right)^{+}$. Similarly $\phi_{R2,2}$ is to main order a sum of mode $4$ terms of size $\bigO{\left(\varepsilon^{4}t^{-4}\right)}$ with a remainder of size $\bigO{\left(\varepsilon^{6}t^{-4}\right)}$ on the support of $\left(f_{\varepsilon}''\right)^{+}$. Given \eqref{psi-R2-elliptic-estimates}--\eqref{first-approximation-construction-40a0}, as well as the bounds for $\psi_{R1}$, $\phi_{R1}$, and $\psi^{out}$ in \eqref{psi-R1-bounds} and \eqref{first-approximation-construction-22}, we can argue as with \eqref{first-approximation-construction-26}--\eqref{bar-R-17-18-bounds}, and obtain, upon recalling the definition \eqref{f-prime-vortex-pair-variation-relationship-1},
		\begin{align}
			&\left(\varepsilon^{2}\dell_{t}\phi_{R2}-\left[\gradperp_{y}\Psi_{L}+\varepsilon\left(\Dot{p}-c\right)e_{1}\right]\cdot\grad_{y}\phi_{R2}\right)+\gradperp_{y}\psi_{R2}\cdot\grad_{y}\phi_{R1}\nonumber\\
			&+\gradperp_{y}\psi_{R1}\cdot\grad_{y}\phi_{R2}+\gradperp_{y}\psi_{R2}\cdot\grad_{y}\phi_{R2}+\varepsilon\gradperp_{y}\psi^{out}\left(\varepsilon y+pe_{1}\right)\cdot\nabla_{y}\phi_{R2}\nonumber\\
			&=\left(f_{\varepsilon}'\right)^{+}\overline{\mathcal{R}}_{19}+\left(f_{\varepsilon}''\right)^{+}\overline{\mathcal{R}}_{20}+\left(f_{\varepsilon}'''\right)^{+}\overline{\mathcal{R}}_{21}\label{first-approximation-construction-40aa}
		\end{align}
		where,
		\begin{align}
			&\left|\left(f_{\varepsilon}'\right)^{+}\overline{\mathcal{R}}_{j}\right|\leq \frac{C\varepsilon^{5}\left(f_{\varepsilon}'\right)^{+}}{t^{3}},\quad \left|\left(f_{\varepsilon}''\right)^{+}\overline{\mathcal{R}}_{20}\right|\leq \frac{C\varepsilon^{5}\left(f_{\varepsilon}''\right)^{+}}{t^{3}},\nonumber\\
			&\left|\left(f_{\varepsilon}'''\right)^{+}\overline{\mathcal{R}}_{21}\right|\leq \frac{C\varepsilon^{5}\left(f_{\varepsilon}'''\right)^{+}}{t^{3}}.\label{bar-R-19-20-21-bounds}
		\end{align}
		It is left to bound $\alpha_{3}$ and $\alpha_{4}$ effectively. To do this we need to construct appropriate $\xi_{1}=\left(p_{1},q_{1}\right)$ that solves a system of ODEs. 
		
		\medskip
		Recall the definitions of $\mathscr{W}_{j}$, $\tilde{\mathscr{W}}_{j}$ for $j=1,2,3$, as well as $\mathscr{Z}_{1}$, and $\mathscr{Z}_{2}$ from \eqref{f-prime-vortex-pair-variation-relationship}--\eqref{dq-Psi-R-variation}, including that the support of the $\mathscr{W}_{j}$, $\tilde{\mathscr{W}}_{j}$ are the same for $j=1,2,3$. Then, if $\xi_{1}=\left(p_{1},q_{1}\right)=\left(\xi_{1,1},\xi_{1,2}\right)$, $\xi_{1}$ will solve the system given by, for $(k,l)=(1,2)$ and $(2,1)$,
		\begin{align}
			&\varepsilon\left(\left(\mathcal{N}\left(\xi_{0}\right)\left[\xi_{1}\right]\right)_{k}+\dot{\xi}_{1,k}\right)\int_{B_{2\rho}\left(0\right)}\tilde{\mathscr{W}}_{1}\left(q_{0}\right)\mathscr{Z}_{l}\left(q_{0}\right)z_{l}\ dz\label{p1-equation}\\
			&+\int_{B_{2\rho}\left(0\right)}\tilde{\mathscr{W}}_{1}\left(q_{0}\right)\mathscr{Z}_{l}\left(q_{0}\right)\left(\mathcal{E}_{2,2}\left(\xi_{0},\dot{\xi}_{0}\right)+\mathcal{E}_{4,1}\left(\xi_{0}\right)+\mathcal{E}_{4,3}\left(\xi_{0}\right)\right)dz\nonumber\\
			&+\int_{B_{2\rho}\left(0\right)}\tilde{\mathscr{W}}_{2}\left(q_{0}\right)\mathscr{Z}_{l}\left(q_{0}\right)\left(\mathcal{E}_{4,2}\left(\xi_{0}\right)+\mathcal{E}_{4,4}\left(\xi_{0}\right)\right)dz\nonumber\\
			&+\alpha_{l}\left(\xi_{0}\right)\int_{B_{2\rho}\left(0\right)}\tilde{\mathscr{W}}_{1}\left(q_{0}\right)\mathscr{Z}_{l}^{2}\left(q_{0}\right) dz=0,\nonumber
		\end{align}
		The construction of the solution to the system above on $\left[T_{0},\infty\right)$ that satisfies \eqref{xi1-bound} proceeds similarly to Section \ref{point-vortex-trajectory-section}, and for $T_{0}>0$ large enough
		we can once again run a fixed point argument analogous to Theorem \ref{p0-q0-construction-theorem} so that for $\varepsilon>0$ small enough, we obtain a solution to \eqref{p1-equation} that satisfies bounds \eqref{xi1-bound}.
		
		\medskip
		With our $\xi_{1}$ in hand, we can compare the equations they solve in \eqref{p1-equation} to the identities obtained by integrating \eqref{psi-R2-equation} against $\mathscr{Z}_{2}\left(q\right)$ and $\mathscr{Z}_{1}\left(q\right)$ respectively. Then the bounds for the terms on the right hand side of \eqref{psi-R2-equation} as well as \eqref{f-prime-q-variation-3}--\eqref{f-prime-q-variation-4} give that
		\begin{align}
			\left|\alpha_{3}\right|+\left|\alpha_{4}\right|\leq\frac{C\varepsilon^{5}}{t^{4}}.\label{first-approximation-construction-41}
		\end{align}
		Recalling \eqref{first-approximation-construction-22a}, \eqref{first-approximation-construction-27a00}, \eqref{first-approximation-construction-27a02}, \eqref{psi-R2-equation}, and \eqref{first-approximation-construction-41}, let
		\begin{equation}\label{R-star-1-definition}
			\begin{aligned}
				\mathcal{R}^{*}_{1}&=\gradperp_{y}\left(\Psi_{R}-c\varepsilon y_{2}\right)\cdot\grad_{y}\left(\overline{\mathcal{R}}_{4}+\mathcal{R}_{12}\right)+\overline{\mathcal{R}}_{5}+\overline{\mathcal{R}}_{13}+\overline{\mathcal{R}}_{17}+\overline{\mathcal{R}}_{19}\nonumber\\
				&-\gradperp_{y}\left(\Psi_{R}-c\varepsilon y_{2}\right)\cdot\grad_{y}\left(\alpha_{3}\left(\xi\right)\mathscr{Z}_{1}(q)+\alpha_{4}\left(\xi\right)\mathscr{Z}_{2}(q)\right),\\
				\mathcal{R}^{*}_{2}&=\overline{\mathcal{R}}_{6}+\overline{\mathcal{R}}_{14}+\overline{\mathcal{R}}_{18}+\overline{\mathcal{R}}_{20},\quad \mathcal{R}^{*}_{3}=\overline{\mathcal{R}_{21}}.
			\end{aligned}
		\end{equation}
		Then \eqref{first-approximation-construction-26}--\eqref{first-approximation-construction-41} give that \eqref{first-approximation-construction-22a} can be written as
		\begin{align*}
			&E_{R1}=-\varepsilon\left(\mathcal{N}\left(\xi_{0}+\xi_{1}\right)\left[\tilde{\xi}\right]+\dot{\tilde{\xi}}\right)\cdot\grad_{y}U_{R}+\left(f_{\varepsilon}'\right)^{+}\mathcal{R}^{*}_{1}+\left(f_{\varepsilon}''\right)^{+}\mathcal{R}^{*}_{2}+\left(f_{\varepsilon}'''\right)^{+}\mathcal{R}^{*}_{3},
		\end{align*}
		where, by \eqref{first-approximation-construction-2a}, \eqref{R-12-bound}, \eqref{psi-out-bound-2}, \eqref{bar-R-7-8-bounds}, \eqref{bar-9-bounds}, \eqref{bar-R-10-bound}, \eqref{bar-R-11-12-bounds}, \eqref{bar-R-13-14-bounds}, \eqref{bar-R-15-16-bounds}, \eqref{bar-R-17-18-bounds}, \eqref{bar-R-19-20-21-bounds}, and \eqref{first-approximation-construction-41},
		\begin{align}
			&\left|\left(f_{\varepsilon}'\right)^{+}\mathcal{R}^{*}_{1}\right|\leq\frac{C\varepsilon^{5}\left(f_{\varepsilon}'\right)^{+}}{t^{3}},\quad \left|\left(f_{\varepsilon}''\right)^{+}\mathcal{R}^{*}_{2}\right|\leq\frac{C\varepsilon^{5}\left(f_{\varepsilon}''\right)^{+}}{t^{3}},\nonumber\\
			&\left|\left(f_{\varepsilon}'''\right)^{+}\mathcal{R}^{*}_{3}\right|\leq\frac{C\varepsilon^{5}}{t^{3}}\left(f_{\varepsilon}'''\right)^{+},\label{first-approximation-construction-42}
		\end{align}
		We have, for $k=1,2,3$,
		\begin{align}
			\mathcal{R}^{*}_{k}=\mathcal{R}^{*}_{k}\left(y,\xi,\dot{\xi}\right),
		\end{align}
		where we have suppressed any specific dependence on $\xi_{0}$ and $\dot{\xi}_{0}$ that manifest through the terms $\mathcal{E}_{j}\left(\xi_{0}\right)$, $j=2,3,4$ and $\dot{\xi}_{0}\cdot\dell_{\xi_{0}}\mathcal{E}_{2}\left(\xi_{0}\right)$ via the identities \eqref{first-approximation-construction-27a0}, \eqref{mode-4-formulae}, \eqref{first-approximation-construction-27a00}, \eqref{psi-R2-equation}, and \eqref{first-approximation-construction-40aa}. This is as $\xi_{0}$ and its time derivatives are known functions by the construction in Section \ref{point-vortex-trajectory-section}, whereas dependence on $\xi$ and $\dot{\xi}$ is dependence on the unknowns $\tilde{\xi}$ and $\dot{\tilde{\xi}}$ by \eqref{point-vortex-trajectory-function-decomposition-new}.
		
		\medskip
		Finally, due to \eqref{psi-R2-elliptic-estimates} and analogous bounds for $\psi_{L2}$, we can run the same argument as in Step III but for $\psi_{R2}$ and $\psi_{L2}$, and this time obtain that
		\begin{align}
			E_{2}^{out}=\sum_{j=R,L}\left(-1\right)^{j}\left[\psi_{j2}\Delta\eta^{(j)}_{K}+2\nabla\eta^{(j)}_{K}\cdot\nabla\psi_{j2}\right]=\bigO{\left(\frac{\varepsilon^{5}}{t^{5}}\right)}.\label{first-approximation-outer-error-E2out-final-bound}
		\end{align}
	\end{proof}
	As a final note in this section, finding a way to exclude the main order error terms with respect to $t$ when defining the ODE system that $\tilde{\xi}$ will solve will be crucial to obtaining the estimates for $\tilde{\xi}$ claimed in \eqref{tildexi-bound}. This will be done via orthogonality conditions for $\phi_{*R}$ defined in Section \ref{cs}. To that end, we define the following useful quantities which will be referred to repeatedly in the next Section. Recall \eqref{f-prime-vortex-pair-variation-relationship}--\eqref{tilde-W-k-def}, and define for $\left(i,j\right)\in\left\{\left(0,0\right),\left(1,0\right),\left(0,1\right)\right\}$,
	\begin{align}
		&\mathscr{Q}_{\left(i,j\right)}\left(\xi_{0}+\xi_{1},\dot{\xi}_{0}+\dot{\xi}_{1}\right)\nonumber\\
		&=\int_{\mathbb{R}^{2}_{+}-q'e_{2}}z_{1}^{i}z_{2}^{j}\left[\sum_{k=1}^{3}\tilde{\mathscr{W}}_{k}\left(q_{0}+q_{1}\right)\mathcal{R}^{*}_{k}\left(\xi_{0}+\xi_{0},\dot{\xi}_{0}+\dot{\xi}_{1}\right)\right]dz,\label{0-orthogonality-rhs}
	\end{align}
	where we have used the substitution $z=y-q'e_{2}$, and then frozen the dependence of the $\mathcal{R}^{*}_{k}$ on $\xi$, at $\xi_{0}+\xi_{1}$. Note that by Remark \ref{support-of-nonlinearity-remark}, \eqref{xi0-bound}--\eqref{tildexi-bound}, and \eqref{f-prime-vortex-pair-variation-relationship}--\eqref{tilde-W-k-def}, for $k=1,2,3$, $\tilde{\mathscr{W}}_{k}\left(q_{0}+q_{1}\right)$ has support uniformly bounded in time in $B_{2\rho}(0)$ in $z$ coordinates. By the bounds in \eqref{first-approximation-construction-42}, we have
	\begin{align}
		\left|\mathscr{Q}_{\left(i,j\right)}\right|\leq \frac{C\varepsilon^{5}}{t^{3}}.\label{orthogonality-rhs-bounds}
	\end{align}
	and by the bounds \eqref{xi0-bound}--\eqref{tildexi-bound}, for all $t\in\left[T_{0},T\right]$,
	\begin{align}
		\left|\mathscr{Q}_{\left(i,j\right)}-\int_{\mathbb{R}^{2}_{+}}y_{1}^{i}\left(y_{2}-q'\right)^{j}\left[\sum_{k=1}^{3}\left(f_{\varepsilon}^{\left(k\right)}\right)^{+}\mathcal{R}^{*}_{k}\left(y,\xi,\dot{\xi}\right)\right]dy\right|\leq \frac{C\varepsilon^{5}}{t^{4}}.\label{orthogonality-rhs-comparison-bounds}
	\end{align}
	\section{Constructing Solutions on $[T_{0},T]$}\label{constructing-full-solution-section}
	\begin{remark}\label{constructing-full-solution-gamma-remark}
		As in Remark \ref{first-approx-section-gamma-remark}, we recall here that in Sections \ref{constructing-full-solution-section} and \ref{conclusion}, $\gamma>18$.
	\end{remark}
	Having constructed the first approximation $\left(\omega_{*},\Psi_{*}\right)$ of a solution to \eqref{2d-euler-vorticity-stream} on $\left[T_{0},T\right]$ in Theorem \ref{first-approximation-construction-theorem}, we now look for a solution on the same time interval to \eqref{2d-euler-vorticity-stream} of the form
	\begin{align}
		\omega\left(x,t\right)=\omega_{*}\left(x,t\right)+\phi_{*}\left(x,t\right),\quad \Psi\left(x,t\right)=\Psi_{*}\left(x,t\right)+\psi_{*}\left(x,t\right).\label{omega-full-solution-decomposition}
	\end{align}
	We set
	\begin{align}
		\phi_{*}=\varepsilon^{-2}\left(\phi_{*R}-\phi_{*L}\right),\ \psi_{*}=\eta^{(R)}_{K}\psi_{*R}-\eta^{(L)}_{K}\psi_{*L}+\psi_{*}^{out},\label{psi-phi-star-cutoffs}
	\end{align}
	with $\eta^{(R)}_{K}$ and $\eta^{(L)}_{K}$ defined in \eqref{cutoffs-definition-1}. We define
	\begin{align*}
		\phi_{*}^{in}=\left(\phi_{*R},\phi_{*L}\right),\ \psi_{*}^{in}=\left(\psi_{*R},\psi_{*L}\right).
	\end{align*}
	Finally we again have $\xi(t)=\left(p(t),q(t)\right)$ as in \eqref{point-vortex-trajectory-function-decomposition-new} for $\xi_{0}$ solving \eqref{p0-q0-system}, $\xi_{1}$ solving \eqref{p1-equation}, and $\xi_{0},\xi_{1}$, and $\Tilde{\xi}$ satisfying \eqref{xi0-bound}--\eqref{tildexi-bound}. 
	
	\medskip
	Unlike Section \ref{first-approximation-section}, in order to reveal important structure of the full linearised operator, we will mainly work in coordinates of the form
	\begin{align}
		z=\frac{x-\xi\left(t\right)}{\varepsilon}=\frac{x-pe_{1}-qe_{2}}{\varepsilon}\label{change-of-coordinates-dynamic-problem-3}
	\end{align}
	for $\left(\phi_{*R},\psi_{*R}\right)$, and analogously for $\left(\phi_{*L},\psi_{*L}\right)$. However in reality the change of coordinates throughout Section \ref{constructing-full-solution-section} will rely on a homotopy parameter, see \eqref{homotopy-parameter-change-of-coordinates}. The majority of the construction of the full solution in this section will be done in $z$ coordinates. However, when emphasising the relationship between $\phi_{*}$ and $\psi_{*}$, that is
	\begin{align}
		-\Delta \psi_{*j}=\phi_{*j},\ \ \  j=R,L,\label{psi-star-phi-star-laplace-equation}
	\end{align}
	we will usually work in $y$ coordinates as in \eqref{change-of-coordinates-dynamic-problem} and take \eqref{psi-star-phi-star-laplace-equation} to mean satisfying the stated equation on the upper half plane with $\psi_{*R}\left(y_{1},0\right)\equiv0$, and analogously in the appropriate coordinates for $\psi_{*L}$. These instances will be clarified when they occur. Accordingly, let
	\begin{align}
		\mathbb{H}_{w}=\left\{\left(v_{1},v_{2}\right): v_{2}\geq w\right\}.\label{upper-half-plane-translate-definition}
	\end{align}
	Then $\mathbb{H}_{0}=\mathbb{R}_{+}^{2}$, and we will construct $\left(\phi_{*R}\left(z,t\right),\psi_{*R}\left(z,t\right)\right)$ on $\mathbb{H}_{-q'}$ such that $\supp \phi_{*R}\subset B_{3\rho}\left(0\right)$, $\rho>0$ defined in \eqref{first-approximation-main-order-vorticity-support}, $\psi_{*R}\left(z_{1},-q',t\right)=0$, and analogously for $\left(\phi_{*L},\psi_{*L}\right)$.
	
	\medskip
	We now concentrate on $\left(\phi_{*R}\left(z,t\right),\psi_{*R}\left(z,t\right)\right)$ with the understanding that $\left(\phi_{*L},\psi_{*L}\right)$ is obtained after reflection in an odd manner around $x_{1}=0$ in the original variables. We make a further decomposition of $\left(\phi_{*R}\left(z,t\right),\psi_{*R}\left(z,t\right)\right)$ of the form
	\begin{align}
		\phi_{*R}\left(z,t\right)=\Tilde{\phi}_{*R}\left(z,t\right)+\alpha_{R}(t)U_{R}\left(z,t\right),\label{real-phi-star-ansatz}%\label{phi-star-R-decomposition}
	\end{align}
	with $U_{R}$ defined in \eqref{w-r-definition}, and $\alpha_{R}$ a time-dependent parameter that will solve a certain ODE defined in \eqref{cRj-initial-value-problem}, and will satisfy 
	\begin{align}
		\|t^{3}\alpha_{R}\|_{\left[T_{0},T\right]}+\|t^{4}\dot{\alpha}_{R}\|_{\left[T_{0},T\right]}\leq \varepsilon^{3-\sigma},\ \ \alpha_{R}\left(T\right)=0,\label{alpha-bound}
	\end{align}
	for arbitrarily small $\sigma>0$. We will also impose three orthogonality conditions on $\Tilde{\phi}_{*R}$, fixing its total mass, and its centre of mass. The function $\tilde{\phi}_{*R}$ corresponding to a genuine solution to \eqref{2d-euler-vorticity-stream} on $\left[T_{0},T\right]$ will satisfy, for $\left(i,j\right)\in\left\{\left(0,0\right),\left(1,0\right),\left(0,1\right)\right\}$,
	\begin{align}
		\int_{\mathbb{H}_{-q'}}z_{1}^{i}z_{2}^{j}\Tilde{\phi}_{*R}\left(z,t\right)\ dz=\mathcal{J}_{\left(i,j\right)}(t)\coloneqq-\varepsilon^{-2}\int_{t}^{T}\mathscr{Q}_{\left(i,j\right)}(\tau) d\tau=\bigO{\left(\varepsilon^{3}t^{-2}\right)},\label{real-orthogonality-conditions}
	\end{align}
	where the functions $\mathscr{Q}_{\left(i,j\right)}$ were defined in \eqref{0-orthogonality-rhs}, $\mathcal{J}_{\left(i,j\right)}$ is shorthand that we will use repeatedly, and the estimate on the right hand side is from \eqref{orthogonality-rhs-bounds}. In reality, the orthogonality conditions we will work with throughout Section \ref{constructing-full-solution-section} will depend on a homotopy parameter, see \eqref{tilde-phi-star-R-0-mass-condition}.
	\begin{remark}
		The time-dependent parameter $\alpha_{R}$ appearing in \eqref{real-phi-star-ansatz}
		is a way of adjusting the mass, and will be used to enforce the mass condition, just as $\left(\Tilde{p},\Tilde{q}\right)$ will be used to enforce the centre of mass conditions.
	\end{remark}
	Having imposed the ansatz \eqref{real-phi-star-ansatz} for $\phi_{*R}$, we impose an ansatz for $\psi_{*R}$:
	\begin{align}
		\psi_{*R}\left(z,t\right)=\Tilde{\psi}_{*R}\left(z,t\right)+\alpha_{R}\left(t\right)\left(\Psi_{R}\left(z,t\right)-c\varepsilon\left(z_{2}+q'\right)\right),\label{real-psi-star-ansatz}%\label{psi-star-R-decomposition-z-variable}
	\end{align}
	with $\Psi_{R}$ defined in \eqref{psi-r-definition}. By a slight abuse of notation, in $y$ coordinates this can be restated as
	\begin{align}
		\psi_{*R}\left(y,t\right)=\Tilde{\psi}_{*R}\left(y,t\right)+\alpha_{R}\left(t\right)\left(\Psi_{R}\left(y,t\right)-c\varepsilon y_{2}\right),\label{real-psi-star-ansatz-2}%\label{psi-star-R-decomposition-y-variable}
	\end{align}
	where, in the upper half plane in $y$ variables, $-\Delta \Tilde{\psi}_{*R}=\Tilde{\phi}_{*R}$ with $\Tilde{\psi}_{*R}\left(y_{1},0\right)\equiv0$. That is,
	\begin{align}
		\Tilde{\psi}_{*R}\left(y,t\right)=\frac{1}{2\pi}\int_{\mathbb{R}_{+}^{2}}\log{\left(\frac{\left|v-\bar{y}\right|}{\left|v-y\right|}\right)}\Tilde{\phi}_{*R}\left(v,t\right) dv.\label{psi-star-r-green-function-representation-upper-half-plane}
	\end{align}
	We note here that by \eqref{Psi-U-equation} and a change of variables, that
	\begin{align*}
		-\Delta_{y}\left(\Psi_{R}\left(y,t\right)-c\varepsilon y_{2}\right)=U_{R}(y,t),
	\end{align*}
	with the corresponding statement holding in $z$ variables for \eqref{real-psi-star-ansatz} by translation. This fact, in addition to the fact that $U_{R}(y,t)=f_{\varepsilon}\left(\Psi_{R}-c\varepsilon y_{2}\right)$ in $y$ coordinates, is crucial to obtain the right structure in our computations \eqref{homotopic-operator-inner-error-E-star-R-definition-3}--\eqref{homotopic-operator-inner-error-main-definition-lower-order-terms}. We also note that whilst $\psi_{*R}$ grows linearly at infinity by \eqref{real-psi-star-ansatz}--\eqref{real-psi-star-ansatz-2}, it is multiplied by a cutoff in \eqref{psi-phi-star-cutoffs}.
	
	\medskip
	We let
	\be 
	\Tilde{\phi}_{*} =\left(\Tilde{\phi}_{*R},\Tilde{\phi}_{*L}\right), \quad \Tilde{\psi}_{*}=\left(\Tilde{\psi}_{*R},\Tilde{\psi}_{*L}\right).\label{tilde-psi-star-definition}
	\ee
	We finally note that by symmetry, we have the decomposition $\phi_{*L}=\Tilde{\phi}_{*L}-\alpha_{R}(t)U_{L}$
	in the appropriate coordinates, with $\Tilde{\phi}_{*L}$ satisfying the odd symmetry condition in $x_{1}$ with respect to $\Tilde{\phi}_{*R}$. Thus, we can drop the subscript on $\alpha_{R}$, and we shall simply work with
	$
	\alpha(t)=\alpha_{R}(t). $
	
	\medskip
	As a final note in this subsection, we remark that just like the orthogonality conditions \eqref{real-orthogonality-conditions} corresponding to those of the genuine solution, the true ansatz for $\phi_{*R}$ and $\psi_{*R}$ we work with throughout the rest of Section \ref{constructing-full-solution-section} are not quite \eqref{real-phi-star-ansatz}, \eqref{real-psi-star-ansatz} and \eqref{real-psi-star-ansatz-2}, but will depend on a homotopy parameter, see \eqref{phi-star-psi-star-homotopy}.
	
	\subsection{Homotopic Operators}\label{full-linearised-euler-operator-section}
	In this section we define, with motivation, operators depending on a homotopy parameter $\lambda\in\left[0,1\right]$. At $\lambda=0$, these operators will be linear, and at $\lambda=1$, their annihilation will imply a solution to the Euler system \eqref{2d-euler-vorticity-stream}. To begin with, we insert $\left(\omega,\Psi\right)$, defined in \eqref{omega-full-solution-decomposition} into $E_{1}$ and $E_{2}$ defined in \eqref{euler-operators}. Similarly to \eqref{first-approximation-inner-error}--\eqref{first-approximation-outer-error-E2out-definition}, we have, for $j=R,L$ and $(-1)^{R}=1$ and $(-1)^{L}=-1$,
	\begin{align}\nonumber 
		E_{1}\left[\omega,\Psi\right]&=\varepsilon^{-4}\sum_{j=R,L}\left(-1\right)^{j}E_{*j}\left(\phi_{j},\phi_{*j},\psi_{j},\psi_{*j},\psi^{out}, \psi_{*}^{out};\xi\right)\\
		%\label{homotopic-operator-euler-inner-error-2}
		\nonumber E_{2}\left[\omega,\Psi\right]&=E_{*}^{out}\left(\phi^{in},\phi^{in}_{*},\psi^{in},\psi^{in}_{*},\psi^{out}, \psi_{*}^{out};\xi\right),
		%\label{homotopic-operator-outer-error-2}
	\end{align}
	where, for $E_{2}^{out}$ defined in \eqref{first-approximation-outer-error-E2out-final-bound},
	\begin{align} 
		E_{*j}&=\varepsilon^{2}\dell_{t}\left(U_{j}+\phi_{j}+\phi_{*j}\right)\nonumber\\
		&+\varepsilon^{2}\gradperp\left(\Psi_{0}+\left(-1\right)^{j}\left(\psi_{j}+\psi_{*j}\right)+\psi^{out}+\psi^{out}_{*}\right)\cdot\nabla\left(U_{j}+\phi_{j}+\phi_{*j}\right),\nonumber\\
		%\label{homotopic-operator-inner-error-E-star-L-definition}
		E_{*}^{out}&=\Delta\psi_{*}^{out}+\sum_{j=R,L}(-1)^{j}\left(\psi_{*j}\Delta\eta^{(j)}_{K}+2\nabla\eta^{(j)}_{K}\cdot\nabla\psi_{*j}\right)+E_{2}^{out},\label{homotopic-operator-inner-error-E-star-R-definition}%\label{homotopic-operator-outer-error-E-star-out-definition}
	\end{align}
	with $\psi_{j}$, $\phi_{j}$, $j=R,L$, and $\psi^{out}$ constructed in the proof of Theorem \ref{first-approximation-construction-theorem}, and $E_{2}^{out}$ given in \eqref{first-approximation-outer-error-E2out-final-bound}. We recall that the specific forms of $E_{R1}$, $E_{L1}$, and $E_{2}^{out}$ given in \eqref{first-approximation-inner-error-ER1-definition}--\eqref{first-approximation-outer-error-E2out-definition} are due to the choice of $K$ and $T_{0}$ in \eqref{K-T0-choices} implying $\eta^{(j)}_{K}\equiv1$ on $\supp{\phi_{j}}$ for $j=R,L$, $\eta^{(R)}_{K}\equiv0$ on $\supp{\phi_{L}}$, and vice versa. Due to \eqref{tildexi-bound} and \eqref{alpha-bound}, the supports of $\phi_{*R}$ and $\phi_{*L}$ will be close enough to the supports of $U_{R}$ and $U_{L}$ respectively to imply the same behaviour, hence the specific forms of $E_{*R}$, $E_{*L}$, and $E_{*}^{out}$ given in \eqref{homotopic-operator-inner-error-E-star-R-definition}.
	
	\medskip
	Regrouping terms for $E_{*R}$ defined in \eqref{homotopic-operator-inner-error-E-star-R-definition}, this becomes
	\begin{align}
		E_{*R}&=\varepsilon^{2}\dell_{t}\left(U_{R}+\phi_{R}\right)+\varepsilon^{2}\gradperp\left(\Psi_{0}+\psi_{R}+\psi^{out}\right)\cdot\nabla\left(U_{R}+\phi_{R}\right)\label{homotopic-operator-inner-error-E-star-R-definition-3}
		\\
		&+\varepsilon^{2}\dell_{t}\phi_{*R}+\varepsilon^{2}\gradperp\left(\Psi_{0}+\psi_{R}+\psi_{*R}+\psi^{out}+\psi^{out}_{*}\right)\cdot\nabla \phi_{*R}\nonumber\\
		&+\varepsilon^{2}\gradperp\left(\psi_{*R}+\psi^{out}_{*}\right)\cdot\nabla\left(U_{R}+\phi_{R}\right).\nonumber
	\end{align}
	Now, to define our homotopy parameter dependent operators, we introduce a change of coordinates $z_{\lambda}$ that itself depends on the homotopy parameter $\lambda$:
	\begin{align}
		z_{\lambda}=\frac{x-\xi_{0}\left(t\right)-\xi_{1}\left(t\right)-\lambda\tilde{\xi}\left(t\right)}{\varepsilon},\label{homotopy-parameter-change-of-coordinates}
	\end{align}
	where $\xi_{0}$ and $\xi_{1}$ were constructed in Section \ref{point-vortex-trajectory-section} and Theorem \ref{first-approximation-construction-theorem}, and $\tilde{\xi}$ satisfies \eqref{tildexi-bound}. We denote $\xi^{\left(\lambda\right)}\coloneqq\xi_{0}+\xi_{1}+\lambda\tilde{\xi}$, and in general any quantities that depend on $\xi^{\left(\lambda\right)}$ will be written with a $\lambda$ subscript. In these coordinates, the orthogonality conditions for $\tilde{\phi}_{*R}$ become 
	for $\left(i,j\right)\in\left\{\left(0,0\right),\left(1,0\right),\left(0,1\right)\right\}$,
	\begin{align}
		\int_{\mathbb{H}_{-q'}}z_{\lambda,1}^{i}z_{\lambda,2}^{j}\Tilde{\phi}_{*R}\left(z_{\lambda},t\right)\ dz_{\lambda}=\lambda\mathcal{J}_{\left(i,j\right)}(t)&\coloneqq-\lambda\varepsilon^{-2}\int_{t}^{T}\mathscr{Q}_{\left(i,j\right)}(\tau) d\tau\nonumber\\
		&=\bigO{\left(\lambda\varepsilon^{3}t^{-2}\right)}.\label{tilde-phi-star-R-0-mass-condition}
	\end{align}
	Using Theorem \ref{first-approximation-construction-theorem}, in $z_{\lambda}$ coordinates, \eqref{homotopic-operator-inner-error-E-star-R-definition-3} becomes
	\begin{align}
		&E_{*R}=-\varepsilon\left(\mathcal{N}\left(\xi_{0}+\xi_{1}\right)\left[\lambda\tilde{\xi}\right]+\lambda\dot{\tilde{\xi}}\right)\cdot\grad_{z_{\lambda}}U_{R}+\nabla_{z_{\lambda}}\left(U_{R}+\phi_{R}\right)\cdot\gradperp_{z_{\lambda}}\psi^{out}_{*}\label{homotopic-operator-inner-error-E-star-R-definition-5}\\
		&+\left(f_{\varepsilon}'\right)^{+}\mathcal{R}^{*}_{1}\left(z_{\lambda},\xi^{\left(\lambda\right)},\dot{\xi}^{\left(\lambda\right)}\right)+\left(f_{\varepsilon}''\right)^{+}\mathcal{R}^{*}_{2}\left(z_{\lambda},\xi^{\left(\lambda\right)},\dot{\xi}^{\left(\lambda\right)}\right)+\left(f_{\varepsilon}'''\right)^{+}\mathcal{R}^{*}_{3}\left(z_{\lambda},\xi^{\left(\lambda\right)},\dot{\xi}^{\left(\lambda\right)}\right)\nonumber\\
		&+\varepsilon^{2}\dell_{t}\phi_{*R}\left(z_{\lambda},t\right)+\gradperp_{z_{\lambda}}\psi_{*R}\cdot\nabla_{z_{\lambda}}\left(U_{R}+\phi_{R}\right)\nonumber\\
		&+\gradperp_{z_{\lambda}}\left(\Psi_{0}-\varepsilon \dot{p}_{\lambda}z_{\lambda,2}+\varepsilon \dot{q}_{\lambda}z_{\lambda,1}+\psi_{R}+\psi_{*R}+\psi^{out}+\psi^{out}_{*}\right)\cdot\nabla_{z_{\lambda}}\phi_{*R},\nonumber
	\end{align}
	where the $\mathcal{R}^{*}_{j}$ were constructed in the proof of Theorem \ref{first-approximation-construction-theorem}.
	
	\medskip
	We now state that $\phi_{*R}$ and $\psi_{*R}$ are of the form 
	\begin{align}
		\phi_{*R}\left(z_{\lambda},t\right)&=\Tilde{\phi}_{*R}\left(z_{\lambda},t\right)+\lambda\alpha_{R}(t)U_{R}\left(z_{\lambda},t\right),\nonumber\\
		\psi_{*R}\left(z_{\lambda},t\right)&=\Tilde{\psi}_{*R}\left(z_{\lambda},t\right)+\lambda\alpha_{R}\left(t\right)\left(\Psi_{R}\left(z_{\lambda},t\right)-c\varepsilon\left(z_{\lambda,2}+q'\right)\right),\label{phi-star-psi-star-homotopy}
	\end{align}
	with the appropriate adjustment when considering $\psi_{*R}$ as a function of $y$ instead, as in \eqref{real-psi-star-ansatz-2}. Then concentrating on the last three terms of \eqref{homotopic-operator-inner-error-E-star-R-definition-5}, these terms can be written as
	%\begin{align}
	%    &\varepsilon^{2}\dell_{t}\left(\tilde{\phi}_{*R}+\alpha U_{R}\right)+\gradperp_{z}\left(\tilde{\psi}_{*R}+\alpha\left(\Psi_{R}-c_{\lambda}\varepsilon \left(z_{\lambda,2}+q'_{\lambda}\right)\right)\right)\cdot\nabla_{z,\lambda}\left(U_{R}+\phi_{R}\right)+\gradperp_{z_{\lambda}}\psi^{out}_{*}\cdot\nabla_{z_{\lambda}}\phi_{R}\nonumber\\
	%    &+\gradperp_{z_{\lambda}}\left(\left(1+\alpha\right)\left(\Psi_{R}-c_{\lambda}\varepsilon \left(z_{\lambda,2}+q'_{\lambda}\right)\right)-\left(\Psi_{L}+\varepsilon\left(\dot{p}_{\lambda}-c_{\lambda}\right)z_{\lambda,2}-\varepsilon \dot{q}_{\lambda}z_{\lambda,1}\right)\right)\cdot\nabla_{z_{\lambda}}\left(\tilde{\phi}_{*R}+\alpha U_{R}\right)\nonumber\\
	%    &+\gradperp_{z_{\lambda}}\left(\psi_{R}+\tilde{\psi}_{*R}+\psi^{out}+\psi^{out}_{*}\right)\cdot\nabla_{z_{\lambda}}\left(\tilde{\phi}_{*R}+\alpha U_{R}\right),\label{homotopic-operator-inner-error-E-star-R-definition-6}
	%\end{align}
	\begin{align}
		&\varepsilon^{2}\dell_{t}\tilde{\phi}_{*R}\left(z_{\lambda},t\right)+\gradperp_{z_{\lambda}}\tilde{\psi}_{*R}\cdot\nabla_{z_{\lambda}}\left(U_{R}+\phi_{R}\right)+\gradperp_{z_{\lambda}}\psi^{out}_{*}\cdot\nabla_{z_{\lambda}}\phi_{R}\label{homotopic-operator-inner-error-E-star-R-definition-7}\\
		&+\gradperp_{z_{\lambda}}\left(\Psi_{0}-\varepsilon \dot{p}_{\lambda}z_{\lambda,2}+\varepsilon \dot{q}_{\lambda}z_{\lambda,1}+\psi_{R}+\psi_{*R}+\psi^{out}+\psi^{out}_{*}\right)\cdot\nabla_{z_{\lambda}}\tilde{\phi}_{*R}\nonumber\\
		&+\lambda\varepsilon^{2}\left(\dot{\alpha}U_{R}+\alpha\dell_{t}U_{R}\right)+\alpha\gradperp_{z_{\lambda}}\left(\Psi_{R}-c_{\lambda}\varepsilon \left(z_{\lambda,2}+q'_{\lambda}\right)\right)\cdot\nabla_{z_{\lambda}}\phi_{R}\nonumber\\
		&+\alpha\gradperp_{z_{\lambda}}\left(\Psi_{0}-\varepsilon \dot{p}_{\lambda}z_{\lambda,2}+\varepsilon \dot{q}_{\lambda}z_{\lambda,1}+\psi_{R}+\psi_{*R}+\psi^{out}+\psi^{out}_{*}\right)\cdot\nabla_{z_{\lambda}}U_{R}\nonumber
	\end{align}
	We make the decomposition
	\begin{align}
		&\Psi_{0}-\varepsilon \dot{p}_{\lambda}z_{\lambda,2}+\varepsilon \dot{q}_{\lambda}z_{\lambda,1}+\psi_{R}+\psi_{*R}+\psi^{out}+\psi^{out}_{*}-\left|\log{\varepsilon}\right|\Omega_{\lambda}\label{homotopic-operator-inner-error-E-star-R-definition-10}\\
		&=\mathscr{V}\left(z_{\lambda},q_{\lambda}\right)\underbrace{-\left(\Psi_{L}\left(z_{\lambda},t\right)+\varepsilon\left(\dot{p}_{\lambda}-c_{\lambda}\right)z_{\lambda,2}-\varepsilon \dot{q}_{\lambda}z_{\lambda,1}\right)+\psi_{R}+\psi_{*R}+\psi^{out}+\psi^{out}_{*}}_{\mathfrak{a}_{R,\lambda}},\nonumber
	\end{align}
	wherein we note that whilst the main order term $\mathscr{V}\left(z_{\lambda},q_{\lambda}\right)$ defined in \eqref{vortex-pair-variation-generalisation} has a nonlinear dependence on $\lambda\Tilde{\xi}$ for $\lambda>0$, this dependence disappears at $\lambda=0$. By \eqref{f-prime-vortex-pair-variation-relationship}, $U_{R}\left(z_{\lambda},t\right)$ is a function of $\mathscr{V}\left(z_{\lambda},q_{\lambda}\right)$, so we define $\mathscr{E}_{R,\lambda}\left(\tilde{\phi}_{*R}, \lambda\alpha,\psi^{out}_{*},\lambda\Tilde{\xi}\right)$ by
	\begin{align}
		&\mathscr{E}_{R,\lambda}\coloneqq\varepsilon^{2}\dell_{t}\tilde{\phi}_{*R}\left(z_{\lambda},t\right)+\gradperp_{z_{\lambda}}\tilde{\psi}_{*R}\cdot\nabla_{z_{\lambda}}\left(U_{R}+\lambda\phi_{R}\right)+\lambda\varepsilon^{2}\dot{\alpha}\left(t\right)U_{R}\left(z_{\lambda},t\right)+\lambda\tilde{\mathscr{E}}_{R}\label{homotopic-operator-inner-error-main-definition}\\
		&+\gradperp_{z_{\lambda}}\left(\mathscr{V}\left(z_{\lambda},q_{\lambda}\right)+\lambda\mathfrak{a}_{R,\lambda}\right)\cdot\nabla_{z_{\lambda}}\tilde{\phi}_{*R}-\varepsilon\left(\mathcal{N}\left(\xi_{0}+\xi_{1}\right)\left[\lambda\tilde{\xi}\right]+\lambda\dot{\tilde{\xi}}\right)\cdot\grad_{z_{\lambda}}U_{R},\nonumber
	\end{align}
	where
	\begin{align}
		&\tilde{\mathscr{E}}_{R}\left(\tilde{\psi}_{*R}, \lambda\alpha,\psi^{out}_{*},\lambda\tilde{\xi}\right)=\gradperp_{z_{\lambda}}\psi^{out}_{*}\cdot\nabla_{z_{\lambda}}\left(U_{R}+\phi_{R}\right)+\varepsilon^{2}\alpha\dot{q}\dell_{q_{\lambda}}U_{R}\left(z_{\lambda},t\right)\label{homotopic-operator-inner-error-main-definition-lower-order-terms}\\
		&+\alpha\gradperp_{z_{\lambda}}\left(\Psi_{R}-c_{\lambda}\varepsilon \left(z_{\lambda,2}+q'_{\lambda}\right)\right)\cdot\nabla_{z_{\lambda}}\phi_{R}+\alpha\gradperp_{z_{\lambda}}\mathfrak{a}_{R,\lambda}\cdot\nabla_{z_{\lambda}}U_{R}\nonumber\\
		&+\left(f_{\varepsilon}'\right)^{+}\mathcal{R}^{*}_{1}\left(z_{\lambda},\xi^{\left(\lambda\right)}\right)+\left(f_{\varepsilon}''\right)^{+}\mathcal{R}^{*}_{2}\left(z_{\lambda},\xi^{\left(\lambda\right)}\right)+\left(f_{\varepsilon}'''\right)^{+}\mathcal{R}^{*}_{3}\left(z_{\lambda},\xi^{\left(\lambda\right)}\right).\nonumber
	\end{align}
	Here $\phi_{R}$ and $\psi_{R}$ were constructed in the proof of Theorem \ref{first-approximation-construction-theorem}, and $\Tilde{\psi}_{*R}$ is related to $\Tilde{\phi}_{*R}$ via \eqref{psi-star-r-green-function-representation-upper-half-plane}.
	
	\medskip
	We next define, for $(-1)^{R}=1$ and $(-1)^{L}=-1$,
	\begin{align} 
		\mathscr{E}_{\lambda}^{out}\left(\tilde{\psi}_{*}, \lambda\alpha,\psi^{out}_{*},\lambda\Tilde{\xi}\right)&=\Delta\psi_{*}^{out}+\lambda\sum_{j=R,L}(-1)^{j}\left(\psi_{*j}\Delta\eta^{(j)}_{K}+2\nabla\eta^{(j)}_{K}\cdot\nabla\psi_{*j}\right)\nonumber\\
		&+\lambda E_{2}^{out}\left(\lambda\tilde{\xi}\right),\label{homotopic-operator-outer-error-main-definition}
	\end{align}
	with $\Tilde{\psi}_{*}$ being defined in \eqref{tilde-psi-star-definition}, and $E_{2}^{out}$ being given in \eqref{first-approximation-outer-error-E2out-final-bound}.
	
	\medskip
	We note that at $\lambda=1$, we recover \eqref{homotopic-operator-inner-error-E-star-R-definition-5}:
	\begin{align}
		\mathscr{E}_{R,1}\left(\tilde{\phi}_{*R}, \lambda\alpha,\psi^{out}_{*},\lambda\Tilde{\xi}\right)&=E_{*R}\left(\phi_{R},\phi_{*R},\psi_{R},\psi_{*R},\psi^{out}, \psi_{*}^{out};\xi\right),\label{homotopic-operator-inner-error-lambda-1-equivalence}\\
		\mathscr{E}_{1}^{out}\left(\tilde{\psi}_{*}, \lambda\alpha,\psi^{out}_{*},\lambda\Tilde{\xi}\right)&=E_{*}^{out}\left(\phi^{in},\phi^{in}_{*},\psi^{in},\psi^{in}_{*},\psi^{out}, \psi_{*}^{out};\xi\right).\label{homotopic-operator-outer-error-lambda-1-equivalence}
	\end{align}
	Thus a solution to the system \eqref{2d-euler-vorticity-stream} is constructed if we find  $\left(\tilde{\phi}_{*R}, \alpha,\psi^{out}_{*},\Tilde{\xi}\right)$ that make the quantities stated in \eqref{homotopic-operator-inner-error-lambda-1-equivalence}--\eqref{homotopic-operator-outer-error-lambda-1-equivalence} equal to $0$. We will do this by a continuation argument that involves finding a priori estimates along the deformation parameter $\lambda$ for the equations one obtains by setting \eqref{homotopic-operator-inner-error-main-definition} and \eqref{homotopic-operator-outer-error-main-definition} equal to $0$, in addition imposing suitable bounds for the parameter functions for $t\in\left[T_{0},T\right]$.
	
	\medskip
	Then, given $\left(\lambda\alpha,\psi^{out}_{*},\lambda\Tilde{\xi}\right)$, we require that $\tilde{\phi}_{*R}$ satisfies the terminal value problem
	\begin{subequations}\label{tilde-phi-star-R-initial-value-problem}
		\begin{align}
			&\mathscr{E}_{R,\lambda}=c_{R0}\left(t\right)U_{R}\left(z_{\lambda},t\right)+c_{R1}\left(t\right)\left(f_{\varepsilon}'\right)^{+}\mathscr{Z}_{1}\left(q_{\lambda}\right)\nonumber\\
			&+c_{R2}\left(t\right)\left(f_{\varepsilon}'\right)^{+}\mathscr{Z}_{2}\left(q_{\lambda}\right)\ \ \text{in}\ \mathbb{H}_{-q'_{\lambda}}\times \left[T_{0},T\right],\label{tilde-phi-star-R-initial-value-problem-equation}\\
			&\tilde{\phi}_{*R}(\blank,T)=0\ \ \text{in}\ \mathbb{H}_{-q'_{\lambda}},\quad \tilde{\phi}_{*R}\left(z_{\lambda},t\right)=0\ \ \text{on}\ \dell\mathbb{H}_{-q_{\lambda}'}\times\left[T_{0},T\right],\label{tilde-phi-star-R-initial-value-problem-initial-data}
		\end{align}
	\end{subequations}
	where $\mathbb{H}_{-q'_{\lambda}}$ is defined in \eqref{upper-half-plane-translate-definition}, and we once again recall that $T_{0}$ is defined in \eqref{K-T0-choices} and $T$ is arbitrarily large. Upon integrating against $1$, $z_{\lambda,1}$, and $z_{\lambda,2}$ in turn, for $\tilde{\phi}_{*R}$ satisfying \eqref{tilde-phi-star-R-initial-value-problem}, we can derive explicit formulae for the $c_{Rj}$ that depend linearly on $\tilde{\phi}_{*R}$ using integration by parts, see Lemma \ref{projected-linear-transport-problem-coefficients-of-projection-a-priori-estimates-lemma}.
	
	\medskip
	Given a solution to \eqref{tilde-phi-star-R-initial-value-problem}, $\mathscr{E}_{R,\lambda}$ will be annihilated once we impose, for $j=0,1,2$, the terminal value problem
	\begin{align}
		c_{Rj}\left(\tilde{\phi}_{*R}, \lambda\alpha,\psi^{out}_{*},\lambda\Tilde{\xi},\lambda\right)=0\ \ \textrm{for all}\ t\in\left[T_{0},T\right],\label{cRj-initial-value-problem}
	\end{align}
	with $\Tilde{\xi}$  and $\alpha$ satisfying
	\begin{align}
		\|t^{2}\tilde{p}\|_{\left[T_{0},T\right]}+\|t^{3}\dot{\tilde{p}}\|_{\left[T_{0},T\right]}+\|t^{3}\tilde{q}\|_{\left[T_{0},T\right]}+\|t^{4}\dot{\tilde{q}}\|_{\left[T_{0},T\right]}&\leq \varepsilon^{4-\sigma},\nonumber\\
		\tilde{p}\left(T\right)=0,\ \ \tilde{q}\left(T\right)&=0,\nonumber\\
		\|t^{3}\alpha\|_{\left[T_{0},T\right]}+\|t^{4}\dot{\alpha}\|_{\left[T_{0},T\right]}&\leq \varepsilon^{3-\sigma},\nonumber\\
		\alpha\left(T\right)&=0,\label{p-tilde-bound-2}
	\end{align}
	for some small $\sigma>0$. We require that $\Tilde{\phi}_{*L}$ and $c_{Lj}$ solve analogous terminal value problems to \eqref{tilde-phi-star-R-initial-value-problem} and \eqref{cRj-initial-value-problem}.
	We also impose that $\psi^{out}_{*}$, in $x$ variables, solves the boundary value problem
	\begin{align}
		\mathscr{E}_{\lambda}^{out}\left(\tilde{\psi}_{*}, \lambda\alpha,\psi^{out}_{*},\lambda\Tilde{\xi}\right)&=0\ \ \text{in}\ \mathbb{R}^{2}_{+}\times \left[T_{0},T\right],\quad \psi^{out}_{*}\rvert_{\dell\mathbb{R}^{2}_{+}}= 0\ \forall t\in \left[T_{0},T\right]. \label{psi-out-star-boundary-value-problem}
	\end{align}
	To construct a solution to \eqref{2d-euler-vorticity-stream} via the above scheme, we consider the vector of parameter functions $\mathfrak{P}=\left(\tilde{\phi}_{*}, \lambda\alpha,\psi^{out}_{*},\lambda\Tilde{\xi}\right)$,  belonging to a Banach space $\left(\mathfrak{X},\|\cdot\|_{\mathfrak{X}}\right)$ and reformulate the equations \eqref{psi-star-r-green-function-representation-upper-half-plane}, \eqref{tilde-phi-star-R-initial-value-problem}--\eqref{psi-out-star-boundary-value-problem} as a fixed point problem of the form
	\begin{align}
		\mathcal{T}\left(\mathfrak{P},\lambda\right)=\mathfrak{P},\ \ \mathfrak{P}\in\mathscr{O},\label{full-solution-construction-fixed-point-formulation}
	\end{align}
	where $\mathscr{O}$ is a bounded open set in $\mathfrak{X}$, and $\mathcal{T}\left(\cdot,\lambda\right)$ is a homotopy of nonlinear compact operators with $\mathcal{T}\left(\cdot,0\right)$ being linear.
	
	\medskip
	For a suitable choice of $\mathfrak{X}$ and $\mathscr{O}$, we will show that for all $\lambda\in[0,1]$, no solution $\mathcal{Z}\in\dell\mathscr{O}$ exists. Existence of a solution to \eqref{full-solution-construction-fixed-point-formulation} at $\lambda=1$ will then follow by standard degree theory, which, as stated, will give a solution to \eqref{2d-euler-vorticity-stream}. The choices of $\mathfrak{X}$ and $\mathscr{O}$ will also give the desired properties we claimed the solution would possess in Theorem \ref{teo1}.
	
	\medskip
	We now move on to a priori estimates for the system  \eqref{psi-star-r-green-function-representation-upper-half-plane}, \eqref{tilde-phi-star-R-initial-value-problem}--\eqref{psi-out-star-boundary-value-problem}. 
	
	\subsection{Weighted $L^{2}$ Function Space}\label{weight-section}
	For all $\varepsilon>0$ small enough, the definitions and estimates in this subsection will hold for any $\lambda\in\left[0,1\right]$, and any $\tilde{\xi}$ satisfying \eqref{p-tilde-bound-2} in the coordinate transformation $x\to z_{\lambda}$ given in \eqref{homotopy-parameter-change-of-coordinates}. Thus we suppress dependence on $\lambda$ and write the definitions and estimates for a coordinate change $x\to z\in\mathbb{H}_{-q'}$, for and $q$ satisfying \eqref{point-vortex-trajectory-function-decomposition-new} and \eqref{p-tilde-bound-2}.
	
	\medskip
	When proving a priori estimates for some linearised operators, it will be convenient to have shorthand for several quantities we define in this section. Firstly, recall the definition of $\left(f_{\varepsilon}'\right)^{+}$ as a function of $y\in\mathbb{R}^{2}_{+}$ from \eqref{f-prime-plus-definition}, based on \eqref{nonlinearity-definition}, \eqref{f-prime-q-variation-1}--\eqref{tilde-W-k-def} and \eqref{f-prime-definition}. We think of $\left(f_{\varepsilon}'\right)^{+}$ as a function of $z=y-q'e_{2}$ via the definition \eqref{tilde-W-k-def}. Then let $\eta_{0}(r)$ once again be the smooth cutoff that is identically $1$ for $r\leq1$ and identically $0$ for $r\geq2$. Define the interior cutoff, $\eta_{\mathfrak{i}}$, and its support by,
	\begin{align}
		\eta_{\mathfrak{i}}\left(z,t\right)=1-\eta_{0}\left(\left(\frac{t}{\varepsilon}\right)^{\frac{\gamma-1}{2}}\left(f_{\varepsilon}'\right)^{+}\right),\quad \mathfrak{W}\coloneqq\supp{\eta_{\mathfrak{i}}}=\left\{\left(f_{\varepsilon}'\right)^{+}\geq\left(\frac{\varepsilon}{t}\right)^{\frac{\gamma-1}{2}}\right\},\label{weighted-L2-interior-cutoff}
	\end{align}
	Note that $\left(f_{\varepsilon}'\right)^{+}$ is a continuous non-negative function that is positive and size $\bigO{\left(1\right)}$ at $z=0$, and is $0$ outside the ball $B_{\rho}\left(0\right)$. Thus $\mathfrak{W}$ can be thought of as an ``interior" region of $\supp{\left(f_{\varepsilon}'\right)^{+}}$ where $\left(f_{\varepsilon}'\right)^{+}$ is strictly positive, with lower bounds chosen to prove ``interior" a-priori bounds in Lemma \ref{L2-interior-a-priori-estimate-lemma}. Consequently, $\supp{\eta_{\mathfrak{e}}}$ for $\eta_{\mathfrak{e}}$ defined below in \eqref{weighted-L2-exterior-cutoff} can correspondingly be thought of as the ``exterior" region of $\supp{\left(f_{\varepsilon}'\right)^{+}}$ where we can instead precisely control the size of $\left(f_{\varepsilon}'\right)^{+}$, and  we prove a corresponding ``exterior" a-priori estimate in Lemma \ref{L2-exterior-a-priori-estimate-lemma}.
	
	\medskip
	Thus, let $\eta_{1}(r)$ to be the smooth cutoff function that is identically $1$ for $r\leq3$ and identically $0$ for $r\geq4$. We also recall from \eqref{dq-Psi-R-variation}--\eqref{f-prime-q-variation-3} that $\left(f_{\varepsilon}'\right)^{+}$ is supported on $B_{\rho}\left(0\right)$, $\rho>0$ defined in \eqref{first-approximation-main-order-vorticity-support}, in $z$ coordinates. We let the exterior cutoff, $\eta_{\mathfrak{e}}$, be defined by
	\begin{align}
		\eta_{\mathfrak{e}}\left(z,t\right)=\twopartdef{\eta_{1}\left(\left(\frac{t}{\varepsilon}\right)^{\frac{\gamma-1}{2}}\left(f_{\varepsilon}'\right)^{+}\right)}{z\in\supp\left(f_{\varepsilon}'\right)^{+}}{\eta_{0}\left(\frac{\left|z\right|}{4\rho}\right)}{z\in \mathbb{H}_{-q'}\setminus \supp\left(f_{\varepsilon}'\right)^{+}}.\label{weighted-L2-exterior-cutoff}
	\end{align}
	We note the two functions that are glued to construct $\eta_{\mathfrak{e}}$ match at the boundary of their supports of definition as they are both identically $1$ on a region around said support. We note that $\eta_{\mathfrak{e}}$ is supported on the region
	\begin{align}
		\supp{\eta_{\mathfrak{e}}}=\left\{\left(f_{\varepsilon}'\right)^{+}\leq 4\left(\frac{\varepsilon}{t}\right)^{\frac{\gamma-1}{2}}\right\}.\label{weighted-L2-exterior-cutoff-support}
	\end{align}
	From \eqref{weighted-L2-interior-cutoff} and \eqref{weighted-L2-exterior-cutoff}, we can see that
	\begin{align}
		&\eta_{\mathfrak{i}}\equiv1,\quad \left(f_{\varepsilon}'\right)^{+}\geq 2\left(\frac{\varepsilon}{t}\right)^{\frac{\gamma-1}{2}}\nonumber\\
		&\eta_{\mathfrak{e}}\equiv1,\quad \left(f_{\varepsilon}'\right)^{+}\leq 3\left(\frac{\varepsilon}{t}\right)^{\frac{\gamma-1}{2}},\ z\in \supp{\left(f_{\varepsilon}'\right)^{+}}.\label{cutoffs-inequalities}
	\end{align}
	Thus on $\supp{\left(f_{\varepsilon}'\right)^{+}}$, since both $\eta_{\mathfrak{i}}$ and $\eta_{\mathfrak{e}}$ are bounded between $0$ and $1$ and at least one of them are identically $1$ on the two regions considered above whose union clearly covers $\supp{\left(f_{\varepsilon}'\right)^{+}}$, we have
	\begin{align*}
		1\leq\eta_{\mathfrak{i}}+\eta_{\mathfrak{e}}\leq2\ \ 
		\mathrm{on}\ \supp{\left(f_{\varepsilon}'\right)^{+}}.
	\end{align*}
	Then from \eqref{weighted-L2-exterior-cutoff}, we can also see that $\eta_{\mathfrak{e}}\equiv1$ on
	\begin{align*}
		\left(\left\{\left(f_{\varepsilon}'\right)^{+}\leq 3\left(\frac{\varepsilon}{t}\right)^{\frac{\gamma-1}{2}}\right\}\cap B_{2\rho}\left(0\right)\right)\cup \left(B_{4\rho}\left(0\right)\setminus B_{2\rho}\left(0\right)\right),
	\end{align*}
	where we recall that $\supp{\left(f_{\varepsilon}'\right)^{+}}\subset B_{2\rho}\left(0\right)$. This property of $\eta_{\mathfrak{e}}$, along with \eqref{cutoffs-inequalities} gives
	\begin{align*}
		1\leq\eta_{\mathfrak{i}}+\eta_{\mathfrak{e}}&\leq2\ \ 
		\mathrm{on}\ B_{4\rho}\left(0\right).
	\end{align*}
	Next, taking the gradient of $\eta_{\mathfrak{i}}$ gives
	\begin{align}
		\nabla_{z}\eta_{\mathfrak{i}}=-\left(\frac{t}{\varepsilon}\right)^{\frac{\gamma-1}{2}}\eta_{0}'\left(\left(\frac{t}{\varepsilon}\right)^{\frac{\gamma-1}{2}}\left(f_{\varepsilon}'\right)^{+}\right)\nabla_{z}\left(\left(f_{\varepsilon}'\right)^{+}\right).\label{grad-eta-i-0}
	\end{align}
	Recalling Theorem \ref{vortex-pair-properties-theorem}, and \eqref{f-prime-q-variation-1}--\eqref{tilde-W-k-def}, we have
	\begin{align*}
		\nabla_{z}\left(\left(f_{\varepsilon}'\right)^{+}\right)=\left(f_{\varepsilon}''\right)^{+}\nabla_{z}\left(\Psi_{R}-c\varepsilon \left(z_{2}+q'e_{2}\right)-\left|\log{\varepsilon}\right|\Omega\right)=\left(f_{\varepsilon}''\right)^{+}\nabla_{z}\left(\Psi_{R}-c\varepsilon z_{2}\right),
	\end{align*}
	whence we note that the $L^{\infty}$ norm of the gradient on the right hand side multiplying $\left(f_{\varepsilon}''\right)^{+}$ on $\mathbb{H}_{-q'}$ is bounded due to \eqref{vortexpair-theorem} and \eqref{psi-r-definition}, keeping in mind the change of coordinates \eqref{homotopy-parameter-change-of-coordinates}. Thus
	\begin{align}
		\left|\nabla_{z}\eta_{\mathfrak{i}}\right|\leq C\left|\eta_{0}'\left(\left(\frac{t}{\varepsilon}\right)^{\frac{\gamma-1}{2}}\left(f_{\varepsilon}'\right)^{+}\right)\right|\left(\frac{t}{\varepsilon}\right)^{\frac{\gamma-1}{2}}\left(f_{\varepsilon}''\right)^{+}.\label{grad-eta-i-00}
	\end{align}
	From \eqref{grad-eta-i-00}, we only have to worry about bounding $\nabla_{z}\eta_{\mathfrak{i}}$ when $\eta_{0}'\left(\left(\frac{t}{\varepsilon}\right)^{\frac{\gamma-1}{2}}\left(f_{\varepsilon}'\right)^{+}\right)$ is non-zero. Then noting that $\eta_{0}'(r)$ is both bounded, and only non-zero when $r\in\left(1,2\right)$, we have that $\eta_{0}'\left(\left(\frac{t}{\varepsilon}\right)^{\frac{\gamma-1}{2}}\left(f_{\varepsilon}'\right)^{+}\right)$ is non-zero exactly when
	\begin{align*}
		1<\left(\frac{t}{\varepsilon}\right)^{\frac{\gamma-1}{2}}\left(f_{\varepsilon}'\right)^{+}<2\iff\left(\frac{\varepsilon}{t}\right)^{\frac{\gamma-1}{2}}<\left(f_{\varepsilon}'\right)^{+}<2\left(\frac{\varepsilon}{t}\right)^{\frac{\gamma-1}{2}}.
	\end{align*}
	Then we note that by \eqref{f-prime-q-variation-1}--\eqref{tilde-W-k-def},
	\begin{align*}
		\left(f_{\varepsilon}'\right)^{+}\sim \left(\frac{\varepsilon}{t}\right)^{\frac{\gamma-1}{2}} \iff
		\left(f_{\varepsilon}''\right)^{+}\sim \left(\frac{\varepsilon}{t}\right)^{\frac{\gamma-2}{2}},
	\end{align*}
	Thus
	\begin{align}
		\left|\nabla_{z}\eta_{\mathfrak{i}}\right|\leq C\left(\frac{t}{\varepsilon}\right)^{\frac{1}{2}}.\label{grad-eta-i-bound}
	\end{align}
	A similar proof shows the same bound for $\nabla_{z}\eta_{\mathfrak{e}}$. To summarize, for $\eta_{\mathfrak{i}}$ and $\eta_{\mathfrak{e}}$, we have
	\begin{align}
		1\leq\eta_{\mathfrak{i}}+\eta_{\mathfrak{e}}\leq2\ \ 
		\mathrm{on}\ B_{4\rho}\left(0\right),\quad \left|\grad_{z}\eta_{\mathfrak{i}}\right|+\left|\grad_{z}\eta_{\mathfrak{e}}\right|\leq C\left(\frac{t}{\varepsilon}\right)^{\frac{1}{2}},\label{interior-exterior-cutoff-sum-bounds}
		%\label{interior-exterior-gradient-bounds}
	\end{align}
	some absolute constant $C>0$. Moreover, since, for any space or time derivative $\dell$, $\dell\eta_{\mathfrak{i}}$ is supported on
	\begin{align*}
		\left\{\left(\frac{\varepsilon}{t}\right)^{\frac{\gamma-1}{2}}\leq\left(f_{\varepsilon}'\right)^{+}\leq 2\left(\frac{\varepsilon}{t}\right)^{\frac{\gamma-1}{2}}\right\},
	\end{align*}
	and since $\eta_{\mathfrak{e}}\equiv1$ on this region by \eqref{cutoffs-inequalities}, we have that
	\begin{align}
		\supp{\dell_{t}\eta_{\mathfrak{i}}}\cup\supp{\nabla_{z}\eta_{\mathfrak{i}}}\subset\left\{\eta_{\mathfrak{e}}=1\right\}\subset\supp{\eta_{\mathfrak{e}}}.\label{cutoff-derivative-support-inclusion}
	\end{align}
	Thus we have 
	\begin{align}
		\eta_{\mathfrak{i}}\leq\eta_{\mathfrak{e}},\quad \left|\dell_{t}\eta_{\mathfrak{i}}\right|\leq \frac{C\eta_{\mathfrak{e}}}{t},\quad \left|\nabla_{z}\eta_{\mathfrak{i}}\right|\leq C\eta_{\mathfrak{e}}\left(\frac{t}{\varepsilon}\right)^{\frac{1}{2}},\quad z\in\supp{\dell_{t}\eta_{\mathfrak{i}}}\cup\supp{\nabla_{z}\eta_{\mathfrak{i}}}.\label{derivative-of-inner-cutoff-outer-cutoff-comparison-bound-1}
	\end{align} 
	The first of these bounds is by \eqref{cutoff-derivative-support-inclusion}, and the fact that $0\leq\eta_{\mathfrak{i}}\leq1$. The second can be proved similarly to \eqref{grad-eta-i-0}--\eqref{grad-eta-i-bound} along with the use of \eqref{xi0-bound}--\eqref{xi1-bound} and Lemma \ref{vortex-pair-q-variation-lemma}. The third bound then combines \eqref{interior-exterior-cutoff-sum-bounds} and \eqref{cutoff-derivative-support-inclusion}.
	
	\medskip
	With the definitions \eqref{weighted-L2-interior-cutoff} and \eqref{weighted-L2-exterior-cutoff} in hand we define, for $1\leq r<\infty$, the space $\mathscr{Y}_{r}$ as all functions on $\mathbb{H}_{-q'}$ with
	\begin{align}
		\|\Phi\|_{\mathscr{Y}_{r}}\coloneqq \|\left(\left(f_{\varepsilon}'\right)^{+}\right)^{\frac{1}{r}-1}\eta_{\mathfrak{i}}\Phi\|_{L^{r}\left(\mathbb{H}_{-q'}\right)}+\left(\frac{t}{\varepsilon}\right)^{\left(\frac{\gamma-1}{2}\right)\left(1-\frac{1}{r}\right)}\|\eta_{\mathfrak{e}}\Phi\|_{L^{r}\left(\mathbb{H}_{-q'}\right)}<\infty.\label{weighted-Lr-norm-definition}
	\end{align}
	We have that if $\supp\Phi\subset B_{4\rho}\left(0\right)$, then by \eqref{cutoffs-inequalities} and \eqref{interior-exterior-cutoff-sum-bounds} we have
	\begin{align}
		&\|\Phi\|_{L^{r}\left(\mathbb{H}_{-q'}\right)}\leq \|\left(\eta_{\mathfrak{i}}+\eta_{\mathfrak{e}}\right)\Phi\|_{L^{r}\left(\mathbb{H}_{-q'}\right)}\leq \|\eta_{\mathfrak{i}}\Phi\|_{L^{r}\left(\mathbb{H}_{-q'}\right)}+\|\eta_{\mathfrak{e}}\Phi\|_{L^{r}\left(\mathbb{H}_{-q'}\right)}\nonumber\\
		&\leq C\|\tilde{\mathscr{W}}_{1}\left(q_{0}+q_{1}\right)^{\frac{1}{r}-1}\eta_{\mathfrak{i}}\Phi\|_{L^{r}\left(\mathbb{H}_{-q'}\right)}+\left(\frac{\varepsilon}{t}\right)^{\left(\frac{\gamma-1}{2}\right)\left(1-\frac{1}{r}\right)}\left(\frac{t}{\varepsilon}\right)^{\left(\frac{\gamma-1}{2}\right)\left(1-\frac{1}{r}\right)}\|\eta_{\mathfrak{e}}\Phi\|_{L^{r}\left(\mathbb{H}_{-q'}\right)}\nonumber\\
		&\leq C\|\Phi\|_{\mathscr{Y}_{r}},\label{weighted-Lr-norm-definition-1}
	\end{align}
	For $r=\infty$, we instead define, for $j=1,2,3$, the norm
	\begin{align}
		\|\Phi\|_{\mathscr{Y}_{\infty,j}}\coloneqq \|\left(\left(f_{\varepsilon}^{\left(j\right)}\right)^{+}\right)^{-1}\eta_{\mathfrak{i}}\Phi\|_{L^{\infty}\left(\mathbb{H}_{-q'}\right)}+\left(\frac{t}{\varepsilon}\right)^{\left(\frac{\gamma-j}{2}\right)}\|\eta_{\mathfrak{e}}\Phi\|_{L^{\infty}\left(\mathbb{H}_{-q'}\right)}<\infty,\label{weighted-L-infinity-norm-definition}
	\end{align}
	where we once again recall the definitions for $\left(f_{\varepsilon}^{\left(j\right)}\right)^{+}$ from \eqref{f-prime-q-variation-1}--\eqref{tilde-W-k-def}. As in \eqref{weighted-Lr-norm-definition-1}, we have for $\Phi$ such that $\supp\Phi\subset B_{4\rho}\left(0\right)$,
	\begin{align}
		\|\Phi\|_{L^{\infty}\left(\mathbb{H}_{-q'}\right)}&\leq C\|\left(\left(f_{\varepsilon}^{\left(j\right)}\right)^{+}\right)^{-1}\eta_{\mathfrak{i}}\Phi\|_{L^{\infty}\left(\mathbb{H}_{-q'}\right)}+\left(\frac{\varepsilon}{t}\right)^{\left(\frac{\gamma-j}{2}\right)}\left(\frac{t}{\varepsilon}\right)^{\left(\frac{\gamma-j}{2}\right)}\|\eta_{\mathfrak{e}}\Phi\|_{L^{\infty}\left(\mathbb{H}_{-q'}\right)}\nonumber\\
		&\leq C\|\Phi\|_{\mathscr{Y}_{\infty,j}}.\label{weighted-L-infinity-norm-bounds-L-infinity-norm}
	\end{align}
	On the support of $\eta_{i}$, we know that
	\begin{align*}
		\mathscr{V}\left(z,q\right)\geq \frac{1}{\gamma}\left(\frac{\varepsilon}{t}\right)^{\frac{1}{2}}.
	\end{align*}
	Then, on $\supp{\eta_{\mathfrak{i}}}$, for any $\beta>\frac{1}{2}$,
	\begin{align}
		\left(1+C\left(\frac{\varepsilon}{t}\right)^{\beta-\frac{1}{2}}\right)\mathscr{V}\left(q\right)\geq\mathscr{V}\left(q\right)+\bigO{\left(\left(\frac{\varepsilon}{t}\right)^{\beta}\right)}\geq\left(1-C\left(\frac{\varepsilon}{t}\right)^{\beta-\frac{1}{2}}\right)\mathscr{V}\left(q\right).\label{f-prime-L2-inner-weight-comparison-bound}
	\end{align}
	Similarly, using \eqref{weighted-L2-exterior-cutoff-support}, we have that on $\supp{\eta_{\mathfrak{e}}}$, for any $\beta>\frac{1}{2}$
	\begin{align}
		\mathscr{V}\left(z,q\right)+\bigO{\left(\left(\frac{\varepsilon}{t}\right)^{\beta}\right)}\leq C\left(\frac{\varepsilon}{t}\right)^{\frac{1}{2}}.\label{f-prime-L2-outer-weight-comparison-bound-1}
	\end{align}
	Now we record some inequalities related to the Poisson equation $-\Delta\Tilde{\psi}_{*R}=\Tilde{\phi}_{*R}$ that we will need later on.
	\subsection{The Poisson Equation}\label{poisson-equation-section}
	In this section, \ref{poisson-equation-section}, we work in $y$ coordinates defined in \eqref{change-of-coordinates-dynamic-problem}. Since $\Tilde{\phi}_{*R}$  and $\Tilde{\psi}_{*R}$ are related by \eqref{psi-star-r-green-function-representation-upper-half-plane}, we would like to prove effective bounds on $\bar{\psi}$ on $\mathbb{R}^{2}_{+}$ satisfying $\psi\left(y_{1},0\right)=0$ and related to $\bar{\phi}$ by $-\Delta\bar{\psi}=\bar{\phi}$, so that
	\begin{align}
		\bar{\psi}\left(y\right)=\frac{1}{2\pi}\int_{\mathbb{R}_{+}^{2}}\log{\left(\frac{\left|v-\bar{y}\right|}{\left|v-y\right|}\right)}\bar{\phi}\left(v\right) dv
		\label{bar-psi-green-function-representation-upper-half-plane}
	\end{align}
	For some fixed $\bar{\mathcal{J}}_{\left(0,0\right)}(t)=\bigO{\left(\varepsilon^{3}t^{-2}\right)}$ on $\left[T_{0},T\right]$, we make the assumptions
	\begin{align}
		\supp\bar{\phi}\subset B_{3\rho}\left(q'e_{2}\right),\ \ \ \|\bar{\phi}\|_{L^{2}\left(\mathbb{R}^{2}_{+}\right)}<\infty,\ \ \ \int_{\mathbb{R}^{2}_{+}}\bar{\phi}\left(v\right) dv=\bar{\mathcal{J}}_{\left(0,0\right)},\label{bar-phi-support-weighted-L2-space-and-mass-condition}
	\end{align}
	Note that \eqref{bar-phi-support-weighted-L2-space-and-mass-condition} implies for $1\leq r<\infty$ and $j=1,2,3$,
	\begin{align}\nonumber   \|\bar{\phi}\|_{L^{r}\left(\mathbb{R}^{2}_{+}\right)}\leq C\|\bar{\phi}\|_{\mathscr{Y}_{r}},\quad \|\bar{\phi}\|_{L^{\infty}\left(\mathbb{R}^{2}_{+}\right)}\leq C\|\bar{\phi}\|_{\mathscr{Y}_{\infty,j}}.
		%\label{bar-phi-is-L2}
	\end{align}
	To help state the bounds we will need, we recall the definition of the H\"older seminorm of order $\beta$ for some $\beta\in\left(0,1\right)$:
	\begin{align}
		\nonumber
		\left[F\right]_{\beta}\left(y\right)\coloneqq\sup_{y_{1},y_{2}\in B_{1}\left(y\right)}\frac{\left|F\left(y_{1}\right)-F\left(y_{2}\right)\right|}{\left|y_{1}-y_{2}\right|^{\beta}}.
		%\label{holder-seminorm-definition}
	\end{align}
	Then the following result holds.
	\begin{lemma}\label{poisson-equation-zero-mass-estimates-lemma}
		Let $\bar{\phi}$ satisfy \eqref{bar-phi-support-weighted-L2-space-and-mass-condition}, and let $\bar{\psi}$ depend on $\bar{\phi}$ by \eqref{bar-psi-green-function-representation-upper-half-plane}. Suppose further that $\bar{\phi}\in L^{r_{0}}\left(\mathbb{R}^{2}_{+}\right)$, some $r_{0}>2$. Then there is an absolute constant $C>0$ such that for all $y\in\mathbb{R}^{2}_{+}$ and $\infty> r_{1}>2$, and some $\beta\in\left(0,1\right)$ we have
		\begin{align}
			\left|\bar{\psi}\left(y\right)\right|&\leq C\left(\|\bar{\phi}\|_{L^{2}\left(\mathbb{R}^{2}_{+}\right)}+\left|\log{\varepsilon}\right|\left|\bar{\mathcal{J}}_{\left(0,0\right)}\right|\right),\label{poisson-equation-zero-mass-estimates-lemma-statement-1}
			\\    \|\nabla\bar{\psi}\left(y\right)\|_{L^{r_{1}}\left(\mathbb{R}^{2}_{+}\right)}&\leq C\left(\|\bar{\phi}\|_{L^{2}\left(\mathbb{R}^{2}_{+}\right)}+\left|\log{\varepsilon}\right|\left|\bar{\mathcal{J}}_{\left(0,0\right)}\right|\right),\label{poisson-equation-zero-mass-estimates-lemma-statement-2}
			\\
			\left|\nabla\bar{\psi}\left(y\right)\right|+\left[\nabla\bar{\psi}\right]_{\beta}\left(y\right)&\leq C\left(\|\bar{\phi}\|_{L^{r_{0}}\left(\mathbb{R}^{2}_{+}\right)}+\left|\log{\varepsilon}\right|\left|\bar{\mathcal{J}}_{\left(0,0\right)}\right|\right).
			\label{poisson-equation-zero-mass-estimates-lemma-statement-3}
		\end{align}
	\end{lemma}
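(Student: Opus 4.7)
The approach rests on viewing $\bar{\psi}$ as a Newtonian potential via the method of images. Extending $\bar{\phi}$ by odd reflection across $\{y_2=0\}$ to a function $\tilde{\phi}$ on $\mathbb{R}^2$, supported in $B_{3\rho}(q'e_2)\cup B_{3\rho}(-q'e_2)$ with vanishing total mass, the representation \eqref{bar-psi-green-function-representation-upper-half-plane} becomes the standard Newtonian potential $\bar{\psi}(y) = \frac{1}{2\pi}\int_{\mathbb{R}^2}\log\frac{1}{|v-y|}\,\tilde{\phi}(v)\,dv$ on all of $\mathbb{R}^2$. The unifying idea across the three estimates is to split the plane into a region $\Omega_0$ of bounded distance from $\supp\tilde{\phi}$, on which one uses local elliptic regularity, and a far region in which the zero-mass hypothesis produces extra decay.

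For \eqref{poisson-equation-zero-mass-estimates-lemma-statement-1}, on $\Omega_0$ I would apply Cauchy--Schwarz to the representation formula, using that $v\mapsto \log\frac{|v-\bar y|}{|v-y|}$ belongs to $L^2$ uniformly on bounded sets (the planar logarithmic singularity is square-integrable), to obtain $|\bar{\psi}(y)| \leq C\|\bar{\phi}\|_{L^2}$. In the far region I would Taylor-expand the kernel about a fixed reference point $v_0\in\supp\bar{\phi}$, using $\int\bar{\phi}\,dv=0$ to eliminate the leading order and extract $|\bar{\psi}(y)|=O(\dist(y,\supp\bar{\phi})^{-1})\,\|\bar{\phi}\|_{L^1}$; the compact support of $\bar{\phi}$ then converts $L^1$ control into $L^2$ control. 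For \eqref{poisson-equation-zero-mass-estimates-lemma-statement-2}, differentiating this far-field expansion yields $|\nabla\bar{\psi}(y)|=O(\dist(y,\supp\bar{\phi})^{-2})$, which is in $L^{r_1}$ of the far region for every $r_1>1$; in $\Omega_0$, the interior $W^{2,2}$ estimate for $-\Delta\bar{\psi}=\bar{\phi}$ combined with the two-dimensional Sobolev embedding $W^{1,2}\hookrightarrow L^{r_1}_{\mathrm{loc}}$ (valid for every finite $r_1$) supplies the missing local bound.

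For \eqref{poisson-equation-zero-mass-estimates-lemma-statement-3}, with $\bar{\phi}\in L^{r_0}$ compactly supported and $r_0>2$, Calder\'on--Zygmund theory gives $\bar{\psi}\in W^{2,r_0}_{\mathrm{loc}}$ with norm controlled by $\|\bar{\phi}\|_{L^{r_0}}$; the Morrey embedding then yields $\nabla\bar{\psi}\in C^{0,\beta}_{\mathrm{loc}}$ with $\beta = 1-2/r_0\in(0,1)$, and since $[\nabla\bar{\psi}]_\beta(y)$ is a seminorm over $B_1(y)$, this local H\"older bound (together with the pointwise bound that comes as a byproduct of the embedding $W^{1,r_0}\hookrightarrow C^0_{\mathrm{loc}}$) suffices for every $y$ near the support. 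Points $y$ far from the support are handled by the $O(\dist(y,\supp\bar{\phi})^{-2})$ decay established in the previous step.

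The main obstacle, while mild, is arranging for the constants to be absolute, that is independent of the parameter $q'$ (equivalently of $\varepsilon$ and $t$). Translation invariance of the Newtonian kernel disposes of this for the direct potential, while the reflected image sits at distance $2q'$ from the original support; since $q'$ is bounded below away from zero, and indeed $q'=q/\varepsilon$ is large in the regime of interest, the image's contribution to all estimates near $\supp\bar{\phi}$ is of the same order as, or better than, the direct contribution. A standard rescaling argument normalising $\rho$ to unity then delivers the uniform constants asserted in the lemma.
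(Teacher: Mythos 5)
Your method-of-images framework is reasonable, and the paper in fact gives no proof (it invokes ``standard elliptic estimates''), so I can only assess your argument on its own terms. There is, however, a genuine gap in the treatment of estimate \eqref{poisson-equation-zero-mass-estimates-lemma-statement-1} on the near region $\Omega_0$: the Cauchy--Schwarz step does not by itself produce an absolute constant. Write the image contribution, for $y$ within bounded distance of $B_{3\rho}(q'e_2)$, as
\begin{align*}
-\frac{1}{2\pi}\int_{B_{3\rho}(q'e_2)}\log\frac{1}{|\bar w-y|}\,\bar\phi(w)\,dw,\quad \bar w=(w_1,-w_2).
\end{align*}
Here $|\bar w-y|\approx 2q'$, so the kernel is roughly the constant $-\tfrac{1}{2\pi}\log(2q')$, and the $L^2$ norm of $v\mapsto\log\frac{|v-\bar y|}{|v-y|}$ over $\supp\bar\phi$ (equivalently, of the Newtonian kernel over $\supp\tilde\phi$, which contains the image ball at distance $2q'$) grows like $\log q'\sim|\log\varepsilon|$. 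Cauchy--Schwarz therefore gives $|\bar\psi(y)|\lesssim|\log\varepsilon|\,\|\bar\phi\|_{L^2}$, not an absolute bound. This is precisely what the Remark following the lemma warns against. The zero-mass condition must be used here too, not only in the far region: subtracting a constant, e.g.\ $\log\frac{|v_0-\bar y|}{|v_0-y|}$ with $v_0=q'e_2$, kills the divergent part of the kernel since $\int\bar\phi=0$, and the corrected kernel has $L^2$ norm $O(1)$. Your closing remark that ``the image's contribution to all estimates near $\supp\bar\phi$ is of the same order as, or better than, the direct contribution'' is false for $\bar\psi$ itself without this cancellation; it is true for $\nabla\bar\psi$, because differentiating the kernel replaces the $\log(2q')$ by an $O(1/q')$ factor, which is one reason \eqref{poisson-equation-zero-mass-estimates-lemma-statement-2} and \eqref{poisson-equation-zero-mass-estimates-lemma-statement-3} are less sensitive to the issue. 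Note also that your proof of \eqref{poisson-equation-zero-mass-estimates-lemma-statement-2} via interior $W^{2,2}$ estimates requires a prior $L^2$ bound on $\bar\psi$, so it inherits the same gap unless you patch \eqref{poisson-equation-zero-mass-estimates-lemma-statement-1} first or argue directly with the gradient kernel.
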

	\begin{proof}
		To see this, first note that if $\left|y-q'e_{2}\right|<6\rho$, since $q'e_{2}=\bigO{\left(\varepsilon^{-1}\right)}$, we see that
		\begin{align}
			&\bar{\psi}\left(y\right)=\frac{1}{2\pi}\int_{B_{3\rho\left(q'e_{2}\right)}}\log{\left(\left|\frac{\left(v-q'e_{2}\right)-\left(\overline{y-q'e_{2}}\right)}{2q'}+e_{2}\right|\right)}\bar{\phi}(v) dv\nonumber\\
			&-\frac{1}{2\pi}\int_{B_{3\rho}\left(q'e_{2}\right)}\log{\left(\left|\left(v-q'e_{2}\right)-\left(y-q'e_{2}\right)\right|\right)}\bar{\phi}(v) dv\nonumber\\
			&+\frac{\log{2q'}}{2\pi}\int_{B_{3\rho}\left(q'e_{2}\right)}\bar{\phi}(v)dv.\label{hat-psi-identity-1}
		\end{align}
		For $\left|y-q'e_{2}\right|>6\rho$, note that the second component of $-\left(\overline{y-q'e_{2}}\right)+2q'e_{2}$ is $y_{2}+q'\geq q'=\bigO{\left(\varepsilon^{-1}\right)}$, so we instead have
		\begin{align}
			&\hat{\psi}\left(y\right)=\frac{1}{2\pi}\int_{B_{3\rho}\left(q'e_{2}\right)}\log{\left(\frac{\left|\left(v-q'e_{2}\right)-\left(\overline{y-q'e_{2}}\right)+2q'e_{2}\right|}{\left|-\left(\overline{y-q'e_{2}}\right)+2q'e_{2}\right|}\right)}\bar{\phi}(v) dv\label{hat-psi-identity-2}\\
			&-\frac{1}{2\pi}\int_{B_{3\rho}\left(q'e_{2}\right)}\log{\left(\frac{\left|\left(v-q'e_{2}\right)-\left(y-q'e_{2}\right)\right|}{\left|y-q'e_{2}\right|}\right)}\bar{\phi}(v) dv\nonumber\\
			&+\frac{1}{2\pi}\log{\left(\frac{\left|-\left(\overline{y-q'e_{2}}\right)+2q'e_{2}\right|}{\left|y-q'e_{2}\right|}\right)}\int_{B_{3\rho}\left(q'e_{2}\right)}\bar{\phi}(v)dv.\nonumber
		\end{align}
		Using the identity \eqref{hat-psi-identity-1} on the region $\left|y-q'e_{2}\right|<6\rho$ and \eqref{hat-psi-identity-2} on the region $\left|y-q'e_{2}\right|>6\rho$, it is straightforward to obtain the estimates.
	\end{proof}
	
	If estimates \eqref{poisson-equation-zero-mass-estimates-lemma-statement-1}, \eqref{poisson-equation-zero-mass-estimates-lemma-statement-2}, and \eqref{poisson-equation-zero-mass-estimates-lemma-statement-3} hold, and moreover $\bar{\phi}\in L^{\infty}\left(\mathbb{R}^{2}_{+}\right)$, we have the following interpolation estimate.
	\begin{lemma}Suppose $\bar{\phi}$ satisfies \eqref{bar-phi-support-weighted-L2-space-and-mass-condition}, and in addition, $\bar{\phi}\in L^{\infty}\left(\mathbb{R}^{2}_{+}\right)$. Then given $\sigma\in\left(0,1\right)$, there exist numbers $C_{\sigma}>0$ and $\beta\in\left(0,1\right)$ such that
		\begin{align}
			\left|\bar{\psi}\left(y\right)\right|+\left|\nabla\bar{\psi}\left(y\right)\right|+\left[\nabla\bar{\psi}\right]_{\beta}\left(y\right)\leq C_{\sigma}\left(\|\bar{\phi}\|_{L^{\infty}\left(\mathbb{R}^{2}_{+}\right)}^{\sigma}\|\bar{\phi}\|_{L^{2}\left(\mathbb{R}^{2}_{+}\right)}^{1-\sigma}+\left|\log{\varepsilon}\right|\left|\bar{\mathcal{J}}_{\left(0,0\right)}\right|\right).\label{poisson-equation-zero-mass-interpolation-estimate-lemma-statement}
		\end{align}
	\end{lemma}
	\subsection{Lower Bound on a Quadratic Form}
	We again consider functions $\bar{\phi}$ as in Section \ref{poisson-equation-section} satisfying \eqref{bar-psi-green-function-representation-upper-half-plane}--\eqref{bar-phi-support-weighted-L2-space-and-mass-condition}. In addition for fixed $\bar{\mathcal{J}}_{\left(i,j\right)}=\bigO{\left(\varepsilon^{3}t^{-2}\right)}$ on $\left[T_{0},T\right]$, assume
	\begin{align}
		\int_{\mathbb{R}^{2}_{+}}y_{1}\bar{\phi}\left(y\right) dy=\bar{\mathcal{J}}_{\left(1,0\right)},\ \ \ \int_{\mathbb{R}^{2}_{+}}\left(y_{2}-q'\right)\bar{\phi}\left(y\right) dy=\bar{\mathcal{J}}_{\left(0,1\right)}.\label{bar-phi-centre-of-mass-conditions}
	\end{align}
	If $\bar{\phi}$ satisfies the above conditions, we gain the following crucial estimate. Recall the definitions of $\eta_{\mathfrak{i}}$, $\eta_{\mathfrak{e}}$, and $\mathfrak{W}$ from \eqref{weighted-L2-interior-cutoff}--\eqref{weighted-L2-exterior-cutoff}. 
	\begin{lemma}\label{quadratic-form-estimate-lemma}
		Suppose $\bar{\phi}$ satisfies \eqref{bar-phi-support-weighted-L2-space-and-mass-condition}, \eqref{bar-phi-centre-of-mass-conditions}, and $\xi=\left(p,q\right)$ satisfies \eqref{xi0-bound}--\eqref{tildexi-bound}. Then for all $t\in\left[T_{0},T\right]$ there exists an absolute constant $C_{0}>0$ such that
		\begin{align}
			\int_{\mathbb{R}^{2}_{+}}\frac{\eta_{\mathfrak{i}}\bar{\phi}}{\left(f_{\varepsilon}'\right)^{+}}\left(\eta_{\mathfrak{i}}\bar{\phi}-\left(f_{\varepsilon}'\right)^{+}\left(-\Delta\right)^{-1}\left(\eta_{\mathfrak{i}}\bar{\phi}\right)\right)\geq C_{0}\int_{\mathbb{R}^{2}_{+}}\frac{\eta_{\mathfrak{i}}^{2}\left|\bar{\phi}\right|^{2}}{\left(f_{\varepsilon}'\right)^{+}}+\mathcal{R}_{1},\label{quadratic-form-estimate-lemma-statement-1}
		\end{align}
		with
		\begin{align}
			\mathcal{R}_{1}=\bigO\left(\left|\log{\varepsilon}\right|^{2}\left(\|\eta_{\mathfrak{e}}\bar{\phi}\|_{L^{2}\left(\mathbb{R}^{2}_{+}\right)}^{2}+\bar{\mathcal{J}}_{\left(0,0\right)}^{2}+\bar{\mathcal{J}}_{\left(1,0\right)}^{2}+\bar{\mathcal{J}}_{\left(0,1\right)}^{2}\right)\right).\label{quadratic-form-estimate-lemma-statement-2}
		\end{align}
	\end{lemma}
	\begin{proof}
		By \eqref{weighted-L2-interior-cutoff}, $\eta_{\mathfrak{i}}/\left(f_{\varepsilon}'\right)^{+}$ is in $L^{\infty}$. Moreover, since $\left(f_{\varepsilon}'\right)^{+}\geq \left(\varepsilon t^{-1}\right)^{\frac{\gamma-1}{2}}$ on $\supp{\eta_{\mathfrak{i}}}$, we know that $\supp{\eta_{\mathfrak{i}}}\subset \supp{\left(f_{\varepsilon}'\right)^{+}}$. Using this fact as well as \eqref{bar-phi-support-weighted-L2-space-and-mass-condition}, we have
		\begin{align}
			\frac{\eta_{\mathfrak{i}}\bar{\phi}}{\left(f_{\varepsilon}'\right)^{+}}\in\mathcal{H},\quad \frac{\eta_{\mathfrak{i}}\bar{\phi}}{\left(f_{\varepsilon}'\right)^{+}}=\sum_{j=0}^{\infty}\bar{\phi}_{j}e_{j}.\label{quadratic-form-estimate-proof-3}
		\end{align}
		as a function on $\supp{\left(f_{\varepsilon}'\right)^{+}}$. Here $\mathcal{H}$ is defined in Section \ref{vortex-pair-spectral-theory-appendix} on $\mathbb{R}^{2}$ and we instead take the equivalent definition by restricting to $\mathbb{R}^{2}_{+}$ and functions $\supp{\left(f_{\varepsilon}'\right)^{+}\to\mathbb{R}}$. Thus, we have expanded in the eigenbasis $\left(e_{j}\right)_{j\geq0}$ related to the operator $\mathfrak{T}\left(h\right)$ also defined in Section \ref{vortex-pair-spectral-theory-appendix}, also restricted to $\mathbb{R}^{2}_{+}$ with $0$ boundary conditions at $y_{2}=0$ in our case. Then
		\begin{align*}
			\int_{\mathbb{R}^{2}_{+}}\frac{\eta_{\mathfrak{i}}\bar{\phi}}{\left(f_{\varepsilon}'\right)^{+}}\left(\eta_{\mathfrak{i}}\bar{\phi}-\left(f_{\varepsilon}'\right)^{+}\left(-\Delta\right)^{-1}\left(\eta_{\mathfrak{i}}\bar{\phi}\right)\right)=\left(\frac{\eta_{\mathfrak{i}}\bar{\phi}}{\left(f_{\varepsilon}'\right)^{+}},\frac{\eta_{\mathfrak{i}}\bar{\phi}}{\left(f_{\varepsilon}'\right)^{+}}\right)_{\mathcal{H}}-\left(\frac{\eta_{\mathfrak{i}}\bar{\phi}}{\left(f_{\varepsilon}'\right)^{+}},\mathfrak{T}\left(\frac{\eta_{\mathfrak{i}}\bar{\phi}}{\left(f_{\varepsilon}'\right)^{+}}\right)\right)_{\mathcal{H}},
			%\label{quadratic-form-estimate-proof-4}
		\end{align*}
		where $\left(\blank, \blank\right)_{\mathcal{H}}$ is the inner product associated to $\mathcal{H}$. Then in the eigenbasis $\left(e_{j}\right)$ with associated eigenvalues $\left(\mu_{j}\left(\varepsilon\right)\right)$, we have
		\begin{align}
			\int_{\mathbb{R}^{2}_{+}}\frac{\eta_{\mathfrak{i}}\bar{\phi}}{\left(f_{\varepsilon}'\right)^{+}}\left(\eta_{\mathfrak{i}}\bar{\phi}-\left(f_{\varepsilon}'\right)^{+}\left(-\Delta\right)^{-1}\left(\eta_{\mathfrak{i}}\bar{\phi}\right)\right)=\left(1-\mu_{0}\right)\left|\bar{\phi}_{0}\right|^{2}+\sum_{j=3}^{\infty}\left(1-\mu_{j}\right)\left|\bar{\phi}_{j}\right|^{2},\label{quadratic-form-estimate-proof-5}
		\end{align}
		where we have used the fact that $\mu_{1}=\mu_{2}=1$ for all $\varepsilon>0$. From Section \ref{vortex-pair-spectral-theory-appendix} we also have that $e_{0}$ has eigenvalue $\mu_{0}\left(\varepsilon\right)\sim\left|\log{\varepsilon}\right|$ as $\varepsilon\to0$. From Lemma \ref{eigenvalue-estimates-lemma} we have that
		\begin{align}
			\sum_{j=3}^{\infty}\left(1-\mu_{j}\right)\left|\bar{\phi}_{j}\right|^{2}\geq \left(1-\delta\right)\sum_{j=3}^{\infty}\left|\bar{\phi}_{j}\right|^{2},\label{quadratic-form-estimate-proof-6}
		\end{align}
		some absolute constant $\delta\in\left(0,1\right)$. For $j=0$, using the orthogonality of the eigenbasis, we have
		\begin{align}
			&\mu_{0}\bar{\phi}_{0}=\int_{\mathbb{R}^{2}_{+}}\left(f_{\varepsilon}'\right)^{+}\left(-\Delta\right)^{-1}\left(\left(f_{\varepsilon}'\right)^{+}\frac{\eta_{\mathfrak{i}}\bar{\phi}}{\left(f_{\varepsilon}'\right)^{+}}\right)e_{0}=\int_{\mathbb{R}^{2}_{+}}\left(f_{\varepsilon}'\right)^{+}\left(-\Delta\right)^{-1}\left(\eta_{\mathfrak{i}}\bar{\phi}\right)e_{0}\label{quadratic-form-estimate-proof-7}\\
			&=\int_{\mathbb{R}^{2}_{+}}\left(f_{\varepsilon}'\right)^{+}\left(-\Delta\right)^{-1}\left(\bar{\phi}\right)e_{0}+\int_{\mathbb{R}^{2}_{+}}\left(f_{\varepsilon}'\right)^{+}\left(\left(-\Delta\right)^{-1}\left(\eta_{\mathfrak{i}}\bar{\phi}\right)-\left(-\Delta\right)^{-1}\left(\bar{\phi}\right)\right)e_{0}.\nonumber
		\end{align}
		The first term on the second line of \eqref{quadratic-form-estimate-proof-7} has the bound
		\begin{align}
			\left|\int_{\supp{\left(f_{\varepsilon}'\right)^{+}}}\left(f_{\varepsilon}'\right)^{+}\left(-\Delta\right)^{-1}\left(\bar{\phi}\right)e_{0}\right|&\leq C\|\left(-\Delta\right)^{-1}\bar{\phi}\|_{L^{\infty}\left(\mathbb{R}^{2}_{+}\right)}\nonumber\\
			&\leq C\left(\|\bar{\phi}\|_{L^{2}\left(\mathbb{R}^{2}_{+}\right)}+\left|\log{\varepsilon}\right|\left|\bar{\mathcal{J}}_{\left(0,0\right)}\right|\right),\label{QF-estimate-1}
		\end{align}
		where $C>0$ is an absolute constant. Here we have used the compact support of $\left(f_{\varepsilon}'\right)^{+}$, as well as \eqref{spectral-theory-appendix-orthogonality-relations} for the weighted $L^{2}$ norm of $e_{0}$. Finally we have used the mass condition for $\bar{\phi}$ and Lemma \ref{poisson-equation-zero-mass-estimates-lemma}.
		
		\medskip
		From here, we use the fact that $\supp \bar{\phi}\subset B_{3\rho}\left(q'e_{2}\right)$ alongside \eqref{interior-exterior-cutoff-sum-bounds} to obtain
		\begin{align}
			\|\bar{\phi}\|_{L^{2}\left(\mathbb{R}^{2}_{+}\right)}\leq \|\eta_{\mathfrak{i}}\bar{\phi}\|_{L^{2}\left(\mathbb{R}^{2}_{+}\right)}+\|\eta_{\mathfrak{e}}\bar{\phi}\|_{L^{2}\left(\mathbb{R}^{2}_{+}\right)}&\leq C\left(\left(\int_{\mathbb{R}^{2}_{+}}\frac{\eta_{\mathfrak{i}}^{2}\left|\bar{\phi}\right|^{2}}{\left(f_{\varepsilon}'\right)^{+}}\right)^{\frac{1}{2}}+\|\eta_{\mathfrak{e}}\bar{\phi}\|_{L^{2}\left(\mathbb{R}^{2}_{+}\right)}\right),\label{QF-estimate-2}
		\end{align}
		where we have used the fact that $\left(f_{\varepsilon}'\right)^{+}$ is strictly positive on $\supp{\eta_{\mathfrak{i}}}$, as well as its global boundedness. Thus, using \eqref{quadratic-form-estimate-proof-3}, \eqref{QF-estimate-1}--\eqref{QF-estimate-2}, and Parseval's identity with respect to the eigenbasis $\left(e_{j}\right)_{j\geq0}$, we obtain the bound 
		\begin{align}
			\left|\int_{\supp{\left(f_{\varepsilon}'\right)^{+}}}\left(f_{\varepsilon}'\right)^{+}\left(-\Delta\right)^{-1}\left(\bar{\phi}\right)e_{0}\right|&\leq C\left(\left(\sum_{j=0}^{\infty}\left|\bar{\phi}_{j}\right|^{2}\right)^{\frac{1}{2}}+\|\eta_{\mathfrak{e}}\bar{\phi}\|_{L^{2}\left(\mathbb{R}^{2}_{+}\right)}\right)\nonumber\\
			&+C\left|\log{\varepsilon}\right|\left|\bar{\mathcal{J}}_{\left(0,0\right)}\right|.\label{quadratic-form-estimate-proof-9-0}
		\end{align}
		For the second term on the right hand side of \eqref{quadratic-form-estimate-proof-7}, as the support of $\bar{\phi}$ is contained in $B_{4\rho}\left(q'e_{2}\right)$, using \eqref{interior-exterior-cutoff-sum-bounds} gives that $\left|1-\eta_{\mathfrak{i}}\right|\leq \eta_{\mathfrak{e}}$ on $\supp{\bar{\phi}}$. This fact gives
		\begin{align}
			\left|\left(-\Delta\right)^{-1}\left(\left(1-\eta_{\mathfrak{i}}\right)\bar{\phi}\right)\right|\leq C\left|\log{\varepsilon}\right|\|\eta_{\mathfrak{e}}\bar{\phi}\|_{L^{2}\left(\mathbb{R}^{2}_{+}\right)}.\label{quadratic-form-estimate-proof-9}
		\end{align}
		Squaring both sides of \eqref{quadratic-form-estimate-proof-7}, combining \eqref{quadratic-form-estimate-proof-9-0} and \eqref{quadratic-form-estimate-proof-9}, and dividing by $\mu_{0}$, we obtain
		\begin{align}
			\mu_{0}\left|\bar{\phi}_{0}\right|^{2}\leq \frac{C}{\mu_{0}}\sum_{j=0}^{\infty}\left|\bar{\phi}_{j}\right|^{2}+\bigO\left(\left|\log{\varepsilon}\right|^{2}\left(\|\eta_{\mathfrak{e}}\bar{\phi}\|_{L^{2}\left(\mathbb{R}^{2}_{+}\right)}^{2}+\bar{\mathcal{J}}_{\left(0,0\right)}^{2}\right)\right).\label{quadratic-form-estimate-proof-11}
		\end{align}
		Next, we use the centre of mass condition with respect to $y_{1}$ given in \eqref{bar-phi-centre-of-mass-conditions} to get
		\begin{align}
			\bar{\mathcal{J}}_{\left(1,0\right)}&=\int_{\mathbb{R}^{2}_{+}}\left(f_{\varepsilon}'\right)^{+}y_{1}\frac{\eta_{\mathfrak{i}}\bar{\phi}}{\left(f_{\varepsilon}'\right)^{+}}\ dy+\int_{\mathbb{R}^{2}_{+}}\left(1-\eta_{\mathfrak{i}}\right)y_{1}\bar{\phi}\ dy\nonumber\\
			&=\bar{\phi}_{1}\int_{\mathbb{R}^{2}_{+}}\left(f_{\varepsilon}'\right)^{+}y_{1}e_{1}\ dy+\int_{\mathbb{R}^{2}_{+}}\left(f_{\varepsilon}'\right)^{+}y_{1}\left(\sum_{j\neq1,2}\bar{\phi}_{j}e_{j}\right)\ dy +\bigO\left(\|\eta_{\mathfrak{e}}\bar{\phi}\|_{L^{2}\left(\mathbb{R}^{2}_{+}\right)}\right),
			\nonumber%\label{quadratic-form-estimate-proof-12} 
		\end{align}
		where we have again used \eqref{bar-phi-support-weighted-L2-space-and-mass-condition}, \eqref{interior-exterior-cutoff-sum-bounds}, and \eqref{quadratic-form-estimate-proof-3}, and the fact that $e_{2}$ is even in $y_{1}$. By \eqref{dq-Psi-R-variation},
		\begin{align}
			\nonumber\int_{\mathbb{R}^{2}_{+}}\left(f_{\varepsilon}'\right)^{+}y_{1}e_{1}\ dy\geq c_{0}>0,
			%\label{quadratic-form-estimate-proof-13}
		\end{align}
		some absolute constant $c_{0}$. Thus, we have
		\begin{align}
			c_{1}\left(\left|\bar{\phi}_{1}\right|^{2}+\left|\bar{\phi}_{2}\right|^{2}\right)+\bigO\left(\|\eta_{\mathfrak{e}}\bar{\phi}\|_{L^{2}\left(\mathbb{R}^{2}_{+}\right)}^{2}+\bar{\mathcal{J}}_{\left(1,0\right)}^{2}+\bar{\mathcal{J}}_{\left(0,1\right)}^{2}\right)\leq \sum_{j\neq 1,2}\left|\bar{\phi}_{j}\right|^{2},\label{quadratic-form-estimate-proof-14}
		\end{align}
		as the argument for $\bar{\phi}_{2}$ using the centre of mass condition with respect to $y_{2}-q'$ in \eqref{bar-phi-centre-of-mass-conditions} is completely analogous. Using \eqref{quadratic-form-estimate-proof-6}, \eqref{quadratic-form-estimate-proof-11}, and \eqref{quadratic-form-estimate-proof-14}, and the fact that $\mu_{0}\sim\left|\log{\varepsilon}\right|$ as $\varepsilon\to0$, we obtain
		\begin{align*} 
			&\sum_{j\neq1,2}\left(1-\mu_{j}\right)\left|\bar{\phi}_{j}\right|^{2}\geq\frac{\min{\{c_{1},1\}}}{2}\left(1-\delta-\left(1+\frac{2}{c_{1}}\right)\frac{C}{\left|\log{\varepsilon}\right|}\right)\sum_{j=0}^{\infty}\left|\bar{\phi}_{j}\right|^{2}+\mathcal{R}_{1},
			%\label{quadratic-form-estimate-proof-15}
			\nonumber
		\end{align*}
		where $\mathcal{R}_{1}$ satisfies \eqref{quadratic-form-estimate-lemma-statement-2}. This, along with \eqref{quadratic-form-estimate-proof-5}, gives \eqref{quadratic-form-estimate-lemma-statement-1} for $\varepsilon>0$ small enough, as required.
	\end{proof}
	\subsection{Weighted $L^{2}$ A Priori Estimates}
	With Lemma \ref{quadratic-form-estimate-lemma} in hand, we wish to work with an operator that has structure compatible with the integrand on the left hand side of the inequality \eqref{quadratic-form-estimate-lemma-statement-1} that we proved in Lemma \ref{quadratic-form-estimate-lemma}. If we consider the last three terms on the right hand side of \eqref{homotopic-operator-inner-error-E-star-R-definition-5}, with dependence on $\lambda$ suppressed, given by
	\begin{align}
		\varepsilon^{2}\dell_{t}\tilde{\phi}_{*R}\left(z,t\right)&+\gradperp_{z}\tilde{\psi}_{*R}\cdot\nabla_{z}\left(U_{R}+\phi_{R}\right)\nonumber\\
		&+\gradperp_{z}\left(\Psi_{0}-\varepsilon \dot{p}z_{2}+\varepsilon \dot{q}z_{1}+\psi_{R}+\psi_{*R}+\psi^{out}+\psi^{out}_{*}\right)\cdot\nabla_{z}\tilde{\phi}_{*R},\label{linear-operator-structure-motivation-1}
	\end{align}
	from cancellations of certain higher order terms, which can be seen from Lemmas \ref{psi-L-correct-form-lemma}, \ref{first-approximation-construction-theorem}, and the structure of $\phi_{R1}/\left(f_{\varepsilon}'\right)^{+}$ which we recall from \eqref{first-approximation-construction-12}, on $\mathbb{H}_{-q'}$ we can write \eqref{linear-operator-structure-motivation-1} schematically as
	\begin{align}
		\varepsilon^{2}\dell_{t}\tilde{\phi}_{*R}\left(z,t\right)+\gradperp_{z}\left(\mathscr{V}\left(q\right)+a_{*}+a\right)\cdot\nabla_{z}\left(\tilde{\phi}_{*R}-f_{\varepsilon}'\left(\mathscr{V}\left(q\right)+a_{*}\right)\tilde{\psi}_{*R}\right)+E\label{linear-operator-structure-motivation-8}    
	\end{align}
	where $a_{*}$ and $a$ are certain functions which will have good bounds in spacetime and space respectively, and $E$ are all the quadratic and small linear terms with respect to the unknowns that can be treated as errors in a fixed point formulation.
	
	\medskip
	Recalling that the homotopic operators \eqref{homotopic-operator-inner-error-main-definition} and \eqref{homotopic-operator-outer-error-main-definition} were defined to have no nonlinear dependence on the unknowns at $\lambda=0$, we first wish to study solutions to a suitable linear (in $\tilde{\phi}_{*R}$) problem. With \eqref{linear-operator-structure-motivation-1}--\eqref{linear-operator-structure-motivation-8} in hand, we will derive a priori estimates for the system
	\begin{subequations}\label{linear-transport-operator-system}
		\begin{align}
			&\varepsilon^{2}\phi_{t}+\gradperp_{z_{b}}\left(\mathscr{V}\left(q_{b}\right)+a_{*}+a\right)\cdot\nabla_{z_{b}}\left(\phi-\mathscr{W}_{1}\left(\mathscr{V}\left(q_{b}\right)+a_{*}\right)\psi\right)\nonumber\\
			&+E=0\ \ \text{in}\ \mathbb{H}_{-q'_{b}}\times \left[T_{0},T\right],\label{linear-transport-equation}\\
			&\phi(\blank,T)=0\ \ \text{in}\ \mathbb{H}_{-q'_{b}},\quad \phi\left(z_{b},t\right)=0\ \ \text{on}\ \dell\mathbb{H}_{-q_{b}'}\times\left[T_{0},T\right],\label{linear-transport-initial-data}
		\end{align}
	\end{subequations}
	where we have used the coordinate transformation
	\begin{align}
		z_{b}=\frac{x-\xi_{0}\left(t\right)-\xi_{1}\left(t\right)-b\left(t\right)}{\varepsilon},\quad \xi^{\left(b\right)}\coloneqq\xi_{0}+\xi_{1}+b\label{linear-operator-change-of-coordinates-z-b}
	\end{align}
	with $b\left(t\right)$, $a\left(z_{b},t\right)$, and $a_{*}\left(z_{b},t\right)$ are some given functions. Moreover, $\psi$ is given by
	\begin{align}
		\psi\left(z,t\right)=\frac{1}{2\pi}\int_{\mathbb{H}_{-q'_{b}}}\log{\left(\frac{\left|v-\overline{\left(z_{b}+q'_{b}e_{2}\right)}\right|}{\left|v-\left(z_{b}+q'_{b}e_{2}\right)\right|}\right)}\phi\left(v,t\right) dv,\label{linear-transport-operator-psi-phi-representation-z-variables}
	\end{align}
	and we impose that for some $\mathcal{J}_{\left(i,j\right)}^{\left(b\right)}(t)=\bigO{\left(\varepsilon^{3}t^{-2}\right)}$ and $\mathcal{Q}_{\left(i,j\right)}^{\left(b\right)}(t)=\bigO{\left(\varepsilon^{5}t^{-3}\right)}$ for all $t\in\left[T_{0},T\right]$ and $\left(i,j\right)\in\{\left(0,0\right),\left(1,0\right),\left(0,1\right)\}$, 
	\begin{align}
		\int_{\mathbb{H}_{-q'_{b}}}z_{b,1}^{i}z_{b,2}^{j}\phi\left(z_{b},t\right)\ dz_{b}=\mathcal{J}_{\left(i,j\right)}^{\left(b\right)}=\varepsilon^{-2}\int_{t}^{T}\mathscr{Q}_{\left(i,j\right)}^{\left(b\right)}(\tau) d\tau.\label{linear-transport-operator-phi-mass-conditions}
	\end{align}
	On the functions $a_{*}\left(z_{b},t\right)$ and $a\left(z_{b},t\right)$, we assume
	\begin{equation}\label{linear-transport-operator-Laplacian-a-star-a-bounded}
		\begin{aligned}
			\left|\nabla_{z_{b}}\left(\mathscr{V}\left(q_{b}\right)+a_{*}+a\right)\right|+\left|\Delta_{z_{b}}\left(a_{*}+a\right)\right|&\leq C,\\
			\left(1+\left|z_{b}\right|\right)^{-2}\left(\left|a_{*}\right|+\left|a\right|\right)+\left(1+\left|z_{b}\right|\right)^{-1}\left(\left|\nabla_{z_{b}}a_{*}\right|+\left|\nabla_{z_{b}}a\right|\right)+\left|D^{2}_{z_{b}}a_{*}\right|&\leq C,
		\end{aligned}
	\end{equation}
	some constant $C>0$ independent of $\left(z_{b},t\right)\in\mathbb{H}_{-q'_{b}}\times\left[T_{0},T\right]$. We note that the $L^{\infty}$ bound for $\Delta_{z_{b}}\left(a_{*}+a\right)$ implies for all $t\in\left[T_{0},T\right]$, and $z_{1},z_{2}\in\mathbb{H}_{-q'_{b}}$, and an absolute constant $C>0$,
	\begin{align}
		\left|\nabla_{z_{b}}\left(a_{*}+a\right)\left(z_{1},t\right)-\nabla_{z_{b}}\left(a_{*}+a\right)\left(z_{2},t\right)\right|\leq C\left|z_{1}-z_{2}\right|\left|\log{\left|z_{1}-z_{2}\right|}\right|,\label{linear-transport-operator-a-log-lipschitz-bound}
	\end{align}
	We also assume that for some constants $C>0$ and $\nu>3/4$, we have, for all $t\in\left[T_{0},T\right]$, and for all $r_{1}>2$, in the ball $B_{10\rho}\left(0\right)$,
	\begin{align}
		&\left(1+\left|z_{b}\right|\right)^{-2}\left|a_{*}\right|+\left(1+\left|z_{b}\right|\right)^{-1}\left|\nabla_{z_{b}} a_{*}\right|+\left|D^{2}_{z_{b}}a_{*}\right|+t\left(1+\left|z_{b}\right|\right)^{-2}\left|\dell_{t}a_{*}\right|\leq \frac{C\varepsilon^{2}}{t^{2}},\label{linear-transport-operator-a-star-bounds}\\
		&\left(1+\left|z_{b}\right|\right)^{-5}\left|a\right|+\frac{\left(1+\left|z_{b}\right|\right)^{-4}}{t^{1-\nu}}\left|\nabla_{z_{b}}a\right|+\|\left(1+\left|\cdot\right|\right)^{-5}\nabla_{z_{b}} a\|_{L^{r_{1}}\left(\mathbb{H}_{-q_{b}'}\right)}\leq\frac{\varepsilon^{2+\nu}}{t^{2}}.\label{linear-transport-operator-a-bounds-0}
	\end{align}
	On $b=(b_1,b_2)$ we assume that
	\begin{align}
		\|t^{2}b_{1}\|_{\left[T_{0},T\right]}+\|t^{3}\dot{b}_{1}\|_{\left[T_{0},T\right]}+\|t^{3}b_{2}\|_{\left[T_{0},T\right]}+\|t^{4}\dot{b}_{2}\|_{\left[T_{0},T\right]}\leq \varepsilon^{3+\nu}.\label{b-1-bound}
	\end{align}
	Finally, we assume, for $\rho>0$ defined in \eqref{first-approximation-main-order-vorticity-support}.:
	\begin{align}
		\supp E\left(t\right)\subset B_{2\rho}\left(0\right),\quad t\in\left[T_{0},T\right],\quad 
		\supp \phi \left(t\right)\subset B_{3\rho}\left(0\right) , \quad t\in\left[T_{0},T\right].\label{linear-transport-operator-error-solution-supports}
	\end{align}
	\begin{remark}\label{10-rho-remark-2}
		As in Theorem \ref{vortex-pair-properties-theorem}, we state the bounds for \eqref{linear-transport-operator-a-star-bounds}--\eqref{linear-transport-operator-a-bounds-0} on a closed ball of radius $10\rho$ for concreteness.
	\end{remark}
	With these assumptions in mind, we now prove a priori estimates for solutions to \eqref{linear-transport-operator-system} satisfying \eqref{linear-transport-operator-psi-phi-representation-z-variables}--\eqref{linear-transport-operator-error-solution-supports} over the next two lemmas. We will suppress dependence on $b$ in the statement and proof, but we will point out whenever we use \eqref{b-1-bound}.
	\begin{lemma}\label{L2-exterior-a-priori-estimate-lemma}
		Let $E\left(z,t\right)\in\mathscr{Y}_{\infty,3}$, defined in \eqref{weighted-L-infinity-norm-definition}. Suppose $\phi$ solves \eqref{linear-transport-operator-system} with assumptions \eqref{linear-transport-operator-phi-mass-conditions}--\eqref{linear-transport-operator-error-solution-supports} holding, and suppose $\xi=\left(p,q\right)$ is of the form \eqref{linear-operator-change-of-coordinates-z-b}, with \eqref{xi0-bound}--\eqref{xi1-bound}, and \eqref{b-1-bound} satisfied. Then for all $\varepsilon>0$ small enough, and all $t\in\left[T_{0},T\right]$,
		\begin{align}
			\|\eta_{\mathfrak{e}}\phi\|_{L^{2}}&\leq \frac{C\varepsilon^{\frac{\gamma-7}{2}}}{t^{\frac{\gamma-3}{2}}}\int_{t}^{T}\left(\|E\|_{\mathscr{Y}_{\infty,3}}+\left(\int_{B_{2\rho}\left(0\right)}\frac{\eta_{\mathfrak{i}}^{2}\phi^{2}}{\left(f_{\varepsilon}'\right)^{+}}dz\right)^{\frac{1}{2}}+\left|\log{\varepsilon}\right|\left|\mathcal{J}_{\left(0,0\right)}\right|\right)d\tau,\label{L2-exterior-a-priori-estimate-lemma-statement-2}
		\end{align}
		where the $L^{2}$ norm in \eqref{L2-exterior-a-priori-estimate-lemma-statement-2} is the $L^{2}\left(\mathbb{H}_{-q'}\right)$ norm.
	\end{lemma}
	\begin{proof}
		Owing to the transport-like structure of \eqref{linear-transport-equation}, let $t\in\left[T_{0},T\right]$, and define characteristics $\bar{z}\left(\tau,t,z\right)$ by the ODE problem
		\begin{align}
			\frac{d\bar{z}}{d\tau}=\varepsilon^{-2}\gradperp_{\bar{z}}\left(\mathscr{V}\left(q\right)+a_{*}+a\right)\left(\bar{z}\left(\tau,t,z\right),\tau\right),\quad \tau\in\left[t,T\right],\quad \bar{z}\left(t,t,z\right)=z.\label{characteristics-ode}
		\end{align}
		The first thing to note is that due to \eqref{linear-transport-operator-Laplacian-a-star-a-bounded}--\eqref{linear-transport-operator-a-log-lipschitz-bound}, we have definiteness of the characteristics:
		\begin{align}
			\left|\bar{z}\left(\tau_{1},t,z\right)-\bar{z}\left(\tau_{2},t,z\right)\right|\leq C\varepsilon^{-2}\left|\tau_{1}-\tau_{2}\right|,\quad \tau_{1},\tau_{2}\in\left[t,T\right].\label{continuity-of-characteristics-0}
		\end{align}
		Next, as an application of Jacobi's formula, the right hand side of \eqref{characteristics-ode} being divergence free, and the initial condition give for the Jacobian $\mathscr{J}$ of the coordinate transformation $z\to\bar{z}$,
		\begin{align}
			\mathscr{J}\left(z,\bar{z}\right)=1.\label{jacobian}
		\end{align}
		We first wish to establish a bound on the size of the characteristics in the case when $z\in\supp{\phi}$. Given the assumption $\supp{\phi}\subset B_{3\rho}\left(0\right)$, \eqref{linear-transport-operator-error-solution-supports}, we claim that, for all $z\in B_{3\rho}\left(0\right)$ and $t\in\left[T_{0},T\right]$,
		\begin{align}
			\left|\bar{z}\left(\tau,t,z\right)\right|\leq 5\rho,\quad \forall \tau\in \left[t,T\right].\label{bar-z-5-rho-bound}
		\end{align}
		So let $z\in B_{3\rho}\left(0\right)$, and let $t\in\left[T_{0},T\right]$. By \eqref{continuity-of-characteristics-0}, we know that there is an interval $\left[t,\tau\right]\subset\left[t,T\right]$ where the bound stated in \eqref{bar-z-5-rho-bound} holds. Let $\left[t,\tau_{*}\right]$ be the largest such interval, and assume $\tau_{*}<T$. By continuity,
		\begin{align}
			\left|\bar{z}\left(\tau_{*},t,z\right)\right|=5\rho.\label{5-rho-equality}
		\end{align}
		Then we estimate $\left(\mathscr{V}\left(q\right)+a_{*}\right)\left(\bar{z}\left(\tau,t,z\right),\tau\right)$ for $\tau\in\left[t,\tau_{*}\right]$:
		\begin{align}
			&\left(\mathscr{V}\left(q\right)+a_{*}\right)\left(\bar{z}\left(\tau,t,z\right),\tau\right)-\left(\mathscr{V}\left(q\right)+a_{*}\right)\left(z,t\right)\nonumber\\
			&=\int_{t}^{\tau}\dell_{s}\left(\left(\mathscr{V}\left(q\right)+a_{*}\right)\left(\bar{z}\left(s,t,z\right),t\right)\right) ds\nonumber\\
			&=\int_{t}^{\tau}\left(\dot{q}\dell_{q}\mathscr{V}\left(q\right)+\dell_{s}a_{*}\right)\left(w,s\right)\rvert_{w=\bar{z}\left(s,t,z\right)}ds\nonumber\\
			&+\int_{t}^{\tau}\frac{d\bar{z}}{ds}\left(s,t,z\right)\cdot\nabla_{\bar{z}}\left(\mathscr{V}\left(q\right)+a_{*}\right)\left(\bar{z}\left(s,t,z\right),s\right)ds.\label{5-rho-proof-1}
		\end{align}
		For the second integral on the right hand side of \eqref{5-rho-proof-1}, we use \eqref{characteristics-ode} to obtain
		\begin{align*}
			&\int_{t}^{\tau}\frac{d\bar{z}}{ds}\left(s,t,z\right)\cdot\nabla_{\bar{z}}\left(\mathscr{V}\left(q\right)+a_{*}\right)\left(\bar{z}\left(s,t,z\right),s\right)ds\\
			&=\varepsilon^{-2}\int_{t}^{\tau}\gradperp_{\bar{z}}a\cdot\nabla_{\bar{z}}\left(\mathscr{V}\left(q\right)+a_{*}\right)\left(\bar{z}\left(s,t,z\right),s\right)ds
		\end{align*}
		Therefore, using \eqref{linear-transport-operator-Laplacian-a-star-a-bounded} and \eqref{linear-transport-operator-a-bounds-0}, we have the bound
		\begin{align}
			\left|\varepsilon^{-2}\int_{t}^{\tau}\gradperp_{\bar{z}}a\cdot\nabla_{\bar{z}}\left(\mathscr{V}\left(q\right)+a_{*}\right)\left(\bar{z}\left(s,t,z\right),s\right)ds\right|\leq C\left(1+5\rho\right)^{6}\left(\frac{\varepsilon}{t}\right)^{\nu}.\label{5-rho-proof-2}
		\end{align}
		For the first integral on the right hand side of \eqref{5-rho-proof-1}, we use Lemma \ref{vortex-pair-q-variation-lemma} and \eqref{xi0-bound}--\eqref{xi1-bound}, \eqref{b-1-bound} to bound $\dot{q}\dell_{q}\mathscr{V}\left(q\right)$, and \eqref{linear-transport-operator-a-star-bounds} to bound $\dell_{s}a_{*}$, and obtain
		\begin{align}
			\left|\int_{t}^{\tau}\left(\dot{q}\dell_{q}\mathscr{V}\left(q\right)+\dell_{s}a_{*}\right)\left(w,s\right)\rvert_{w=\bar{z}\left(s,t,z\right)}ds\right|\leq C\left(1+5\rho\right)^{2}\left(\frac{\varepsilon}{t}\right)^{2}.\label{5-rho-proof-3}
		\end{align}
		Bringing \eqref{5-rho-proof-1}--\eqref{5-rho-proof-3} together and using $\nu>3/4$, we obtain, for all $\tau\in\left[t,\tau_{*}\right]$,
		\begin{align}
			\left|\left(\mathscr{V}\left(q\right)+a_{*}\right)\left(\bar{z}\left(\tau,t,z\right),\tau\right)-\left(\mathscr{V}\left(q\right)+a_{*}\right)\left(z,t\right)\right|\leq C\left(1+5\rho\right)^{6}\left(\frac{\varepsilon}{t}\right)^{\frac{3}{4}}.\label{5-rho-proof-3a}
		\end{align}
		Applying this to $\tau=\tau_{*}$, and using $\left|z\right|\leq 3\rho<5\rho$, $\left|\bar{z}\left(\tau_{*},t,z\right)\right|=5\rho$, as well as \eqref{linear-transport-operator-a-star-bounds}, this becomes
		\begin{align}
			\left|\mathscr{V}\left(q\right)\left(\bar{z}\left(\tau_{*},t,z\right),\tau_{*}\right)-\mathscr{V}\left(q\right)\left(z,t\right)\right|\leq C\left(1+5\rho\right)^{6}\left(\frac{\varepsilon}{T_{0}}\right)^{\frac{3}{4}},\label{5-rho-proof-3b}
		\end{align}
		as $T_{0}\leq t$. From here, we can see that
		\begin{align}
			\mathscr{V}\left(q\right)\left(z,t\right)=\mathscr{V}\left(q\right)\left(\bar{z}\left(\tau_{*},t,z\right),\tau_{*}\right)-\left(\mathscr{V}\left(q\right)\left(\bar{z}\left(\tau_{*},t,z\right),\tau_{*}\right)-\mathscr{V}\left(q\right)\left(z,t\right)\right).\label{5-rho-proof-4}
		\end{align}
		We know that $\mathscr{V}\left(q\right)\left(z,t\right)\geq 0$ on $B_{\rho}\left(0\right)$ by Remark \ref{support-of-nonlinearity-remark} and \eqref{vortex-pair-variation-generalisation}, so by continuity, there is an absolute constant $C_{0}>0$ such that for $\left(z,t\right)\in B_{3\rho}\left(0\right)\times\left[T_{0},T\right]$,
		\begin{align}
			\mathscr{V}\left(q\right)\left(z,t\right)\geq -C_{0}\implies \mathscr{V}\left(q\right)\left(\bar{z}\left(\tau_{*},t,z\right),\tau_{*}\right)\geq -C_{0}-C\left(1+5\rho\right)^{6}\left(\frac{\varepsilon}{T_{0}}\right)^{\frac{3}{4}},\label{5-rho-proof-5}
		\end{align}
		whence for $\varepsilon>0$ small enough, we can again use continuity of $\mathscr{V}\left(q\right)$ to ensure that \eqref{5-rho-proof-5} implies $\left|\bar{z}\left(\tau_{*},t,z\right)\right|\leq 4\rho$, a contradiction to \eqref{5-rho-equality}, which establishes the bound \eqref{bar-z-5-rho-bound} as required.
		Proceeding exactly as in \eqref{5-rho-proof-1}--\eqref{5-rho-proof-3a}, for all $\tau_{1}\in\left[t,T\right]$, we have
		\begin{align}
			\left|\left(\mathscr{V}\left(q\right)+a_{*}\right)\left(\bar{z}\left(\tau_{1}\right),\tau_{1}\right)-\left(\mathscr{V}\left(q\right)+a_{*}\right)\left(z,t\right)\right|\leq C\left(\frac{\varepsilon}{t}\right)^{\frac{3}{4}}.\label{stream-function-level-sets-characteristics-estimate-4}
		\end{align}
		Then \eqref{linear-transport-operator-a-star-bounds}, \eqref{stream-function-level-sets-characteristics-estimate-4}, \eqref{f-prime-L2-outer-weight-comparison-bound-1}, and \eqref{f-prime-q-variation-1}--\eqref{tilde-W-k-def} give, for $z\in\supp{\eta_{\mathfrak{e}}}$ and for all $\tau\in\left[t,T\right]$,
		\begin{align}
			\left|\mathscr{W}_{j}\left(\mathscr{V}\left(q\right)+A\right)\left(\bar{z}\left(\tau\right),\tau\right)\right|&\leq C\left(\frac{\varepsilon}{t}\right)^{\left(\frac{\gamma-j}{2}\right)},\quad j=0,1,2,3,\quad A=0\ \text{or}\ a_{*}.\label{L2-exterior-a-priori-estimate-lemma-4a}
		\end{align}
		Using the characteristics defined by the system \eqref{characteristics-ode}, we can write a solution to \eqref{linear-transport-operator-system}, $\phi$, as
		\begin{align}\nonumber
			\phi\left(z,t\right)=\varepsilon^{-2}\int_{t}^{T}\left(-E\left(\bar{z}\right)+\gradperp_{\bar{z}}\left(\mathscr{V}\left(q\right)+a_{*}+a\right)\cdot\nabla_{\bar{z}}\left(\mathscr{W}_{1}\left(\mathscr{V}\left(q\right)+a_{*}\right)\psi\right)\left(\bar{z}\right)\right) d\tau.\nonumber
			%\label{L2-exterior-a-priori-estimate-lemma-1}
		\end{align}
		Now, because of \eqref{linear-transport-operator-error-solution-supports}, $E\left(\bar{z}\left(\tau\right),\tau\right)$ is only non zero when $\bar{z}\left(\tau\right)\in B_{2\rho}\left(0\right)$, and so \eqref{interior-exterior-cutoff-sum-bounds} applies. Then, using the assumption that $E\in\mathscr{Y}_{\infty,3}$, and proceeding as in the first inequality of \eqref{weighted-L-infinity-norm-bounds-L-infinity-norm} and then applying \eqref{L2-exterior-a-priori-estimate-lemma-4a}, we have, for all $\tau\in\left[t,T\right]$,
		\begin{align}
			\left|E\left(\bar{z}\left(\tau\right),\tau\right)\right|\leq C\left(\left(f_{\varepsilon}^{'''}\right)^{+}+\left(\frac{\varepsilon}{t}\right)^{\left(\frac{\gamma-3}{2}\right)}\right)\|E\|_{\mathscr{Y}_{\infty,3}}\leq C\left(\frac{\varepsilon}{t}\right)^{\left(\frac{\gamma-3}{2}\right)}\|E\|_{\mathscr{Y}_{\infty,3}}\left(\tau\right).
			\label{L2-exterior-a-priori-estimate-lemma-2}
		\end{align}
		Thus we have that
		\begin{align}\nonumber
			\left|\eta_{\mathfrak{e}}(z,t)\phi\left(z,t\right)\right| &\leq C\varepsilon^{-2}\int_{t}^{T}\left(\left(\frac{\varepsilon}{t}\right)^{\left(\frac{\gamma-3}{2}\right)}\|E\|_{\mathscr{Y}_{\infty,3}}+\left|\mathscr{W}_{1}\left(\mathscr{V}\left(q\right)+a_{*}\right)\right|\left|\nabla_{\bar{z}}\psi\right|\left(\bar{z}\right)\right) d\tau\nonumber\\
			&+C\varepsilon^{-2}\int_{t}^{T}\left|\mathscr{W}_{2}\left(\mathscr{V}\left(q\right)+a_{*}\right)\right|\left|\psi\right|\left(\bar{z}\left(\tau\right),\tau\right) d\tau,
			%\label{L2-exterior-a-priori-estimate-lemma-6}
			\nonumber
		\end{align}
		where we have bounded $\gradperp_{\bar{z}}\left(\mathscr{V}\left(q\right)+a_{*}+a\right)$ in $L^{\infty}$. Using \eqref{linear-transport-operator-phi-mass-conditions} to apply Lemma \ref{poisson-equation-zero-mass-estimates-lemma} and \eqref{L2-exterior-a-priori-estimate-lemma-4a} for $j=1,2$, as well as the Minkowski integral inequality, we obtain
		\begin{align}
			\|\eta_{\mathfrak{e}}\phi\|_{L^{2}}&\leq C\varepsilon^{-2}\left(\frac{\varepsilon}{t}\right)^{\left(\frac{\gamma-3}{2}\right)}\int_{t}^{T}\|E\|_{\mathscr{Y}_{\infty,3}} d\tau\nonumber\\
			&+C\varepsilon^{-2}\left(\frac{\varepsilon}{t}\right)^{\left(\frac{\gamma-2}{2}\right)}\int_{t}^{T}\left(\|\phi\|_{L^{2}\left(\mathbb{H}_{-q'}\right)}+\left|\log{\varepsilon}\right|\left|\mathcal{J}_{\left(0,0\right)}\right|\right)d\tau\nonumber\\
			&+C\varepsilon^{-2}\left(\frac{\varepsilon}{t}\right)^{\left(\frac{\gamma-1}{2}\right)}\int_{t}^{T}\left(\int_{B_{2\rho}(0)}\left|\nabla_{\bar{z}}\psi\right|^{2}\left(\bar{z}\left(\tau\right),\tau\right) d\bar{z}(\tau)\right)^{\frac{1}{2}}d\tau,\label{L2-exterior-a-priori-estimate-lemma-7}
		\end{align}
		where on the last term we use $\supp{\mathscr{W}_{1}\left(\mathscr{V}(q)+a_{*}\right)}\subset B_{2\rho}(0)$, and we have also used \eqref{jacobian} to change the variable of integration to $\bar{z}(\tau)$. The space integral in the last term on the right hand side can then by bounded by a constant times $\|\nabla_{z}\psi\|_{L^{r_{1}}}$, some $r_{1}>2$, using H{\"o}lder's inequality. Then \eqref{L2-exterior-a-priori-estimate-lemma-7} and another application of Lemma \ref{poisson-equation-zero-mass-estimates-lemma} gives
		\begin{align}
			&\left(\frac{t}{\varepsilon}\right)^{\left(\frac{\gamma-1}{4}\right)}\|\eta_{\mathfrak{e}}\phi\|_{L^{2}\left(\mathbb{H}_{-q'}\right)}\nonumber\\
			&\leq \frac{C\varepsilon^{\frac{\gamma-13}{4}}}{t^{\frac{\gamma-5}{4}}}\int_{t}^{T}\left(\|E\|_{\mathscr{Y}_{\infty,3}}+\|\phi\|_{L^{2}\left(\mathbb{H}_{-q'}\right)}+\left|\log{\varepsilon}\right|\left|\mathcal{J}_{\left(0,0\right)}\right|\right)d\tau.\label{L2-exterior-a-priori-estimate-lemma-17}
		\end{align}
		Let
		\begin{align}
			\left(\frac{t}{\varepsilon}\right)^{\left(\frac{\gamma-1}{4}\right)}\|\eta_{\mathfrak{e}}\phi\|_{L^{2}\left(\mathbb{H}_{-q'}\right)}=\alpha_{\mathfrak{e}}\left(t\right).\label{L2-exterior-a-priori-estimate-lemma-18}
		\end{align}
		Then, proceeding as in \eqref{weighted-Lr-norm-definition-1} for $r=2$ due to assumption on the support of $\phi$ \eqref{linear-transport-operator-error-solution-supports}, from \eqref{L2-exterior-a-priori-estimate-lemma-17} we have
		\begin{align*}
			\alpha_{\mathfrak{e}}\left(t\right)\leq \frac{C\varepsilon^{\frac{\gamma-13}{4}}}{t^{\frac{\gamma-5}{4}}}\int_{t}^{T}\left(\|E\|_{\mathscr{Y}_{\infty,3}}+\left(\int_{B_{2\rho}\left(0\right)}\frac{\eta_{\mathfrak{i}}^{2}\phi^{2}}{\left(f_{\varepsilon}'\right)^{+}}dz\right)^{\frac{1}{2}}+\left(\frac{\varepsilon}{\tau}\right)^{\frac{\gamma-1}{4}}\alpha_{\mathfrak{e}}+\left|\log{\varepsilon}\right|\left|\mathcal{J}_{\left(0,0\right)}\right|\right)d\tau.
		\end{align*}
		Thus, applying Gr\"{o}nwall, we obtain
		\begin{align}\nonumber
			\alpha_{\mathfrak{e}}\left(t\right)\leq \frac{C\varepsilon^{\frac{\gamma-13}{4}}}{t^{\frac{\gamma-5}{4}}}\int_{t}^{T}\left(\|E\|_{\mathscr{Y}_{\infty,3}}+\left(\int_{B_{2\rho}\left(0\right)}\frac{\eta_{\mathfrak{i}}^{2}\phi^{2}}{\left(f_{\varepsilon}'\right)^{+}}dz\right)^{\frac{1}{2}}+\left|\log{\varepsilon}\right|\left|\mathcal{J}_{\left(0,0\right)}\right|\right)d\tau,%\label{L2-exterior-a-priori-estimate-lemma-20}
		\end{align}
		as claimed in \eqref{L2-exterior-a-priori-estimate-lemma-statement-2}.
	\end{proof}
	Now we move on to the a priori estimate on the interior region where $\eta_{\mathfrak{i}}$ is supported.
	\begin{lemma}\label{L2-interior-a-priori-estimate-lemma}
		Let $E\left(z,t\right)\in\mathscr{Y}_{\infty,3}$, defined in \eqref{weighted-L-infinity-norm-definition}, and suppose $E$ has the bound
		\begin{align}
			\|E\|_{\mathscr{Y}_{\infty,3}}<\frac{\varepsilon^{5-\sigma_{0}}}{t^{3}},\label{L2-interior-a-priori-estimate-lemma-error-bound-statement}
		\end{align}
		for some small $\sigma_{0}>0$.
		
		\medskip
		Suppose further that $\phi$ solves \eqref{linear-transport-operator-system} with assumptions \eqref{linear-transport-operator-phi-mass-conditions}--\eqref{linear-transport-operator-error-solution-supports} holding, and suppose $\xi=\left(p,q\right)$ is of the form \eqref{linear-operator-change-of-coordinates-z-b}, with \eqref{xi0-bound}--\eqref{xi1-bound}, and \eqref{b-1-bound} satisfied.. Then for all $\varepsilon>0$ small enough, and all $T_{0}>0$ large enough, we have for all $t\in\left[T_{0},T\right]$,
		\begin{align}
			\left(\int_{B_{2\rho}\left(0\right)}\frac{\eta_{\mathfrak{i}}^{2}\phi^{2}}{\left(f_{\varepsilon}'\right)^{+}}dz\right)^{\frac{1}{2}}\leq\frac{C\varepsilon^{3-\sigma_{0}}\left|\log{\varepsilon}\right|^{2}}{t^{2}},\label{L2-interior-a-priori-estimate-lemma-statement}
		\end{align}
		for some absolute constant $C>0$.
	\end{lemma}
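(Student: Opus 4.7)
The plan is to derive a weighted $L^2$ energy identity by testing the transport equation \eqref{linear-transport-equation} against the weight $\eta_{\mathfrak{i}}^2 \phi / (f_\varepsilon')^+$ and integrating over $\mathbb{H}_{-q'}$, then combine the resulting differential inequality with the coercivity estimate of Lemma \ref{quadratic-form-estimate-lemma} and the exterior decay bounds of Lemma \ref{L2-exterior-a-priori-estimate-lemma}, integrating backward from $t=T$, where $\phi$ vanishes by \eqref{linear-transport-initial-data}. Setting $\mathfrak{I}(t):=\|((f_\varepsilon')^+)^{-1/2}\eta_{\mathfrak{i}}\phi\|_{L^2(\mathbb{H}_{-q'})}^2$, the time-derivative term produces $\tfrac{\varepsilon^2}{2}\tfrac{d}{dt}\mathfrak{I}$ plus commutator errors coming from $\partial_t\eta_{\mathfrak{i}}$ and $\partial_t(f_\varepsilon')^+$. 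The first is confined to $\supp\eta_{\mathfrak{e}}$ by \eqref{derivative-of-inner-cutoff-outer-cutoff-comparison-bound-2}--\eqref{derivative-of-inner-cutoff-outer-cutoff-comparison-bound-3} and is controlled via Lemma \ref{L2-exterior-a-priori-estimate-lemma}; the second is $O(\varepsilon^2/t)\cdot\mathfrak{I}$ by \eqref{linear-transport-operator-a-star-bounds}, hence absorbable when $T_0$ is enlarged in accordance with Remark \ref{choice-of-T0-remark}.

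The crucial observation for the transport term is that $\gradperp_{z}\mathscr{V}(q)$ is tangent to the level sets of $(f_\varepsilon')^+=\mathscr{W}_1(\mathscr{V}(q))$, so, after integration by parts, the principal contribution of
$$\int \gradperp_{z}\bigl(\mathscr{V}(q)+a_{*}+a\bigr)\cdot\nabla_{z}\bigl(\phi-\mathscr{W}_1(\mathscr{V}(q)+a_{*})\psi\bigr)\,\frac{\eta_{\mathfrak{i}}^2 \phi}{(f_\varepsilon')^+}$$
collapses (modulo commutators supported in $\supp\nabla\eta_{\mathfrak{i}}\subset\{\eta_{\mathfrak{e}}\equiv 1\}$, which are exterior terms again controlled via \eqref{L2-exterior-a-priori-estimate-lemma-statement-2}) precisely to the quadratic form
$$\int_{\mathbb{R}^2_+}\frac{\eta_{\mathfrak{i}}\phi}{(f_\varepsilon')^+}\Bigl(\eta_{\mathfrak{i}}\phi-(f_\varepsilon')^+(-\Delta)^{-1}(\eta_{\mathfrak{i}}\phi)\Bigr)$$
controlled from below by Lemma \ref{quadratic-form-estimate-lemma}; the orthogonality conditions \eqref{linear-transport-operator-phi-mass-conditions} are exactly the hypotheses needed to invoke that lemma. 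The perturbative transport pieces generated by $\gradperp a_{*}$ and $\gradperp a$ contribute respectively $O(\varepsilon^2/t^2)\cdot\mathfrak{I}$ and $O(\varepsilon^{2+\nu}/t^{1+\nu})\cdot\mathfrak{I}$ by \eqref{linear-transport-operator-a-star-bounds}--\eqref{linear-transport-operator-a-bounds}, which are absorbed by the coercive $C_0\,\varepsilon^{-2}\mathfrak{I}$ coming from Lemma \ref{quadratic-form-estimate-lemma}, again provided $T_0$ is large enough.

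For the forcing, Cauchy--Schwarz together with \eqref{L2-interior-a-priori-estimate-lemma-error-bound-statement} gives
$$\left|\int\frac{\eta_{\mathfrak{i}}^2 \phi\,E}{(f_\varepsilon')^+}\right|\leq C\,\mathfrak{I}(t)^{1/2}\cdot\frac{\varepsilon^{5-\sigma_0}|\log\varepsilon|}{t^{3}},$$
where the logarithm arises from $\int ((f_\varepsilon')^+)^{-1}\chi_{\supp\eta_{\mathfrak{i}}}\sim|\log\varepsilon|$. Assembling the identity yields a backward differential inequality of Gronwall type on $[t,T]$ of the schematic form
$$-\tfrac{d}{dt}\mathfrak{I}(t)\leq -\tfrac{C_0}{\varepsilon^2}\mathfrak{I}(t)+\tfrac{C}{\varepsilon^2}\,\mathfrak{I}(t)^{1/2}\,\tfrac{\varepsilon^{5-\sigma_0}|\log\varepsilon|}{t^{3}}+\mathcal{R}_{\mathrm{ext}}(t),$$
where $\mathcal{R}_{\mathrm{ext}}$ aggregates all the exterior and commutator remainders bounded via \eqref{L2-exterior-a-priori-estimate-lemma-statement-1}--\eqref{L2-exterior-a-priori-estimate-lemma-statement-2}. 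The gain $(\varepsilon/t)^{(\gamma-1)/4}$ available in the exterior estimate, combined with $\gamma\ge 19$, renders $\mathcal{R}_{\mathrm{ext}}$ strictly smaller than the forcing contribution, so that integration backward from $T$ with zero terminal data produces the claimed bound $\mathfrak{I}(t)^{1/2}\leq C\varepsilon^{3-\sigma_0}|\log\varepsilon|\,t^{\sigma_0/2-2}$.

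The main obstacle is the passage from the transport identity to the quadratic form of Lemma \ref{quadratic-form-estimate-lemma}: since $\phi$ is not supported inside $\supp(f_\varepsilon')^+$, the natural energy identity lives on all of $\phi$ whereas coercivity is only accessible for $\eta_{\mathfrak{i}}\phi$, and every commutator generated when moving $\eta_{\mathfrak{i}}$ through $(-\Delta)^{-1}$ or through $\gradperp \mathscr{V}$ must be rerouted into the exterior estimate. Tracking these commutators quantitatively, and verifying that the powers of $\varepsilon/t$ they produce are dominated by $(\varepsilon/t)^{(\gamma-1)/4}$ in the exterior bound, is precisely where the hypothesis $\gamma\ge 19$ is forced upon us.
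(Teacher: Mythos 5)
Your proposal captures the right landscape (weighted $L^2$ energy with interior/exterior split, coercivity from Lemma~\ref{quadratic-form-estimate-lemma}, exterior terms via Lemma~\ref{L2-exterior-a-priori-estimate-lemma}, backward integration from $T$, need for $\gamma\geq19$ and $T_0$ large) but the actual mechanism is wrong in two coupled places, and the resulting differential inequality has a term that does not exist.

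First, you propose to test only against $\eta_{\mathfrak i}^2\phi/(f_\varepsilon')^+$. The paper tests against the full $\eta_{\mathfrak i}G = \eta_{\mathfrak i}^2\phi/\mathscr{W}_1(\mathscr{V}+a_*) - \eta_{\mathfrak i}(-\Delta)^{-1}(\eta_{\mathfrak i}\phi)$, and the nonlocal second piece is essential: it is what produces $\tfrac{\varepsilon^2}{2}\tfrac{d}{dt}\int\bigl(\eta_{\mathfrak i}^2\phi^2/\mathscr{W}_1 - (\eta_{\mathfrak i}\phi)(-\Delta)^{-1}(\eta_{\mathfrak i}\phi)\bigr)$ from the $\dell_t\phi$ term. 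The quadratic form of Lemma~\ref{quadratic-form-estimate-lemma} is therefore the \emph{time-integrated} quantity monitored by $\tfrac{d}{dt}$, not the output of integrating the transport term by parts. Second, and related: the transport term, when hit with the full test function $\eta_{\mathfrak i}G$, does \emph{not} collapse to the coercive quadratic form with a minus sign. It becomes $\tfrac12\int\eta_{\mathfrak i}\mathscr{W}_1\nabla^\perp(\mathscr{V}+a_*+a)\cdot\nabla(G^2)$ plus error integrals $\mathcal{I}_1,\ldots,\mathcal{I}_4$, and after integration by parts and use of $\nabla^\perp\mathscr{V}\cdot\nabla\mathscr{W}_1=0$ it degenerates into cutoff-commutator terms supported where $\eta_{\mathfrak e}\equiv1$ and small $a$-perturbation terms. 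The transport is essentially conservative — it provides no dissipation.

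Consequently the schematic inequality
$-\tfrac{d}{dt}\mathfrak I \leq -\tfrac{C_0}{\varepsilon^2}\mathfrak I + \cdots$
is spurious; there is no dissipative $-\tfrac{C_0}{\varepsilon^2}\mathfrak I$ term (if there were, the estimate would trivialize and neither the $\gamma\geq19$ constraint nor the enlargement of $T_0$ would be needed, which is internally inconsistent with your own last paragraph). The true structure is: (i) integrate the energy identity on $[t,T]$ with $\phi(\cdot,T)=0$ to bound the quadratic form at time $t$ by integrals of $\|E\|_{\mathscr Y_\infty}^2$, $\tau^{-1-\sigma_0}\alpha_{\mathfrak i}(\tau)^2$, and $\alpha_{\mathfrak e}(\tau)^2$; (ii) lower-bound the left-hand side by $C_0\alpha_{\mathfrak i}(t)^2$ using Lemma~\ref{quadratic-form-estimate-lemma} and control the remainder $\mathcal R_1$ in terms of $\alpha_{\mathfrak e}$; (iii) substitute the exterior estimate \eqref{L2-exterior-a-priori-estimate-lemma-statement-2} and close the resulting Volterra-type integral inequality by a continuity/first-failure bootstrap (eqs.\ \eqref{L2-interior-a-priori-estimate-lemma-41}--\eqref{L2-interior-a-priori-estimate-lemma-44}), not by Gronwall with a coercive rate. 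You would need to rebuild the argument along those lines; as written the core step — the source of the negative sign and of the quadratic form — is misidentified, and the proof does not go through.
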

	\begin{proof}
		We define $\hat{\mathscr{W}}_{j}=\hat{\mathscr{W}}_{j}\left(z,t\right)\coloneqq\mathscr{W}_{j}\left(\mathscr{V}\left(q\right)+a_{*}\right)$, $j=0,1,2$. Take the equality \eqref{linear-transport-equation}, multiply by
		\begin{align}
			\eta_{\mathfrak{i}}G\coloneqq\left(\frac{\eta_{\mathfrak{i}}^{2}\phi}{\hat{\mathscr{W}}_{1}}-\eta_{\mathfrak{i}}\left(-\Delta\right)^{-1}\left(\eta_{\mathfrak{i}}\phi\right)\right),\label{L2-interior-a-priori-estimate-lemma-1}
		\end{align}
		and integrate over $\mathbb{H}_{-q'}$ to obtain
		\begin{align}
			&\varepsilon^{2}\int_{B_{2\rho}\left(0\right)}\frac{\eta_{\mathfrak{i}}^{2}\phi\ \dell_{t}\phi\ dz}{\hat{\mathscr{W}}_{1}}-\varepsilon^{2}\int_{B_{2\rho}\left(0\right)}\eta_{\mathfrak{i}}\dell_{t}\phi\left(-\Delta\right)^{-1}\left(\eta_{\mathfrak{i}}\phi\right)dz\label{L2-interior-a-priori-estimate-lemma-2}\\
			&+\int_{B_{2\rho}\left(0\right)}\eta_{\mathfrak{i}}\left(z,t\right)G\left(z,t\right)\ \gradperp_{z}\left(\mathscr{V}\left(q\right)+a_{*}+a\right)\cdot\nabla_{z}\left(\phi-\hat{\mathscr{W}}_{1}\psi\right)dz\nonumber\\
			&+\int_{B_{2\rho}\left(0\right)}\left(\frac{\eta_{\mathfrak{i}}^{2}\phi}{\hat{\mathscr{W}}_{1}}-\eta_{\mathfrak{i}}\left(-\Delta\right)^{-1}\left(\eta_{\mathfrak{i}}\phi\right)\right)E\ dz=0,\nonumber
		\end{align}
		where the region of integration is due to the support of $\eta_{\mathfrak{i}}$ given in \eqref{weighted-L2-interior-cutoff} with
		$\rho>0$ defined in \eqref{first-approximation-main-order-vorticity-support}. Moreover, by \ref{f-prime-L2-inner-weight-comparison-bound} and \eqref{linear-transport-operator-a-star-bounds}, we have, for $j=0,1,2,3$,
		\begin{align*}
			\hat{\mathscr{W}}_{j}\left(z,t\right)=\mathscr{W}_{j}\left(\mathscr{V}\left(q\right)+a_{*}\right)\geq C\left(\frac{\varepsilon}{t}\right)^{\frac{\gamma-j}{2}},\quad z\in\supp{\eta_{\mathfrak{i}}},
		\end{align*}
		which shows that the integrals in \eqref{L2-interior-a-priori-estimate-lemma-2} are well defined. 
		
		\medskip
		For the first term on the left hand side of \eqref{L2-interior-a-priori-estimate-lemma-2}, we have
		\begin{align}
			\varepsilon^{2}\int_{B_{2\rho}\left(0\right)}\frac{\eta_{\mathfrak{i}}^{2}\phi\ \dell_{t}\phi\ dz}{\hat{\mathscr{W}}_{1}}&=\frac{\varepsilon^{2}}{2}\frac{d}{dt}\left(\int_{B_{2\rho}\left(0\right)}\frac{\eta_{\mathfrak{i}}^{2}\phi^{2}\ dz}{\hat{\mathscr{W}}_{1}}\right)-\frac{\varepsilon^{2}}{2}\int_{B_{2\rho}\left(0\right)}\dell_{t}\left(\frac{\eta_{\mathfrak{i}}^{2}}{\hat{\mathscr{W}}_{1}}\right)\phi^{2}\ dz.\label{L2-interior-a-priori-estimate-lemma-3}
		\end{align}
		Then \eqref{weighted-L2-interior-cutoff}, \eqref{derivative-of-inner-cutoff-outer-cutoff-comparison-bound-1}, and \eqref{f-prime-L2-inner-weight-comparison-bound}, give
		\begin{align}
			\left|\eta_{\mathfrak{i}}\dell_{t}\eta_{\mathfrak{i}}\right|\left|\frac{1}{\hat{\mathscr{W}}_{1}}\right|\leq \frac{C\eta_{\mathfrak{e}}^{2}}{t}\left(\frac{t}{\varepsilon}\right)^{\frac{\gamma-1}{2}}.\label{L2-interior-a-priori-estimate-lemma-4}
		\end{align}
		Next, we note that on $\supp{\eta_{\mathfrak{i}}}$ that due to the definition of $\hat{\mathscr{W}}_{1}$ and \eqref{f-prime-q-variation-1}--\eqref{tilde-W-k-def},
		\begin{align*}
			\dell_{t}\left(\frac{1}{\hat{\mathscr{W}}_{1}}\right)&=\frac{\dell_{t}\left(\mathscr{V}\left(q\right)+a_{*}\right)}{\gamma\left(\gamma-1\right)\left(\mathscr{V}\left(q\right)+a_{*}\right)^{\gamma-1}_{+}\chi_{B_{\frac{s}{\varepsilon}}\left(0\right)}(z)}\frac{1}{\left(\mathscr{V}\left(q\right)+a_{*}\right)}\\
			&=\frac{\dell_{t}\left(\mathscr{V}\left(q\right)+a_{*}\right)}{\gamma\hat{\mathscr{W}}_{1}}\frac{1}{\left(\mathscr{V}\left(q\right)+a_{*}\right)}.
		\end{align*}
		Then, due to \eqref{linear-transport-operator-a-star-bounds}, and \eqref{f-prime-L2-inner-weight-comparison-bound}, we have
		\begin{align}
			\left|\eta_{\mathfrak{i}}^{2}\dell_{t}\left(\frac{1}{\hat{\mathscr{W}}_{1}}\right)\right|&\leq \frac{C\eta_{\mathfrak{i}}^{2}}{\hat{\mathscr{W}}_{0}}\left(\frac{\varepsilon}{t}\right)^{2}\leq \frac{C\eta_{\mathfrak{i}}^{2}}{\hat{\mathscr{W}}_{1}}\left(\frac{\varepsilon}{t}\right)^{\frac{3}{2}}.
			%\label{L2-interior-a-priori-estimate-lemma-5}
			\nonumber \end{align}
		Thus
		\begin{align}
			\left|\int_{B_{2\rho}\left(0\right)}\dell_{t}\left(\frac{\eta_{\mathfrak{i}}^{2}}{\hat{\mathscr{W}}_{1}}\right)\phi^{2}\ dz\right|&\leq C\left(\frac{\varepsilon}{t}\right)^{\frac{3}{2}}\|\hat{\mathscr{W}}_{1}^{-\frac{1}{2}}\eta_{\mathfrak{i}}\phi\|_{L^{2}\left(\mathbb{H}_{-q'}\right)}^{2}+\frac{C\alpha_{\mathfrak{e}}^{2}\left(t\right)}{t},\label{L2-interior-a-priori-estimate-lemma-6}
		\end{align}
		where $\alpha_{\mathfrak{e}}$ is defined in \eqref{L2-exterior-a-priori-estimate-lemma-18}.
		
		\medskip
		Next we have the second term on the left hand side of \eqref{L2-interior-a-priori-estimate-lemma-2} which we can write as
		\begin{align}
			\frac{\varepsilon^{2}}{2}\frac{d}{dt}\left(\int_{B_{2\rho}\left(0\right)}\left(\eta_{\mathfrak{i}}\phi\right)\left(-\Delta\right)^{-1}\left(\eta_{\mathfrak{i}}\phi\right)dz\right)-\varepsilon^{2}\int_{B_{2\rho}\left(0\right)}\left(\phi\dell_{t}\eta_{\mathfrak{i}}\right)\left(-\Delta\right)^{-1}\left(\eta_{\mathfrak{i}}\phi\right)dz.\label{L2-interior-a-priori-estimate-lemma-7}
		\end{align}
		Similarly to \eqref{L2-interior-a-priori-estimate-lemma-4}, we can apply Young's inequality to obtain
		\begin{align}
			&\left|\int_{B_{2\rho}\left(0\right)}\left(\phi\dell_{t}\eta_{\mathfrak{i}}\right)\left(-\Delta\right)^{-1}\left(\eta_{\mathfrak{i}}\phi\right)dz\right|\leq \frac{C}{t}\|\eta_{\mathfrak{e}}\phi\|_{L^{2}\left(\mathbb{H}_{-q'}\right)}\|\left(-\Delta\right)^{-1}\left(\eta_{\mathfrak{i}}\phi\right)\|_{L^{\infty}\left(\mathbb{H}_{-q'}\right)}\label{L2-interior-a-priori-estimate-lemma-10}\\
			&\leq C\left(\frac{\left|\log{\varepsilon}\right|}{t}\right)^{2}\left(\frac{\varepsilon}{t}\right)^{\left(\frac{\gamma-1}{4}\right)}\alpha_{\mathfrak{e}}\left(t\right)^{2}+C\left(\frac{\varepsilon}{t}\right)^{\left(\frac{\gamma-1}{4}\right)}\|\hat{\mathscr{W}}_{1}^{-\frac{1}{2}}\eta_{\mathfrak{i}}\phi\|_{L^{2}\left(\mathbb{H}_{-q'}\right)}^{2}.\nonumber
		\end{align}
		Let $\bar{\eta}_{\mathfrak{i}}=1-\eta_{\mathfrak{i}}$. For the term on the second line on the left hand side of \eqref{L2-interior-a-priori-estimate-lemma-2}, we first note that since $\psi=\left(-\Delta\right)^{-1}\left(\phi\right)$,
		\begin{align}
			&\nabla_{z}\left(\hat{\mathscr{W}}_{1}\psi\right)=\nabla_{z}\left(\hat{\mathscr{W}}_{1}\left(-\Delta\right)^{-1}\left(\bar{\eta}_{\mathfrak{i}}\phi\right)\right)+\nabla_{z}\left(\hat{\mathscr{W}}_{1}\left(-\Delta\right)^{-1}\left(\eta_{\mathfrak{i}}\phi\right)\right).\label{L2-interior-a-priori-estimate-lemma-11}
		\end{align}
		Using \eqref{L2-interior-a-priori-estimate-lemma-11} we can rewrite the integral on the second line of \eqref{L2-interior-a-priori-estimate-lemma-2}, and obtain
		\begin{align}
			&\int_{B_{2\rho}\left(0\right)}\eta_{\mathfrak{i}}G\  \gradperp_{z}\left(\mathscr{V}\left(q\right)+a_{*}+a\right)\cdot\nabla_{z}\left(\phi-\hat{\mathscr{W}}_{1}\psi\right)dz\label{L2-interior-a-priori-estimate-lemma-12}\\
			&=\int_{B_{2\rho}\left(0\right)}\eta_{\mathfrak{i}}G\  \gradperp_{z}\left(\mathscr{V}\left(q\right)+a_{*}+a\right)\cdot\nabla_{z}\left(\hat{\mathscr{W}}_{1}G\right)dz+\mathcal{I},\nonumber
		\end{align}
		where, noting that $\hat{\mathscr{W}}_{1}$ is a function of $\mathscr{V}\left(q\right)+a_{*}$, and the definitions \eqref{f-prime-q-variation-1}--\eqref{tilde-W-k-def}, we have
		\begin{align}
			\mathcal{I}&=\int_{B_{2\rho}\left(0\right)}\left(-\frac{\eta_{\mathfrak{i}}^{2}\phi}{\hat{\mathscr{W}}_{1}}\left(-\Delta\right)^{-1}\left(\bar{\eta}_{\mathfrak{i}}\phi\right)+\eta_{\mathfrak{i}}\left(-\Delta\right)^{-1}\left(\eta_{\mathfrak{i}}\phi\right)\left(-\Delta\right)^{-1}\left(\bar{\eta}_{\mathfrak{i}}\phi\right)\right)\gradperp_{z}\left(a\right)\cdot\nabla_{z}\hat{\mathscr{W}}_{1}\ dz,\nonumber\\
			&+\int_{B_{2\rho}\left(0\right)}\left(-\eta_{\mathfrak{i}}^{2}\phi+\eta_{\mathfrak{i}}\left(-\Delta\right)^{-1}\left(\eta_{\mathfrak{i}}\phi\right)\right)\nabla_{z}\left(\left(-\Delta\right)^{-1}\left(\bar{\eta}_{\mathfrak{i}}\phi\right)\right)\cdot\gradperp_{z}\hat{\mathscr{W}}_{0}\ dz,\nonumber\\
			&+\int_{B_{2\rho}\left(0\right)}  \gradperp_{z}\left(\mathscr{V}\left(q\right)+a_{*}+a\right)\cdot\left(\eta_{\mathfrak{i}}G\phi\nabla_{z}\bar{\eta}_{\mathfrak{i}}+\frac{\phi^{2}}{2\hat{\mathscr{W}}_{1}}\nabla_{z}\left(\eta_{\mathfrak{i}}^{2}\bar{\eta}_{\mathfrak{i}}\right)\right) dz,\nonumber\\
			&-\int_{B_{2\rho}\left(0\right)}\frac{\eta_{\mathfrak{i}}^{2}\bar{\eta}_{\mathfrak{i}}\phi^{2}}{2\hat{\mathscr{W}}_{0}}\nabla_{z}\left(\mathscr{V}\left(q\right)+a_{*}\right)\cdot\gradperp_{z}\left(a\right)dz,\label{L2-interior-a-priori-estimate-lemma-13}
		\end{align}
		where we have used integration by parts for the last two terms. We begin to estimate \eqref{L2-interior-a-priori-estimate-lemma-12} with the first term on the right hand side, and note
		\begin{align}
			&\int_{B_{2\rho}\left(0\right)}\eta_{\mathfrak{i}}G\ \gradperp_{z}\left(\mathscr{V}\left(q\right)+a_{*}+a\right)\cdot\nabla_{z}\left(\hat{\mathscr{W}}_{1}G\right)dz\label{L2-interior-a-priori-estimate-lemma-17}\\
			&=-\frac{1}{2}\int_{B_{2\rho}\left(0\right)}\hat{\mathscr{W}}_{1}G^{2}\ \gradperp_{z}\left(\mathscr{V}\left(q\right)+a_{*}+a\right)\cdot\nabla_{z}\left(\eta_{\mathfrak{i}}\right)dz\nonumber\\
			&+\frac{1}{2}\int_{B_{2\rho}\left(0\right)}\eta_{\mathfrak{i}}\hat{\mathscr{W}}_{2}G^{2}\ \gradperp_{z}\left(a\right)\cdot\nabla_{z}\left(\mathscr{V}\left(q\right)+a_{*}\right)dz,\nonumber
		\end{align}
		where we use integration by parts. For the first term on the right hand side of \eqref{L2-interior-a-priori-estimate-lemma-17}, recalling the definition of $G$ in \eqref{L2-interior-a-priori-estimate-lemma-1}, we have
		\begin{align}
			&\left|\int_{B_{2\rho}\left(0\right)}\hat{\mathscr{W}}_{1}G^{2}\left(z,t\right)\ \gradperp_{z}\left(\mathscr{V}\left(q\right)+a_{*}+a\right)\cdot\nabla_{z}\left(\eta_{\mathfrak{i}}\left(z,t\right)\right)dz\right|\label{L2-interior-a-priori-estimate-lemma-18}\\
			&\leq C\int_{B_{2\rho}\left(0\right)}\frac{\eta_{\mathfrak{i}}^{2}\phi^{2}}{\hat{\mathscr{W}}_{1}}\left|\nabla_{z}\left(\eta_{\mathfrak{i}}\left(z,t\right)\right)\right|dz\nonumber\\
			&+C\left(\frac{\varepsilon}{t}\right)^{\frac{\gamma-1}{2}}\int_{B_{2\rho}\left(0\right)}\left|\left(-\Delta\right)^{-1}\left(\eta_{\mathfrak{i}}\phi\right)\right|^{2}\left|\nabla_{z}\left(\eta_{\mathfrak{i}}\left(z,t\right)\right)\right|dz.\nonumber
		\end{align}
		Then \eqref{cutoff-derivative-support-inclusion}, \eqref{derivative-of-inner-cutoff-outer-cutoff-comparison-bound-1}, \eqref{f-prime-L2-inner-weight-comparison-bound}, and \eqref{weighted-L2-exterior-cutoff-support} give us
		\begin{align}
			&\left|\int_{B_{2\rho}\left(0\right)}\hat{\mathscr{W}}_{1}G^{2}\left(z,t\right)\ \gradperp_{z}\left(\mathscr{V}\left(q\right)+a_{*}+a\right)\cdot\nabla_{z}\left(\eta_{\mathfrak{i}}\left(z,t\right)\right)dz\right|\label{L2-interior-a-priori-estimate-lemma-18a}\\
			&\leq C\left(\left(\frac{t}{\varepsilon}\right)^{\frac{1}{2}}\alpha_{\mathfrak{e}}\left(t\right)^{2}+\left|\log{\varepsilon}\right|^{2}\left(\frac{\varepsilon}{t}\right)^{\frac{\gamma-2}{2}}\|\hat{\mathscr{W}}_{1}^{-\frac{1}{2}}\eta_{\mathfrak{i}}\phi\|_{L^{2}\left(\mathbb{H}_{-q'}\right)}^{2}\right).\nonumber
		\end{align}
		For the second term on the right hand side of \eqref{L2-interior-a-priori-estimate-lemma-17}, using \eqref{linear-transport-operator-a-bounds-0}, \eqref{f-prime-L2-inner-weight-comparison-bound}, and \eqref{weighted-L2-interior-cutoff}, we have the bound
		\begin{align}
			\left|\eta_{\mathfrak{i}}\left(z,t\right)\hat{\mathscr{W}}_{2} \gradperp_{z}\left(a\right)\right|\leq \frac{C\varepsilon^{\frac{11}{4}}}{t^{\frac{7}{4}}}\frac{\eta_{\mathfrak{i}}\left(z,t\right)\hat{\mathscr{W}}_{1}}{\mathscr{V}\left(q\right)+a_{*}}&\leq\frac{C\varepsilon^{\frac{11}{4}}}{t^{\frac{7}{4}}}\frac{\eta_{\mathfrak{i}}\left(z,t\right)\hat{\mathscr{W}}_{1}}{\left(1-C\left(\frac{\varepsilon}{t}\right)^{\frac{3}{2}}\right)\mathscr{V}\left(q_{0}+q_{1}\right)}\label{L2-interior-a-priori-estimate-lemma-19}\\
			&\leq\frac{C\varepsilon^{\frac{9}{4}}}{t^{\frac{5}{4}}}\eta_{\mathfrak{i}}\left(z,t\right)\hat{\mathscr{W}}_{1}.\nonumber
		\end{align}
		Then, analogously to \eqref{L2-interior-a-priori-estimate-lemma-18}--\eqref{L2-interior-a-priori-estimate-lemma-18a}, we have
		\begin{align}
			&\left|\int_{B_{2\rho}\left(0\right)}\eta_{\mathfrak{i}}\left(z,t\right)\hat{\mathscr{W}}_{2}G^{2}\left(z,t\right)\ \gradperp_{z}\left(a\right)\cdot\nabla_{z}\left(\mathscr{V}\left(q\right)+a_{*}\right)dz\right|\label{L2-interior-a-priori-estimate-lemma-20}\\
			&\leq\frac{C\left|\log{\varepsilon}\right|^{2}\varepsilon^{\frac{9}{4}}}{t^{\frac{5}{4}}}\left(\|\hat{\mathscr{W}}_{1}^{-\frac{1}{2}}\eta_{\mathfrak{i}}\phi\|_{L^{2}\left(\mathbb{H}_{-q'}\right)}^{2}+\alpha_{\mathfrak{e}}\left(t\right)^{2}\right).\nonumber
		\end{align}
		The remainder $\mathcal{I}$ defined in \eqref{L2-interior-a-priori-estimate-lemma-13} can be similarly estimated term by term once we recall that $\left|\bar{\eta}_{\mathfrak{i}}\right|=\left|1-\eta_{\mathfrak{i}}\right|\leq \eta_{\mathfrak{e}}$ on $B_{4\rho}(0)$ by \eqref{interior-exterior-cutoff-sum-bounds}. Thus
		\begin{align}\nonumber
			\left|\mathcal{I}\right|\leq \frac{C\varepsilon^{\frac{9}{4}}\left|\log{\varepsilon}\right|^{2}}{t^{\frac{5}{4}}}\|\hat{\mathscr{W}}_{1}^{-\frac{1}{2}}\eta_{\mathfrak{i}}\phi\|_{L^{2}\left(\mathbb{H}_{-q'}\right)}^{2}+C\left(\frac{t}{\varepsilon}\right)^{\frac{1}{2}}\alpha_{\mathfrak{e}}\left(t\right)^{2}.
			%\label{L2-interior-a-priori-estimate-lemma-26}
		\end{align}
		Finally, we estimate the last term on the left hand side of \eqref{L2-interior-a-priori-estimate-lemma-2} given by
		\begin{align*}
			&\int_{B_{2\rho}\left(0\right)}\left(\frac{\eta_{\mathfrak{i}}^{2}\phi}{\hat{\mathscr{W}}_{1}}-\eta_{\mathfrak{i}}\left(-\Delta\right)^{-1}\left(\eta_{\mathfrak{i}}\phi\right)\right)E\ dz\\
			&=\int_{B_{2\rho}\left(0\right)}\left[\left(\frac{\eta_{\mathfrak{i}}\phi}{\hat{\mathscr{W}}_{1}^{\frac{1}{2}}}\right)\left(\frac{\eta_{\mathfrak{i}}E}{\hat{\mathscr{W}}_{3}}\right)\left(\frac{\hat{\mathscr{W}}_{3}}{\hat{\mathscr{W}}_{1}^{\frac{1}{2}}}\right) -\left(-\Delta\right)^{-1}\left(\eta_{\mathfrak{i}}\phi\right)\left(\eta_{\mathfrak{i}}E\right)\right]dz,
			%\label{L2-interior-a-priori-estimate-lemma-28}
		\end{align*}
		where the rewriting of the integrals on the right hand side are valid by the definitions \eqref{f-prime-q-variation-1}--\eqref{tilde-W-k-def}, and the fact that $\gamma>18>5$ means that $\hat{\mathscr{W}}_{3}\hat{\mathscr{W}}_{1}^{-\frac{1}{2}}$ is a bounded function. From here we again use \eqref{f-prime-L2-inner-weight-comparison-bound} along with the definition \eqref{weighted-L-infinity-norm-definition} and Young's inequality to obtain that
		\begin{align}
			\left|\int_{B_{2\rho}\left(0\right)}\left(\frac{\eta_{\mathfrak{i}}^{2}\phi}{\hat{\mathscr{W}}_{1}}-\eta_{\mathfrak{i}}\left(-\Delta\right)^{-1}\left(\eta_{\mathfrak{i}}\phi\right)\right)E\ dz\right|&\leq \frac{C\varepsilon^{2}}{\left|\log{\varepsilon}\right|t}\|\hat{\mathscr{W}}_{1}^{-\frac{1}{2}}\eta_{\mathfrak{i}}\phi\|_{L^{2}\left(\mathbb{H}_{-q'}\right)}^{2}\nonumber\\
			&+\frac{Ct}{\varepsilon^{2}}\left|\log{\varepsilon}\right|^{4}\|E\|_{\mathscr{Y}_{\infty,3}}^{2}.\label{L2-interior-a-priori-estimate-lemma-30}
		\end{align}
		Then, using the identities \eqref{L2-interior-a-priori-estimate-lemma-3}, \eqref{L2-interior-a-priori-estimate-lemma-7}, \eqref{L2-interior-a-priori-estimate-lemma-12}, as well as bounds \eqref{L2-interior-a-priori-estimate-lemma-6}, \eqref{L2-interior-a-priori-estimate-lemma-10}, \eqref{L2-interior-a-priori-estimate-lemma-18}, and \eqref{L2-interior-a-priori-estimate-lemma-20}--\eqref{L2-interior-a-priori-estimate-lemma-30}, we have
		\begin{align}
			&\frac{\varepsilon^{2}}{2}\frac{d}{dt}\left(\int_{B_{2\rho}\left(0\right)}\left(\frac{\eta_{\mathfrak{i}}^{2}\phi^{2}}{\hat{\mathscr{W}}_{1}}-\left(\eta_{\mathfrak{i}}\phi\right)\left(-\Delta\right)^{-1}\left(\eta_{\mathfrak{i}}\phi\right)\right)dz\right)\nonumber\\
			&\geq-\frac{C\varepsilon^{2}}{\left|\log{\varepsilon}\right|t}\|\hat{\mathscr{W}}_{1}^{-\frac{1}{2}}\eta_{\mathfrak{i}}\phi\|_{L^{2}\left(\mathbb{H}_{-q'}\right)}^{2}-\frac{Ct}{\varepsilon^{2}}\left|\log{\varepsilon}\right|^{4}\|E\|_{\mathscr{Y}_{\infty,3}}^{2}-C\left(\frac{t}{\varepsilon}\right)^{\frac{1}{2}}\alpha_{\mathfrak{e}}\left(t\right)^{2},
			%\label{L2-interior-a-priori-estimate-lemma-31} 
			\nonumber
		\end{align}
		Then integrating on $\left[t,T\right]$, and noting that $\phi\left(T\right)=0$ from \eqref{linear-transport-initial-data}, we have
		\begin{align}
			&\int_{B_{2\rho}\left(0\right)}\left(\frac{\eta_{\mathfrak{i}}^{2}\phi^{2}}{\hat{\mathscr{W}}_{1}}-\left(\eta_{\mathfrak{i}}\phi\right)\left(-\Delta\right)^{-1}\left(\eta_{\mathfrak{i}}\phi\right)\right)dz\leq\int_{t}^{T}C\varepsilon^{-2}\left(\frac{t}{\varepsilon}\right)^{\frac{1}{2}}\alpha_{\mathfrak{e}}\left(\tau\right)^{2}d\tau\label{L2-interior-a-priori-estimate-lemma-32}\\
			&+\int_{t}^{T}\frac{C}{\left|\log{\varepsilon}\right|\tau}\|\hat{\mathscr{W}}_{1}^{-\frac{1}{2}}\eta_{\mathfrak{i}}\phi\|_{L^{2}\left(\mathbb{H}_{-q'}\right)}^{2} d\tau+\int_{t}^{T}\frac{C\tau}{\varepsilon^{4}}\left|\log{\varepsilon}\right|^{4}\|E\|_{\mathscr{Y}_{\infty,3}}^{2}d\tau.\nonumber
		\end{align}
		Once again using \eqref{f-prime-L2-inner-weight-comparison-bound}, we have
		\begin{align}
			\left|\int_{B_{2\rho}\left(0\right)}\eta_{\mathfrak{i}}^{2}\phi^{2}\left(\frac{1}{\hat{\mathscr{W}}_{1}}-\frac{1}{\left(f_{\varepsilon}'\right)^{+}}\right)dz\right|\leq C\left(\frac{\varepsilon}{t}\right)^{\frac{3}{2}}\int_{B_{2\rho}\left(0\right)}\frac{\eta_{\mathfrak{i}}^{2}\phi^{2}}{\left(f_{\varepsilon}'\right)^{+}}dz.\label{L2-interior-a-priori-estimate-lemma-34}
		\end{align}
		Then applying Lemma \ref{quadratic-form-estimate-lemma} and \eqref{L2-interior-a-priori-estimate-lemma-34} to \eqref{L2-interior-a-priori-estimate-lemma-32}, we obtain
		\begin{align}
			\int_{B_{2\rho}\left(0\right)}\frac{\eta_{\mathfrak{i}}^{2}\phi^{2}}{\left(f_{\varepsilon}'\right)^{+}}dz+\mathcal{R}_{1}&\leq\int_{t}^{T}\frac{C}{\left|\log{\varepsilon}\right|\tau}\left(\int_{B_{2\rho}\left(0\right)}\frac{\eta_{\mathfrak{i}}^{2}\phi^{2}}{\left(f_{\varepsilon}'\right)^{+}}dz\right) d\tau\label{L2-interior-a-priori-estimate-lemma-35}\\
			&+\int_{t}^{T}\frac{C\tau}{\varepsilon^{4}}\left|\log{\varepsilon}\right|^{4}\|E\|_{\mathscr{Y}_{\infty,3}}^{2}ds\nonumber\\
			&+\int_{t}^{T}C\varepsilon^{-2}\left(\frac{t}{\varepsilon}\right)^{\frac{1}{2}}\alpha_{\mathfrak{e}}\left(\tau\right)^{2}d\tau.\nonumber
		\end{align}
		Then using Lemma \ref{L2-exterior-a-priori-estimate-lemma}, we have that
		\begin{align}
			&\left|\log{\varepsilon}\right|^{2}\|\eta_{\mathfrak{e}}\phi\|_{L^{2}\left(\mathbb{H}_{-q'}\right)}^{2}\label{L2-interior-a-priori-estimate-lemma-36}\\
			&\leq \left|\log{\varepsilon}\right|^{2}\frac{C\varepsilon^{\gamma-7}}{t^{\gamma-3}}\left(\int_{t}^{T}\left(\|E\|_{\mathscr{Y}_{\infty,3}}+\left(\int_{B_{2\rho}\left(0\right)}\frac{\eta_{\mathfrak{i}}^{2}\phi^{2}}{\left(f_{\varepsilon}'\right)^{+}}dz\right)^{\frac{1}{2}}+\left|\log{\varepsilon}\right|\left|\mathcal{J}_{\left(0,0\right)}\right|\right)d\tau\right)^{2}.\nonumber
		\end{align}
		Define $\alpha_{\mathfrak{i}}$ as
		\begin{align}\nonumber
			\alpha_{\mathfrak{i}}\left(t\right)^{2}=\int_{B_{2\rho}\left(0\right)}\frac{\eta_{\mathfrak{i}}^{2}\phi^{2}}{\left(f_{\varepsilon}'\right)^{+}}dz.%\label{L2-interior-a-priori-estimate-lemma-37}
		\end{align}
		Using \eqref{L2-interior-a-priori-estimate-lemma-34}--\eqref{L2-interior-a-priori-estimate-lemma-36}, \eqref{f-prime-q-variation-3}--\eqref{f-prime-q-variation-4}, \eqref{f-prime-L2-inner-weight-comparison-bound}, Lemmas \ref{quadratic-form-estimate-lemma} and \ref{L2-exterior-a-priori-estimate-lemma}, and \eqref{L2-exterior-a-priori-estimate-lemma-18}, we have
		\begin{align}
			&\alpha_{\mathfrak{i}}\left(t\right)^{2}\leq \int_{t}^{T}\frac{C\tau}{\varepsilon^{4}}\left|\log{\varepsilon}\right|^{4}\|E\|_{\mathscr{Y}_{\infty,3}}^{2}d\tau+\left|\log{\varepsilon}\right|^{2}\frac{C\varepsilon^{\gamma-7}}{t^{\gamma-3}}\left(\int_{t}^{T}\|E\|_{\mathscr{Y}_{\infty,3}}d\tau\right)^{2}\label{L2-interior-a-priori-estimate-lemma-39}\\
			&+\int_{t}^{T}\frac{C\varepsilon^{\frac{\gamma-18}{2}}}{\tau^{\frac{\gamma-6}{2}}}\left(\int_{\tau}^{T}\|E\|_{\mathscr{Y}_{\infty,3}}d\tau'\right)^{2}d\tau+\left|\log{\varepsilon}\right|^{2}\sum_{i=0}^{2}\mathcal{J}_{\left(i,j\right)}^{2}\nonumber\\
			&+\left|\log{\varepsilon}\right|^{4}\frac{C\varepsilon^{\gamma-7}}{t^{\gamma-3}}\left(\int_{t}^{T}\left|\mathcal{J}_{\left(0,0\right)}\right|d\tau\right)^{2}+\left|\log{\varepsilon}\right|^{2}\int_{t}^{T}\frac{C\varepsilon^{\frac{\gamma-18}{2}}}{\tau^{\frac{\gamma-6}{2}}}\left(\int_{\tau}^{T}\left|\mathcal{J}_{\left(0,0\right)}\right|d\tau'\right)^{2}d\tau\nonumber\\
			&+\int_{t}^{T}\frac{C}{\left|\log{\varepsilon}\right|\tau}\alpha_{\mathfrak{i}}\left(\tau\right)^{2}d\tau+\left|\log{\varepsilon}\right|^{2}\frac{C\varepsilon^{\gamma-7}}{t^{\gamma-3}}\left(\int_{t}^{T}\alpha_{\mathfrak{i}}\left(\tau\right)d\tau\right)^{2}\nonumber\\
			&+\int_{t}^{T}\frac{C\varepsilon^{\frac{\gamma-18}{2}}}{\tau^{\frac{\gamma-6}{2}}}\left(\int_{\tau}^{T}\alpha_{\mathfrak{i}}\left(\tau'\right)d\tau'\right)^{2}d\tau.\nonumber
		\end{align}
		Now, using the bounds we have for the $\mathcal{J}_{\left(i,j\right)}$ defined in \eqref{linear-transport-operator-phi-mass-conditions}, as well as \eqref{L2-interior-a-priori-estimate-lemma-error-bound-statement}, we have for the first six terms on the right hand side of \eqref{L2-interior-a-priori-estimate-lemma-39} the bound
		\begin{align}
			&\left(1+\left|\log{\varepsilon}\right|^{-2}\left(\frac{\varepsilon}{t}\right)^{\gamma-3}+\frac{\varepsilon^{\frac{\gamma-10}{2}}\left|\log{\varepsilon}\right|^{-4}}{t^{\frac{\gamma-8}{2}}}\right)\frac{C\varepsilon^{6-2\sigma_{0}}\left|\log{\varepsilon}\right|^{4}}{t^{4}}\label{L2-interior-a-priori-estimate-lemma-40}\\
			&+\varepsilon^{2\sigma_{0}}\left(\left|\log{\varepsilon}\right|^{-2}+\frac{\varepsilon^{\gamma-7}}{t^{\gamma-5}}+\left|\log{\varepsilon}\right|^{-2}\frac{\varepsilon^{\frac{\gamma-18}{2}}}{t^{\frac{\gamma-12}{2}}}\right)\frac{C\varepsilon^{6-2\sigma_{0}}\left|\log{\varepsilon}\right|^{4}}{t^{4}}\leq \frac{2C\varepsilon^{6-2\sigma_{0}}\left|\log{\varepsilon}\right|^{4}}{t^{4}}\nonumber
		\end{align}
		for $\varepsilon>0$ small enough. We claim that for $\varepsilon>0$ small enough, and for all $t\in\left[T_{0},T\right]$,
		\begin{align}
			\alpha_{\mathfrak{i}}\left(t\right)^{2}<\frac{3C\varepsilon^{6-2\sigma_{0}}\left|\log{\varepsilon}\right|^{4}}{t^{4}}\label{L2-interior-a-priori-estimate-lemma-41}
		\end{align}
		on $\left[T_{0},T\right]$.
		
		\medskip
		Clearly \eqref{L2-interior-a-priori-estimate-lemma-41} is true for all $t$ close enough to $T$ as we know $\alpha_{\mathfrak{i}}\left(T\right)=0$. Then, observing the last term on the right hand side of \eqref{L2-interior-a-priori-estimate-lemma-39}, as long as $\gamma>18$ so that have positive powers of $\varepsilon$ multiplying every term on the right hand side of \eqref{L2-interior-a-priori-estimate-lemma-39}, a standard continuation argument moving back in time from $T$ to $T_{0}$ gives us \eqref{L2-interior-a-priori-estimate-lemma-41}, and therefore \eqref{L2-interior-a-priori-estimate-lemma-statement}, as required.
	\end{proof}
	Using Lemmas \ref{L2-exterior-a-priori-estimate-lemma} and \ref{L2-interior-a-priori-estimate-lemma}, we can get a direct estimate on solutions to the $L^{2}$ norm of solutions to \eqref{linear-transport-operator-system} satisfying assumptions \eqref{linear-transport-operator-phi-mass-conditions}--\eqref{linear-transport-operator-error-solution-supports}.
	\begin{corollary}\label{L2-a-priori-estimate-corollary}
		Suppose the assumptions of Lemmas \ref{L2-exterior-a-priori-estimate-lemma} and \ref{L2-interior-a-priori-estimate-lemma} hold. Then there is a constant $C>0$ such that for all $\varepsilon>0$ small enough, and all $T_{0}>0$ large enough, we have for all $t\in\left[T_{0},T\right]$,
		\begin{align} \nonumber
			\|\phi\|_{L^{2}\left(\mathbb{H}_{-q'}\right)}\leq\frac{C\varepsilon^{3-\sigma_{0}}\left|\log{\varepsilon}\right|^{2}}{t^{2}},%\label{L2-a-priori-estimate-corollary-statement}
		\end{align}
		$\sigma_{0}>0$ defined in \eqref{L2-interior-a-priori-estimate-lemma-error-bound-statement}.
	\end{corollary}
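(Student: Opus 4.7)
The plan is to decompose $\phi$ through the cutoffs $\eta_{\mathfrak{i}}$ and $\eta_{\mathfrak{e}}$ from \eqref{weighted-L2-interior-cutoff} and \eqref{weighted-L2-exterior-cutoff}, estimate each piece via one of the two preceding lemmas, and then combine. The key observation is that, because $\supp \phi \subset B_{3\rho}(0)$ by \eqref{linear-transport-operator-error-solution-supports} and $\eta_{\mathfrak{i}}+\eta_{\mathfrak{e}}\geq 1$ on $B_{4\rho}(0)$ by \eqref{interior-exterior-cutoff-sum-bounds}, we have the pointwise bound $|\phi|\leq |\eta_{\mathfrak{i}}\phi|+|\eta_{\mathfrak{e}}\phi|$ on $\mathbb{H}_{-q'}$, so that
$$\left\|\phi\right\|_{L^{2}\left(\mathbb{H}_{-q'}\right)}\leq \left\|\eta_{\mathfrak{i}}\phi\right\|_{L^{2}\left(\mathbb{H}_{-q'}\right)}+\left\|\eta_{\mathfrak{e}}\phi\right\|_{L^{2}\left(\mathbb{H}_{-q'}\right)}.$$

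For the interior piece, note that on $\supp \eta_{\mathfrak{i}}$ we have $\left(f_{\varepsilon}'\right)^{+}$ bounded above by an absolute constant, since $(f_\varepsilon')^+$ is to main order $\gamma \Gamma_+^{\gamma-1}$, which is uniformly bounded on its support (see \eqref{f-prime-q-variation-3}--\eqref{f-prime-q-variation-3a}). Hence
$$\left\|\eta_{\mathfrak{i}}\phi\right\|_{L^{2}\left(\mathbb{H}_{-q'}\right)}\leq C\left\|\left(\left(f_{\varepsilon}'\right)^{+}\right)^{-\frac{1}{2}}\eta_{\mathfrak{i}}\phi\right\|_{L^{2}\left(\mathbb{H}_{-q'}\right)}\leq \frac{C\varepsilon^{3-\sigma_{0}}\left|\log{\varepsilon}\right|}{t^{2-\frac{\sigma_{0}}{2}}}$$
directly from Lemma \ref{L2-interior-a-priori-estimate-lemma}, which is of the desired order.

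For the exterior piece, I would apply \eqref{L2-exterior-a-priori-estimate-lemma-statement-2} of Lemma \ref{L2-exterior-a-priori-estimate-lemma}, feeding into its right-hand side both the a priori bound \eqref{L2-interior-a-priori-estimate-lemma-error-bound-statement} on $\|E\|_{\mathscr{Y}_{\infty}}$ and the interior estimate just obtained. Dividing by the prefactor $(t/\varepsilon)^{(\gamma-1)/4}$ yields
$$\left\|\eta_{\mathfrak{e}}\phi\right\|_{L^{2}\left(\mathbb{H}_{-q'}\right)}\leq \frac{C\varepsilon^{(\gamma-6)/2}}{t^{(\gamma-2)/2}}\int_{t}^{T}\left(\frac{\varepsilon^{5-\sigma_{0}}}{\tau^{3}}+\frac{C\varepsilon^{3-\sigma_{0}}\left|\log{\varepsilon}\right|}{\tau^{2-\frac{\sigma_{0}}{2}}}\right)d\tau,$$
and after integrating in $\tau$ the dominant contribution is bounded by a constant times
$$\frac{\varepsilon^{(\gamma-6)/2+3-\sigma_{0}}\left|\log{\varepsilon}\right|}{t^{(\gamma-2)/2+1-\sigma_{0}/2}},$$
which, for $\gamma\geq 19$, is far smaller than $\varepsilon^{3-\sigma_{0}}\left|\log{\varepsilon}\right|/t^{2-\sigma_{0}/2}$ uniformly on $[T_{0},T]$ for all $\varepsilon>0$ small. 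Summing the two estimates produces the claimed bound.

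There is essentially no obstacle here beyond bookkeeping of exponents; all the analytical work sits inside Lemmas \ref{L2-exterior-a-priori-estimate-lemma} and \ref{L2-interior-a-priori-estimate-lemma}. The only point that requires mild care is the conversion between the weighted norm $\|((f_{\varepsilon}')^{+})^{-1/2}\eta_{\mathfrak{i}}\phi\|_{L^{2}}$ and the unweighted $\|\eta_{\mathfrak{i}}\phi\|_{L^{2}}$, which works cleanly only because the upper bound on $(f_{\varepsilon}')^{+}$ on $\supp\eta_{\mathfrak{i}}$ is independent of $\varepsilon$ and $t$; the lower bound on $(f_{\varepsilon}')^{+}$ that degenerates as $(\varepsilon/t)^{(\gamma-1)/2}$ is used only in the other direction inside the proof of Lemma \ref{L2-interior-a-priori-estimate-lemma} and plays no role here.
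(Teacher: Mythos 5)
Your proposal is correct and is essentially the intended proof; the paper itself only remarks that the corollary follows from Lemmas \ref{L2-exterior-a-priori-estimate-lemma} and \ref{L2-interior-a-priori-estimate-lemma} without spelling out the details, and the decomposition you give via $\eta_{\mathfrak{i}}+\eta_{\mathfrak{e}}\geq 1$ on $\supp\phi$, the uniform upper bound $(f_{\varepsilon}')^{+}\leq C$ on $\supp\eta_{\mathfrak{i}}$ to drop the weight, and the feeding of the interior estimate together with \eqref{L2-interior-a-priori-estimate-lemma-error-bound-statement} into \eqref{L2-exterior-a-priori-estimate-lemma-statement-2} for the exterior piece, is exactly the natural way to combine the two. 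Your exponent bookkeeping for the exterior term is also correct: the prefactor collapses to $\varepsilon^{(\gamma-6)/2}/t^{(\gamma-2)/2}$ and after integration the result is strictly smaller than the target bound for $\gamma\geq 19$, $\varepsilon$ small, and $t\geq T_0>1$.
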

	Moreover, we can use Lemma \ref{poisson-equation-zero-mass-estimates-lemma} alongside Lemmas \ref{L2-exterior-a-priori-estimate-lemma} and \ref{L2-interior-a-priori-estimate-lemma} to obtain the following estimates.
	\begin{corollary}\label{L-infinity-a-priori-estimate-corollary}
		Suppose the assumptions of Lemmas \ref{L2-exterior-a-priori-estimate-lemma} and \ref{L2-interior-a-priori-estimate-lemma} hold. Then for all $\varepsilon>0$ small enough, and all $T_{0}>0$ large enough, we have for all $t\in\left[T_{0},T\right]$,
		\begin{align}
			\left|\psi\right|+\left|\nabla\psi\right|+\left[\nabla\psi\right]_{\beta}\left(y\right)&\leq \frac{C\varepsilon^{3-\sigma_{1}}}{t^{2-\sigma_{1}}},\label{L-infinity-a-priori-estimate-corollary-statement-2}\\
			t\varepsilon^{-2}\|\phi\|_{L^{r_{*}}\left(\mathbb{H}_{-q'}\right)}+\|\phi\|_{L^{\infty}\left(\mathbb{H}_{-q'}\right)}&\leq \frac{C\varepsilon^{1-\sigma_{1}}}{t^{1-\sigma_{1}}}\label{L-infinity-a-priori-estimate-corollary-statement-4},
		\end{align}
		for some $2<r_{*}<\infty$, some absolute constant $C>0$, some small $\sigma_{1}>0$.
	\end{corollary}
	\begin{proof}
		Once again define $\hat{\mathscr{W}}_{j}=\hat{\mathscr{W}}_{j}\left(z,t\right)\coloneqq\mathscr{W}_{j}\left(\mathscr{V}\left(q\right)+a_{*}\right)$, $j=0,1,2$. We proceed as in Lemma \ref{L2-exterior-a-priori-estimate-lemma}, and use characteristics to represent $\phi$ as
		\begin{align}\nonumber
			\phi\left(z,t\right)=\varepsilon^{-2}\int_{t}^{T}\left(-E\left(\bar{z}\left(\tau\right),\tau\right)+\gradperp_{\bar{z}}\left(\mathscr{V}\left(q\right)+a_{*}+a\right)\cdot\nabla_{\bar{z}}\left(\hat{\mathscr{W}}_{1}\psi\right)\left(\bar{z}\left(\tau\right),\tau\right)\right) d\tau.\nonumber
			%\label{L-infinity-a-priori-estimate-corollary-1}
		\end{align}
		To begin with, bounds for $\mathcal{J}_{\left(i,j\right)}$ defined in \eqref{linear-transport-operator-phi-mass-conditions}, as well as Corollary \ref{L2-a-priori-estimate-corollary} and Lemma \ref{poisson-equation-zero-mass-estimates-lemma} give the $L^{\infty}$ bound for $\psi$ in \eqref{L-infinity-a-priori-estimate-corollary-statement-2}.
		
		\medskip
		Then taking the $L^{r_{1}}$ norm of $\phi$, some $2<r_{1}<\infty$, we obtain, by Lemma \ref{poisson-equation-zero-mass-estimates-lemma},
		\begin{align*}
			\|\phi\|_{L^{r_{1}}\left(\mathbb{H}_{-q'}\right)}\leq C\varepsilon^{-2}\int_{t}^{T}\left(\|E\|_{\mathscr{Y}_{\infty,3}}+\|\phi\|_{L^{2}\left(\mathbb{H}_{-q'}\right)}+\left|\log{\varepsilon}\right|\left|\mathcal{J}_{\left(0,0\right)}\right|\right)d\tau,
		\end{align*}
		where the second inequality comes from Lemma \ref{poisson-equation-zero-mass-estimates-lemma}.
		
		\medskip
		Then we interpolate for some $2<r_{2}<r_{1}$, and obtain, for some arbitrarily small $\sigma_{2}>0$,
		\begin{align}
			\|\phi\|_{L^{r_{2}}\left(\mathbb{H}_{-q'}\right)}\leq C\|\phi\|_{L^{2}\left(\mathbb{H}_{-q'}\right)}^{1-\sigma_{2}}\|\phi\|_{L^{r_{1}}\left(\mathbb{H}_{-q'}\right)}^{\sigma_{2}}.\label{L-infinity-a-priori-estimate-corollary-3}
		\end{align}
		Then bounds for $\mathcal{J}_{\left(i,j\right)}$ defined in \eqref{linear-transport-operator-phi-mass-conditions}, \eqref{L-infinity-a-priori-estimate-corollary-3} and Lemma \ref{poisson-equation-zero-mass-estimates-lemma} give \eqref{L-infinity-a-priori-estimate-corollary-statement-4} and therefore \eqref{L-infinity-a-priori-estimate-corollary-statement-2}.
		
		\medskip
		Finally using \eqref{L-infinity-a-priori-estimate-corollary-3} alongside Lemma \ref{poisson-equation-zero-mass-estimates-lemma} once again, we obtain
		\begin{align}
			&\|\phi\|_{L^{\infty}\left(\mathbb{H}_{-q'}\right)}\nonumber\\
			&\leq C\varepsilon^{-2}\int_{t}^{T}\left(\|E\|_{\mathscr{Y}_{\infty,3}}+\|\phi\|_{L^{2}\left(\mathbb{H}_{-q'}\right)}+\|\phi\|_{L^{r_{2}}\left(\mathbb{H}_{-q'}\right)}+\left|\log{\varepsilon}\right|\left|\mathcal{J}_{\left(0,0\right)}\right|\right)d\tau.
			%\label{L-infinity-a-priori-estimate-corollary-4}
		\end{align}
		which gives \eqref{L-infinity-a-priori-estimate-corollary-statement-4}, as required.
	\end{proof}
	\subsection{A Priori Estimates on a Projected Linear Problem}\label{projected-transport-problem-a-priori-estimates-section}
	In view of \eqref{tilde-phi-star-R-initial-value-problem}, we want to show a priori estimates on the coefficients of projection in the following linear problem
	\begin{subequations}\label{projected-linear-transport-operator-system}
		\begin{align}
			&\varepsilon^{2}\phi_{t}+\gradperp_{z_{b}}\left(\mathscr{V}\left(q\right)+a_{*}+a\right)\cdot\nabla_{z_{b}}\left(\phi-\hat{\mathscr{W}}_{1}\psi\right)+E=c_{R0}\left(t\right)U_{R}\label{projected-linear-transport-equation}\\
			&+\left(f_{\varepsilon}'\right)^{+}\left(c_{R1}\left(t\right)\mathscr{Z}_{1}\left(q_{b}\right)+c_{R2}\left(t\right)\mathscr{Z}_{2}\left(q_{b}\right)\right)\ \ \text{in}\ \mathbb{H}_{-q'_{b}}\times \left[T_{0},T\right],\nonumber\\
			&\phi(\blank,T)=0\ \ \text{in}\ \mathbb{H}_{-q'_{b}},\quad \phi\left(z_{b},t\right)=0\ \ \text{on}\ \dell\mathbb{H}_{-q_{b}'}\times\left[T_{0},T\right],\label{projected-linear-transport-initial-data}
		\end{align}
	\end{subequations}
	with the same assumptions on $\phi,a,a_{*},b$, and $E$ as in \eqref{linear-operator-change-of-coordinates-z-b}--\eqref{linear-transport-operator-error-solution-supports}.
	
	\medskip
	To obtain a priori estimates for the coefficients of projection in \eqref{projected-linear-transport-operator-system}, we also need an extra assumption on $a_{*}$ that we justify here. The $a_{*}$ we consider corresponds to the quantity 
	\begin{align}
		\lambda\left(-\mathcal{E}_{2}\left(z,\xi_{0}\right)+\psi_{R1,2}\left(z,\xi_{0}\right)\right)
		\label{a-star-extra-assumption-justification-2}
	\end{align}
	in the full nonlinear problem \eqref{tilde-phi-star-R-initial-value-problem}. Using \eqref{first-approximation-first-error-mode-2} and \eqref{psi-R12-equation} we compute that
	\begin{align}
		\Delta_{z}\left(-\mathcal{E}_{2}+\psi_{R1,2}\right)+\left(f_{\varepsilon}'\right)^{+}\left(-\mathcal{E}_{2}+\psi_{R1,2}\right)&=\left(\left(f_{\varepsilon}'\right)^{+}-\gamma\Gamma^{\gamma-1}_{+}\right)\left(-\mathcal{E}_{2}+\psi_{R1,2}\right)\nonumber\\
		&=\bigO{\left(\varepsilon^{4}t^{-2}\right)},\label{a-star-extra-assumption-justification-3}
	\end{align}
	where the estimate on the right hand side uses Theorem \ref{vortex-pair-properties-theorem}.
	
	\medskip
	With \eqref{a-star-extra-assumption-justification-3} in hand, and recalling Lemma \ref{vortex-pair-q-variation-lemma} and subsequently \eqref{f-prime-q-variation-1}--\eqref{f-prime-q-variation-4}, our additional assumption for $a_{*}$ is that, for some $\nu>3/4$,
	\begin{align}
		\|\Delta_{z}\left(\mathscr{V}\left(q\right)+a_{*}\right)+\mathscr{W}_{0}\left(\mathscr{V}\left(q\right)+a_{*}\right)\|_{C^{1}}\leq\frac{\varepsilon^{2+\nu}}{t^{2}},\label{a-star-extra-assumption-statement-1}    
	\end{align}
	Now we state and prove a priori estimates on $\left(c_{R0},c_{R1},c_{R2}\right)$. We once again suppress dependence of $z$ on $b$ as in Lemmas \ref{L2-exterior-a-priori-estimate-lemma} and \ref{L2-interior-a-priori-estimate-lemma}.
	\begin{lemma}\label{projected-linear-transport-problem-coefficients-of-projection-a-priori-estimates-lemma}
		Suppose $\phi$ solves \eqref{projected-linear-transport-operator-system} with assumptions \eqref{linear-operator-change-of-coordinates-z-b}--\eqref{linear-transport-operator-error-solution-supports}, as well as the size assumption \eqref{L2-interior-a-priori-estimate-lemma-error-bound-statement} on $E$ from Lemma \ref{L2-interior-a-priori-estimate-lemma}, and the extra assumptions on $a_{*}$, \eqref{a-star-extra-assumption-statement-1}. Then for all $\varepsilon>0$ small enough, and all $T_{0}>0$ large enough, we have that for all $t\in\left[T_{0},T\right]$ that $c_{R}=\left(c_{R0},c_{R1},c_{R2}\right)$ satisfies an equation of the form
		\begin{align}
			\begin{pmatrix}
				M+\bigO{\left(\varepsilon^{2}\right)} & 0 & \bigO{\left(\varepsilon^{2}\right)} \\
				0 & -M+\bigO{\left(\varepsilon^{2}\right)} & 0\\
				\bigO{\left(\varepsilon^{2}\right)} & 0 & M+\bigO{\left(\varepsilon^{2}\right)}
				%\label{projected-linear-transport-problem-coefficients-of-projection-a-priori-estimates-lemma-matrix-form}
			\end{pmatrix}\begin{pmatrix}
				c_{R0}\\ c_{R1}\\ c_{R2}
			\end{pmatrix}=\begin{pmatrix}
				\mathfrak{F}_{0}\\\mathfrak{F}_{1}\\ \mathfrak{F}_{2}\end{pmatrix},\label{projected-linear-transport-problem-coefficients-of-projection-a-priori-estimates-lemma-linear-system-form}
		\end{align}
		and the right hand side has the form, for $(i,j)=(0,0),(1,0)$, and $(0,1)$
		\begin{align}
			\mathfrak{F}_{(i,j)}&=\mathscr{Q}_{(i,j)}+\int_{\mathbb{H}_{-q'}}z_{1}^{i}z_{2}^{j}E,\ dz+\bigO{\left(\frac{\varepsilon^{2+\nu}}{t^{2}}\right)}\|\phi\|_{L^{2}\left(\mathbb{H}_{-q'}\right)}+\bigO{\left(\frac{\varepsilon^{5+\nu}\left|\log{\varepsilon}\right|}{t^{4}}\right)},\label{projected-linear-transport-problem-coefficients-of-projection-a-priori-estimates-lemma-error-form-1}
		\end{align}
		where the $\mathscr{Q}_{\left(i,j\right)}$ are defined in \eqref{linear-transport-operator-phi-mass-conditions}, $M>0$ is an absolute constant defined in \eqref{power-semilinear-problem-R2}, and $\nu>3/4$.
	\end{lemma}
	\begin{proof}
		As above, define $\hat{\mathscr{W}}_{j}=\hat{\mathscr{W}}_{j}\left(z,t\right)\coloneqq\mathscr{W}_{j}\left(\mathscr{V}\left(q\right)+a_{*}\right)$, $j=0,1,2$. To begin with, we test \eqref{projected-linear-transport-equation} with 1, and integrate on $\mathbb{H}_{-q'}$. The first term on the left hand side gives us $\mathscr{Q}_{\left(0,0\right)}$ due to assumption \eqref{linear-transport-operator-phi-mass-conditions} and integration by parts means the second term on the left hand side disappears. The second term on the right hand side also disappears due to oddness in $z_{1}$. We obtain from \eqref{power-semilinear-problem-R2}, Theorem \ref{vortex-pair-properties-theorem}, and \eqref{f-prime-q-variation-1}--\eqref{dq-Psi-R-variation}
		\begin{align}
			\mathscr{Q}_{\left(0,0\right)}+\int_{\mathbb{H}_{-q'}}E\ dz&=c_{R0}\left(\int_{\mathbb{H}_{-q'}}U_{R}\ dz\right)+c_{R2}\left(\int_{\mathbb{H}_{-q'}}\left(f_{\varepsilon}'\right)^{+}\mathscr{Z}_{2}\left(q\right)dz\right)\nonumber\\
			&=c_{R0}\left(M+\bigO{\left(\varepsilon^{2}\right)}\right)+c_{R2}\left(\bigO{\left(\varepsilon^{2}\right)}\right).\label{projected-linear-transport-problem-coefficients-of-projection-a-priori-estimates-lemma-1}
		\end{align}
		Next, we test \eqref{projected-linear-transport-equation} with $z_{1}$, and integrate on $\mathbb{H}_{-q'}$. The first term on the left hand side gives $\mathscr{Q}_{\left(1,0\right)}$ due to \eqref{linear-transport-operator-phi-mass-conditions}. The first and third terms on the right hand side will disappear because multiplying by $z_{1}$ will make the integrands odd in $z_{1}$. We are left with
		\begin{align}
			&\mathscr{Q}_{\left(1,0\right)}+\int_{\mathbb{H}_{-q'}}z_{1}E\ dz+\int_{\mathbb{H}_{-q'}}z_{1}\gradperp_{z}\left(\mathscr{V}\left(q\right)+a_{*}+a\right)\cdot\nabla_{z}\left(\phi-\hat{\mathscr{W}}_{1}\psi\right)dz\label{projected-linear-transport-problem-coefficients-of-projection-a-priori-estimates-lemma-3}\\
			&=c_{R1}\left(\int_{\mathbb{H}_{-q'}}z_{1}\left(f_{\varepsilon}'\right)^{+}\mathscr{Z}_{1}\left(q\right)dz\right).\nonumber
		\end{align}
		Similarly to \eqref{projected-linear-transport-problem-coefficients-of-projection-a-priori-estimates-lemma-1}, we can write the factor multiplying $c_{R1}$ on the right hand side of \eqref{projected-linear-transport-problem-coefficients-of-projection-a-priori-estimates-lemma-3} as
		\begin{align}
			\int_{\mathbb{H}_{-q'}}z_{1}\left(f_{\varepsilon}'\right)^{+}\mathscr{Z}_{1}\left(q\right)dz&=\int_{\mathbb{H}_{-q'}}z_{1}\dell_{1}\left(\left(\Gamma\right)^{\gamma}_{+}\right)+\bigO{\left(\varepsilon^{2}\right)}=-M+\bigO{\left(\varepsilon^{2}\right)},\label{projected-linear-transport-problem-coefficients-of-projection-a-priori-estimates-lemma-4}
		\end{align}
		For the second term on the left hand side of \eqref{projected-linear-transport-problem-coefficients-of-projection-a-priori-estimates-lemma-3}, we use integration by parts to obtain
		\begin{align}
			&\int_{\mathbb{H}_{-q'}}z_{1}\gradperp_{z}\left(\mathscr{V}\left(q\right)+a_{*}+a\right)\cdot\nabla_{z}\left(\phi-\hat{\mathscr{W}}_{1}\psi\right)dz\nonumber\\
			&=\int_{\mathbb{H}_{-q'}}\dell_{z_{2}}\left(\mathscr{V}\left(q\right)+a_{*}\right)\left(\Delta_{z}\psi+\hat{\mathscr{W}}_{1}\psi\right)dz-\int_{\mathbb{H}_{-q'}}\dell_{z_{2}}\left(a\right)\left(\phi-\hat{\mathscr{W}}_{1}\psi\right)dz.\label{projected-linear-transport-problem-coefficients-of-projection-a-priori-estimates-lemma-5}
		\end{align}
		Here we used the fact that $-\Delta_{z}\psi=\phi$ to obtain the first term on the right hand side of \eqref{projected-linear-transport-problem-coefficients-of-projection-a-priori-estimates-lemma-5}. We have also used the fact that $\phi$, and $\hat{\mathscr{W}}_{1}$ have compact support to see that all boundary terms are $0$.
		
		\medskip
		Then for the second term on the right hand side of \eqref{projected-linear-transport-problem-coefficients-of-projection-a-priori-estimates-lemma-5}, we obtain, for some $r_{1}>2$,
		\begin{align}
			&\left|\int_{\mathbb{H}_{-q'}}\dell_{z_{2}}\left(a\right)\left(\phi-\hat{\mathscr{W}}_{1}\psi\right)dz\right|\label{projected-linear-transport-problem-coefficients-of-projection-a-priori-estimates-lemma-5a}\\
			&\leq C\|\nabla a\|_{L^{r_{1}}\left(\mathbb{H}_{-q'}\right)}\|\phi\|_{L^{r^{*}_{1}}\left(\mathbb{H}_{-q'}\right)}+\|\nabla a\|_{L^{r_{1}}\left(\mathbb{H}_{-q'}\right)}\|\psi\|_{L^{\infty}\left(\mathbb{H}_{-q'}\right)}\nonumber\\
			&\leq\bigO{\left(\frac{\varepsilon^{2+\nu}}{t^{2}}\right)}\|\phi\|_{L^{2}\left(\mathbb{H}_{-q'}\right)}+\bigO{\left(\frac{\varepsilon^{2+\nu}}{t^{2}}\right)}\left(\left|\log{\varepsilon}\right|\left|\mathcal{J}_{\left(0,0\right)}\right|\right)\nonumber\\
			&\leq\bigO{\left(\frac{\varepsilon^{2+\nu}}{t^{2}}\right)}\|\phi\|_{L^{2}\left(\mathbb{H}_{-q'}\right)}+\bigO{\left(\frac{\varepsilon^{5+\nu}\left|\log{\varepsilon}\right|}{t^{4}}\right)},\nonumber,
		\end{align}
		where the H\"older conjugate $r_{1}^{*}$ of $r_{1}$ satisfies $r^{*}_{1}<2$, and finally we use the bounds for $\mathcal{J}_{\left(i,j\right)}$ defined in \eqref{linear-transport-operator-phi-mass-conditions}, and \eqref{linear-transport-operator-a-bounds-0} as well as Lemma \ref{poisson-equation-zero-mass-estimates-lemma} to obtain the last line of \eqref{projected-linear-transport-problem-coefficients-of-projection-a-priori-estimates-lemma-5a}. 
		
		\medskip
		Concentrating now on the first term on the right hand side of \eqref{projected-linear-transport-problem-coefficients-of-projection-a-priori-estimates-lemma-5}, we first note that since we defined $\hat{\mathscr{W}}_{j}=\hat{\mathscr{W}}_{j}\left(z,t\right)\coloneqq\mathscr{W}_{j}\left(\mathscr{V}\left(q\right)+a_{*}\right)$, using \eqref{f-prime-q-variation-1}--\eqref{tilde-W-k-def} and integration by parts, we obtain 
		\begin{align}
			&\int_{\mathbb{H}_{-q'}}\dell_{z_{2}}\left(\mathscr{V}\left(q\right)+a_{*}\right)\left(\Delta_{z}\psi+\hat{\mathscr{W}}_{1}\psi\right)dz\label{projected-linear-transport-problem-coefficients-of-projection-a-priori-estimates-lemma-6}\\
			&=\int_{\mathbb{H}_{-q'}}\psi\dell_{z_{2}}\left(\Delta_{z}\left(\mathscr{V}\left(q\right)+a_{*}\right)+\hat{\mathscr{W}}_{0}\right)dz=\bigO{\left(\frac{\varepsilon^{2+\nu}}{t^{2}}\right)}\|\phi\|_{L^{2}\left(\mathbb{H}_{-q'}\right)}+\bigO{\left(\frac{\varepsilon^{5+\nu}\left|\log{\varepsilon}\right|}{t^{4}}\right)},\nonumber
		\end{align}
		where we have used Lemma \ref{poisson-equation-zero-mass-estimates-lemma} and \eqref{a-star-extra-assumption-statement-1} to obtain the right hand side of \eqref{projected-linear-transport-problem-coefficients-of-projection-a-priori-estimates-lemma-6}.
		
		\medskip
		Finally, we test \eqref{projected-linear-transport-equation} against $z_{2}$ and integrate. The first term on the left hand side gives $\mathscr{Q}_{\left(0,1\right)}$ due to assumption \eqref{linear-transport-operator-phi-mass-conditions}. The second term on the right hand side disappears because the integrand preserves its oddness in $z_{1}$ after multiplication by $z_{2}$. Analogously to \eqref{projected-linear-transport-problem-coefficients-of-projection-a-priori-estimates-lemma-4}--\eqref{projected-linear-transport-problem-coefficients-of-projection-a-priori-estimates-lemma-6}, we obtain
		\begin{align}
			&\mathscr{Q}_{\left(0,1\right)}+\int_{\mathbb{H}_{-q'}}z_{2}E\ dz+ \bigO{\left(\frac{\varepsilon^{2+\nu}}{t^{2}}\right)}\|\phi\|_{L^{2}\left(\mathbb{H}_{-q'}\right)}+\bigO{\left(\frac{\varepsilon^{5+\nu}\left|\log{\varepsilon}\right|}{t^{4}}\right)}\label{projected-linear-transport-problem-coefficients-of-projection-a-priori-estimates-lemma-7}\\
			&=c_{R0}\bigO{\left(\varepsilon^{2}\right)}+c_{R2}\left(M+\bigO{\left(\varepsilon^{2}\right)}\right).\nonumber
		\end{align}
		Noting that $\left(c_{R0},c_{R1},c_{R2}\right)$ satisfy the linear system given by \eqref{projected-linear-transport-problem-coefficients-of-projection-a-priori-estimates-lemma-1}, \eqref{projected-linear-transport-problem-coefficients-of-projection-a-priori-estimates-lemma-3}, and \eqref{projected-linear-transport-problem-coefficients-of-projection-a-priori-estimates-lemma-7}, and we have estimates on the coefficients and errors for the linear system given by \eqref{projected-linear-transport-problem-coefficients-of-projection-a-priori-estimates-lemma-1}, \eqref{projected-linear-transport-problem-coefficients-of-projection-a-priori-estimates-lemma-4}, \eqref{projected-linear-transport-problem-coefficients-of-projection-a-priori-estimates-lemma-7}, and \eqref{projected-linear-transport-problem-coefficients-of-projection-a-priori-estimates-lemma-7}, and \eqref{projected-linear-transport-problem-coefficients-of-projection-a-priori-estimates-lemma-5}--\eqref{projected-linear-transport-problem-coefficients-of-projection-a-priori-estimates-lemma-6}, and \eqref{projected-linear-transport-problem-coefficients-of-projection-a-priori-estimates-lemma-7} respectively, we obtain a linear system for $\left(c_{R0},c_{R1},c_{R2}\right)$ of the form \eqref{projected-linear-transport-problem-coefficients-of-projection-a-priori-estimates-lemma-linear-system-form}--\eqref{projected-linear-transport-problem-coefficients-of-projection-a-priori-estimates-lemma-error-form-1}, as required.
	\end{proof}
	
	\subsection{A Priori Estimates on Solutions to the Homotopic Operators}\label{phi-a-priori-estimates-section}
	We now obtain a priori estimates to solutions $\tilde{\phi}_{*R}$ solving \eqref{tilde-phi-star-R-initial-value-problem} given by
	\begin{align*}
		&\mathscr{E}_{R,\lambda}\left(\tilde{\phi}_{*R}, \lambda\alpha,\psi^{out}_{*},\lambda\Tilde{\xi}\right)=c_{R0}\left(t\right)U_{R}+c_{R1}\left(t\right)\left(f_{\varepsilon}'\right)^{+}\mathscr{Z}_{1}\left(q_{\lambda}\right)\nonumber\\
		&+c_{R2}\left(t\right)\left(f_{\varepsilon}'\right)^{+}\mathscr{Z}_{2}\left(q_{\lambda}\right)\ \ \text{in}\ \mathbb{H}_{-q'_{\lambda}}\times \left[T_{0},T\right],\\
		&\tilde{\phi}_{*R}(\blank,T)=0\ \ \text{in}\ \mathbb{H}_{-q'_{\lambda}},\quad \tilde{\phi}_{*R}\left(z_{\lambda},t\right)=0\ \ \text{on}\ \dell\mathbb{H}_{-q_{\lambda}'}\times\left[T_{0},T\right],
	\end{align*}
	with $\left(\lambda\alpha,\psi^{out}_{*},\lambda\Tilde{\xi}\right)$ given, and sufficiently small in the sense of \eqref{deformed-open-ball-definition} below, and $\tilde{\phi}_{*R}$ satisfying the orthogonality conditions \eqref{tilde-phi-star-R-0-mass-condition}, which correspond to satisfying assumptions \eqref{linear-transport-operator-phi-mass-conditions} for solutions to our linearized operator.
	
	\medskip
	From the computations \eqref{linear-operator-structure-motivation-1}--\eqref{linear-operator-structure-motivation-8}, we know that we can write \eqref{homotopic-operator-inner-error-main-definition} as
	\begin{align}
		&\mathscr{E}_{R,\lambda}\left(\tilde{\phi}_{*R}, \lambda\alpha,\psi^{out}_{*},\lambda\Tilde{\xi}\right)=\varepsilon^{2}\dell_{t}\tilde{\phi}_{*R}\left(z_{\lambda},t\right)+\lambda\tilde{\mathscr{E}}_{R}\left(\tilde{\psi}_{*R}, \lambda\alpha,\psi^{out}_{*},\lambda\tilde{\xi}\right)\label{homotopic-operators-a-priori-estimates-1}\\
		&+\gradperp_{z_{\lambda}}\left(\mathscr{V}\left(q_{\lambda}\right)+\lambda\mathfrak{a}_{R,\lambda}\right)\cdot\nabla_{z_{\lambda}}\left(\tilde{\phi}_{*R}-\mathscr{W}_{1}\left(\mathscr{V}\left(q_{\lambda}\right)+\lambda\mathfrak{A}_{R,\lambda,*}\right)\tilde{\psi}_{*R}\right)\nonumber\\
		&+\mathfrak{R}_{1}+\varepsilon^{2}\dot{\alpha}\left(t\right)U_{R}\left(z_{\lambda},t\right)-\varepsilon\left(\mathcal{N}\left(\xi_{0}+\xi_{1}\right)\left[\lambda\tilde{\xi}\right]+\lambda\dot{\tilde{\xi}}\right)\cdot\grad_{z_{\lambda}}U_{R}\nonumber
	\end{align}
	where $\mathfrak{a}_{R,\lambda}$ is defined in \eqref{homotopic-operator-inner-error-E-star-R-definition-10}. Further to \eqref{a-star-extra-assumption-justification-2} $\mathfrak{A}_{R,\lambda,*}$, and therefore $\mathfrak{A}_{R,\lambda}$ are defined as:
	\begin{align}
		\mathfrak{A}_{R,\lambda,*}&\coloneqq-\mathcal{E}_{2}\left(z,\xi_{0}\right)+\psi_{R1,2}\left(z,\xi_{0}\right)\nonumber\\
		\mathfrak{A}_{R,\lambda}\left(\tilde{\psi}_{*R}, \lambda\alpha,\psi^{out}_{*},\lambda\tilde{\xi}\right)&\coloneqq\mathfrak{a}_{R,\lambda}\left(\tilde{\psi}_{*R}, \lambda\alpha,\psi^{out}_{*},\lambda\tilde{\xi}\right)-\mathfrak{A}_{R,\lambda,*}\left(\xi_{0}+\xi_{1}\right).\label{homotopic-operators-a-priori-estimates-3}
	\end{align}
	We can also use \eqref{linear-operator-structure-motivation-1}--\eqref{linear-operator-structure-motivation-8} to compare the first two terms of \eqref{homotopic-operators-a-priori-estimates-1} to the first three terms on the right hand side of \eqref{homotopic-operator-inner-error-main-definition} and obtain that
	\begin{align}
		&\mathfrak{R}_{1}=\gradperp_{z}\left(\mathscr{V}\left(q_{\lambda}\right)+\lambda\mathfrak{a}_{R,\lambda}\right)\cdot\nabla_{z}\left(\left(\mathscr{W}_{1}\left(\mathscr{V}\left(q_{\lambda}\right)+\lambda\mathfrak{A}_{R,\lambda,*}\right)\right)\tilde{\psi}_{*R}\right)\label{homotopic-operators-a-priori-estimates-3a}\\
		&-\gradperp_{z}\left(\mathscr{V}\left(q_{\lambda}\right)+\lambda\mathfrak{a}_{R,\lambda}\right)\cdot\nabla_{z}\left(\left(\left(f_{\varepsilon}'\right)^{+}+\left(f_{\varepsilon}''\right)^{+}\left(\frac{\lambda\phi_{R1}}{\left(f_{\varepsilon}'\right)^{+}}\right)\right)\tilde{\psi}_{*R}\right)\nonumber\\
		&+\lambda\gradperp_{z}\left(\mathfrak{a}_{R,\lambda}-\frac{\phi_{R1}}{\left(f_{\varepsilon}'\right)^{+}}\right)\cdot\nabla_{z}\left(\left(f_{\varepsilon}'\right)^{+}\tilde{\psi}_{*R}\right)\nonumber\\
		&+\lambda\gradperp_{z}\left(\mathfrak{a}_{R,\lambda}\right)\cdot\nabla_{z}\left(\left(f_{\varepsilon}''\right)^{+}\left(\frac{\phi_{R1}}{\left(f_{\varepsilon}'\right)^{+}}\right)\tilde{\psi}_{*R}\right)+\lambda\gradperp_{z}\tilde{\psi}_{*R}\cdot\nabla_{z}\phi_{R2}.\nonumber
	\end{align}
	It is important to note that upon Taylor expanding the term multiplying $\tilde{\psi}_{*R}$ under the gradient on the first term of the right hand side of \eqref{homotopic-operators-a-priori-estimates-3a}, we obtain cancellation that lets us say
	\begin{align}
		\mathfrak{R}_{1}=\lambda\tilde{\mathfrak{R}}_{1}\label{homotopic-operators-a-priori-estimates-3b}
	\end{align}
	for $\tilde{\mathfrak{R}}_{1}$ bounded as $\lambda\to0$. It is also important to note that using the compact support of the integrand, integration by parts, and specifically statements \eqref{poisson-equation-zero-mass-estimates-lemma-statement-1}--\eqref{poisson-equation-zero-mass-estimates-lemma-statement-2} in Lemma \ref{poisson-equation-zero-mass-estimates-lemma}, we have, for integers $\left(i,j\right)$ with $0\leq i+j\leq 1$, and for $\sigma>0$ arbitrarily small, the estimate
	\begin{align}
		\int_{\mathbb{H}_{-q'}}z_{1}^{i}z_{2}^{j}\lambda\tilde{\mathfrak{R}}_{1}dz=\lambda\bigO{\left(\|\phi\|_{L^{2}\left(\mathbb{H}_{-q'}\right)}^{2}+\frac{\varepsilon^{5-\sigma}}{t^{4}}\right)}.\label{homotopic-operators-a-priori-estimates-3c}
	\end{align}
	By \eqref{a-star-extra-assumption-justification-2} and \eqref{homotopic-operators-a-priori-estimates-3}, we have $ a_{*}=\lambda\mathfrak{A}_{R,\lambda,*}$ and $a=\lambda\mathfrak{A}_{R,\lambda}$, and if we work in the parameter space where $a$ satisfies \eqref{linear-transport-operator-a-bounds-0} and \eqref{linear-transport-operator-a-log-lipschitz-bound}, we have that $\left(a,a_{*}\right)$ satisfy \eqref{linear-transport-operator-Laplacian-a-star-a-bounded}--\eqref{linear-transport-operator-a-log-lipschitz-bound} as well \eqref{a-star-extra-assumption-statement-1} via the computation \eqref{a-star-extra-assumption-justification-3}. With $b=\lambda\tilde{\xi}$ with $\tilde{\xi}$ satisfying \eqref{p-tilde-bound-2}, we have that $b$ satisfies \eqref{b-1-bound}.
	
	\medskip
	If we define the error $\mathfrak{E}_{\lambda}$ as being 
	\begin{align}
		\mathfrak{E}_{\lambda}&=\lambda\tilde{\mathfrak{R}}_{1}+\lambda\varepsilon^{2}\dot{\alpha}\left(t\right)U_{R}\left(z_{\lambda},t\right)-\varepsilon\left(\mathcal{N}\left(\xi_{0}+\xi_{1}\right)\left[\lambda\tilde{\xi}\right]+\lambda\dot{\tilde{\xi}}\right)\cdot\grad_{z_{\lambda}}U_{R}\label{homotopic-operators-a-priori-estimates-6}\\
		&+\lambda\tilde{\mathscr{E}}_{R}\left(\tilde{\psi}_{*R}, \lambda\alpha,\psi^{out}_{*},\lambda\tilde{\xi}\right)\nonumber
	\end{align}
	then $\mathfrak{E}_{\lambda}$ satisfies the assumption on its support \eqref{linear-transport-operator-error-solution-supports}, as well as the bound \eqref{L2-interior-a-priori-estimate-lemma-error-bound-statement} for $\left(\tilde{\psi}_{*R},\alpha,\psi^{out}_{*},\lambda\Tilde{\xi}\right)$ sufficiently small. 
	
	\medskip
	Now, using \eqref{homotopic-operator-inner-error-main-definition} and \eqref{homotopic-operator-inner-error-main-definition-lower-order-terms}, \eqref{tilde-phi-star-R-initial-value-problem-equation} can be written in the form, for $a_{*}$ and $a$ defined in \eqref{homotopic-operators-a-priori-estimates-3}
	\begin{align}
		&\varepsilon^{2}\dell_{t}\tilde{\phi}_{*R}\left(z_{\lambda},t\right)+\gradperp_{z_{\lambda}}\left(\mathscr{V}\left(q_{\lambda}\right)+a_{*}+a\right)\cdot\nabla_{z_{\lambda}}\tilde{\phi}_{*R}\nonumber\\
		&=-\tilde{\mathfrak{E}}_{\lambda}+c_{R0}\left(t\right)U_{R}+c_{R1}\left(t\right)\left(f_{\varepsilon}'\right)^{+}\mathscr{Z}_{1}(q_{\lambda})+c_{R2}\left(t\right)\left(f_{\varepsilon}'\right)^{+}\mathscr{Z}_{2}(q_{\lambda})=\mathfrak{E}_{\lambda}',\label{homotopic-operators-a-priori-estimates-7}
	\end{align}
	with error term $\tilde{\mathfrak{E}}_{\lambda}$ being the sum of the second, third, fifth, and sixth terms on the right hand side of \eqref{homotopic-operator-inner-error-main-definition}. Thus, from \eqref{first-approximation-construction-12}, \eqref{first-approximation-construction-40a0}, $\mathfrak{E}_{\lambda}'$ can be written as
	\begin{align*}
		\mathfrak{E}_{\lambda}'=U_{R}\left(z_{\lambda},t\right)\mathfrak{E}_{0}+\left(f_{\varepsilon}'\right)^{+}\left(z_{\lambda},t\right)\mathfrak{E}_{1}+\left(f_{\varepsilon}''\right)^{+}\left(z_{\lambda},t\right)\mathfrak{E}_{2}+\left(f_{\varepsilon}'''\right)^{+}\left(z_{\lambda},t\right)\mathfrak{E}_{3}
	\end{align*}
	Then by \eqref{f-prime-q-variation-1}--\eqref{tilde-W-k-def}, this becomes
	\begin{align}
		&\mathfrak{E}_{\lambda}'=\left(\mathscr{V}\left(z_{\lambda},q_{\lambda}\right)\right)^{\gamma}_{+}\chi_{B_{\frac{s}{\varepsilon}}\left(0\right)}(z)\mathfrak{E}_{0}+\gamma\left(\mathscr{V}\left(z_{\lambda},q_{\lambda}\right)\right)^{\gamma-1}_{+}\chi_{B_{\frac{s}{\varepsilon}}\left(0\right)}(z)\mathfrak{E}_{1}\nonumber\\
		&+\gamma\left(\gamma-1\right)\left(\mathscr{V}\left(z_{\lambda},q_{\lambda}\right)\right)^{\gamma-2}_{+}\chi_{B_{\frac{s}{\varepsilon}}\left(0\right)}(z)\mathfrak{E}_{2}\nonumber\\
		&+\gamma\left(\gamma-1\right)\left(\gamma-2\right)\left(\mathscr{V}\left(z_{\lambda},q_{\lambda}\right)\right)^{\gamma-3}_{+}\chi_{B_{\frac{s}{\varepsilon}}\left(0\right)}(z)\mathfrak{E}_{3}.\label{E-prime-form}
	\end{align}
	We now show that $\tilde{\phi}_{*R}$ satisfies the support assumption \eqref{linear-transport-operator-error-solution-supports}. 
	\begin{lemma}\label{phi-support-lemma}
		Suppose $\tilde{\phi}_{*R}$ solves \eqref{tilde-phi-star-R-initial-value-problem} with $\left(\lambda\alpha,\psi^{out}_{*},\lambda\Tilde{\xi}\right)$ given, and sufficiently small in the sense of \eqref{deformed-open-ball-definition}, and $\tilde{\phi}_{*R}$ further satisfies the orthogonality conditions \eqref{tilde-phi-star-R-0-mass-condition}. Then for all $\varepsilon>0$ small enough, $\tilde{\phi}_{*R}$ satisfies \eqref{linear-transport-operator-error-solution-supports}.
	\end{lemma}
	\begin{proof}
		We use a continuity argument similar to \eqref{bar-z-5-rho-bound}--\eqref{5-rho-proof-5}. Making use of the representation formula related to the characteristics defined in \eqref{characteristics-ode}, we have
		\begin{align}
			\tilde{\phi}_{*R}\left(z_{\lambda},t\right)=\varepsilon^{-2}\int_{t}^{T}\mathfrak{E}_{\lambda}'\left(\bar{z}(\tau,t,z_{\lambda}),\tau\right)d\tau.\label{tilde-phi-representation}
		\end{align}
		We note that by \eqref{E-prime-form} and \eqref{tilde-phi-representation},
		\begin{align*}
			z_{\lambda}&\in\bigcap_{\tau\in\left[t,T\right]}\left(\left\{z\colon \mathscr{V}\left(\bar{z}(\tau,t,z),q_{\lambda}\left(\tau\right)\right)<0\right\}\cup\left\{z\colon \left|\bar{z}(\tau,t,z)\right|>\frac{s}{\varepsilon}\right\}\right)\\
			&\implies \tilde{\phi}_{*R}\left(z_{\lambda},t\right)=0.
		\end{align*}
		Thus
		\begin{align}
			&\left\{z_{\lambda}\colon \tilde{\phi}_{*R}\left(z_{\lambda},t\right)\neq0\right\}\nonumber\\
			&\subset\bigcup_{\tau\in\left[t,T\right]}\left(\left\{z\colon \mathscr{V}\left(\bar{z}(\tau,t,z),q_{\lambda}\left(\tau\right)\right)\geq0\right\}\cap\left\{z\colon \left|\bar{z}(\tau,t,z)\right|\leq\frac{s}{\varepsilon}\right\}\right).\label{set-inclusion-0}
		\end{align}
		We claim that for all $\tau\in\left[t,T\right]$, 
		\begin{align}
			&\left\{z\colon \mathscr{V}\left(\bar{z}(\tau,t,z),q_{\lambda}\left(\tau\right)\right)\geq0\right\}\cap\left\{z\colon \left|\bar{z}(\tau,t,z)\right|\leq\frac{s}{\varepsilon}\right\}\nonumber\\
			&\subset\bigcap_{\tau'\in\left[t,T\right]}\left\{z\colon \left|\bar{z}\left(\tau',t,z\right)\right|\leq 3\rho\right\}.\label{set-inclusion}
		\end{align}
		So fix $t\in\left[T_{0},T\right]$. Then fix $\tau_{0}\in\left[t,T\right]$, and choose $z_{0}\in\mathbb{H}_{-q'}$ so that 
		\begin{align*}
			z_{0}\in\left\{z\colon \mathscr{V}\left(\bar{z}(\tau_{0},t,z),q_{\lambda}\left(\tau_{0}\right)\right)\geq0\right\}\cap\left\{z\colon \left|\bar{z}(\tau_{0},t,z)\right|\leq\frac{s}{\varepsilon}\right\}.
		\end{align*}
		Then, since $\mathscr{V}\left(\bar{z}(\tau_{0},t,z_{0}),q_{\lambda}\left(s\right)\right)\geq0$ in the region $\left|\bar{z}(\tau_{0},t,z_{0})\right|\leq\frac{s}{\varepsilon}$, by Remark \ref{support-of-nonlinearity-remark}, and \eqref{vortex-pair-variation-generalisation}, we have that $\left|\bar{z}(\tau_{0},t,z_{0})\right|\leq \rho$, $\rho>0$ the absolute constant defined in \eqref{vortex-pair-support-true}. So given that $\left|\bar{z}(\tau_{0},t,z_{0})\right|\leq \rho$, let $\left[\tau_{*},\tau^{*}\right]\subseteq\left[t,T\right]\subseteq\left[T_{0},T\right]$ be the largest interval such that 
		\begin{align*}
			\left|\bar{z}(\tau,t,z_{0})\right|\leq 3\rho\ \forall \tau\in\left[\tau_{*},\tau^{*}\right],
		\end{align*}
		an interval which manifestly exists due to \eqref{continuity-of-characteristics-0}. Then if $\tau_{*}\neq t$, we know by continuity that
		\begin{align*}
			\left|\bar{z}(\tau_{*},t,z_{0})\right|= 3\rho.
		\end{align*}
		Arguing as in \eqref{5-rho-proof-1}--\eqref{5-rho-proof-3b}, and noting that $\mathfrak{A}_{R,\lambda,*}$ and $\mathfrak{A}_{R,\lambda}\left(\tilde{\psi}_{*R}, \lambda\alpha,\psi^{out}_{*},\lambda\tilde{\xi}\right)$ defined in \eqref{homotopic-operators-a-priori-estimates-3} satisfy the assumptions in \eqref{linear-transport-operator-a-star-bounds}--\eqref{linear-transport-operator-a-bounds-0} as the quantities they depend on are small in the sense of \eqref{deformed-open-ball-definition}, we obtain that
		\begin{align*}
			\left|\mathscr{V}\left(q\right)\left(\bar{z}\left(\tau_{*},t,z_{0}\right),\tau_{*}\right)-\mathscr{V}\left(q\right)\left(\bar{z}\left(\tau_{0},t,z_{0}\right),\tau_{0}\right)\right|\leq C\left(1+3\rho\right)^{6}\left(\frac{\varepsilon}{T_{0}}\right)^{\frac{3}{4}},
		\end{align*}
		using that $T_{0}\leq t\leq \tau_{*}$. Now $\left|\bar{z}(\tau_{0},t,z_{0})\right|\leq \rho$, so $\mathscr{V}\left(q\right)\left(\bar{z}\left(\tau_{0},t,z_{0}\right),\tau_{0}\right)\geq0$, meaning
		\begin{align*}
			\mathscr{V}\left(q\right)\left(\bar{z}\left(\tau_{*},t,z_{0}\right),\tau_{*}\right)&=\mathscr{V}\left(q\right)\left(\bar{z}\left(\tau_{0},t,z_{0}\right),\tau_{0}\right)\\
			&+\left(\mathscr{V}\left(q\right)\left(\bar{z}\left(\tau_{*},t,z_{0}\right),\tau_{*}\right)-\mathscr{V}\left(q\right)\left(\bar{z}\left(\tau_{0},t,z_{0}\right),\tau_{0}\right)\right)\\
			&\geq -C\left(1+3\rho\right)^{6}\left(\frac{\varepsilon}{T_{0}}\right)^{\frac{3}{4}},
		\end{align*}
		which by continuity implies that for all $\varepsilon>0$ small enough, $\left|\bar{z}\left(\tau_{*},t,z_{0}\right)\right|\leq 2\rho$, a contradiction, implying that $\tau_{*}=t$, and via analogous proof, $\tau^{*}=T$. Thus,
		\begin{align*}
			\left|\bar{z}(\tau,t,z_{0})\right|\leq 3\rho\ \forall \tau\in\left[t,T\right],
		\end{align*}
		and since $z_{0}\in\mathbb{H}_{-q'}$ was arbitrary, we obtain \eqref{set-inclusion}, which along with \eqref{set-inclusion-0} implies \eqref{linear-transport-operator-error-solution-supports} for $\tilde{\phi}_{*R}$ as required.
	\end{proof}
	Finally, with all the assumptions of Lemma \ref{projected-linear-transport-problem-coefficients-of-projection-a-priori-estimates-lemma}, we have the following estimates
	\begin{align}
		\|\tilde{\phi}_{*R}\|_{L^{2}\left(\mathbb{H}_{-q'_{\lambda}}\right)}\leq \frac{C\varepsilon^{3-\sigma_{3}}}{t^{2}},\quad \|\tilde{\phi}_{*R}\|_{L^{\infty}\left(\mathbb{H}_{-q'_{\lambda}}\right)}\leq \frac{C\varepsilon^{1-\sigma_{3}}}{t^{1-\sigma_{3}}},\label{phi-star-R-L2-Linfinity-estimates}
	\end{align}
	for all $t\in\left[T_{0},T\right]$ and some small $\sigma_{3}>0$.
	\subsection{A Priori Estimates on the Poisson Equation}
	Having obtained a priori estimates for $\tilde{\phi}_{*R}$, we now obtain a priori estimates for $\psi^{out}_{*}$ solving
	\begin{align}
		-\Delta_{x}\psi_{*}^{out}=\lambda\left(\sum_{j=R,L}\left(-1\right)^{j}\left(\psi_{*j}\Delta_{x}\eta^{(j)}_{K}+2\nabla_{x}\eta^{(j)}_{K}\cdot\nabla_{x}\psi_{*j}\right)+E_{2}^{out}\right)\coloneqq\mathscr{S}_{*}.\label{psi-out-star-equation}
	\end{align}
	As in \eqref{psi-out-integrand region}, we have 
	\begin{align*}
		\psi_{*R}(y)&=\frac{1}{2\pi}\int_{\mathbb{R}^{2}_{+}}\log{\left(\frac{\left|w-\bar{y}\right|}{\left|w-y\right|}\right)}\phi_{*R}(w)\ dw\\
		&=\frac{1}{2\pi}\int_{\mathbb{R}^{2}_{+}}\left(\mathscr{L}_{1}+\mathscr{L}_{2}\right)\left(\tilde{\phi}_{*R}(w)+\lambda\alpha U_{R}(w)\right)dw,
	\end{align*}
	where we recall the ansatz \eqref{phi-star-psi-star-homotopy}. Using the estimates on $\mathscr{L}_{1}$ and $\mathscr{L}_{2}$ as in \eqref{L2-definition}--\eqref{L1-definition} and \eqref{tilde-phi-star-R-0-mass-condition}, we obtain
	\begin{align*}
		&\frac{1}{2\pi}\int_{\mathbb{R}^{2}_{+}}\left(\mathscr{L}_{1}+\mathscr{L}_{2}\right)\tilde{\phi}_{*R}(w)\ dw\\
		&=\bigO{\left(\lambda\left(t^{-1}\left|\mathcal{J}_{\left(0,0\right)}\right|+\varepsilon t^{-1}\left(\left|\mathcal{J}_{\left(1,0\right)}\right|+\left|\mathcal{J}_{\left(0,1\right)}\right|\right)\right)+\varepsilon^{2}t^{-2}\|\tilde{\phi}_{*R}\|_{L^{2}\left(\mathbb{R}^{2}_{+}\right)}\right)},\\
		&\frac{\lambda\alpha}{2\pi}\int_{\mathbb{R}^{2}_{+}}\left(\mathscr{L}_{1}+\mathscr{L}_{2}\right)U_{R}(w)\ dw=\bigO{\left(\lambda t^{-1}\left|\alpha\right|\right)},\\
		&\nabla_{x}\psi_{*R}=\varepsilon^{-1}\nabla_{y}\psi_{*R}\\
		&=\bigO{\left(\lambda\left(t^{-2}\left|\mathcal{J}_{\left(0,0\right)}\right|+\varepsilon t^{-2}\left(\left|\mathcal{J}_{\left(1,0\right)}\right|+\left|\mathcal{J}_{\left(0,1\right)}\right|\right)+t^{-2}\left|\alpha\right|\right)+\varepsilon^{2}t^{-3}\|\tilde{\phi}_{*R}\|_{L^{2}\left(\mathbb{R}^{2}_{+}\right)}\right)}.
	\end{align*}
	Then, as in \eqref{psi-R1-psi-out-estimate}--\eqref{first-approximation-construction-22}, using \eqref{first-approximation-outer-error-E2out-final-bound} for the bound on $\lambda E^{out}_{2}$, $\mathscr{S}_{*}$ and $\psi^{out}_{*}$ have estimates
	\begin{align*}
		&\mathscr{S}_{*}(x,t)=\bigO{\left(\lambda\left(\varepsilon^{3}t^{-5}+\varepsilon^{2}t^{-4}\|\tilde{\phi}_{*R}\|_{L^{2}\left(\mathbb{R}^{2}_{+}\right)}+t^{-3}\left|\alpha\right|\right)\right)},\nonumber\\
		&\|\psi^{out}_{*}\|_{L^{\infty}\left(\mathbb{R}^{2}_{+}\right)}+\|\nabla\psi^{out}_{*}\|_{L^{\infty}\left(\mathbb{R}^{2}_{+}\right)}\leq\frac{C\lambda\varepsilon^{3}\left|\log{t}\right|}{t^{3}},
	\end{align*}
	where we have used \eqref{tilde-phi-star-R-0-mass-condition}, \eqref{alpha-bound}, and \eqref{phi-star-R-L2-Linfinity-estimates}.
	
	\medskip
	Then analogously to \eqref{psi-out-UR-estimate} and \eqref{psi-out-bound-2}, we also have
	\begin{align*}
		&\left|\varepsilon\gradperp_{y}\psi^{out}_{*}\left(\varepsilon y+pe_{1}\right)\cdot\nabla_{y}\left(U_{R}(y)+\phi_{R}(y)\right)\right|\\
		&\leq C\lambda\left(\varepsilon^{5}t^{-4}+\varepsilon^{4}t^{-3}\|\tilde{\phi}_{*R}\|_{L^{2}\left(\mathbb{R}^{2}_{+}\right)}+\varepsilon
		^{2}t^{-2}\left|\alpha\right|\right)\left(\left(f_{\varepsilon}'\right)^{+}+\left(f_{\varepsilon}''\right)^{+}+\left(f_{\varepsilon}'''\right)^{+}\right).
	\end{align*}
	
	\subsection{Fixed Point Formulation and Solution on $[T_{0},T]$}\label{fixed-point-formulation-section}
	We reformulate the system \eqref{psi-star-r-green-function-representation-upper-half-plane}, \eqref{tilde-phi-star-R-initial-value-problem}, \eqref{cRj-initial-value-problem}, \eqref{psi-out-star-boundary-value-problem} as a fixed point problem of the form \eqref{full-solution-construction-fixed-point-formulation} for a well chosen Banach space $\mathfrak{X}$. Recall that $\tilde{\phi}_{*L}$ is an odd reflection of $\tilde{\phi}_{*R}$ in the $x_{1}$ variable, and thus it is enough to solve the above problem.
	
	\medskip
	Fix a small number $\beta\in\left(0,1\right)$. For $\tilde{\phi}_{*R}\left(z_{\lambda},t\right)$ defined on $\mathbb{H}_{-q'_{\lambda}}\times\left[T_{0},T\right]$, we consider the set of functions with support on $B_{3\rho}\left(0\right)$ in the $z_{\lambda}$ variables defined in \eqref{homotopy-parameter-change-of-coordinates} for $\rho>0$ defined in \eqref{first-approximation-main-order-vorticity-support} with finite $\|\cdot\|_{i}$ norm, where 
	\begin{align*}
		\|\phi\|_{i}\coloneqq \sup_{\left[T_{0},T\right]}\left(t^{2}\|\phi\|_{L^{2}\left(B_{3\rho}\left(0\right)\right)}\right)+\sup_{\left[T_{0},T\right]}\left(\varepsilon^{2}t^{1-\beta}\|\phi\|_{L^{\infty}\left(B_{3\rho}\left(0\right)\right)}\right).
	\end{align*}
	Next we define the $\|\cdot\|_{o}$ norm for $\psi_{*}^{out}\left(x,t\right)$ defined on $\mathbb{R}^{2}\times\left[T_{0},T\right]$:
	\begin{align*}
		\|\psi\|_{o}\coloneqq \sup_{\left[T_{0},T\right]}\left(t^{3}\left|\log{t}\right|^{-1}\|\psi\|_{L^{\infty}\left(\mathbb{R}^{2}\right)}\right)+\sup_{\left[T_{0},T\right]}\left(t^{3}\left|\log{t}\right|^{-1}\|\nabla\psi\|_{L^{\infty}\left(\mathbb{R}^{2}\right)}\right).
	\end{align*}
	For $\tilde{\psi}_{*R}$ related to $\tilde{\phi}_{*R}$ via \eqref{psi-star-r-green-function-representation-upper-half-plane}, we define $\mathfrak{X}$ as the Banach space of all points $\mathfrak{P}=\left(\tilde{\phi}_{*R},\lambda\alpha,\psi_{*}^{out},\lambda\tilde{\xi}\right)$ such that $\nabla_{z_{\lambda}}\tilde{\psi}_{*R}$, $\nabla_{z_{\lambda}}\psi_{*}^{out}$, $\dot{\alpha}$, $\dot{\tilde{\xi}}$ all exist and are continuous, with $\tilde{\xi}=\left(\tilde{p},\tilde{q}\right)$, satisfying
	\begin{align*}
		\|\mathfrak{P}\|_{\mathfrak{X}}&\coloneqq\|\tilde{\phi}_{*R}\|_{i}+\|\psi_{*}^{out}\|_{o}+\|t^{2}\tilde{p}\|_{\left[T_{0},T\right]}+\|t^{3}\dot{\tilde{p}}\|_{\left[T_{0},T\right]}\nonumber\\
		&+\|t^{3}\tilde{q}\|_{\left[T_{0},T\right]}+\|t^{4}\dot{\tilde{q}}\|_{\left[T_{0},T\right]}+\|t^{3}\alpha\|_{\left[T_{0},T\right]}+\|t^{4}\dot{\alpha}\|_{\left[T_{0},T\right]}<\infty,
	\end{align*}
	with $\tilde{\phi}_{*R}$ also satisfying $\supp{\tilde{\phi}_{*R}}\subset B_{3\rho}\left(0\right)$.
	
	\medskip
	Then in $\mathfrak{X}$, we define the open set $\mathscr{O}$ as the ``deformed ball" around $0$ in $\mathscr{X}$, that is all $\mathfrak{P}$ satisfying
	\begin{align}
		&\|\tilde{\phi}_{*R}\|_{i}<\varepsilon^{3-3\beta}, \quad \|\psi_{*}^{out}\|_{o}<\varepsilon^{3-3\beta},\quad \|t^{3}\alpha\|_{\left[T_{0},T\right]}+\|t^{4}\dot{\alpha}\|_{\left[T_{0},T\right]}<\varepsilon^{3-3\beta},\label{deformed-open-ball-definition}\\
		&\|t^{2}\tilde{p}\|_{\left[T_{0},T\right]}+\|t^{3}\dot{\tilde{p}}\|_{\left[T_{0},T\right]}<\varepsilon^{4-3\beta}, \quad \|t^{3}\tilde{q}\|_{\left[T_{0},T\right]}+\|t^{4}\dot{\tilde{q}}\|_{\left[T_{0},T\right]}<\varepsilon^{4-3\beta},\nonumber
	\end{align}
	with $\tilde{\phi}_{*R}$ also satisfying the orthogonality conditions \eqref{tilde-phi-star-R-0-mass-condition}.
	%\begin{remark}
	%    We note that by the representation formula, $\tilde{\phi}_{*R}$ solving \eqref{tilde-phi-star-R-initial-value-problem} has support depending only on $U_{R}\left(z_{\lambda},t\right)$ and $\mathscr{W}_{1}\left(\mathscr{V}\left(q_{\lambda}\right)+a_{*}\right)$ for $a_{*}$ defined in \eqref{homotopic-operators-a-priori-estimates-4}. In coordinates $z_{\lambda}$, this means that the only unknown $\supp{\tilde{\phi}_{*R}}$ depends on is $\lambda\tilde{\xi}$, and due to \eqref{deformed-open-ball-definition}, we can use the characteristics defined in \eqref{characteristics-ode} to show that this support is uniformly contained in $B_{3\rho}\left(0\right)$ for any $\lambda\in\left[0,1\right]$.
	%\end{remark}
	
	\medskip
	Now we rewrite the system \eqref{psi-star-r-green-function-representation-upper-half-plane}, \eqref{tilde-phi-star-R-initial-value-problem}, \eqref{cRj-initial-value-problem}, \eqref{psi-out-star-boundary-value-problem} as a fixed point problem by finding an operator $\mathcal{T}$ such that solving the system is equivalent to solving $\mathcal{T}\left(\mathfrak{P},\lambda\right)=\mathfrak{P}$ for $\mathfrak{P}\in\mathscr{O}$, and for all $\lambda\in\left[0,1\right]$.
	
	\medskip
	We begin with \eqref{tilde-phi-star-R-initial-value-problem}. For any $\hat{a}$ satisfying $\Delta_{z_{\lambda}}\hat{a}\in L^{\infty}\left(\mathbb{H}_{-q_{\lambda}'}\times\left[T_{0},T\right]\right)$, and bounded $E\left(z_{\lambda},t\right)$, consider the system
	\begin{subequations}
		\begin{align*}
			&\varepsilon^{2}\dell_{t}\phi+\gradperp_{z_{\lambda}}\left(\mathscr{V}\left(q_{\lambda}\right)+\hat{a}\right)\cdot\nabla_{z_{\lambda}}\phi+E=0\ \ \text{in}\ \mathbb{H}_{-q'_{\lambda}}\times\left[T_{0},T\right]\\
			&\phi(\blank,T)=0\ \ \text{in}\ \mathbb{H}_{-q'_{\lambda}},\quad \phi\left(z_{\lambda},t\right)=0\ \ \text{on}\ \dell\mathbb{H}_{-q_{\lambda}'}\times\left[T_{0},T\right].
		\end{align*}
	\end{subequations}
	The representation formula gives us a unique solution $\phi=\mathcal{L}^{-1}\left(\hat{a}\right)\left[E\right]$, where $\mathcal{L}\left(\hat{a}\right)$ is the transport operator defined above.
	
	\medskip
	Then recalling \eqref{homotopic-operators-a-priori-estimates-1},\eqref{homotopic-operators-a-priori-estimates-3a}--\eqref{homotopic-operators-a-priori-estimates-3b}, and \eqref{homotopic-operators-a-priori-estimates-6}, \eqref{tilde-phi-star-R-initial-value-problem} can be written in the form 
	\begin{align}
		\tilde{\phi}_{*R}=\mathcal{T}^{in}_{\lambda}\left(\tilde{\phi}_{*R},\lambda\alpha,\psi_{*}^{out},\lambda\tilde{\xi}\right)\coloneqq\mathcal{L}^{-1}\left(\lambda\mathfrak{a}_{R,\lambda}\right)\left[\hat{\mathscr{E}}_{R}\left(\tilde{\phi}_{*R},\lambda\alpha,\psi_{*}^{out},\lambda\tilde{\xi}\right)\right],\label{inner-problem-fixed-point-formulation}
	\end{align}
	and $\hat{\mathscr{E}}_{R}\left(\tilde{\phi}_{*R},\lambda\alpha,\psi_{*}^{out},\lambda\tilde{\xi}\right)$ is given by
	\begin{align*}
		\hat{\mathscr{E}}_{R}\left(\tilde{\phi}_{*R},\lambda\alpha,\psi_{*}^{out},\lambda\tilde{\xi}\right)&=\lambda\tilde{\mathfrak{R}}_{1}+\varepsilon^{2}\dot{\alpha}\left(t\right)U_{R}\left(z_{\lambda},t\right)-\varepsilon\left(\mathcal{N}\left(\xi_{0}+\xi_{1}\right)\left[\lambda\tilde{\xi}\right]+\lambda\dot{\tilde{\xi}}\right)\cdot\grad_{z_{\lambda}}U_{R}\nonumber\\
		&+\lambda\tilde{\mathscr{E}}_{R}\left(\tilde{\psi}_{*R}, \lambda\alpha,\psi^{out}_{*},\lambda\tilde{\xi}\right)
	\end{align*}
	where we recall the definition of $\mathfrak{a}_{R,\lambda}$ from \eqref{homotopic-operator-inner-error-E-star-R-definition-10}, of $\tilde{\mathscr{E}}_{R}$ from \eqref{homotopic-operator-inner-error-main-definition-lower-order-terms}, and of $\tilde{\mathfrak{R}}_{1}$ from \eqref{homotopic-operators-a-priori-estimates-3a}--\eqref{homotopic-operators-a-priori-estimates-3b}. We note that the only dependence of $\hat{\mathscr{E}}_{R}$ on $\tilde{\phi}_{*R}$ comes through $\tilde{\psi}_{*R}$ and $\nabla_{z}\tilde{\psi}_{*R}$, via the relationship $\eqref{psi-star-r-green-function-representation-upper-half-plane}$.
	
	\medskip
	Next we recall that \eqref{psi-out-star-boundary-value-problem}, which, for $(-1)^{R}=1$ and $(-1)^{L}=-1$, is given by
	\begin{align*}
		&\Delta\psi_{*}^{out}+\lambda\sum_{j=R,L}\left(-1\right)^{j}\left(\psi_{*j}\Delta\eta^{(j)}_{K}+2\nabla\eta^{(j)}_{K}\cdot\nabla\psi_{*j}\right)+\lambda E_{2}^{out}\left(\lambda\tilde{\xi}\right)=0\ \ \text{in}\ \mathbb{R}^{2}_{+}\times \left[T_{0},T\right],\\
		&\psi^{out}_{*}\left(x_{1},0,t\right)= 0,\ \ t\in \left[T_{0},T\right],
	\end{align*}
	where is defined in $E_{2}^{out}\left(\tilde{\xi}\right)$ \eqref{first-approximation-outer-error-E2out-final-bound}, and we note that $\psi_{*L}$ is an odd reflection in $x_{1}$ of $\psi_{*R}$. Then \eqref{psi-out-star-boundary-value-problem} is equivalent to
	\begin{align}
		\psi^{out}_{*}&=\mathcal{T}^{out}_{\lambda}\left(\tilde{\phi}_{*R},\lambda\alpha,\psi_{*}^{out},\lambda\tilde{\xi}\right)\nonumber\\
		&\coloneqq\left(-\Delta\right)^{-1}\left(\lambda\sum_{j=R,L}\left(-1\right)^{j}\left(\psi_{*j}\Delta\eta^{(j)}_{K}+2\nabla\eta^{(j)}_{K}\cdot\nabla\psi_{*j}\right)+\lambda E_{2}^{out}\right),\label{outer-problem-fixed-point-formulation}
	\end{align}
	where the inverse Laplacian is on $\mathbb{R}^{2}_{+}$ with $0$ boundary conditions at $\left\{x_{2}=0\right\}$.
	
	\medskip 
	Given \eqref{cRj-initial-value-problem}, we integrate \eqref{homotopic-operator-inner-error-main-definition} on $\mathbb{H}_{-q_{\lambda}'}$. Then, we obtain
	\begin{align}
		\lambda\varepsilon^{2}\left(\int_{\mathbb{H}_{-q_{\lambda}'}}U_{R}\ dz_{\lambda}\right)\dot{\alpha}&=\lambda\mathscr{Q}_{(0,0)}-\lambda\int_{\mathbb{H}_{-q_{\lambda}'}}\left(\left(f_{\varepsilon}'\right)^{+}\mathcal{R}^{*}_{1}+\left(f_{\varepsilon}''\right)^{+}\mathcal{R}^{*}_{2}+\left(f_{\varepsilon}'''\right)^{+}\mathcal{R}^{*}_{3}\right)dz_{\lambda}\nonumber\\
		&+\lambda\varepsilon^{2}\alpha\left(\int_{\mathbb{H}_{-q_{\lambda}'}}\dell_{q_{\lambda}}U_{R}\ dz_{\lambda}\right)\dot{q}.\label{cRj-fixed-point-formulation-1}
	\end{align}
	Next, we integrate \eqref{homotopic-operators-a-priori-estimates-1} on $\mathbb{H}_{-q_{\lambda}'}$ against $z_{\lambda,1}$, and noting that $U_{R}$ and $\dell_{t}U_{R}$ are even in $z_{\lambda,1}$ on $\mathbb{H}_{-q_{\lambda}'}$, we obtain
	\begin{align}
		&\varepsilon\left(\int_{\mathbb{H}_{-q_{\lambda}'}}U_{R}\ dz_{\lambda}\right)\left(\lambda\dot{\tilde{p}}+\left[\mathscr{A}\left(\xi_{*}\right)\left(\lambda\tilde{\xi}\right)\right]_{1}\right)\nonumber\\
		&=\varepsilon\left(\int_{\mathbb{H}_{-q_{\lambda}'}}U_{R}\ dz_{\lambda}\right)\left[\mathscr{A}\left(\xi_{*}\right)\left(\lambda\tilde{\xi}\right)-\mathcal{N}\left(\xi_{0}+\xi_{1}\right)\left(\lambda\tilde{\xi}\right)\right]_{1}\nonumber\\
		&+\lambda\mathscr{Q}_{\left(1,0\right)}\nonumber-\lambda\int_{\mathbb{H}_{-q_{\lambda}'}}z_{\lambda,1}\left(\tilde{\mathfrak{R}}_{1}+\tilde{\mathscr{E}}_{R}\left(\tilde{\psi}_{*R}, \lambda\alpha,\psi^{out}_{*},\lambda\tilde{\xi}\right)\right)dz_{\lambda}\nonumber\\
		&-\int_{\mathbb{H}_{-q_{\lambda}'}}z_{\lambda,1}\left(\left[\gradperp_{z_{\lambda}}\left(\mathscr{V}\left(q_{\lambda}\right)+\lambda\mathfrak{a}_{R,\lambda}\right)\right]\cdot\nabla_{z_{\lambda}}\left(\tilde{\phi}_{*R}-\hat{\mathscr{W}}_{1,\lambda}\tilde{\psi}_{*R}\right)\right)dz_{\lambda},\label{cRj-fixed-point-formulation-2}
	\end{align}
	where we have defined $\hat{\mathscr{W}}_{1,\lambda}=\mathscr{W}_{1}\left(\mathscr{V}\left(q_{\lambda}\right)+\lambda\mathfrak{A}_{R,\lambda}\left(\xi_{0}+\xi_{1}\right)\right)$, and we recall the definition of $\tilde{\mathfrak{R}}_{1}$ from \eqref{homotopic-operators-a-priori-estimates-3b}. We have also used \eqref{tilde-phi-star-R-0-mass-condition}.
	
	\medskip
	Recalling \eqref{tilde-phi-star-R-0-mass-condition}, we can then proceed as in \eqref{projected-linear-transport-problem-coefficients-of-projection-a-priori-estimates-lemma-5}--\eqref{projected-linear-transport-problem-coefficients-of-projection-a-priori-estimates-lemma-6}, and rewrite $\eqref{cRj-fixed-point-formulation-2}$ as
	\begin{align}
		&\varepsilon\left(\int_{\mathbb{H}_{-q_{\lambda}'}}U_{R}\ dz_{\lambda}\right)\left(\lambda\dot{\tilde{p}}+\left[\mathscr{A}\left(\xi_{*}\right)\left(\lambda\tilde{\xi}\right)\right]_{1}\right)\nonumber\\
		&=\varepsilon\left(\int_{\mathbb{H}_{-q_{\lambda}'}}U_{R}\ dz_{\lambda}\right)\left[\mathscr{A}\left(\xi_{*}\right)\left(\lambda\tilde{\xi}\right)-\mathcal{N}\left(\xi_{0}+\xi_{1}\right)\left(\lambda\tilde{\xi}\right)\right]_{1}\nonumber\\
		&+\lambda\mathscr{Q}_{\left(1,0\right)}-\lambda\int_{\mathbb{H}_{-q_{\lambda}'}}z_{\lambda,1}\left(\tilde{\mathfrak{R}}_{1}+\tilde{\mathscr{E}}_{R}\left(\tilde{\psi}_{*R}, \lambda\alpha,\psi^{out}_{*},\lambda\tilde{\xi}\right)\right)dz_{\lambda}-\lambda\mathfrak{R}_{2},\label{cRj-fixed-point-formulation-2a}
	\end{align}
	for $\mathfrak{R}_{2}$ satisfying estimates inferred from Lemma \ref{projected-linear-transport-problem-coefficients-of-projection-a-priori-estimates-lemma}. Moreover, following the proof of this lemma with $a_{*}+a=\lambda \mathfrak{a}_{R,\lambda}$ accounts for the factor of $\lambda$ in front of $\mathfrak{R}_{2}$.
	
	\medskip
	We similarly integrate \eqref{homotopic-operators-a-priori-estimates-1} on $\mathbb{H}_{-q_{\lambda}'}$ against $z_{\lambda,2}$, and obtain
	\begin{align}
		&\varepsilon\left(\int_{\mathbb{H}_{-q_{\lambda}'}}U_{R}\ dz_{\lambda}\right)\left(\lambda\dot{\tilde{q}}+\left[\mathscr{A}\left(\xi_{*}\right)\left(\lambda\tilde{\xi}\right)\right]_{2}\right)\label{cRj-fixed-point-formulation-3}\\
		&=\varepsilon\left(\int_{\mathbb{H}_{-q_{\lambda}'}}U_{R}\ dz_{\lambda}\right)\left[\mathscr{A}\left(\xi_{*}\right)\left(\lambda\tilde{\xi}\right)-\mathcal{N}\left(\xi_{0}+\xi_{1}\right)\left(\lambda\tilde{\xi}\right)\right]_{2}\nonumber\\
		&+\lambda\mathscr{Q}_{\left(0,1\right)}-\lambda\int_{\mathbb{H}_{-q_{\lambda}'}}z_{\lambda,2}\left(\tilde{\mathfrak{R}}_{1}+\tilde{\mathscr{E}}_{R}\right)dz_{\lambda}-\lambda\mathfrak{R}_{3}-\lambda\varepsilon^{2}\left(\int_{\mathbb{H}_{-q_{\lambda}'}}z_{\lambda,2}U_{R}\left(z_{\lambda},\lambda\tilde{\xi}\right)dz_{\lambda}\right)\dot{\alpha},\nonumber
	\end{align}
	where, analogously to \eqref{cRj-fixed-point-formulation-2a}, $\mathfrak{R}_{3}$ satisfies estimates inferred from Lemma \ref{projected-linear-transport-problem-coefficients-of-projection-a-priori-estimates-lemma}.
	
	\medskip
	Then, similarly to Lemma \ref{projected-linear-transport-problem-coefficients-of-projection-a-priori-estimates-lemma}, the system of ODEs defined by \eqref{cRj-fixed-point-formulation-1}, \eqref{cRj-fixed-point-formulation-2a}, and \eqref{cRj-fixed-point-formulation-3}, the proof of Theorem \ref{first-approximation-construction-theorem}, the definitions of the $\mathscr{Q}_{\left(i,j\right)}$ in \eqref{0-orthogonality-rhs}, the orthogonality conditions \eqref{tilde-phi-star-R-0-mass-condition} and \eqref{homotopic-operator-inner-error-main-definition-lower-order-terms} can be written as
	\begin{align}
		\frac{d}{dt}\left(\begin{pmatrix}
			\lambda\alpha \\
			\lambda\tilde{p} \\
			\lambda\tilde{q}
		\end{pmatrix}\right)=-\mathfrak{A}\begin{pmatrix}
			\lambda\alpha \\
			\lambda\tilde{p} \\
			\lambda\tilde{q}
		\end{pmatrix}+\hat{\mathfrak{R}}_{5}\left(\tilde{\phi}_{*R},\lambda\alpha,\psi_{*}^{out},\lambda\tilde{\xi}\right)=\begin{pmatrix}
			\bigO{\left(\lambda\varepsilon^{3}t^{-4}\right)} \\
			\bigO{\left(\lambda\varepsilon^{4}t^{-3}\right)} \\
			\bigO{\left(\lambda\varepsilon^{4}t^{-4}\right)}
		\end{pmatrix},\quad \mathfrak{A}=\begin{pmatrix}
			1 & \textbf{0} \\
			\textbf{0} & \mathscr{A}(\xi_{*})
		\end{pmatrix},\label{cRj-fixed-point-formulation-4b}
	\end{align}
	where $\mathfrak{A}$ is in block matrix form and $\mathscr{A}$ is defined in \eqref{matrix-A-polar-form}. Thus \eqref{cRj-initial-value-problem} is equivalent to the fixed point problem
	\begin{align}
		\begin{pmatrix}
			\lambda\alpha \\
			\lambda\tilde{p} \\
			\lambda\tilde{q}
		\end{pmatrix}=\mathcal{T}^{ode}_{\lambda}\left(\tilde{\phi}_{*R},\lambda\alpha,\psi_{*}^{out}\lambda\tilde{\xi}\right)\coloneqq\int_{t}^{T}\left(-\mathfrak{A}\begin{pmatrix}
			\lambda\alpha \\
			\lambda\tilde{p} \\
			\lambda\tilde{q}
		\end{pmatrix}+\hat{\mathfrak{R}}_{5}\right)ds.\label{cRj-fixed-point-formulation-5}
	\end{align}
	Then if we define the new operators
	\begin{align*}
		\tilde{\mathcal{T}}^{out}_{\lambda}\left(\tilde{\phi}_{*R},\lambda\alpha,\psi_{*}^{out},\lambda\tilde{\xi}\right)&=\mathcal{T}^{out}_{\lambda}\left(\mathcal{T}^{in}_{\lambda}\left(\tilde{\phi}_{*R},\lambda\alpha,\psi_{*}^{out},\lambda\tilde{\xi}\right),\lambda\alpha,\psi_{*}^{out},\lambda\tilde{\xi}\right)\\
		\tilde{\mathcal{T}}^{ode}_{\lambda}\left(\tilde{\phi}_{*R},\lambda\alpha,\psi_{*}^{out},\lambda\tilde{\xi}\right)&=\mathcal{T}^{ode}_{\lambda}\left(\mathcal{T}^{in}_{\lambda}\left(\tilde{\phi}_{*R},\lambda\alpha,\psi_{*}^{out},\lambda\tilde{\xi}\right),\lambda\alpha,\psi_{*}^{out},\lambda\tilde{\xi}\right),
	\end{align*}
	we see that the system consisting of \eqref{inner-problem-fixed-point-formulation}, \eqref{outer-problem-fixed-point-formulation}, and \eqref{cRj-fixed-point-formulation-5} on the deformed open ball $\mathscr{O}$ defined in \eqref{deformed-open-ball-definition} is equivalent to the fixed point problem
	\begin{align}
		\mathfrak{P}=\tilde{\mathcal{T}}_{\lambda}\left(\mathfrak{P}\right)=\left(\mathcal{T}^{in}_{\lambda}\left(\mathfrak{P}\right),\tilde{\mathcal{T}}^{out}_{\lambda}\left(\mathfrak{P}\right),\tilde{\mathcal{T}}^{ode}_{\lambda}\left(\mathfrak{P}\right)\right),\quad \mathfrak{P}\in\mathscr{O}.\label{fixed-point-problem-final}
	\end{align}
	\begin{lemma}
		The operator $\tilde{\mathcal{T}}\colon\mathscr{O}\times\left[0,1\right]\rightarrow\mathfrak{X}$ defined by $\tilde{\mathcal{T}}\left(\cdot,\lambda\right)\coloneqq\tilde{\mathcal{T}}_{\lambda}$ in \eqref{fixed-point-problem-final} is compact, and for all $\lambda\in\left[0,1\right]$, is a degree $1$ operator.
	\end{lemma}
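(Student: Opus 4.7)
The plan is to verify compactness of $\tilde{\mathcal{T}}$ componentwise, and then establish that the Leray--Schauder degree $\deg(I - \tilde{\mathcal{T}}_\lambda, \mathscr{O}, 0)$ equals $1$ by homotopy invariance, reducing to a direct degree computation at $\lambda = 0$ where the operator is linear.

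For compactness of $\tilde{\mathcal{T}}_\lambda$ I would treat each component separately. For $\mathcal{T}^{in}_\lambda$, the representation of $\tilde{\phi}_{*R}$ along the characteristics of $\gradperp_{z_\lambda}(\mathscr{V}(q_\lambda) + \lambda \mathfrak{a}_{R,\lambda})$, together with the $L^\infty$ and H\"older bounds on $\nabla \tilde{\psi}_{*R}$ provided by Corollary \ref{L-infinity-a-priori-estimate-corollary}, yields uniform H\"older continuity of the image in $z_\lambda$; the transport equation itself then gives uniform continuity in $t$ on the fixed compact support $B_{3\rho}(0)$, so Arzela--Ascoli produces precompactness in the $\|\cdot\|_i$ norm. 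For $\tilde{\mathcal{T}}^{out}_\lambda$, the inverse Laplacian against compactly supported data gains a derivative in space via standard elliptic theory, and the $t$-dependence is continuous through that of $\psi_{*R}, \psi_{*L}$ and $E_2^{out}$, giving precompactness in $\|\cdot\|_o$. For $\tilde{\mathcal{T}}^{ode}_\lambda$, the defining integral \eqref{cRj-fixed-point-formulation-6} smooths once in $t$, giving equicontinuity for the $\tilde{\xi}, \alpha$ components; the matrix $\mathfrak{B}^{-1}$ introduces only polynomial factors of $t$ which preserve the weighted estimates defining $\mathfrak{X}$.

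At $\lambda = 0$, the operator $\tilde{\mathcal{T}}_0$ is linear in $\mathfrak{P}$ by design: all nonlinear pieces in $\mathfrak{R}_1$, $\tilde{\mathscr{E}}_R$ and $\hat{\mathfrak{R}}_4$ carry an explicit factor $\lambda$, as do all the coupling terms between $\tilde{\phi}_{*R}$, $\psi^{out}_*$, $\alpha$ and $\tilde{\xi}$ in the construction of $\mathcal{T}^{in}_\lambda$, $\tilde{\mathcal{T}}^{out}_\lambda$, $\tilde{\mathcal{T}}^{ode}_\lambda$. Applying Corollaries \ref{L2-a-priori-estimate-corollary} and \ref{L-infinity-a-priori-estimate-corollary} to $\mathcal{T}^{in}_0$ with error identically zero (together with Lemma \ref{projected-linear-transport-problem-coefficients-of-projection-a-priori-estimates-lemma} to see $c_{R0}, c_{R1}, c_{R2}$ all vanish), and observing that $\tilde{\mathcal{T}}^{out}_0 \equiv 0$ and that $\tilde{\mathcal{T}}^{ode}_0$ is forced to vanish by \eqref{cRj-fixed-point-formulation-4} since $\hat{\mathfrak{R}}_3 \equiv 0$ at $\lambda = 0$, we conclude $I - \tilde{\mathcal{T}}_0$ is an invertible compact perturbation of the identity with unique fixed point $\mathfrak{P} = 0 \in \mathscr{O}$. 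Standard Leray--Schauder degree theory then gives $\deg(I - \tilde{\mathcal{T}}_0, \mathscr{O}, 0) = 1$.

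To close the argument via homotopy invariance, one must check that no fixed point $\mathfrak{P}$ of $\tilde{\mathcal{T}}_\lambda$ lies on $\partial \mathscr{O}$ for any $\lambda \in [0,1]$. Any such fixed point satisfies the projected problem \eqref{tilde-phi-star-R-initial-value-problem} with error $\mathfrak{E}_\lambda$ (defined in \eqref{homotopic-operators-a-priori-estimates-6}), which, for $\mathfrak{P} \in \overline{\mathscr{O}}$, meets the size hypothesis \eqref{L2-interior-a-priori-estimate-lemma-error-bound-statement}. Applying Corollaries \ref{L2-a-priori-estimate-corollary}, \ref{L-infinity-a-priori-estimate-corollary} to $\tilde{\phi}_{*R}$, Poisson estimates to $\psi^{out}_*$ via \eqref{outer-problem-fixed-point-formulation}, and Lemma \ref{projected-linear-transport-problem-coefficients-of-projection-a-priori-estimates-lemma} to the ODE for $(\alpha, \tilde{\xi})$, one obtains bounds of the form $\varepsilon^{3-\sigma}$ or $\varepsilon^{4-\sigma}$ for small $\sigma$, strictly better (in powers of $\varepsilon$) than the thresholds $\varepsilon^{3-3\beta}, \varepsilon^{4-3\beta}$ defining $\partial\mathscr{O}$, provided $\beta$ is chosen slightly larger than $\sigma$ and $\varepsilon$ is small enough. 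The main obstacle I anticipate is bookkeeping the dependence on $\lambda$ in every coupling term inside $\tilde{\mathscr{E}}_R, \tilde{\mathfrak{R}}_1$ and $\mathfrak{a}_{R,\lambda}$ to confirm that the bounds are indeed uniform over $\lambda \in [0,1]$; once this uniformity is established, homotopy invariance of the Leray--Schauder degree yields $\deg(I - \tilde{\mathcal{T}}_\lambda, \mathscr{O}, 0) = 1$ for all $\lambda \in [0,1]$.
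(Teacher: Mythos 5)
Your proposal is the Leray--Schauder continuation argument that the paper invokes by reference to \cite{DDMW2020}, and your structural observations (explicit $\lambda$-factors giving linearity at $\lambda=0$, compactness via the time integral in \eqref{cRj-fixed-point-formulation-6} and the inverse Laplacian, and a priori estimates excluding boundary fixed points for the homotopy) match what that reference carries out. So the approach is essentially the paper's.

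One step does not follow as written: "$I - \tilde{\mathcal{T}}_0$ is an invertible compact perturbation of the identity with unique fixed point $0$; standard Leray--Schauder degree theory then gives $\deg(I-\tilde{\mathcal{T}}_0,\mathscr{O},0)=1$." For a \emph{linear} compact operator $L$ with $1\notin\operatorname{spec}(L)$, the degree is $(-1)^{m}$ where $m$ is the total algebraic multiplicity of eigenvalues of $L$ lying in $(1,\infty)$; uniqueness of the trivial fixed point only asserts that $1$ is not an eigenvalue and so gives $\deg = \pm1$, not $\deg = 1$. To pin down the sign you must either show directly that $\tilde{\mathcal{T}}_0$ has no eigenvalues $\geq 1$, or insert the additional linear homotopy $s\mapsto s\,\tilde{\mathcal{T}}_0$, $s\in[0,1]$, from the zero operator (which has degree $1$) and verify that $\mathfrak{P}=s\,\tilde{\mathcal{T}}_0\mathfrak{P}$ has no solutions on $\partial\mathscr{O}$ — which again amounts to excluding eigenvalues $\geq 1$. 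The machinery of Lemmas \ref{quadratic-form-estimate-lemma} and \ref{L2-interior-a-priori-estimate-lemma} is robust enough to do this (replacing $\mathscr{W}_1$ by $\mu^{-1}\mathscr{W}_1$ for $\mu\geq1$ only strengthens the coercivity, since the eigenvalues $\mu_j$ of the operator $\mathfrak{T}$ in Appendix \ref{vortex-pair-spectral-theory-appendix} scale by $\mu^{-1}\leq1$), but this verification is a genuine step your argument elides. A smaller presentational point: the conclusion that the error in the inner problem vanishes at $\lambda=0$ first requires $\alpha\equiv 0$ from the third component of the fixed-point equation (since $\mathfrak{E}_0=\varepsilon^2\dot\alpha\,U_R$), which you invoke implicitly but should order explicitly in the chain of deductions.
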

	\begin{proof}
		Since we are trying to construct a solution to \eqref{fixed-point-problem-final} on a finite time interval $\left[T_{0},T\right]$, the proof is almost exactly the proof given in \cite{DDMW2020}. The extra detail is due to the fact that we must obtain a solution for $\left(\lambda\alpha,\lambda\tilde{\xi}\right)$ instead of $\left(\alpha,\tilde{\xi}\right)$. However, the right hand side of \eqref{cRj-fixed-point-formulation-4b} is $\bigO{\left(\lambda\right)}$ as $\lambda\to0$, so we have compactness of the operator as well as the desired information on the degree for all $\lambda\in\left[0,1\right]$. 
	\end{proof}
	The degree of the operator at $\lambda=1$ exactly means we have a unique solution in $\mathscr{O}$ to \eqref{psi-star-r-green-function-representation-upper-half-plane}, \eqref{tilde-phi-star-R-initial-value-problem}, \eqref{cRj-initial-value-problem}, \eqref{psi-out-star-boundary-value-problem} at $\lambda=1$, and therefore to \eqref{2d-euler-vorticity-stream} with the desired estimates in $\varepsilon$ and $t$ on $\left[T_{0},T\right]$, for all $T$ large enough, as desired. For a proof of this fact, see, for example \cite{ambromalch}, Theorem $4.1$.
	
	\section{Proof of Theorem \ref{teo1}}\label{conclusion}
	We have a sequence of solutions to \eqref{2d-euler-vorticity-stream} that is identified with the sequence of solutions with estimates coming from \eqref{deformed-open-ball-definition} on $\left[T_{0},T\right]$, $\mathfrak{P}^{T}=\left(\tilde{\phi}_{*R}^{T},\alpha^{T},\psi_{*}^{out,T},\tilde{\xi}^{T}\right)$ to \eqref{psi-star-r-green-function-representation-upper-half-plane}, \eqref{tilde-phi-star-R-initial-value-problem}, \eqref{cRj-initial-value-problem}, \eqref{psi-out-star-boundary-value-problem} at $\lambda=1$ that we constructed in Section \ref{fixed-point-formulation-section}. These solutions satisfy $\mathfrak{P}^{T}\left(T\right)=0$.
	
	\medskip
	By a diagonal argument applied to $\left[T_{0},T_{0}+1\right]$, $\left[T_{0},T_{0}+2\right]\dots$, we obtain a sequence $(\alpha^{T_{n}},\tilde{\xi}^{T_{n}})\to(\alpha,\tilde{\xi})$, $(\dot{\alpha}^{T_{n}},\dot{\tilde{\xi}}^{T_{n}})\to(\dot{\alpha},\dot{\tilde{\xi}})$ that converges locally uniformly on $\left[T_{0},\infty\right)$, and satisfies bounds 
	\begin{align*}
		&\|t^{3}\alpha\|_{\left[T_{0},\infty\right)}+\|t^{4}\dot{\alpha}\|_{\left[T_{0},\infty\right)}\leq \varepsilon^{3-\sigma},\\
		&\|t^{2}\tilde{p}\|_{\left[T_{0},\infty\right)}+\|t^{3}\dot{\tilde{p}}\|_{\left[T_{0},\infty\right)}+\|t^{3}\tilde{q}\|_{\left[T_{0},\infty\right)}+\|t^{4}\dot{\tilde{q}}\|_{\left[T_{0},\infty\right)}\leq \varepsilon^{4-\sigma}. 
	\end{align*}
	For equicontinuity in $z$ for $\tilde{\phi}_{*R}^{T}$ for $t_{0}\in\left[T_{0},T_{0}+1\right]$, we first recall that $\tilde{\phi}_{*R}^{T}$ solves the equation
	\begin{align}
		\varepsilon^{2}\dell_{t}\tilde{\phi}_{*R}^{T}+\gradperp_{z}\left(\mathscr{V}\left(q^{T}\right)+\mathfrak{a}_{R}^{T}\right)\cdot\nabla_{z}\left(\tilde{\phi}_{*R}^{T}-\mathscr{W}_{1}\left(\mathscr{V}\left(q^{T}\right)+\mathfrak{A}_{R}^{T}\right)\tilde{\psi}_{*R}^{T}\right)+\mathfrak{E}_{1}^{T}=0,\label{phi-star-T-equation}
	\end{align}
	where $z$ is the coordinate coming from \eqref{homotopy-parameter-change-of-coordinates} at $\lambda=1$, $\mathfrak{a}_{R}^{T}$ is defined in \eqref{homotopic-operator-inner-error-E-star-R-definition-10} for $\lambda=1$, $\mathfrak{A}_{R}^{T}$ is defined in \eqref{homotopic-operators-a-priori-estimates-3} for $\lambda=1$, $\mathfrak{E}_{1}^{T}$ is defined in \eqref{homotopic-operators-a-priori-estimates-6} for $\lambda=1$, and $q^{T}=q_{0}+q_{1}+\tilde{q}^{T}$. For ease of notation, define 
	\begin{align*}
		\tilde{\mathfrak{E}}_{1}^{T}=-\gradperp_{z}\left(\mathscr{V}\left(q^{T}\right)+\mathfrak{a}_{R}^{T}\right)\cdot\nabla_{z}\left(\mathscr{W}_{1}\left(\mathscr{V}\left(q^{T}\right)+\mathfrak{A}_{R}^{T}\right)\tilde{\psi}_{*R}^{T}\right)+\mathfrak{E}_{1}^{T}.
	\end{align*}
	Then as in \eqref{characteristics-ode}, let $t\in\left[T_{0},T\right]$, and define characteristics $\bar{z}\left(\tau,t,z\right)$ by the ODE problem
	\begin{align}
		\frac{d\bar{z}^{T}}{d\tau}=\varepsilon^{-2}\gradperp_{\bar{z}^{T}}\left(\mathscr{V}\left(q^{T}\right)+\mathfrak{a}_{R}^{T}\right)\left(\bar{z}^{T}\left(\tau,t,z\right),\tau\right),\quad \tau\in\left[t,T\right],\quad \bar{z}^{T}\left(t,t,z\right)=z.\label{characteristics-ode-T}
	\end{align}
	We note that $\alpha^{T}$ and $\xi^{T}$ are both a uniformly bounded family of constant functions with respect to the space variables. Then Corollary \ref{L-infinity-a-priori-estimate-corollary} gives a Log-Lipschitz bound on the quantity $\gradperp_{\bar{z}}\left(\mathscr{V}\left(q^{T}\right)+\mathfrak{a}_{R}^{T}\right)$, which along with \eqref{characteristics-ode-T}, gives
	\begin{align}
		\left|\bar{z}_{1}^{T}\left(\tau,t,z\right)-\bar{z}_{2}^{T}\left(\tau,t,z\right)\right|\leq C\varepsilon^{-2}e^{e^{\tau}}\left|z_{1}-z_{2}\right|,\quad \tau\in\left[t,T\right],\label{characteristics-double-exponential-bound}
	\end{align}
	some constant $C>0$. We stress here that for the purposes of passage to the limit $T\to\infty$, $\varepsilon>0$ is fixed, and can be considered a constant when checking equicontinuity of these terms.
	
	\medskip
	We define for each pair of distinct points $z_{1},z_{2}\in B_{3\rho}\left(0\right)$, $T_{*}\left(z_{1},z_{2}\right)$ by the identity
	\begin{align}
		\varepsilon^{-4}e^{e^{T_{*}}}\left|z_{1}-z_{2}\right|=\left(\frac{\varepsilon}{T_{*}}\right)^{1-\sigma}.\label{phi-space-equicontinuity-T-star-definition}
	\end{align}
	Then suppose for $\delta>0$ small, $\left|z_{1}-z_{2}\right|<\varepsilon^{5-\sigma}e^{-2e^{\delta^{-\frac{1}{1-\sigma}}}}$. Then using \eqref{phi-star-T-equation} and \eqref{characteristics-ode-T} and \eqref{phi-space-equicontinuity-T-star-definition}, we have
	\begin{align}
		\left|\tilde{\phi}_{*R}^{T_{n}}\left(z_{1},t_{0}\right)-\tilde{\phi}_{*R}^{T_{n}}\left(z_{2},t_{0}\right)\right|\leq 2C\varepsilon^{1-\sigma}\delta.\label{infinite-time-solution-2}
	\end{align}
	which gives equicontinuity in $z$ for $\tilde{\phi}_{*R}^{T_{n}}$ at any $t_{0}\in\left[T_{0},T\right]$, as $\supp{\tilde{\phi}_{*R}^{T_{n}}}\subset B_{3\rho}\left(0\right)$ for any $T_{n}$.
	
	\medskip
	Next, due to the fact that $\tilde{\mathfrak{E}}_{1}^{T_{n}}\left(t\right)=\bigO{\left(\varepsilon^{3-\sigma}t^{-\left(2-\sigma\right)}\right)}$ on $\left[T_{0},T_{n}\right]$, we have
	\begin{align}
		\|\tilde{\phi}_{*R}^{T_{n}}\left(t_{1}\right)-\tilde{\phi}_{*R}^{T_{n}}\left(t_{2}\right)\|_{L^{\infty}\left(\mathbb{H}_{-q'}\right)}=\varepsilon^{-2}\|\int_{t_{1}}^{t_{2}}\tilde{\mathfrak{E}}_{1}^{T_{n}}\|_{L^{\infty}\left(\mathbb{H}_{-q'}\right)}\leq C\varepsilon^{1-\sigma}\left|t_{1}-t_{2}\right|.\label{infinite-time-solution-3}
	\end{align}
	This gives equicontinuity on $\left[T_{0},T_{0}+1\right]$ for $\tilde{\phi}_{*R}^{T_{n}}$ for fixed $z$.
	
	\medskip
	Combining \eqref{phi-space-equicontinuity-T-star-definition}--\eqref{infinite-time-solution-3}, we obtain uniform equicontinuity in $\left(z,t\right)$ for $\tilde{\phi}_{*R}^{T_{n}}$. Uniform boundedness for this family is clear, so we obtain upon possibly relabelling once more, that $\tilde{\phi}_{*R}^{T_{n}}\to\tilde{\phi}_{*R}$ uniformly in space and locally uniformly in time on $B_{3\rho}\left(0\right)\times\left[T_{0},\infty\right)$.
	
	\medskip
	By Corollary \ref{L-infinity-a-priori-estimate-corollary}, compact support,  convergence of $\tilde{\phi}_{*R}^{T_{n}}\to\tilde{\phi}_{*R}$ in $L^{\infty}\left(\mathbb{H}_{-q'}\right)$, and therefore convergence in $L^{2}\left(\mathbb{H}_{-q'}\right)$, $\tilde{\phi}_{*R}$ satisfies the estimates \eqref{phi-star-R-L2-Linfinity-estimates}. The existence, regularity, and bounds for $\tilde{\psi}_{*R}$ and $\psi^{out}_{*}$ follow from the estimates for $\tilde{\phi}_{*R}$. 
	
	\medskip
	Finally, by \eqref{phi-space-equicontinuity-T-star-definition} and \eqref{infinite-time-solution-2}, $\tilde{\phi}_{*R}\left(\cdot,t\right)$, for fixed $\varepsilon>0$ and all $t\geq T_{0}$, has a modulus of continuity given by, for some constant $C>0$.
	\begin{align*}
		\frac{2C\varepsilon^{1-\sigma}}{\left(\log{\left(\log{\left(\frac{\varepsilon^{\frac{5-\sigma}{2}}}{\left|z_{1}-z_{2}\right|^{\frac{1}{2}}}\right)}\right)}\right)^{\left(1-\sigma\right)}},
	\end{align*}
	For $\left(-1\right)^{R}=1$ and $\left(-1\right)^{L}=-1$, $x_{R}=\varepsilon^{-1}\left(x-pe_{1}\right)$, $x_{L}=\varepsilon^{-1}\left(x+pe_{1}\right)$, defining $\phi_{\varepsilon}$ as
	\begin{align*}
		\phi_{\varepsilon}\left(x,t\right)&=\sum_{j=R,L}\left(-1\right)^{j}\left[\phi_{j}\left(x_{j},t\right)+\left(\phi_{*j}\left(x_{j}-q'e_{2},t\right)-\phi_{*j}\left(x_{j}+q'e_{2},t\right)\right)\right],
	\end{align*}
	for $\phi_{j}=\phi_{j1}+\phi_{j2}$, $j=R,L$ constructed in Theorem \ref{first-approximation-construction-theorem} and $\phi_{*j}$, $j=R,L$ constructed in Section \ref{constructing-full-solution-section} and \ref{conclusion}, we have a solution to $\eqref{2d-euler-vorticity-stream}$ with the properties stated in Theorem \ref{teo1}, as desired.
	
	\appendix
	\section{Linearisation Around the Power Nonlinearity Vortex}
	Here we record some useful results about the linearised operator around $\Gamma$ defined in \eqref{power-semilinear-problem-R2}. 
	
	\begin{lemma}\label{dancer-yan-non-degeneracy-lemma}
		Suppose $\varphi\in L^{\infty}$ is a solution to
		\begin{align}
			\Delta\varphi+\gamma \Gamma^{\gamma-1}_{+}\varphi=\kappa \gamma \Gamma^{\gamma-1}_{+}\varphi\nonumber%\label{dancer-yan-non-degeneracy-lemma-statement-1}
		\end{align}
		on $\mathbb{R}^{2}$, with $\kappa\in[0,1)$ a constant. If $\kappa=0$, then $\varphi$ lives in the span of $\dell_{1}\Gamma$ and $\dell_{2}\Gamma$. If $\kappa>0$, then $\varphi\equiv0$.
	\end{lemma}
	The proof of this result (with minor modifications) can be found in \cite{danceryan}.
	
	\medskip
	Next, we look at the linear problem given by
	\begin{align}
		\Delta\psi+\gamma \Gamma^{\gamma-1}_{+}\psi=\gamma \Gamma^{\gamma-1}_{+}g,\label{vortex-linearised-equation}
	\end{align}
	where $g$ is a $C^{1}$ function. We move to complex coordinates $y=re^{i\theta}$, so that
	\begin{align}
		\psi\left(r,\theta\right)=\sum_{k\in\mathbb{Z}}\uppsi_{k}\left(r\right)e^{ik\theta},\quad g\left(r,\theta\right)=\sum_{k\in\mathbb{Z}}\mathfrak{g}_{k}\left(r\right)e^{ik\theta}.\label{vortex-linearised-equation-solution-fourier-expansion}
	\end{align}
	We assume that $g$ has no mode $0$ terms so that $\mathfrak{g}_{0}\equiv0$, and we accordingly impose that $\uppsi_{0}\equiv0$. Then \eqref{vortex-linearised-equation} splits into infinitely many ODEs given by
	\begin{align}
		\mathscr{L}_{k}\left[\uppsi_{k}\right]\coloneqq\dell_{r}^{2}\uppsi_{k}+\frac{1}{r}\dell_{r}\uppsi_{k}-\frac{k^{2}}{r^{2}}\uppsi_{k}+\gamma\Gamma^{\gamma-1}_{+}\uppsi_{k}=\gamma\Gamma^{\gamma-1}_{+}\mathfrak{g}_{k}.\label{modal-odes}
	\end{align}
	For $k\neq0$, one can find a positive solution $\zeta_{k}$ to the equation $\mathscr{L}_{k}\left[\zeta_{k}\right]=0$, that behaves like
	\begin{align}
		\zeta_{k}=r^{\left|k\right|}\left(1+\bigO{\left(r^{2}\right)}\right),\ r\to0,\label{zeta-k-origin-behaviour}
	\end{align}
	For $k=\pm1$, we have that $\zeta_{k}=-\Gamma'\left(r\right)$, which by \eqref{Gamma-def}, decays like $r^{-1}$ as $r\to\infty$. For $\left|k\right|\geq2$, we instead have
	\begin{align}
		\zeta_{k}=r^{\left|k\right|}\left(1+\bigO{\left(r^{-2\left|k\right|}\right)}\right).\ r\to\infty,\label{zeta-k-infinity-behaviour}
	\end{align}
	Then, in the case that $\mathfrak{g}_{k}\sim r^{\left|k\right|}$ as $r\to0$, we have the following lemma on how $\uppsi_{k}$ behaves at the origin and as $r\to\infty$.
	\begin{lemma}\label{vortex-linearised-equation-fourier-coefficients-behaviour-lemma}
		Assume we have $\psi$ and $g$ as in \eqref{vortex-linearised-equation}--\eqref{vortex-linearised-equation-solution-fourier-expansion}, with no mode $0$ terms. Also assume that $\mathfrak{g}_{k}\sim r^{\left|k\right|}$ as $r\to0$ for $\left|k\right|\geq 1$. Then for each $\left|k\right|\geq1$, a unique solution $\uppsi_{k}$ to \eqref{modal-odes} exists. Moreover, we have
		\begin{align}
			&\uppsi_{k}\in L^{\infty}\left(\mathbb{R}^{2}\right),\quad \uppsi_{k}=r^{\left|k\right|}\left(1+o\left(1\right)\right),\ r\to0,\quad \left|k\right|\geq2,\nonumber\\
			&\uppsi_{k}=\twopartdef{\bigO{\left(r^{2}\left|k\right|^{-2}\right),}}{r\to0,\ \left|k\right|\to\infty,}{\bigO{\left(r^{-\left|k\right|}\left|k\right|^{-2}\right)},}{r\to\infty,\ \left|k\right|\to\infty,}\nonumber\\
			&\frac{1}{r+1}\uppsi_{k}\in L^{\infty}\left(\mathbb{R}^{2}\right),\quad \uppsi_{k}=\twopartdef{\bigO{\left(r^{3}\right)},}{r\to0,}{\bigO{\left(r\right)},}{r\to\infty,}\quad \left|k\right|=1.\label{vortex-linearised-equation-fourier-coefficients-behaviour-lemma-statement-1}
		\end{align}
	\end{lemma}
	\begin{proof}
		Using variation of parameters, we can write $\uppsi_{k}$ in terms of $\zeta_{k}$ and $\mathfrak{g}_{k}$ explicitly. Taking into account the growth or decay of each $\zeta_{k}$ at either $0$ or $\infty$, we have
		\begin{align}
			\uppsi_{k}=\twopartdef{\zeta_{k}\int_{0}^{r}\frac{1}{s\zeta_{k}\left(s\right)^{2}}\int_{0}^{s}\zeta_{k}\left(\tau\right)\mathfrak{g}_{k}\left(\tau\right)\gamma\Gamma^{\gamma-1}_{+}\left(\tau\right)\tau\ d\tau\ ds,}{\left|k\right|=1,}{\zeta_{k}\int_{r}^{\infty}\frac{1}{s\zeta_{k}\left(s\right)^{2}}\int_{0}^{s}\zeta_{k}\left(\tau\right)\mathfrak{g}_{k}\left(\tau\right)\gamma\Gamma^{\gamma-1}_{+}\left(\tau\right)\tau\ d\tau\ ds,}{\left|k\right|\geq2.}\label{variation-of-parameters}
		\end{align}
		The properties \eqref{vortex-linearised-equation-fourier-coefficients-behaviour-lemma-statement-1} can then be inferred from \eqref{variation-of-parameters} along with the additional fact that $\supp{\Gamma^{\gamma-1}_{+}}\subset B_{1}\left(0\right)$.
	\end{proof}
	Using the above results, we define $\varrho_{k}\left(r\right)$ such that in \eqref{vortex-linearised-equation-solution-fourier-expansion},
	\begin{align}
		\mathfrak{g}_{k}\left(r\right)=r^{\left|k\right|}.\label{vortex-linearised-equation-rk-error-solution}
	\end{align}
	
	\section{Spectral Theory of the Vortex Pair}\label{vortex-pair-spectral-theory-appendix}
	In this appendix we record relevant facts related to the spectral theory of the linearised operator around the vortex pair solution exhibited in Theorem \ref{vortexpair-theorem}. In $y=\varepsilon x$ variables, let
	\begin{align*}
		\mathcal{H}\coloneqq L^{2}\left(\supp{f_{\varepsilon}'},f_{\varepsilon}'\ dy\right)
		=\left\{ u:\supp{f_{\varepsilon}'}\to\R\,\Big|\, \int_{\R^2}f_\varepsilon'\ |u|^2<\infty,\ u \text{ odd in $y_2$}\right\},
	\end{align*}
	with $f_{\varepsilon}'$ defined in \eqref{f-prime-definition}. We recall that by \eqref{first-approximation-main-order-vorticity-support}, $\supp{f_{\varepsilon}'}\subset B_\rho(q'e_2)\cup B_\rho(-q'e_2)$ where $q'=\frac{q}{\varepsilon}$, and $\rho>0$ is the absolute constant defined in \eqref{vortex-pair-support-true}. 
	
	\medskip
	Define the linear operator $\mathfrak{T}$ by
	\begin{align}
		\mathfrak{T}:\mathcal{H}\rightarrow \mathcal{H}\quad h\mapsto \left(-\Delta\right)^{-1}\left(f_{\varepsilon}'h\right)\nonumber%\label{spectral-theory-appendix-linear-operator-definition}
	\end{align}
	where $(-\Delta)^{-1}$ is the Newtonian potential \eqref{newtonian}.
	One can show that, for each fixed $\varepsilon>0$ small enough, $\mathfrak{T}$ is a compact self-adjoint operator on $\mathcal{H}$. Thus, $ \mathcal{H}$ has a basis $(e_{j})_{j\geq 0}$ of eigenvectors of $\mathfrak{T}$ with eigenvalues $\mu_{j}\left(\varepsilon\right)>0$ which, for fixed $\varepsilon$, tend to $0$ as $j\to\infty$. We normalize
	\begin{align}
		\left(-\Delta\right)^{-1}\left( f_{\varepsilon}'e_{j}\right)=\mu_{j}\left(\varepsilon\right)e_{j},\quad \int_{\mathbb{R}^{2}} f_{\varepsilon}'e_{i}e_{j}=\delta_{ij}.\label{spectral-theory-appendix-orthogonality-relations}
	\end{align}
	Let $\mu_0$ be the largest eigenvalue.
	We will prove below that $\mu_{0}\sim\left|\log{\varepsilon}\right|$ as $\varepsilon\to0$.  Let
	\begin{align}
		e_{1}=\kappa_{1}\dell_{1}\Psi_{R},\quad e_{2}=\kappa_{2}\dell_{q'}\left(\Psi_{R}-c\varepsilon y_{2}-\left|\log{\varepsilon}\right|\Omega\right)\label{vortex-pair-1st-eigenfunction}
	\end{align}
	where $\kappa_{1}$ and $\kappa_{2}$ are normalization constants so that $e_{1}$ and $e_{2}$ satisfy \eqref{spectral-theory-appendix-orthogonality-relations}. 
	Since $e_{1}$ and $e_{2}$ are in the kernel of $\Delta+f_{\varepsilon}'$,  $e_1$ and $e_2$ are eigenvectors with $\mu_{1}\left(\varepsilon\right)=\mu_{2}\left(\varepsilon\right)=1$ for all $\varepsilon$.
	
	\medskip
	For all eigenvalues $\mu_{j}$, $j\geq3$, we have $\mu_j(\varepsilon)<1$ with a uniform gap.
	\begin{lemma}\label{eigenvalue-estimates-lemma}
		There is $\varepsilon_0>0$ such that:
		\textup{a)}
		for $0<\varepsilon<\varepsilon_0$ there is $e_0\not=0$ and $\mu_0=c_0|\log\varepsilon|+\bigO{(1)}$ as $\varepsilon\to0$ for some $c_0>0$ such that 
		$$
		\mathfrak{T}(e_0)=\mu_0e_0.
		$$
		After normalization, $e_0=1+\bigO{\left(\frac{1}{|\log\varepsilon|}\right)}$ uniformly in $B_{2\rho}(q'e_2)$, and is unique.
		
		\textup{b)}
		There exists a fixed $\delta\in\left(0,1\right)$ such that for all $0<\varepsilon<\varepsilon_{0}$ and $j\geq3$,
		\begin{align}
			\mu_{j}\left(\varepsilon\right)\in[0,\delta].\nonumber%\label{eigenvalue-estimates-lemma-statement}
		\end{align}
	\end{lemma}
	\begin{proof}
		In this proof we consider equations
		$$
		-\Delta \phi = \lambda f_\varepsilon' \phi + h\quad\text{in }\R^2
		$$
		with $h$ odd in $y_2$ and fast decay. The solution is the one given by the Newtonian potential and hence odd in $y_2$. The eigenfunctions satisfy
		$$
		-\Delta e_j = \frac{1}{\mu_j} f_\varepsilon' e_j \quad\text{in }\R^2.
		$$
		a) 
		Here we prove the existence of $e_0$ with eigenvalue $\mu_0\sim |\log\varepsilon|$.
		Let $\phi_0$ be the solution of 
		$$
		-\Delta \phi_0 = \lambda_0 f_\varepsilon'\chi_{\R^2_+}- \lambda_0 f_\varepsilon'\chi_{\R^2_-}  \quad\text{in }\R^2
		$$
		(given by the Newtonian potential). A calculation shows that choosing $\lambda_0$ appropriately with the expansion $\lambda_0=\frac{c_0}{|\log\varepsilon|}+\bigO{\left(\frac{1}{|\log\varepsilon|^2}\right)}$, we get $\phi_0(q'e_2)=1$. 
		From the Newtonian potential we also get
		$$
		\phi_0(y)=1+\bigO{\left(\frac{1}{|\log\varepsilon|}\right)}\quad\text{uniformly in } B_{2\rho}(q'e_2).
		$$
		We look for $\phi_1$ so that $e_0=\phi_0+\phi_1$ is an eigenfunction of $\mathfrak{T}$ with eigenvalue $\mu_0=\frac{1}{\lambda}$, where $\lambda=\bigO{\left(\frac{1}{|\log\varepsilon|}\right)}$. We then get the equation
		\begin{align}
			\label{eqphi1e}
			-\Delta\phi_1=\lambda f_{\varepsilon}'\phi_1+\lambda_0 f_{\varepsilon}'(\phi_0-1)\chi_{\R^2_+}+\lambda_0 f_{\varepsilon}'(\phi_0+1)\chi_{\R^2_-}+(\lambda-\lambda_0)f_{\varepsilon}'\phi_0.
		\end{align}
		So we study 
		\begin{align}
			\label{eqphi1}
			-\Delta \phi=\lambda f_{\varepsilon}'\phi+ f_{\varepsilon}'h+d f_{\varepsilon}'\phi_0
		\end{align}
		with $d\in\mathbb{R}$, $h\in L^\infty$ and odd in $y_2$ and $\lambda=o(1)$ as $\varepsilon\to0$.
		The solution is the one given by the Newtonian potential.
		
		\medskip
		Then there exists $C$ such that for $\varepsilon>0$ small, if $\phi$ solves \eqref{eqphi1} and $\int_{\R^2_+} f_{\varepsilon}'\phi=0$, then
		\begin{align}
			\label{a1}
			\|\phi\|_{L^\infty(\R^2)} +\left|d\right|\leq C \|h\|_{L^\infty(\R^2)},
		\end{align}
		with the proof following very similar lines to Proposition $5.1$ in \cite{ao}.
		
		\medskip
		Estimate \eqref{a1} and a standard argument using the Fredholm alternative give that for any $h\in L^\infty(\R^2)$ odd in $y_2$, there are unique $\phi$ and $d$ satisfying \eqref{eqphi1}. 
		Applying this solvability theory to \eqref{eqphi1e} we find $\lambda = \lambda_0+\bigO{\left(\frac{1}{|\log\varepsilon|^2}\right)}$ and $\phi_1$ with $\|\phi_1\|_{L^\infty(\R^2}\leq \frac{C}{|\log\varepsilon|^2}$ satisfying \eqref{eqphi1e}.
		This proves the existence of $e_0$. 
		The uniqueness is consequence of b) below.
		
		\medskip
		\noindent b) Suppose by contradiction that there  is a sequence $\varepsilon_n\to 0$, such that there is an eigenfunction $v_n = e_{j_n}$ with corresponding eigenvalue $\mu_n=\mu_{j_n}(\varepsilon_n)$ such that $v_n$ satisfies
		\begin{align}
			\label{norma2}
			\int_{\R^2}f_\varepsilon' |v_n|^2=1,\quad 
			\int_{\R^2}f_\varepsilon' v_n e_j =0,\quad j=0,1,2
		\end{align}
		and either $\mu_n \to \mu\geq 1$ or $\mu_n\to\infty$. We claim that 
		\begin{align}
			\label{claim1}
			\|v_n\|_{L^\infty(\R^2)}\leq C.
		\end{align}
		Indeed, from \eqref{norma2} and standard elliptic estimates we have
		$$
		\sup_{B_{\frac{\rho}{2}}(q'e_2)}|v_n|\leq C.
		$$
		From here, estimates for the Newtonian potential applied to $\frac{1}{\mu_n}f_{\varepsilon}' v_n \chi_{\R^2_+}$ and the Newtonian potential applied to $\frac{1}{\mu_n}f_{\varepsilon}' v_n \chi_{\R^2_-}$, the oddness of $v_{n}$, and the fact that $v_n$ is harmonic in the complement of $B_{2\rho}(q'e_2)\cup  B_{2\rho}(-q'e_2)$, and $v_n(y)\to 0$ as $|y|\to 0$, gives us \eqref{claim1}.
		
		\medskip
		Let $\bar v_n(z)=v_n(z_n+q'e_2)$. By elliptic estimates $\bar v_n$ has a subsequence converging locally in $\mathbb{R}^{2}$ to some $\bar v \in L^\infty(\R^2)$, solving
		\begin{align}
			\Delta\bar v+\gamma \Gamma^{\gamma-1}_{+}\bar v=\kappa \gamma \Gamma^{\gamma-1}_{+}\bar v,\quad\text{in }\R^2
			\label{eigenvalue-estimates-lemma-1}
		\end{align}
		for some $\kappa\in[0,1)$ if $\mu=\lim_{n\to\infty}\mu_n<\infty$ and $\kappa=1$ ir $\mu=\infty$.
		From \eqref{norma2}, we have 
		\begin{align}            
			\int_{\mathbb{R}^{2}} \gamma \Gamma^{\gamma-1}_{+}\left|\bar v\right|^{2}=1,\quad \int_{\mathbb{R}^{2}} \Gamma^{\gamma-1}_{+}\bar v=0,\quad 
			\int_{\mathbb{R}^{2}} \Gamma^{\gamma-1}_{+}\bar v\dell_{i}\Gamma =0,\ \ i=1,2.
			\label{eigenvalue-estimates-lemma-2}
		\end{align}
		In particular, this implies that $\bar v$ is not in the span of $\dell_{1}\Gamma,\dell_{2}\Gamma$. 
		
		\medskip
		If $\mu<\infty$, so $\kappa\in[0,1)$, by Lemma~\ref{dancer-yan-non-degeneracy-lemma} we get $\bar v\equiv0$. This directly contradicts \eqref{eigenvalue-estimates-lemma-2}. If $\mu=\infty$ so $\kappa=1$, then $\bar v$ must be constant and hence $0$, again a contradiction.
	\end{proof}
	
	%------
	% Insert acknowledgments and information
	% regarding funding at the end of the last
	% section, i.e., right before the bibliography.
	%------
	
	{\it Acknowledgement.}
		We would like to thank Michele Dolce for useful conversations regarding this work.
	
	\medskip
	
{\it Funding.}
		J.~D\'avila has been supported  by  a Royal Society  Wolfson Fellowship, UK. M.~del Pino has been supported by the Royal Society Research Professorship grant RP-R1-180114 and by the ERC/UKRI Horizon Europe grant ASYMEVOL, EP/Z000394/1. M. Musso has been supported by EPSRC research Grant EP/T008458/1.  S. Parmeshwar has been supported by EPSRC research Grants EP/T008458/1 and EP/V000586/1.

	%------
	% Insert the bibliography.
	%------

	%\begin{thebibliography}{99}
	
	%------ Example for a paper in journal:
	% \bibitem{article1}
	% Petrunin, A.: Parallel transportation for Alexandrov space with curvature bounded below.
	% Geom. Funct. Anal. \textbf{8}, 123--148 (1998) \Zbl{0903.53045} \MR{1601854}
	
	%------ Example for a book:
	% \bibitem{book1}
	% Ziemer, W.~P.: Weakly differentiable functions. Grad. Texts in Math. 120,
	% Springer, New York (1989) \Zbl{0692.46022} \MR{1014685}
	
	%------ Example for a paper in a book:
	% \bibitem{incollection1}
	% Milne, J.~S.: Introduction to Shimura varieties. In: Harmonic Analysis, the
	% Trace Formula, and Shimura Varieties (M.~W. Marcellin, E.~Giorgi, eds.),
	% pp. 265--378, Clay Math. Proc. 4,
	% American Mathematical Society, Providence, RI, 2005
	% \Zbl{1148.14011} \MR{2192012}
	
	%------ Example for a preprint on arXiv:
	% \bibitem{preprint1}
	% Nguyen, D.~V., Chilappagari, S.~K., Marcellin, M.~W., Vasic, B.:
	% LDPC codes from latin squares free of small trapping sets.
	% \arxiv{1008.4177} (2010)
	
	%------ Example for a report:
	% \bibitem{report1}
	% Schöberl, J.: Commuting quasi-interpolation operators.
	% Technical report isc-01-10-math, Texas A\&M University,
	% \url{www.isc.tamu.edu/publications-reports/tr/0110.pdf} (2001)
	
	%------ Example for a thesis:
	% \bibitem{thesis1}
	% Giorgi, E.: The geometric universe.
	% Ph.D. thesis, University of Maryland, College Park (2002)
	\bibliography{main} 
	\bibliographystyle{siam}
	
\end{document}